\newif\ifjournalstyle
\journalstyletrue
\journalstylefalse

\ifdefined\articleversion
  \journalstylefalse
\fi
\ifdefined\journalversion
  \journalstyletrue
\fi
\ifjournalstyle
  \documentclass[envcountsect,envcountsame,orivec,oribibl]{svproc}

\else
  \documentclass{article}
\fi
\pdfoutput=1

\usepackage{graphicx}       
\usepackage{subfig} 

\usepackage[T1]{fontenc}
\usepackage{microtype} 

\usepackage{cite}
\usepackage{array,colortbl}
\usepackage{amsmath,amsthm,mathtools}
\usepackage{amsfonts,bm,amssymb}
\DeclareFontFamily{U}{mathx}{\hyphenchar\font45}
\DeclareFontShape{U}{mathx}{m}{n}{
      <5> <6> <7> <8> <9> <10>
      <10.95> <12> <14.4> <17.28> <20.74> <24.88>
      mathx10
      }{}
\DeclareSymbolFont{mathx}{U}{mathx}{m}{n}
\DeclareFontSubstitution{U}{mathx}{m}{n}
\DeclareMathAccent{\widecheck}{0}{mathx}{"71}

\def\citep#1#2{\cite[{#1}]{#2}}

\usepackage{algorithm}
\usepackage{algorithmic}

\newcommand{\bbC}{{\mathbb{C}}}

\newcommand{\bbE}{{\mathbb{E}}}

\newcommand{\bbT}{{\mathbb{T}}}

\newcommand{\N}{{\mathbb{N}}} 
\newcommand{\Q}{{\mathbb{Q}}} 
\newcommand{\R}{{\mathbb{R}}} 
\newcommand{\Z}{{\mathbb{Z}}} 

\DeclareSymbolFont{bbold}{U}{bbold}{m}{n}
\DeclareSymbolFontAlphabet{\mathbbold}{bbold}

\newcommand{\bbone}{{\mathbbold{1}}}

\newcommand{\bsa}{{\boldsymbol{a}}}
\newcommand{\bsb}{{\boldsymbol{b}}}
\newcommand{\bsc}{{\boldsymbol{c}}}

\newcommand{\bse}{{\boldsymbol{e}}}

\newcommand{\bsh}{{\boldsymbol{h}}}

\newcommand{\bsk}{{\boldsymbol{k}}}

\newcommand{\bsv}{{\boldsymbol{v}}}
\newcommand{\bsw}{{\boldsymbol{w}}}
\newcommand{\bsx}{{\boldsymbol{x}}}
\newcommand{\bsy}{{\boldsymbol{y}}}
\newcommand{\bsz}{{\boldsymbol{z}}}

\newcommand{\bsK}{{\boldsymbol{K}}}

\newcommand{\bszero}{{\boldsymbol{0}}} 
\newcommand{\bsone}{{\boldsymbol{1}}}  
\newcommand{\bsalpha}{{\boldsymbol{\alpha}}}
\newcommand{\bsbeta}{{\boldsymbol{\beta}}}
\newcommand{\bsgamma}{{\boldsymbol{\gamma}}}

\newcommand{\bstau}{{\boldsymbol{\tau}}}

\newcommand{\bsDelta}{{\boldsymbol{\Delta}}}

\newcommand{\calH}{{\mathcal{H}}}

\newcommand{\calQ}{{\mathcal{Q}}}

\newcommand{\frakQ}{{\mathfrak{Q}}}

\newcommand{\frakU}{{\mathfrak{U}}}

\newcommand{\setu}{{\mathfrak{u}}}
\newcommand{\setv}{{\mathfrak{v}}}
\newcommand{\setw}{{\mathfrak{w}}}

\newcommand{\rmi}{{\mathrm{i}}}

\newcommand{\rmu}{{\mathrm{u}}}

\newcommand{\rd}{\,\mathrm{d}} 

\ifjournalstyle
  \usepackage[hidelinks,hypertexnames=false,hyperindex=true,pdfpagelabels,linktoc=all]{hyperref}
  \usepackage{bookmark}
  \makeatletter
    \providecommand*{\toclevel@author}{999}
    \providecommand*{\toclevel@title}{0}
  \makeatother
  \usepackage{enumitem}
  \numberwithin{equation}{section}
  \spnewtheorem{alg}[theorem]{Algorithm}{\bfseries}{\itshape}
  \spnewtheorem{assumption}[theorem]{Assumption}{\bfseries}{\itshape}
\else
\usepackage[hidelinks,hypertexnames=false,hyperindex=true,pdfpagelabels,linktoc=all]{hyperref}
\usepackage{bookmark}
\makeatletter 
  \providecommand*{\toclevel@author}{999}
  \providecommand*{\toclevel@title}{0}
\makeatother

\usepackage{enumitem}

\numberwithin{equation}{section}

\theoremstyle{plain}
  \newtheorem{theorem}{Theorem}[section]
  
  \newtheorem{lemma}[theorem]{Lemma}
  
\theoremstyle{definition}

\theoremstyle{remark}

\fi

\newcommand{\fmarkone}{+}
\newcommand{\fmarktwo}{\circ}

\numberwithin{figure}{section}
\numberwithin{table}{section}
\graphicspath{{figs/}}

\ifjournalstyle
  \usepackage[paperwidth=155mm,paperheight=235mm,textwidth=122mm,textheight=193mm,centering]{geometry}
  \usepackage{adjustbox}
\else
  \usepackage[a4paper]{geometry}
\fi

\newcommand{\mix}{\mathrm{mix}}
\DeclareMathOperator{\supp}{supp}
\DeclareMathOperator{\Li}{Li}
\newcommand{\divisors}{\operatorname{div}}
\newcommand{\twopii}{2\pi\rmi\,}
\newcommand{\wce}{\operatorname{wce}}
\newcommand{\tpmod}[1]{\,(\operatorname{mod} #1)}
\newcommand{\hookrightleftarrows}{%
  \mathrel{%
    \substack{%
      \hookrightarrow\\[-0.6ex]
      \rotatebox[origin=c]{180}{$\scriptstyle\hookrightarrow$}%
    }%
  }%
}

\usepackage{etoolbox} 
\usepackage{needspace}

\usepackage{nicefrac}

\newcommand{\numberthis}[1]{%
  \addtocounter{equation}{1}%
  \tag{\theequation}%
  \ifstrempty{#1}{}{%
    \label{#1}%
  }%
}

\allowdisplaybreaks

\usepackage[utf8]{inputenc}  
\usepackage{listings}
\usepackage{xcolor}

\lstdefinelanguage{Julia}{
  morekeywords={function, end, if, else, for, while, return, struct, mutable, let, in, begin, hcat, mod, reduce, @test, println, using, collect},
  sensitive=true,
  morecomment=[l]\#,
  morestring=[b]",
}

\def\figuredocument{function_spaces}
\def\figurepretex{\string\def\string\articleversion{1}}
\usepackage{pgfplots,pgfplotstable}
\pgfplotsset{compat=1.18}
\usetikzlibrary{arrows.meta,calc,patterns,positioning}

\definecolor{deepgreen}{RGB}{0,110,80}
\definecolor{softgreen}{RGB}{205,235,224}
\definecolor{deepblue}{RGB}{35,90,155}
\definecolor{softblue}{RGB}{214,228,246}
\definecolor{deeporange}{RGB}{205,100,25}
\definecolor{softorange}{RGB}{249,222,196}
\definecolor{softgray}{RGB}{240,242,244}

\tikzset{
  sample/.style={circle,fill=black,inner sep=1.6pt},
  infoarrow/.style={-{Stealth[length=2.2mm]},thick},
  construction samples/.pic={
    \foreach \samplex/\sampley in {
      0.0625/0.0625,0.1875/0.3125,0.3125/0.5625,0.4375/0.8125,
      0.5625/0.1875,0.6875/0.4375,0.8125/0.6875,0.9375/0.9375
    } {
      \node[sample] at (\samplex,\sampley) {};
    }
  },
}

\usepgfplotslibrary{groupplots}
\usepgfplotslibrary{external}
\providecommand{\figuredocument}{\tikzexternalrealjob}
\providecommand{\figurepretex}{}
\tikzset{external/system call={pdflatex \tikzexternalcheckshellescape -halt-on-error -interaction=batchmode -synctex=1 -jobname "\image" "\figurepretex\string\def\string\tikzexternalrealjob{\tikzexternalrealjob}\string\input{\figuredocument}"}}
\ifdefined\rebuildfigures
  \tikzset{external/force remake}
\fi
\newcommand{\articledata}{data/20261014_e16d21c70ede}

\pgfdeclareplotmark{filled cross}{
  \pgfpathmoveto{\pgfqpoint{-.3\pgfplotmarksize}{\pgfplotmarksize}}
  \pgfpathlineto{\pgfqpoint{.3\pgfplotmarksize}{\pgfplotmarksize}}
  \pgfpathlineto{\pgfqpoint{.3\pgfplotmarksize}{.3\pgfplotmarksize}}
  \pgfpathlineto{\pgfqpoint{\pgfplotmarksize}{.3\pgfplotmarksize}}
  \pgfpathlineto{\pgfqpoint{\pgfplotmarksize}{-.3\pgfplotmarksize}}
  \pgfpathlineto{\pgfqpoint{.3\pgfplotmarksize}{-.3\pgfplotmarksize}}
  \pgfpathlineto{\pgfqpoint{.3\pgfplotmarksize}{-\pgfplotmarksize}}
  \pgfpathlineto{\pgfqpoint{-.3\pgfplotmarksize}{-\pgfplotmarksize}}
  \pgfpathlineto{\pgfqpoint{-.3\pgfplotmarksize}{-.3\pgfplotmarksize}}
  \pgfpathlineto{\pgfqpoint{-\pgfplotmarksize}{-.3\pgfplotmarksize}}
  \pgfpathlineto{\pgfqpoint{-\pgfplotmarksize}{.3\pgfplotmarksize}}
  \pgfpathlineto{\pgfqpoint{-.3\pgfplotmarksize}{.3\pgfplotmarksize}}
  \pgfpathclose
  \pgfusepathqfill
}

\pgfplotsset{
  scatterpoints/.style={
    axis equal image,
    width=4cm,
    height=4cm,
    xmin=0, xmax=1,
    ymin=0, ymax=1,
    axis lines=box,
    enlargelimits=false,
    clip=false,
    grid=none,
    xlabel={},
    ylabel={},
    xtick=\empty,
    ytick=\empty,
    xticklabel=\empty,
    yticklabel=\empty,
    tick style={black, line width=0.5pt},
    scale only axis,
  }
}
\pgfplotsset{
  bluepoints/.style={
    only marks,
    mark=*,
    mark size=1.5pt,
    mark options={fill=blue},
  }
}
\pgfplotsset{
  orangepoints/.style={
    only marks,
    mark=*,
    mark size=1.5pt,
    mark options={fill=orange},
  }
}

\ifjournalstyle
  \pgfmathsetmacro{\journalfigurescale}{122/147}
  \pgfkeys{/pgf/images/include external/.code={\includegraphics[scale=\journalfigurescale]{#1}}}
\fi

\newcommand{\titlebreak}{\ifjournalstyle\space\else\\\fi}
\ifjournalstyle
  \newcommand{\journalonly}[1]{#1}
  \newcommand{\articleonly}[1]{}
  \newcommand{\journalbreak}[1][]{\\* #1 &}
\else
  \newcommand{\journalonly}[1]{}
  \newcommand{\articleonly}[1]{#1}
  \newcommand{\journalbreak}[1][]{}
\fi
\newcommand{\chapterauthor}{Dirk Nuyens}
\newcommand{\chapteraffiliation}{Department of Computer Science \\ KU~Leuven, Belgium}

\begin{document}

\title{A gentle introduction to approximating\titlebreak high-dimensional integrals with lattice rules\titlebreak and a maybe-not-so-gentle introduction\titlebreak to the relevant function spaces\titlebreak for QMC integration}
\ifjournalstyle
  \mainmatter
  \author{\chapterauthor}
  \institute{\chapteraffiliation}
  \titlerunning{Lattice rules and function spaces for QMC integration}
  \authorrunning{\chapterauthor}
\else
  \author{\chapterauthor \\ \normalsize \chapteraffiliation}
\fi
\date{\today}
\maketitle

\begin{abstract}
These lecture notes give both a practical and theoretical introduction to high-dimensional integration with lattice rules.
We introduce lattice points and rules in combination with accompanying Julia code.
Next we introduce the typical function spaces used in the literature.
We compare classical Sobolev spaces with Sobolev spaces of dominating mixed smoothness.
We derive lower bounds and show that the curse of dimensionality is unavoidable for these unweighted function spaces.
Next we switch to weighted periodic spaces and derive a constructive upper bound using the component-by-component construction for the generating vector of a lattice rule.
We pick periodic spaces because they are a perfect match with lattice rules.
The upper bound shows that under suitable conditions on the weights we obtain dimension-independent worst-case error bounds.
We then use embedding constants to transfer these bounds to some common norms for these periodic spaces.
\end{abstract}

\ifjournalstyle
  \begingroup
    \makeatletter
    \section*{\contentsname}
    \let\l@title\@gobbletwo
    \let\l@author\@gobbletwo
    \let\authcount\@gobble
    \@starttoc{toc}
    \makeatother
  \endgroup
\else
  \tableofcontents
\fi

\section{Introduction}\label{sec:introduction}

This text aims to provide an ``easy'' introduction to using lattice rules for approximating high-dimensional integrals and to provide an entry point into worst-case error bounds in the literature.
If one enjoys reading mathematics and code at the same time, then ``easy'' might refer to Sections~\ref{sec:introduction}--\ref{sec:quadrature} which start the explanation from a practical point of view and include Julia code for numerical experiments.
Here, and maybe more importantly throughout the later sections, ``easy'' is meant as well as: you can skip freely parts that you do not need.
To be able to reference the literature we do need to supply enough of the technical details.
One aim is to make it possible for the reader to ``interpret'' error bounds from the literature which are formulated for a particular function space and to then understand if this result could be applied to their problem, or their function space, at hand.
We define membership in a function space as having the associated norm finite, possibly with additional constraints.
The main part of these lecture notes is hence Section~\ref{sec:function-spaces} on ``function spaces'' where we define a whole lot of different spaces/norms.
Since we tirelessly ramble through a lot of different, somewhat equivalent (to be specified later), function spaces/norms, one might need a strong stomach to read this front to back.
So maybe ``easy'' should be interpreted as a hint to the reader to take it easy.
Nevertheless, we hope that the stated results in the text will be useful to cite should the need arise, since no comprehensive overview like this seems to exist, although a lot of individual results can be found scattered throughout the literature.
An advantage is that we provide all the proofs in a consistent way.
To give an idea of the crazy notation: we will define the multivariate Sobolev spaces $H^\alpha([0,1]^d)$, $H^{\alpha,\bsgamma}_\mix([0,1]^d)$, $H^{\alpha,\bsgamma,\rmu}_\mix([0,1]^d)$, $H^{\alpha,\bsgamma,\bsa}_\mix([0,1]^d)$ and their periodic relatives $\widetilde{H}^\alpha([0,1]^d)$, $\widetilde{H}^{\alpha,\bsgamma}_\mix([0,1]^d)$, $\widetilde{H}^{\alpha,\bsgamma,\rmu}_\mix([0,1]^d)$, $\widetilde{H}'^{\alpha,\bsgamma}_\mix([0,1]^d)$, $\widetilde{H}''^{\alpha,\bsgamma}_\mix([0,1]^d)$, $\widetilde{H}^{\alpha,\bsgamma,\rmu}_{\mix,1}([0,1]^d)$, $\widetilde{H}'^{\alpha,\bsgamma}_{\mix,1}([0,1]^d)$, $\widetilde{H}''^{\alpha,\bsgamma}_{\mix,1}([0,1]^d)$ and $\widetilde{H}^{\alpha,\bsgamma,\bsa}_\mix([0,1]^d)$.
The norms of the periodic spaces are all expressed in terms of Fourier coefficients and a specific \emph{Fourier multiplier} dependent on the actual space/norm, except for the anchored periodic space $\widetilde{H}^{\alpha,\bsgamma,\bsa}_\mix([0,1]^d)$.

We first provide some general background, along with remarks related to computer implementations.
The mathematical notation and conventions used throughout these lecture notes are explained at the end of the current section.
In Section~\ref{sec:lattice-points} we provide some Julia code to play with lattice points and lattice sequences, and demonstrate some useful insight on lattice point sets.
Section~\ref{sec:quadrature} reviews some concepts of univariate quadrature formulae in such a way that they relate to the usage of lattice rules, which are quadrature/cubature rules that use lattice points, to approximate multivariate integrals.
We provide Julia code that allows you to play with lattice rules to approximate multivariate integrals and to estimate errors.
After this we are ready for the main part.
Section~\ref{sec:function-spaces} first introduces essential information-based complexity concepts and then motivates different univariate function spaces.
For the multivariate spaces we next explain the difference between ``classical Sobolev space norms'' and ``dominating mixed smoothness Sobolev space norms'' and demonstrate that the latter avoid the dimension-dependent deterioration of the rate of convergence.
We show this by two respective lower bounds, which hold for any algorithm using function values (or derivative values), see Theorems~\ref{thm:Halpha-lower-bound} and~\ref{thm:Hmixalpha-lower-bound}.
We define what it means to suffer the \emph{curse of dimensionality} and we then show that these unweighted function spaces suffer the curse of dimensionality in Theorem~\ref{thm:curse}.
Next we want to show that one possible way to achieve tractability and avoid the curse of dimensionality is to use \emph{weighted function spaces}.
At this point we switch in Section~\ref{sec:periodic-spaces} to weighted periodic Sobolev spaces (which are vector subspaces of the non-periodic ones since we ask additional periodicity properties of the functions), but weighted non-periodic function spaces can also break the curse of dimensionality.
Our focus is on periodic spaces because Fourier expansions interact especially well with lattice rules.
In particular, in Section~\ref{sec:upper-bound-lattice-rules}, we use the \emph{lattice character property}~\eqref{eq:lattice-character-prop} to obtain an exact expression for the integration error for functions with absolutely converging Fourier series, see Lemma~\ref{lem:fourier-sum-error-expression}.
We then use the \emph{component-by-component construction} to prove a constructive upper bound on the worst-case error, see Theorem~\ref{thm:lattice-rule-upper-bounds}.
Under certain conditions on the weights this worst-case error bound is dimension-independent.
Finally, in Section~\ref{sec:rkhs} we discuss \emph{reproducing kernel Hilbert spaces} which are traditionally used in the literature but which we have somehow avoided up to this point.
We give some standard results and use them to show that, for the function spaces discussed here, deterministic algorithms that are nonlinear and/or adaptive cannot improve upon an optimal nonadaptive linear algorithm.

As mentioned above, the literature often uses rather specific function spaces with ``ugly'' looking norms.
These norms are often chosen as the most convenient ones to prove the results one is after, or because one is familiar with these norms, or because they are most often used in other specialised literature, or just for historical reasons.
Luckily it is not always necessary for the application to also use these ugly norms.
We can use embedding constants to transfer worst-case error results from one space to another, see~\eqref{eq:transplant-upper-bound}.
One particular popular function space in the literature is the \emph{Korobov space}.
Its definition is however not completely standardised.
It agrees most of the time with the unanchored periodic Sobolev spaces of dominating mixed smoothness $\widetilde{H}^{\alpha,\bsgamma,\rmu}_\mix([0,1]^d)$ and $\widetilde{H}^{\alpha,\bsgamma,\rmu}_{\mix,1}([0,1]^d)$.
E.g., in Theorem~\ref{thm:periodic-upper-bound-transfer} we transfer worst-case error bounds for $\widetilde{H}^{\alpha,\bsgamma,\rmu}_{\mix,1}([0,1]^d)$ to a whole zoo of periodic spaces, cf.\ Table~\ref{tbl:periodic-equivalent-norms}.
All our spaces are Hilbert spaces, but in the original publications of Korobov the spaces were based on $\ell_\infty$ based Fourier bounds~\cite{Kor59}, see also the discussion before Theorem~\ref{thm:wce-F-d-r}.
Of course, we cannot skip the definitions of all of these norms and so the whole of Section~\ref{sec:function-spaces} will contain plenty of ugliness.
We just hope the reader is happy to surf along.

Having defined these function spaces, one can then analyse the usage of lattice rules to approximate high-dimensional integrals.
We standardise the domain to be the unit cube $[0, 1]^d$ and are interested in approximating
\begin{align}\label{eq:Id}
  I_d(f)
  :=
  \int_{[0,1]^d} f(\bsx) \rd\bsx
  ,
\end{align}
for $f$ in a certain function space---these function spaces will characterise the ``smoothness'' or ``regularity'' of our functions since we expect the problem of integrating ``smooth'' functions to be easier than the problem of integrating not so smooth functions\ldots\

There are many different types of quasi-Monte Carlo (QMC) point sets, but we focus on what is arguably the easiest point set of them all: \emph{lattice points}.
A formula to approximate an integral is often called a \emph{quadrature rule} and, in the multivariate case, a \emph{cubature rule}.
Hence we use \emph{lattice rule} as the name for a quadrature/cubature rule that uses lattice points.
For a given number of points $n$, in $d$ dimensions, a lattice rule is fully specified by the \emph{generating vector} $\bsz \in \Z^d$, and the lattice rule takes the form
\begin{align}\label{eq:R1L-rule}
  Q_{d,n,\bsz}(f)
  &:=
  \frac1n \sum_{k=0}^{n-1} f((\bsz k \bmod n) / n)
  .
\end{align}
As is typical for quasi-Monte Carlo rules the weights in the quadrature/cubature rule are all equal to $\nicefrac1n$, and this then warrants the naming ``quasi-Monte Carlo rule'' as a variant of a plain Monte Carlo rule, with the difference being that we took deterministic quasi-Monte Carlo points instead of i.i.d.\ uniform $[0,1]^d$ sample points.
(However, we do not insist on the weights all being $\nicefrac1n$ and there are many examples in the literature in which one has modified weights.)

The standard, crudely formulated, selling point of quasi-Monte Carlo rules is that they converge like $O(n^{-1})$ for sufficiently smooth functions, i.e., functions of bounded Hardy--Krause variation, while the standard Monte Carlo method only converges like $O(n^{-1/2})$, but under the much more relaxed condition that $f$ needs to have finite variance.
We must warn the reader that such a statement must be interpreted with care: it hides dimension-dependent constants and compares a \emph{deterministic worst-case error} bound for quasi-Monte Carlo with a \emph{root-mean-square error} bound for Monte Carlo.
A precise comparison requires specifying the function space and the error criterion.
Indeed, in the randomised root-mean-square setting the minimal error on the unit ball of $L_2([0,1]^d)$ is also of order $n^{-1/2}$, and hence the standard Monte Carlo method has optimal rate for this class and no randomised quasi-Monte Carlo method can improve on this rate.
We will however not consider randomised methods in these lecture notes, except in Section~\ref{sec:quadrature} to estimate errors.

While lattice rules can get optimal convergence rates in certain function spaces, not every lattice rule will do so.
The quality is determined by the generating vector.
In Figure~\ref{fig:r1l-points} we give some ``good'' and ``bad'' examples of lattice rules.
It should be intuitively obvious that taking $\bsz = (1,\ldots,1)$, which results in all the lattice points being on the diagonal of the unit cube, is not likely to be a good quadrature/cubature rule, as for sure there must be functions $f$ which, say, are zero on the diagonal but have interesting behaviour off-diagonal.
But be warned!
This depends on what type of functions you want to integrate, as even points on the diagonal could have their very specific use case (although in that case the problem should probably have been formulated differently).
Obviously taking $\bsz = (0, \ldots, 0)$ is even worse as we get just a single point whatever $n$ is!

Lattice points are actually a multiset and each lattice point appears with multiplicity $\gcd(z_1, \ldots, z_d, n)$.
Note that $\gcd(0, n) = n$, and $\gcd(a_1,\ldots,a_n) = \allowbreak \gcd(\gcd(a_1,a_2),\ldots,a_n)$.
Hence we have $n$ unique points if and only if $\gcd(z_1, \ldots, z_d, n) = 1$.
Quite often one demands that $\gcd(z_j, n) = 1$ for all~$j$, which is a stronger condition than $\gcd(z_1, \ldots, z_d, n) = 1$: it does not just guarantee that there are $n$ unique $d$-dimensional points in total, but also that every one-dimensional projection has exactly $n$ points.
See also Figures~\ref{fig:r1l-points-b} and~\ref{fig:r1l-points-c} which demonstrate the effect of having $\gcd(z_j, n) > 1$ for specific~$j$.
Code and more comments on these lattices are given in Section~\ref{sec:lattice-points}.

What makes a good lattice rule, i.e., the combination $(d, n, \bsz)$, depends on the class of functions you want to integrate, which we will encode in a function space.
The aim, for those constructing ``good'' lattice rules, is then to produce a good combination $(d, n, \bsz)$ for the particular function space which can then be used for all functions in that space.
This is done by minimising the worst-case error in some sense.
Such ``good'' lattice rules are traditionally constructed in advance for a specific function space.

\begin{figure}
  \centering
  \tikzsetnextfilename{figure_lattice_points_1_13_21}
\begin{tikzpicture}
\begin{axis}[
    scatterpoints,
    xtick=data,
    ytick=data,
]
    \addplot[bluepoints] table[x index=0, y index=1] {\articledata/lattice_points_1_13_21.txt};
\end{axis}
\end{tikzpicture}\quad%
  \tikzsetnextfilename{figure_lattice_points_1_19_21}
\begin{tikzpicture}
\begin{axis}[
    scatterpoints,
    xtick=data,
    ytick=data,
]
    \addplot[bluepoints] table[x index=0, y index=1] {\articledata/lattice_points_1_19_21.txt};
\end{axis}
\end{tikzpicture}\quad%
  \tikzsetnextfilename{figure_lattice_points_1_1_21}
\begin{tikzpicture}
\begin{axis}[
    scatterpoints,
    xtick=data,
    ytick=data,
]
    \addplot[bluepoints] table[x index=0, y index=1] {\articledata/lattice_points_1_1_21.txt};
\end{axis}
\end{tikzpicture}
  \caption{Some ``good'' and ``bad'' example lattice points with $n=21$ points. The generating vector $\bsz$ determines the quality of the point set. Left: a ``good'' set with $\bsz = (1, 13)$. Middle: a not so good set with $\bsz = (1, 19)$. Right: a clearly not so good set with $\bsz = (1, 1)$. Intuitively we want the point set to be filling the unit cube as good as possible. If we leave large gaps then we have no idea what the function might do there\ldots}\label{fig:r1l-points}
\end{figure}

\paragraph{Comments on the modulus operation.}
We comment on some possible issues concerning the modulus operation, especially when one wants to implement lattice rules in a programming language.
In the form~\eqref{eq:R1L-rule} we note that instead of writing $\bsz \in \Z^d$ we could as well have written $\bsz \in \Z_n^d$ where we define $\Z_n := \{ 0, 1, \ldots, n-1 \}$ and formally identify $\Z_n$ with $\Z/n\Z$.
The modulus operation in~\eqref{eq:R1L-rule} is meant as a \emph{binary operator}, i.e., acting on two integer arguments and written in infix notation as ``$a \bmod b$'', and, for $a, b \in \Z$ with $b \ne 0$, is defined by $a \bmod b := a - b \lfloor a/b \rfloor$.
On different occasions we also need the equivalence relation modulo~$n$ and we write this in the traditional notation as \mbox{``$a \equiv b \pmod{n}$''}.
These two different notations have different, but related, meanings.
The first argument may be negative, e.g., when taking differences or additive inverses of lattice points.
For our purpose we will always have the second argument to be positive.
Unfortunately, different programming languages use different sign conventions for operations called ``modulus'' or ``remainder''.
With the mathematical definition we have $a \bmod n \in \Z_n$ for $a \in \Z$ and $n \in \N = \{1, 2, \ldots\}$, i.e., $a \bmod n$ is always a non-negative integer number in $[0, n)$ independent of the sign of~$a$, e.g., $-3 \bmod 5 = 5 - 3 = 2$.
In C and C++, where the remainder takes the sign of the first argument, one must write something more elaborate like \mbox{\texttt{(a \% n + n) \% n}} to express $a \bmod n$ for integer arguments.
We also use the modulus operation for real arguments and define $x \bmod y := x - y \lfloor x/y \rfloor$ for $x, y \in \R$, with $y \ne 0$.
See Figure~\ref{fig:mods} for an illustration of how to correct the C-style modulus operation.
Note however that in C/C++ the \texttt{\%} operator only works on integers and one has to use \texttt{fmod} for floating-point arguments.

\begin{figure}
\centering
\tikzsetnextfilename{figure_modulo}
\begin{tikzpicture}
\begin{groupplot}[
  group style={
    group size=3 by 1,
    horizontal sep=0.5cm
  },
  width=6cm,
  height=4.5cm,
  xmin=-3.5, xmax=3.5,
  ymin=-1.2, ymax=1.2,
  samples=400,
  domain=-3.5:3.5,
  axis lines=middle,
  xtick={-3,-2,-1,0,1,2,3},
  ytick={-1, -0.5, 0, 0.5, 1},
  tick label style={font=\scriptsize},
  xlabel={$x$}, ylabel style={rotate=-90},
  xlabel style={xshift=5pt},
  grid=both
]
\nextgroupplot[title={$\{ x \} = x \bmod 1 = x - \lfloor x \rfloor$}]
\pgfplotsinvokeforeach{-4,...,3}{
  \addplot[green!60!black, thick, domain=#1:{#1+1}, samples=2] {x - (#1)};
}
\addplot[only marks, mark=*, mark size=1.5pt, mark options={draw=green!60!black, fill=white}] coordinates {(-3,1) (-2,1) (-1,1) (0,1) (1,1) (2,1) (3,1)};
\addplot[only marks, mark=*, mark size=1.5pt, mark options={draw=green!60!black, fill=green!60!black}] coordinates {(-3,0) (-2,0) (-1,0) (0,0) (1,0) (2,0) (3,0)};
\nextgroupplot[title={$x \,\%\, 1$ (C-style)}]
\pgfplotsinvokeforeach{-4,...,-1}{
  \addplot[red, thick, domain=#1:{#1+1}, samples=2] {x - (#1+1)};
}
\pgfplotsinvokeforeach{0,...,3}{
  \addplot[red, thick, domain=#1:{#1+1}, samples=2] {x - (#1)};
}
\addplot[only marks, mark=*, mark size=1.5pt, mark options={draw=red, fill=white}] coordinates {(-3,-1) (-2,-1) (-1,-1) (1,1) (2,1) (3,1)};
\addplot[only marks, mark=*, mark size=1.5pt, mark options={draw=red, fill=red}] coordinates {(-3,0) (-2,0) (-1,0) (0,0) (1,0) (2,0) (3,0)};
\nextgroupplot[title={$((x \,\%\, 1) + 1) \,\%\, 1$ (C-style)}]
\pgfplotsinvokeforeach{-4,...,3}{
  \addplot[blue, thick, domain=#1:{#1+1}, samples=2] {x - (#1)};
}
\addplot[only marks, mark=*, mark size=1.5pt, mark options={draw=blue, fill=white}] coordinates {(-3,1) (-2,1) (-1,1) (0,1) (1,1) (2,1) (3,1)};
\addplot[only marks, mark=*, mark size=1.5pt, mark options={draw=blue, fill=blue}] coordinates {(-3,0) (-2,0) (-1,0) (0,0) (1,0) (2,0) (3,0)};
\end{groupplot}
\end{tikzpicture}
\caption{Differences in modulo operations: mathematical modulo (what we want) versus C-style modulo and how to correct it. In C/C++ the \texttt{\%} operator only accepts integers and one would use \texttt{fmod} instead for floating-point numbers, which will result in the ``C-style'' graphs.}\label{fig:mods}
\end{figure}

Quite often this ``reduction to the unit cube'' is written using the \emph{fractional braces} notation from number theory, i.e., for $a \in \Z$ and $n \in \N = \{1, 2, \ldots\}$, the common notation in the literature is
\begin{align*}
  \frac{a \bmod n}n
  &=
  \left\{ \frac{a}n \right\}
  =
  \frac{a}n \bmod 1
  .
\end{align*}
Using the proper mathematical definition of the modulus operator, we define the fractional braces to be $\{ x \} := x \bmod 1$ for any $x \in \R$.
In your programming language this then calls for a float version of the modulus operation, i.e., we are doing ``modulus 1.0''.
Since fractional braces are sometimes confusing as they clash with set notation, and since they hide the actual algebraic structure of the lattice points, we will avoid that notation in these notes.

To wrap up.
For our usage, in C/C++ you need to write \texttt{((a \% b) + b) \% b} for integer \texttt{a} and $\texttt{b} > 0$, and \texttt{fmod(fmod(x, y) + y, y)} for floating-point \texttt{x} and $\texttt{y} > 0$.
In Python the \texttt{\%} operator for modulus is fine for both integer and floating-point arguments.
In Julia the explicit \texttt{mod} function is fine for our purpose for both integer and floating-point arguments, but mind that \texttt{\%} also exists in Julia and uses the ``wrong'' C/C++ convention.
Matlab's and Octave's \texttt{mod} functions work for both integer and floating-point arguments and behave correctly for our purpose.
Due to the nature of floating-point arithmetic it is however possible that your implementation of $x \bmod y$ returns a result equal to $y$ instead of strictly less than~$y$.
Applying an extra modulus operation on top of this result would map the result back to~$0$ which is at least in the valid range.
The C/C++ trick does this automatically.
However, this does not stop other artefacts: adding $y$ before the second modulus operation can also introduce extra rounding errors and then it might be safer to first check if we need to add $y$ (i.e., if the first result was negative) and to then explicitly check if the result wrongly has $\ge y$ and if so set it to~$0$.
Finally, for just doing ``modulo $1.0$'' in C/C++ one can also directly do \texttt{x - floor(x)} or \texttt{x - std::floor(x)} respectively.
But also here there are floating-point caveats.
Also for integer modulo you might have to be careful since in almost all cases you will be working with finite width representations and so you have to be aware of the possibility that your result might wrap-around and even get a negative sign because of this.

Throughout the next two sections we will illustrate the text with Julia code.
There is no specific reason to pick Julia, except maybe for personal preference of the author at this occasion.
The particular features used are to keep the code snippets concise, and also due to personal preference.
W.r.t.\ the long discussion on the modulus operator: we are going for simplicity and convenience and will simply make use of the \texttt{mod} function in Julia.

\paragraph{Notation.}
Let us first settle some further notation.
We use $\N := \{1, 2, \ldots\}$ and $\N_0 := \{0, 1, 2, \ldots\}$.
We set $\Z_n := \{0, \ldots, n-1\}$ and define $\Z_n^\times := \{ a \in \Z_n : \gcd(a, n) = 1\}$ as the multiplicative group modulo~$n$.
The elements of $\Z_n^\times$ are those elements of $\Z_n$ that have a multiplicative inverse and are called the \emph{units} of the ring~$\Z_n$ equipped with addition and multiplication modulo~$n$.
We use $A \subset B$ to denote that $A$ is a strict subset of $B$ and $A \subseteq B$ to denote that $A$ is a subset of $B$ and may equal~$B$.
A sum over an empty index set is~$0$, and an empty product is~$1$.

For $\setu \subseteq \{1, \ldots, d\}$ and $\bsx \in \R^d$ we denote $\bsx_\setu = (x_j)_{j\in\setu}$, similarly for $\bsh \in \Z^d$ we denote $\bsh_\setu = (h_j)_{j \in \setu}$.
We define $\supp(\bsh) := \{ j : h_j \ne 0 \}$.
We write $\bsh_\setu \in \Z^{|\setu|}$ to mean $h_j \in \Z$ for all $j \in \setu$.
With $-\setu$ we mean the complement w.r.t.\ $\{1, \ldots, d\}$, i.e., $-\setu := \{1,\ldots,d\} \setminus \setu$.
It should always be clear from the context what $d$ is in this case.
To join up two vectors, we write $(\bsx_\setu; \bsa_{-\setu})$ to denote the $d$-dimensional vector which takes the corresponding components of $\bsx$ for $j \in \setu$ and the corresponding components of ``the anchor'' $\bsa$ for $j \in -\setu$.
As a shorthand when we anchor with all the same value $a$ we write $(\bsx_\setu; a)$, e.g., when $a = 0$ we write $(\bsx_\setu; 0)$.
We always interpret the result as a $d$-dimensional vector.
We will meet functions $r_{\alpha,\bsgamma}$ that are defined in terms of the support of $\bsh$, e.g., the following form in particular
\begin{align*}
  r_{\alpha,\bsgamma}(\bsh)
  &=
  \gamma_{\supp(\bsh)}^{-1/2} \, \prod_{j \in \supp(\bsh)} r_\alpha(h_j)
  .
\end{align*}
Special about this particular form is that it does not care about the input dimensionality (it even works when the input dimensionality is~$0$) and it ``ignores'' extra trailing $0$-values.
For such a function $r_{\alpha,\bsgamma}$ the following shorthand is often used in the literature
\begin{align}\label{eq:fake-0-dimensional-sum}
  \sum_{\bsh_\setu \in (\Z \setminus \{0\})^{|\setu|}}
  r_{\alpha,\bsgamma}(\bsh_\setu)
  &:=
  \sum_{\substack{\bsh \in \Z^d \\ \bsh_\setu \in (\Z \setminus \{0\})^{|\setu|} \\ \bsh_{-\setu} = \bszero}}
  r_{\alpha,\bsgamma}(\bsh)
  =
  \sum_{\substack{\bsh \in \Z^d \\ \bsh_\setu \in (\Z \setminus \{0\})^{|\setu|} \\ \bsh_{-\setu} = \bszero}}
  r_{\alpha,\bsgamma}((\bsh_\setu; 0))
  ,
\end{align}
where the second and third lines under the sum are restrictions to $\bsh \in \Z^d$ each time.
We will also use this notation.
Note that for $\setu = \emptyset$ such a sum has a single term, namely $r_{\alpha,\bsgamma}(\bszero) = \gamma_\emptyset^{-1/2}$.

We write $\|\cdot\|_{\ell_p}$ for the $\ell_p$ norm, which could be on any countable index set, finite or countably infinite.
The particular index set should be clear from the context.
In a few locations we write $\|\bstau\|_1$ or $\|\bstau\|_\infty$ for $\bstau \in \N_0^d$, they just mean $\|\bstau\|_1 = \|\bstau\|_{\ell_1(\{1,\ldots,d\})} = |\tau_1| + \cdots + |\tau_d| = \tau_1 + \cdots + \tau_d$ and $\|\bstau\|_\infty = \|\bstau\|_{\ell_\infty(\{1,\ldots,d\})} = \max_{j \in \{1,\ldots,d\}} \tau_j$.

We will use both big-$O$ and little-$o$ asymptotic notations.
Given a function $f$, the notation $O(f)$ denotes the class of functions that can asymptotically be bounded by a constant times~$f$ and we write memberships as $g \in O(f)$ and $g \in o(f)$.
For functions $f > 0$ and $g \ge 0$ of $n$ we have, for $n \to \infty$,
\begin{align*}
  g \in O(f)
  &\;\Leftrightarrow\;
  \limsup_{n \to \infty} \frac{g(n)}{f(n)} < \infty
  \;\Leftrightarrow\;
  \exists C > 0 : \exists n_0 : \forall n > n_0 : g(n) \le C \, f(n)
  .
\end{align*}
Given a function $f$, the notation $o(f)$ denotes the class of functions that are asymptotically negligible compared to~$f$.
For functions $f > 0$ and $g \ge 0$ of $n$ we have, for $n \to \infty$,
\begin{align*}
  g \in o(f)
  &\;\Leftrightarrow\;
  \lim_{n \to \infty} \frac{g(n)}{f(n)} = 0
  \;\Leftrightarrow\;
  \forall C > 0 : \exists n_0 : \forall n > n_0 : g(n) < C \, f(n)
  .
\end{align*}
Obviously if $g \in o(f)$ then also $g \in O(f)$, in fact we have $o(f) \subset O(f)$.

When defining norms we often write things like
\begin{align*}
  \|f\|_{H^\alpha([0,1])}^2
  &:=
  \sum_{\tau = 0}^\alpha \|D^\tau f\|_{L_2([0, 1])}^2
  ,
\end{align*}
where we have defined the \emph{squared} norm.
Obviously this is just to make the equations slightly more compact and the actual definition should be interpreted as
\begin{align*}
  \|f\|_{H^\alpha([0,1])}
  &:=
  \left( \sum_{\tau = 0}^\alpha \|D^\tau f\|_{L_2([0, 1])}^2 \right)^{1/2}
  .
\end{align*}

All the notations that we use are quite common in the literature, but there are unfortunately no standard symbols for the different function spaces.
It is always necessary to check the particular definitions of the used function spaces in a reference as changes can sometimes be subtle.
Quite often the smoothness parameter $\alpha$ for a Korobov space might be doubled or not.
Also the weights in the weighted norms might be squared or not.

In the literature one might often see $s$ or $d$ as the number of dimensions.
Sometimes there is both $d$ and $s$, e.g., with applications to PDEs with random fields, one often uses $d$ as the dimension of the physical domain on which the PDE is solved, and $s$ as the number of (truncated) stochastic dimensions from the random field, see, e.g., \cite{KN2016,NN2021uniform,NN2021lognormal}.
In this case, if one uses lattice points, or other quasi-Monte Carlo points, for the random field, the dimensionality of this point set is then~$s$.
Also older literature often uses $s$ as the number of dimensions.
In these lecture notes we use $d$ throughout.
As a final remark, we like to think of vectors as column vectors, but we will just write $\bsz = (1, 5)$ for a column vector of length two.

\paragraph{References.}
We give plenty of references throughout the lecture notes, but here are already some general references.
Classical references on quasi-Monte Carlo methods and lattice rules are \cite{Nie92,SJ94}.
Recent introductions and surveys are \cite{DP2010,DKS2013,Nuy2014,DKP2022,KN2016}.
Standard references on information-based complexity are \cite{TWW88,NW2008,NW2010,NW2012}.
For background on the function spaces used in these lecture notes, see, e.g., \cite{NW2008,NW2010,GHHR2017,GHHRW2017,KSU2015,NUU2017}.

\section{A first look at lattice points}\label{sec:lattice-points}

Before we dive into the theory in Section~\ref{sec:function-spaces} we will first try to gain some insights into lattice points by giving some basic facts and illustrating them with Julia code and their corresponding figures.
Necessarily this contains some minimalistic tutorial on Julia.
The Julia code should be read in order: sometimes a snippet uses a certain package or defines a function, e.g., we write \lstinline{using Plots}, and this will not be repeated in subsequent snippets.

\subsection{Lattice points}

Lattice points on the $d$-dimensional unit cube $[0, 1]^d$ are defined by two parameters: the number of points $n \in \N$ and the \emph{generating vector} $\bsz \in \Z^d$.
For the triplet $(d, n, \bsz)$ the points are given by
\begin{align}\label{eq:R1L-points}
  \bsx_k
  :=
  \frac{\bsz k \bmod{n}}{n}
  =
  \frac{\bsz k}{n} \bmod{1}
  ,
  \qquad
  \text{for } k = 0, 1, \ldots, n-1
  .
\end{align}
They can be used directly like this as the nodes of a quadrature/cubature rule with equal weights~$1/n$, as in~\eqref{eq:R1L-rule}.

We briefly need to discuss some aspects of our programming language of choice, Julia.
They are mostly related to syntax and the discussed concepts are also possible in other languages.
In Julia we can make a ``generator'' which ``generates'' elements one by one.
The syntax is similar to that of array comprehension but with the square brackets replaced by round brackets.
E.g.\ array comprehension \lstinline!a = [ cos(k/10) for k in 1:10000 ]! creates an array that we can index like \lstinline!a[1]! or \lstinline!a[10000]!, indexing is $1$-based, and stays resident in memory, but the generator construction \lstinline!b = ( cos(k/10) for k in 1:10000 )! creates a generator which lets us iterate and compute its values one-by-one without storing them all in memory first.
We do not have random access anymore, but we can use constructs like \lstinline!for x in b; ...; end!.
A lot of standard Julia functions also accept generators instead of arrays, e.g., \lstinline!sum(b)! will work.
We will use this concept to create a ``point generator'' for our lattice points.
This has the advantage that we can use it in a similar way as a random number generator, each time asking for the next $d$-dimensional vector.

As in many modern scientific programming languages, Julia supports broadcasting a function to all elements of an array.
In Julia this is explicitly written by using the dot syntax, e.g., \lstinline!mod.([1.2, 4.5, -3], 5)! applies \lstinline!mod! to all three elements of the array and automatically expands scalars and singleton dimensions (dimensions of size~$1$) of other arguments such that all arguments are compatible in size.
In Julia, there is no need for an explicitly written vectorised version of a function to get good performance on arrays.
By using the dot notation, Julia automatically applies the function elementwise via broadcasting, which is compiled and fused for efficiency.
Having said that we can now show how to have a simple one-liner generator of lattice points.
\begin{lstlisting}
using Test # Demonstrate simple calls and unit tests

# Generator for n rank-1 lattice points with
# d-dimensional generating vector z
lattice_points(z, n) = ( mod.(z .* k, n) ./ n for k in 0:n-1 )
# The `.*` is to make it such `z` can be a tuple instead of an array.
# You can use the `StaticArrays` package and do e.g. `z = SA[1, 3]`.
\end{lstlisting}
\Needspace{6\baselineskip}
\begin{lstlisting}
# Example usage: print the points one by one
for x in lattice_points([1, 3], 4)
  println(x)
end
# or `println.(lattice_points([1, 3], 4));` if you like

# We can `collect` all the lattice points in an n-vector of d-vectors
@test collect(lattice_points([1, 3], 4)) ==
  [[0.0, 0.0], [0.25, 0.75], [0.5, 0.5], [0.75, 0.25]]
@test collect(lattice_points((1, 3), 4)) ==
  # z as tuple gives n-vector of d-tuples
  [(0.0, 0.0), (0.25, 0.75), (0.5, 0.5), (0.75, 0.25)]
@test collect.(lattice_points((1, 3), 4)) ==
  # `collect.` gives n-vector of d-vectors
  [[0.0, 0.0], [0.25, 0.75], [0.5, 0.5], [0.75, 0.25]]
# Or we can collect them in a d-by-n array (add
# `collect.` again for tuple)
@test reduce(hcat, lattice_points([1, 3], 4)) ==
  [                      # this is a 2-by-4 matrix
    0.0 0.25 0.5 0.75;
    0.0 0.75 0.5 0.25
  ]
\end{lstlisting}
We make use of the \lstinline!Test! package to demonstrate what the result of a simple example call should be.
This is also a limited check that we did not make any stupid mistakes.

It is useful to be able to make some simple 2D plots.
We will use the \lstinline!Plots! package for this.
To make nice plots we require some fiddling around and hence we define a \lstinline!scatter_points! function to do the work for us.
\begin{lstlisting}
using Plots # To make some basic plots

function unit_square_plot_setup()
  delta = 0.01; lim = (0-delta, 1+delta)
  plot([0, 1, 1, 0, 0], [0, 0, 1, 1, 0], # draw unit square
    lc=:gray, xlim=lim, ylim=lim, ratio=1,
    legend=false, framestyle=:none, plot_margin=0)
end

# Given a point generator `g` (like `lattice_points`), make a
# 2D scatter plot of points
function scatter_points(g; d1=1, d2=2, plt=unit_square_plot_setup())
  P = reduce(hcat, collect.(g))
    # "collect." allows the points, i.e., z, to be a tuple
  scatter!(plt, P[d1,:], P[d2,:])
  plt
end

\end{lstlisting}
\Needspace{6\baselineskip}
\begin{lstlisting}
# Now plot some nice lattice point set.
# The lattice we use is a ``Fibonacci lattice''.
# It has n = F_m and z = (1, F_{m-1}). They are known to be good.
# Such an explicit good form is only known for 2D (and 1D).
scatter_points(lattice_points([1, 55], 89))
\end{lstlisting}
The lattice with $\bsz = (1, 55)$ and $n=89$ can be found in the left subplot of Figure~\ref{fig:r1l-points-b}.

Given $d$ and $n$, the quality of the lattice point set depends on the choice of the generating vector $\bsz$.
Under the intuitive assumption that the lattice points need to be uniformly filling the unit cube it is obvious that e.g.\ $\bsz = (1,\ldots,1)$ is an extremely bad choice, and many more bad choices can be found.
However, one can easily show that there are also enough good choices and most of the proofs show existence of good generating vectors by making use of this fact.
\begin{lstlisting}
# Bad choices with n = 32
scatter_points(lattice_points([5, 5], 32))
  # same as z = (1, 1) since gcd(5, 32) = 1
scatter_points(lattice_points([2, 6], 32))
  # looks good but only 16 unique points since gcd(2, 6, 32) = 2
scatter_points(lattice_points([1, -1], 32))
  # same as z = (1, 31) since mod(-1, 32) = 31
\end{lstlisting}
This produces the ``intuitively bad'' lattice points for $n = 32$ from left to right displayed in Figure~\ref{fig:r1l-points-c} (the middle plot is only bad because we expected to see $32$ points and not $16$).
Multiplying every component of the generating vector by a unit $a \in \Z_n^\times$ does not change the point set, since multiplication by $a$ merely permutes the indices $k \in \Z_n$; this explains, for example, why $\bsz=(5, 5)$ and $\bsz=(1, 1)$ generate the same point set when $n = 32$, just in a different order.

\begin{figure}[p]
  \centering
  \scalebox{0.85}{\hbox{%
    \tikzsetnextfilename{figure_lattice_points_1_55_89}
\begin{tikzpicture}
\begin{axis}[
    scatterpoints,
    xtick=data,
    ytick=data,
]
    \addplot[bluepoints] table[x index=0, y index=1] {\articledata/lattice_points_1_55_89.txt};
\end{axis}
\end{tikzpicture}\quad%
    \tikzsetnextfilename{figure_lattice_points_10_9_90}
\begin{tikzpicture}
\begin{axis}[
    scatterpoints,
    xtick=data,
    ytick=data,
]
    \addplot[bluepoints] table[x index=0, y index=1] {\articledata/lattice_points_10_9_90.txt};
\end{axis}
\end{tikzpicture}\quad%
    \tikzsetnextfilename{figure_lattice_points_10_11_90}
\begin{tikzpicture}
\begin{axis}[
    scatterpoints,
    xtick=data,
    ytick=data,
]
    \addplot[bluepoints] table[x index=0, y index=1] {\articledata/lattice_points_10_11_90.txt};
\end{axis}
\end{tikzpicture}
  }}
  \caption{Rank-$1$ lattices with $89$ and $90$ points. We mark the unique $x_1$ and $x_2$ coordinates on the axes. Left: $\bsz = (1, 55)$ with $n=89$: this is a ``Fibonacci lattice'' which has a nice uniform distribution and $89$ unique coordinates for both $x_1$ and $x_2$. Fibonacci rules are the archetypal examples of ``good'' lattice rules for $d=2$. Middle: $\bsz = (10, 9)$ with $n=90$: this is a $9$ by $10$ grid which can be represented by a rank-$1$ lattice since $\gcd(9, 10) = 1$, there are only $9$ unique $x_1$ coordinates and $10$ unique $x_2$ coordinates. Right: $\bsz=(10, 11)$ with $n=90$: this one has only $9$ distinct coordinates for $x_1$ since $\gcd(10, 90) = 10$ and $90/10 = 9$. The last two are most likely not such ``good'' lattice rules since if functions like $f(x_1, x_2) = g(x_1)$ are to be allowed, and we do not know it is actually a one-dimensional function instead of a two-dimensional function, then these rules will only use $9$ effective quadrature points while the first one will use the full $89$ points.}\label{fig:r1l-points-b}
  \par\medskip
  \scalebox{0.85}{\hbox{%
    \tikzsetnextfilename{figure_lattice_points_1_1_32}
\begin{tikzpicture}
\begin{axis}[
    scatterpoints,
    xtick=data,
    ytick=data,
]
    \addplot[bluepoints] table[x index=0, y index=1] {\articledata/lattice_points_1_1_32.txt};
\end{axis}
\end{tikzpicture}\quad%
    \tikzsetnextfilename{figure_lattice_points_2_6_32}
\begin{tikzpicture}
\begin{axis}[
    scatterpoints,
    xtick=data,
    ytick=data,
]
    \addplot[bluepoints] table[x index=0, y index=1] {\articledata/lattice_points_2_6_32.txt};
\end{axis}
\end{tikzpicture}\quad%
    \tikzsetnextfilename{figure_lattice_points_1_31_32}
\begin{tikzpicture}
\begin{axis}[
    scatterpoints,
    xtick=data,
    ytick=data,
]
    \addplot[bluepoints] table[x index=0, y index=1] {\articledata/lattice_points_1_31_32.txt};
\end{axis}
\end{tikzpicture}
  }}
  \caption{Intuitively bad examples of lattice points for $n=32$. Left: $\bsz = (5, 5)$, which is equivalent to $\bsz = (1, 1)$ since $\gcd(5, 32) = 1$, has all points on the diagonal and hence does not fill the cube nicely. Middle: $\bsz = (2, 6)$ looks good but we only have $16$ distinct points instead of $32$ since $\gcd(2, 6, 32) = 2$, hence this is equivalent to taking $\bsz = (1, 3)$ with $n = 16$. Right: $\bsz = (1, -1)$, or equivalently $\bsz = (1, 31)$ since $-1 \bmod 32 = 31$, has all points except $(0, 0)$ on the anti diagonal and hence also does not fill the cube nicely.}\label{fig:r1l-points-c}
  \par\medskip
  \scalebox{0.85}{\hbox{%
    \tikzsetnextfilename{figure_lattice_sequence_64}
\begin{tikzpicture}
\begin{axis}[
    scatterpoints,
    xtick=data,
    ytick=data,
]
    \addplot[bluepoints] table[x index=0, y index=1] {\articledata/lattice_sequence_64.txt};

\end{axis}
\end{tikzpicture}\quad%
    \tikzsetnextfilename{figure_lattice_sequence_96}
\begin{tikzpicture}
\begin{axis}[
    scatterpoints,
    xtick=data,
    ytick=data,
]
    \addplot[draw=none, mark=none, forget plot] table[x index=0, y index=1] {\articledata/lattice_sequence_96.txt};
    \addplot[bluepoints, restrict expr to domain={\coordindex}{0:63}] table[x index=0, y index=1] {\articledata/lattice_sequence_96.txt};
    \addplot[orangepoints, restrict expr to domain={\coordindex}{64:95}] table[x index=0, y index=1] {\articledata/lattice_sequence_96.txt};

\end{axis}
\end{tikzpicture}\quad%
    \tikzsetnextfilename{figure_lattice_sequence_128}
\begin{tikzpicture}
\begin{axis}[
    scatterpoints,
    xtick=data,
    ytick=data,
]
    \addplot[draw=none, mark=none, forget plot] table[x index=0, y index=1] {\articledata/lattice_sequence_128.txt};
    \addplot[bluepoints, restrict expr to domain={\coordindex}{0:63}] table[x index=0, y index=1] {\articledata/lattice_sequence_128.txt};
    \addplot[orangepoints, restrict expr to domain={\coordindex}{64:127}] table[x index=0, y index=1] {\articledata/lattice_sequence_128.txt};

\end{axis}
\end{tikzpicture}
  }}
  \caption{A lattice sequence in base-$2$ with generating vector $\bsz=(1, 364981)$.
  Left: first $64 = 2^6$ points.
  Middle: first $96$ points.
  Right: first $128 = 2^7$ points.
  In between two powers of~$2$ we can gradually fill up space by making use of the radical inverse ordering.}\label{fig:lattice-sequence-points}
\end{figure}

Technically~\eqref{eq:R1L-points} is a ``rank-$1$ lattice'' point set.
The rank refers to the minimum number of rational generators that are needed to span the full point set.
In case of a rank-$1$ lattice, there is only one rational generator, i.e., $\bsz / n$.
In recent literature we typically only consider rank-$1$ lattices as they are powerful enough to prove useful error bounds.
More information on rank-$r$ lattices, with $1 \le r \le d$, can be found in, e.g., \cite{SJ94,DKP2022,SSN2019}.
One useful fact to remember about general rank-$r$ lattices is that a regular grid in $d$ dimensions can always be represented by $d$ rational generators, and so theory for general rank-$r$ lattices also holds for regular grids, see, e.g., \cite{SSN2019}.
It is even possible to represent some regular grids as rank-$1$ lattices: provided $n_1$ and $n_2$ are relatively prime then the product grid of $n_1 \times n_2$ points can be represented by a rank-$1$ lattice with generating vector $\bsz = (n_2, n_1)$ and $n = n_1 \, n_2$ points.
This can also be done in higher dimensions.
The following produces the regular grid in the middle plot of Figure~\ref{fig:r1l-points-b}.
\Needspace{9\baselineskip}
\begin{lstlisting}
# A regular n_1 by n_2 grid as a rank-1 lattice
# when n_1 and n_2 are relatively prime
# but not relatively prime to n.
# We take z = (n_2, n_1) with n = n_1 * n_2 points.
n_1 = 9; n_2 = 10;
n = n_1 * n_2;
scatter_points(lattice_points([n_2, n_1], n))
\end{lstlisting}

It is particularly useful to stress the group structure of a lattice point set.
Given $(d, n, \bsz)$, if we take two points $\bsx_k$ and $\bsx_{k'}$ as in~\eqref{eq:R1L-points}, then they form a group with addition modulo~$1$ as the operation.
That is, $\bsx_k + \bsx_{k'}$ is another point of the same lattice point set, and so is $\bsx_k - \bsx_{k'}$; where in both cases the addition is interpreted as the group operation, i.e., modulo~$1$.
It can easily be verified that, in our case, we obtain the points $\bsx_{k+k' \bmod{n}}$ and $\bsx_{k-k' \bmod{n}}$ respectively, so under the group operation we have $-\bsx_{k'} = \bsx_{-k' \bmod{n}}$.
For a rank-$1$ lattice point set this structure clearly comes from the underlying operations modulo~$n$ on the ring $\Z_n$ with addition and multiplication modulo~$n$.
This is the reason why we prefer the definitions with modulo~$n$ in~\eqref{eq:R1L-rule} and~\eqref{eq:R1L-points}.
This ring structure is also the basis for the so-called fast component-by-component construction of lattice rules, see, e.g., \cite{NC2006-prime,NC2006,CKN2006,DKS2013,Nuy2014,KN2016,DKP2022}.

From~\eqref{eq:R1L-points} it is clear that it is sufficient to consider the generating vector to be modulo~$n$.
Most of the literature will even consider the components to only come from the multiplicative group modulo~$n$, also called the units of $\Z_n$,
\begin{align*}
  \Z_n^\times
  :=
  \big\{ z \in \Z_n : \gcd(z, n) = 1 \big\}
  .
\end{align*}
A good intuitive interpretation of multiplication with a unit element, i.e., $z \in \Z_n^\times$, is that it actually defines a permutation on $\Z_n$.
I.e., we have $z \, \Z_n = \{ z \, a \bmod{n} : a \in \Z_n \} = \Z_n$ for any $z \in \Z_n^\times$.
A special property of these permutations is that they always map $0$ to~$0$.
In contrast, if $z \in \Z_n \setminus \Z_n^\times$, then, for $d=1$, we have
\begin{align*}
  x_k
  &=
  (k z \bmod{n}) / n
  =
  (k g z' \bmod{g n'})/(g n')
  =
  (k z' \bmod{n'})/n'
  ,
\end{align*}
with $g = \gcd(z, n)$, $z = g \, z'$ and $n = g \, n'$.
This means we only have $n' = n/g$ unique points which are repeated $g$ times.
Clearly, at least for $d = 1$, this should be avoided.
The middle and right plots in Figure~\ref{fig:r1l-points-b} illustrate this loss in uniquely projected points.
In the middle plot this results in a regular grid.
The following produces the right plot in Figure~\ref{fig:r1l-points-b}, where only the first coordinate has fewer unique projected points.
\begin{lstlisting}
# Projected points falling on top of each other for x_1
# since gcd(z_1, n) > 1.
# Note 90 = 2 * 5 * 3^2.
scatter_points(lattice_points([10, 11], 90))
\end{lstlisting}

Often the literature on lattice rules takes $n = p$ to be prime, and in that case we have $\Z_p^\times = \{1, \ldots, p-1\} = \Z_p \setminus \{0\}$.
In number theory it is often convenient to work with prime $p$ since then $\Z_p$ is a field.
This means that every nonzero residue is invertible and some of the counting arguments used in error bounds and construction proofs become simpler.
On the other hand, proofs for general composite $n$ can become quite complicated; but more structure, like $n = 2^m$ can simplify things again.
Proofs for good generating vectors also often restrict the components of $z \in \Z_n^\times$.
At least for prime $n = p$ this is often not necessary and taking $z \in \Z_p$ often results in very similar bounds.
The cardinality of $\Z_n^\times$ is given by the Euler totient function $\varphi(n) = |\Z_n^\times|$.
Bounds from the literature which take components in $\Z_n^\times$ often state error upper bounds in terms of a power of $\varphi(n)^{-1}$ instead of $n^{-1}$.
For prime $n = p$ we have $\varphi(p) = p-1$ and then $\varphi(p)^{-1} = 1/(p-1) \le 2/p$.
For $n = p^m$ a power of a prime, we have $\varphi(p^m) = p^{m-1} (p-1)$ and so here also $\varphi(p^m)^{-1} \le 2/p^m$.
So in these cases it is easy to rewrite the bound in terms of~$n^{-1}$.
Those cases are mostly the more practically interesting.

Let us end this section by the obvious: for $d = 1$ and $z_1 \in \Z_n^\times$, a rank-$1$ lattice point set is precisely the equidistant grid $\{ k/n : k \in \Z_n \}$, possibly generated in a different order when iterating $k = 0, 1, \ldots, n-1$.
Using this as points in an equal-weight quadrature rule then results in the left-rectangle rule.

\subsection{Lattice point sequences}

Often people think about lattice rules and lattice point sets as a fixed collection of $n$ points that we use in a quadrature/cubature rule to approximate an integral.
It is however possible to look at consecutive powers of a base, say $2$ for convenience, and then to have a single good generating vector $\bsz$ that works for all $n$ of the form $2^m$ for some range of~$m$.
Depending on the function space we can also use fast component-by-component algorithms.
E.g., for a Korobov space with product weights the construction cost is $O(d \, N (\log N)^2)$ for a generating vector that works for all $n=2^m$ with $m_1 \le m \le m_2$ and $N = 2^{m_2}$, see, e.g., \cite{CKN2006}.
Moreover, these point sets are embedded into each other.
We hence speak about \emph{embedded lattice point sets} or \emph{lattice sequences} (in base~$2$, or whatever base was used).

The advantage of having a sequence (and an equal weight rule) is that we can just keep on drawing extra samples if we think the approximation is not accurate enough.
Specifically for base~$2$ when you go from the initial $2^m$ points to the set with $2^{m+1}$ points, and considering
\begin{align*}
  \bsx_k
  &=
  \frac{\bsz k \bmod{2^{m+1}}}{2^{m+1}}
\end{align*}
it is easy to see that the even $k$'s reduce to the previous points, since, with $k = 2\ell$,
\begin{align*}
  \bsx_{2\ell}
  &=
  \frac{\bsz 2\ell \bmod{2^{m+1}}}{2^{m+1}}
  =
  \frac{\bsz \ell \bmod{2^m}}{2^m}
  ,
\end{align*}
and hence the odd $k$'s will generate the additional points.

We will make use of this in the following code snippet to generate the sequence in blocks, each block just containing the extra points to make up the full next lattice point set.
The generating vector we use is the one from~\cite{HKKN2011}.
Note that we are using a trick.
We are not writing \lstinline{1:2:2^m - 1} but \lstinline{1:2:2^m} as the range for~\lstinline{k}, this is such that the range also works for $m = 0$, which is the first block for which we need $k = 0$ and we have $1 \bmod 1 = 0$ so this correctly generates the zero vector as the first block.

\Needspace{5\baselineskip}
\begin{lstlisting}
using Statistics

lattice_points_base2_block(z, m) =
  ( mod.(z .* k, 2^m) ./ 2^m for k in 1:2:2^m )
# Note the special case for m = 0 which will give the
# range 1:2:1 = ( 1 ) which
# then correctly gives the point zero as 1 mod 1 = 0.
# All other ranges are fine too since we
# iterate over odd numbers and 2^m
# is always even for integer m >= 1.
\end{lstlisting}
\Needspace{3\baselineskip}
\begin{lstlisting}
# First 3 dimensions of the HKKN2011 vector, with n_max = 2^20
z = (1, 364981, 245389)
@test collect(lattice_points_base2_block(z, 0)) == [(0, 0, 0)]
@test collect(lattice_points_base2_block(z, 1)) == [(0.5, 0.5, 0.5)]
@test collect.(lattice_points_base2_block.(Ref(z), 0:3)) ==
  # vectorise m using Ref(z)
  [
    [(0.0, 0.0, 0.0)],
    [(0.5, 0.5, 0.5)],
    [(0.25, 0.25, 0.25), (0.75, 0.75, 0.75)],
    [(0.125, 0.625, 0.625), (0.375, 0.875, 0.875),
      (0.625, 0.125, 0.125), (0.875, 0.375, 0.375)]
  ]
# Each block is a different-length vector of 3-dimensional tuples.
# `reduce(vcat, ...)` joins them into one vector of tuples.
# `reduce(hcat, collect.(...))` then gives a 3-by-8 matrix.
# We only need the concatenation for the following unit tests:
@test mean(collect.(reduce(vcat,
  collect.(lattice_points_base2_block.(Ref(z), 0:7))))) ==
  [1, 1, 1] * (2^7 - 1)/(2 * 2^7)
@test mean(collect.(reduce(vcat,
  collect.(lattice_points_base2_block.(Ref(z), 1:7))))) ==
  [1, 1, 1] * 1/2

# To generate all the points in the blocks from
# 0:3 you can much easier do:
P_firstpoints_a = collect(lattice_points(z, 2^3))
# which gives the same *collection* of points,
# but in different order, as
P_firstpoints_b = reduce(vcat,
  collect.(lattice_points_base2_block.(Ref(z), 0:3)))
# so they are equal if we sort them lexicographical:
@test sort(P_firstpoints_a) == sort(P_firstpoints_b)
\end{lstlisting}

It is always a good idea to devise a simple unit test in the code to avoid making stupid mistakes.
The two tests using the mean make use of the fact that we have a generating vector that is relatively prime to $n = 2^m$ for each component and any $m$, i.e., the generating vector consists of odd numbers.
Hence for $n = 2^m$ every individual component is a permutation of the equidistant points $\{ k / n : k \in \Z_n \}$.
If we then take the vector mean over all generated points except the first one we have
\begin{align*}
  \frac1{n-1} \sum_{k=1}^{n-1} \frac{k \bsz \bmod{n}}{n}
  &=
  \begin{bmatrix} \displaystyle \frac1{n-1} \sum_{k=1}^{n-1} \frac{k z_j \bmod{n}}{n} \end{bmatrix}_{j=1,\ldots,d}
  \\
  &=
  \begin{bmatrix} \displaystyle \frac1{n-1} \sum_{k=1}^{n-1} \frac{k}{n} \end{bmatrix}_{j=1,\ldots,d}
  =
  \begin{bmatrix} \displaystyle \frac12 \end{bmatrix}_{j=1,\ldots,d}
  .
\end{align*}
This is what the second mean unit test checks for our block generator.
The first mean unit test just also includes the first point which is the zero vector, and hence rescales the result.

In most practical cases it is sufficient to just generate the whole lot of additional points.
Sometimes we actually do want to generate the sequence point by point.
This is possible by using the radical inverse function in the same base as the sequence, see, e.g., \cite{HHLL2001,HN2003,CKN2006,HKKN2011}.
The radical inverse function in base~$2$, denoted by $\phi_2 : \N_0 \to \Q \cap [0, 1)$, just mirrors the bits around the base-$2$ radix point.
E.g., for $k = 3 = (11)_2$ we get $\phi_2(3) = (0.11)_2 = \nicefrac{1}{2} + \nicefrac{1}{4} = \nicefrac{3}{4}$ and for $k = 6 = (110)_2$ we get $\phi_2(6) = (0.011)_2 = \nicefrac{1}{4} + \nicefrac{1}{8} = \nicefrac{3}{8}$.
Using this we can generate the lattice sequence in the correct ordering in each block as follows
\begin{align*}
  \bsx_k
  &=
  \bsz \, \phi_2(k) \bmod{1}
  ,
  &
  \text{for } k = 0, 1, \ldots
  .
\end{align*}
If we split this sequence in blocks of the sizes generated by the block generator and ignore the ordering inside the blocks then we generate exactly the same points per block.
So each time we reach a full power of $2$ we have a full lattice point set and they are embedded into each other just as above.

The following code implements the radical inverse ordering in base~$2$ and produces the three panels in Figure~\ref{fig:lattice-sequence-points}.
\begin{lstlisting}
# Reverse all bits of an unsigned integer and scale by its bit width.
radical_inverse_base2(k) =
  bitreverse(unsigned(k)) / 4 / 2^(8 * sizeof(k) - 2)

lattice_sequence(z) =
  (mod.(z .* radical_inverse_base2(k), 1)
    for k in Iterators.countfrom(0))

z = (1, 364981)
points = collect(Iterators.take(lattice_sequence(z), 128))

@test mean(collect.(points[2:64])) == [1, 1] * 1/2
@test mean(collect.(points[2:128])) == [1, 1] * 1/2

\end{lstlisting}
\Needspace{4\baselineskip}
\begin{lstlisting}
scatter_points(points[1:64])
scatter_points(points[1:96])
scatter_points(points[1:128])
\end{lstlisting}

\section{Setting the scene for numerical integration}\label{sec:quadrature}

We now numerically approximate some integrals.
We deliberately take non-periodic functions as test functions although lattice rules shine when integrating periodic functions.
(For a function in a periodic space like $\widetilde{H}^\alpha_\mix([0,1]^d)$ we can have a convergence of $O(n^{-\alpha+\epsilon})$.)
This is to demonstrate that you can still use lattice rules on non-periodic functions, and there are many ways to do so.
Here we will employ random shifting (when $d \ge 2$) and, without going too much into the function space business yet, we will numerically show a convergence of $O(n^{-1+\epsilon})$ if the integrand is in $H^\alpha_\mix([0,1]^d)$ for any $\alpha \in \N$, i.e., $\alpha \ge 1$, for details, see~\cite{SKJ2002}.
We will also numerically show that the tent transformation can give you $O(n^{-2+\epsilon})$ if the integrand is in $H^\alpha_\mix([0,1]^d)$ with $\alpha \ge 2$, for details, see~\cite{Hic2002}.
Keep in mind that this section is still meant to be very informal, the real function space business begins in Section~\ref{sec:function-spaces}.

\subsection{Just one-dimensional}

Before we even start looking at high-dimensional integrals we should discuss the one-dimensional case.
Say, suppose, you need to get a value for the following integral
\begin{align}\label{eq:example-integral-d=1}
  \int_0^2 \exp(-x^5 + x) \rd{x}
  \qquad\Bigg(\;=\;\quad
  \int_0^1 \exp(-(2x)^5 + (2x)) \, 2 \rd{x} \quad\;\Bigg)
  .
\end{align}
(The reason why we have written the second integral with the substitution will become clear in a minute.)
A picture of the integrand function is given in Figure~\ref{fig:example-integrand-d=1}.
We have lots of options to approximate this integral: Gaussian quadrature, Newton--Cotes rules, Simpson's rule, the trapezoidal rule, \ldots\ and then we have not even considered adaptive integration software such as QUADPACK~\cite{PdDUK83} or Cubature~\cite{Joh2005}.

\begin{figure}
  \centering
  \tikzsetnextfilename{figure_example_integrand}
\begin{tikzpicture}
  \begin{groupplot}[
    group style={
      group size=2 by 1,
      horizontal sep=2cm
    },
    width=6cm,
    height=5cm,
    samples=200,
    ymin=-0.1, ymax=4, 
    xmin=-0.1, xmax=2.1
  ]
    \nextgroupplot[
      domain=0:2,
      title={$g(x) = \exp(-x^5 + x)$}
    ]
    \addplot[blue, thick] {exp(-x^5 + x)};

    \nextgroupplot[
      domain=0:1,
      title={$f(x) = 2 \exp(-(2x)^5 + 2x)$}
    ]
    \addplot[red, thick] {2*exp(-(2*x)^5 + 2*x)};
  \end{groupplot}
  \end{tikzpicture}
  \caption{Our one-dimensional example function. Left: original domain. Right: after substitution.}\label{fig:example-integrand-d=1}
\end{figure}

A common trick in array-oriented languages like Python/NumPy and Matlab is to ``vectorise'' your function such that the function can be evaluated in all the sample points at once.
This then results in a speed advantage as array operations are dispatched to optimally compiled code.
One advantage of using Julia is that you do not need to write your function vectorised from the start, you can just use the dot-notation (as explained in the previous section) since Julia will compile your for-loops.
For our application of multivariate integration: our single input is a vector of length~$d$, or a scalar for $d=1$.
So for the function $g(x) = \exp(-x^5 + x)$ over $[0, 2]$ we can just straightforwardly write it as follows.
\begin{lstlisting}
# Define `g' as the name for our function over [0, 2] (defined
# as an anonymous function)
g = x -> exp(-x^5 + x)
\end{lstlisting}
\Needspace{3\baselineskip}
\begin{lstlisting}
# Note that we could also have written
#   g(x) = exp(-x^5 + x)
# which would be a shorthand for
#   function g(x)
#      exp(-x^5 + x)
#   end
# The version `g = x -> ...' might be preferred since it makes
# a difference when you are e.g. in interactive mode (in the REPL)
# since the latter two versions add `g' to the method table and this
# could be confusing if you later add another function which has
# multiple arguments, as the old method will still exist...
# You can check this with `methods(g)'.
\end{lstlisting}

The integral~\eqref{eq:example-integral-d=1} is defined over $[0, 2]$.
Lattice points and rules are defined over the unit cube $[0, 1]^d$.
To apply our lattice rule we will use the simple substitution $x = 2t$ which maps $[0,1]$ to $[0,2]$ and has constant Jacobian~$2$.
We write this change of variable as the integral over $[0, 1]$ for a new function that we will call \lstinline!f!, see also~\eqref{eq:example-integral-d=1} above.
Such substitutions are standard when you want to apply basic quadrature rules, e.g., Gauss--Legendre rules are defined over $[-1,1]$.
\Needspace{3\baselineskip}
\begin{lstlisting}
# Our mapped function over [0, 1]
f = x -> 2 * g(2 * x)
\end{lstlisting}

Since a lattice rule is just the average over the lattice points we can use the \lstinline!mean! function from the \lstinline!Statistics! module.
\begin{lstlisting}
# Note that we pass in z = 1 (a scalar)
# instead of z = [ 1 ] (a vector).
# This is because our function expects scalars.
mean(f, lattice_points(1, 55))
# This gives    1.5777499333497484
# true value is 1.5596783055678136 in double precision

\end{lstlisting}
\Needspace{11\baselineskip}
\begin{lstlisting}
# Let's also do some simple unit tests to see we did
# not make stupid mistakes...
@test mean(x -> 1, lattice_points(1, 55)) == 1
@test isapprox(mean(x -> x - 1/2, lattice_points(1, 55)), -1/2/55)
@test isapprox(mean(x -> x - 1/2, lattice_points(1, 50)), -1/2/50)
@test isapprox(mean(x -> x - 1/2, lattice_points(12, 55)), -1/2/55)
  # gcd(12, 55) = 1
@test isapprox(mean(x -> x - 1/2, lattice_points(12, 50)), -1/2/25)
  # gcd(12, 50) = 2
\end{lstlisting}
Does this perform as you expect when you look at the ``lattice rule approximation'' versus the true value? (See snippet above: we obtain $1.5777499333497484$ versus the reference value $1.5596783055678136$. Not too good.)
First note that a lattice rule for $d=1$ is just a ``left-rectangle rule''.
As long as we take $z_1$ relatively prime to $n$, we get $n$ distinct points $x_k = k / n$, $k = 0, \ldots, n-1$, and $z_1$ just determines a permutation of these points.
Remember your numerical analysis course: we do not expect terribly good performance from the left-rectangle rule.
This is like the simplest approximation of them all.
The typical error bound states that for a function with bounded first derivative we should see convergence $O(n^{-1})$.
We will check this convergence in a minute.

The code snippet above also used another simple unit test.
Integrating the function $x \mapsto x - 1/2$ over $[0, 1]$ gives $0$ since it is antisymmetric around~$1/2$.
The lattice rule acts as a left-rectangle rule which has symmetric points around $1/2$, except for the first point, and possibly a point at $1/2$ when $n$ is even, but our function is zero there.
Because of the antisymmetry of our function all the symmetric evaluations cancel against each other and we have, for $\gcd(z_1, n) = 1$,
\begin{align*}
  \frac1n \sum_{k=0}^{n-1} \left(\frac{k}{n} - \frac12\right)
  &=
  -\frac1{2n}
  .
\end{align*}
While for $\gcd(z_1, n) = g$ we have $n/g$ unique points but with the same properties.

There are other simple quadrature rules that we can build out of our ``one-dimensional lattice points''.
The midpoint rule is particularly easy as well and for functions with bounded second derivative it should converge like $O(n^{-2})$.
\Needspace{5\baselineskip}
\begin{lstlisting}
# Midpoint rule
mean(x -> f(x + 1/2/55), lattice_points(1, 55))
# This gives    1.5597333998528078
# true value is 1.5596783055678136 in double precision
\end{lstlisting}

\begin{figure}
  \centering
\tikzsetnextfilename{figure_tent_transform}
\begin{tikzpicture}
  \begin{groupplot}[
    group style={
      group size=2 by 1,
      horizontal sep=2cm,
    },
    width=5.5cm,
    height=4cm,
    domain=0:1,
    samples=200,
    ymin=0, ymax=1.1,
    xmin=0, xmax=1.1,
    axis lines=middle,
    xlabel=$x$,
    ylabel=$\varphi(x)$,
    extra x ticks={0},
    extra y ticks={0},
  ]
    \nextgroupplot[title={$n=10$}]
    \addplot[blue, thick] {1 - abs(1 - 2*x)};
    \foreach \k in {0,...,9} {
      \addplot[only marks, mark=*, mark options={fill=blue}] coordinates {(\k/10, {1 - abs(1 - 2*(\k/10))})};
    }
    \nextgroupplot[title={$n=11$}]
    \addplot[blue, thick] {1 - abs(1 - 2*x)};
    \foreach \k in {0,...,10} {
      \addplot[only marks, mark=*, mark options={fill=blue}] coordinates {(\k/11, {1 - abs(1 - 2*(\k/11))})};
    }
  \end{groupplot}
\end{tikzpicture}
  \caption{The tent transform maps points on the $x$-axis to the points on the $y$-axis.}\label{fig:tent}
\end{figure}

Yet another rule that we can build from ``one-dimensional lattice points'' is the trapezoidal rule.
For this we use the tent transform
\begin{align*}
  \varphi(x)
  &:=
  1 - |1-2x|
  ,
\end{align*}
which was originally proposed in \cite{Hic2002}, see also \cite{DNP2014,GSY2019}.
The tent transform is a Lebesgue preserving transformation.
Splitting the integral over $[0,1/2]$ and $[1/2,1]$ and performing a substitution on each interval gives $\int_0^1 f(\varphi(x)) \rd{x} = \int_0^1 f(x) \rd{x}$.
See Figure~\ref{fig:tent} for an illustration on how this maps one-dimensional lattice points.
If $n = 2m$ is even, we can see this as mapping all the interior points to halve the resolution, each with weight~$2$, and mapping the point at $0$ and at $1/2$ to $0$ and $1$ respectively, with weight~$1$.
That is exactly the trapezoidal rule.
For odd $n$ this is not the case, but you can still obtain $O(n^{-2})$ convergence.
\Needspace{6\baselineskip}
\begin{lstlisting}
# Trapezoidal rule
tent(x) = 1 .- abs.(1 .- 2*x)
mean(x -> f(tent(x)), lattice_points(1, 55))
# This gives    1.5592375736545039
# true value is 1.5596783055678136 in double precision
\end{lstlisting}
The implementation does the straightforward mapping, which is slightly wasteful because of the double function evaluations.
This could be avoided, see \cite{DNP2014} which explains how to avoid this for general~$d$.

The expected $O(n^{-1})$ for the left-rectangle rule (i.e., plain lattice rule) and $O(n^{-2})$ convergence for the midpoint rule (i.e., shifted lattice rule) and trapezoidal rule (i.e., tent transformed lattice rule) can be checked directly on a log-log plot.
The plot produced by the following code is the left panel of Figure~\ref{fig:convergence-plots}.
\Needspace{8\baselineskip}
\begin{lstlisting}
I_f = 1.5596783055678136
ns = 2 .^ (4:20)
lr_errors = [
  abs(mean(f, lattice_points(1, n)) - I_f)
  for n in ns
]
mp_errors = [
  abs(mean(x -> f(x + 1/(2*n)), lattice_points(1, n)) - I_f)
  for n in ns
]
tr_errors = [
  abs(mean(x -> f(tent(x)), lattice_points(1, n)) - I_f)
  for n in ns
]

\end{lstlisting}
\Needspace{6\baselineskip}
\begin{lstlisting}
plt = scatter(ns, [lr_errors tr_errors mp_errors],
  xscale=:log10, yscale=:log10,
  label=["left rectangle" "tent transformed" "midpoint"])
plot!(plt, ns, 1 ./ ns, label="n^-1", linestyle=:dash)
plot!(plt, ns, 1 ./ ns.^2, label="n^-2", linestyle=:dash)
\end{lstlisting}
From the left panel of Figure~\ref{fig:convergence-plots} we clearly see that the tent transformed rule has $O(n^{-2})$ convergence.
Since we used the straightforward implementation, we are doing about twice as many function evaluations as necessary, and this results in the curve being shifted by a factor of~$2$ to the right.
In higher dimensions the tent transformed lattice rule will still be able to show $O(n^{-2})$ convergence for non-periodic functions~\cite{Hic2002,GSY2019} and we demonstrate this in the next section.
On the other hand, our trick to shift a one-dimensional lattice rule to obtain the midpoint rule does not have an immediate multivariate equivalent.

\subsection{Higher dimensional and randomly shifted lattice rules}

Our next step is to do multivariate integrals.
A direct tensor product of a classical univariate rule with $m$ points leads to a multivariate rule with $n = m^d$ points.
Of course we need to take $m \ge 2$ for this to make sense, otherwise, with $m = 1$, we just have $n = 1^d = 1$ point.
Unfortunately, for $m = 2$ and say $d = 100$ dimensions we already have that
\begin{align*}
  n
  =
  2^{100}
  &=
  1\,267\,650\,600\,228\,229\,401\,496\,703\,205\,376
  \approx
  10^{30}
\end{align*}
so even if one function evaluation would just take $1~\textrm{ns}$ then the time of doing these function evaluations would be about $10^{21}~\textrm{s}$ which is about $10^{13}~\textrm{years}$ which is about $3000$ times more than the current estimated age of the universe.
Still, we would like to do $100$-dimensional integrals.
E.g., in financial engineering we meet integrals with hundreds, thousands and even tens of thousands of dimensions, and in uncertainty quantification this can even be much much larger.
That means that simple tensor product rules are out of the picture since their number of sample points increases exponentially in~$d$.

Luckily, lattice rules allow you to freely choose the number of sample points independent of~$d$.
(Of course, that does not mean that a $32$-point lattice rule would be a good idea to approximate a $1000$-dimensional integral.
Unless your function / function space is super super nice.)

Let us experiment on the following function
\begin{align*}
  g_d(\bsx)
  &:=
  \prod_{j=1}^d (x_j + 1/2)
  .
\end{align*}
Clearly for any number of dimensions $d$ we have $I_d(g_d) = 1$, so this is a perfect test integrand function for which we can compute the exact error.

In practice of course we would not be able to compute the exact integral, otherwise what is the point of trying to numerically approximate it?
To get a practical error estimate we will use $M$ independent randomly shifted lattice rules.
Given $d, n \in \N$, $\bsz \in \Z^d$, $\bsDelta \in [0, 1]^d$, a \emph{shifted lattice rule} is
\begin{align}\label{eq:shifted-R1L-rule}
  Q_{d,n,\bsz}^\bsDelta(f)
  &:=
  \frac1n \sum_{k=0}^{n-1} f\bigg(\big((\bsz k / n) + \bsDelta\big) \bmod{1}\bigg)
  .
\end{align}
If we compare this with the standard lattice rule $Q_{d,n,\bsz}$, see~\eqref{eq:R1L-rule}, then we notice that we have shifted all of our points by the given shift $\bsDelta$ modulo~$1$ over the unit cube.
Note that
\begin{align*}
  \big( ((\bsz k \bmod{n}) / n) + \bsDelta \big) \bmod{1}
  &=
  \big( (\bsz k / n) \bmod{1} + \bsDelta \big) \bmod{1}
  \journalbreak =
  \big( (\bsz k / n) + \bsDelta \big) \bmod{1}
  .
\end{align*}
We obtain a \emph{randomly shifted lattice rule} if we consider the shift $\bsDelta$ to be random.
See, e.g.,~\cite{LL2000}, for an overview on randomly shifted lattice rules.
If the shift is uniform over the unit cube, $\bsDelta \sim U([0,1]^d)$, then we can show that $Q_{d,n,\bsz}^\bsDelta$ is unbiased since, for $f \in L_1([0,1]^d)$,
\begin{align*}
  \bbE_{\bsDelta \sim U([0,1]^d)}[Q_{d,n,\bsz}^\bsDelta(f)]
  &=
  \int_{[0,1]^d} \frac1n \sum_{k=0}^{n-1} f\bigg(\big((\bsz k / n) + \bsDelta\big) \bmod{1}\bigg) \rd\bsDelta
  \\
  &=
  \frac1n \sum_{k=0}^{n-1} \int_{[0,1]^d} f\bigg(\big((\bsz k / n) + \bsDelta\big) \bmod{1}\bigg) \rd\bsDelta
  \\
  &=
  \frac1n \sum_{k=0}^{n-1} \int_{[0,1]^d} f(\bsDelta) \rd\bsDelta
  \\
  &=
  I_d(f)
  ,
\end{align*}
where we used that, for fixed $\bsa$, the map $\bsx \mapsto (\bsa + \bsx) \bmod{1}$ is a measure-preserving map, like the tent transform, and hence
\begin{align*}
  \int_{[0,1]^d} f((\bsx + \bsa) \bmod{1}) \rd\bsx
  &=
  \int_{[0,1]^d} f(\bsx) \rd\bsx
  .
\end{align*}

If we now consider $M$ i.i.d.\ random shifts $\bsDelta_r \sim U([0,1]^d)$, $r=1,\ldots,M$, then we get $M$ independent estimators
\begin{align*}
  Q_{d,n,\bsz}^{\bsDelta_1}(f)
  ,\,
  Q_{d,n,\bsz}^{\bsDelta_2}(f)
  ,\,
  \ldots
  ,\,
  Q_{d,n,\bsz}^{\bsDelta_M}(f)
  .
\end{align*}
Write $\omega = (\bsDelta_1,\ldots,\bsDelta_M)$ and take the average as our final (unbiased) estimator
\begin{align*}
  \overline{Q}_{d,n,\bsz,M}^\omega(f)
  &:=
  \frac1M \sum_{r=1}^M Q_{d,n,\bsz}^{\bsDelta_r}(f)
  .
\end{align*}
For $M \ge 2$ and $f \in L_2([0,1]^d)$ the standard deviation of this average is called the \emph{standard error} and we estimate it by
\begin{align*}
  \hat{S}(\overline{Q}_{d,n,\bsz,M}^\omega(f))
  &:=
  \sqrt{
  \frac1{(M-1) M} \sum_{r=1}^M
  \Big| Q_{d,n,\bsz}^{\bsDelta_r}(f) - \overline{Q}_{d,n,\bsz,M}^\omega(f) \Big|^2
  }
  .
\end{align*}
We use this as an error estimator in the next Julia snippets.
In the code we will take the sample standard deviation of the collection of results using the function \lstinline{std} and then divide by $\sqrt{M}$ to get the estimated standard error on $\overline{Q}_{d,n,\bsz,M}^\omega(f)$.
We approximate the integral for $g_d$ for $d = 2, 10, 100$ using $M = 10$ randomly shifted lattice rules.
The generating vector is from~\cite{CKN2006}.
For each of these we plot the estimated standard error in the middle panel of Figure~\ref{fig:convergence-plots} against the number of lattice points~$n$.
Note that this is different from the total number of function evaluations, which is~$M \, n$.

\begin{lstlisting}
using Random

# A test integrand with integral one in every dimension
gd = x -> prod(1 .+ (x .- 1/2))

shift_mod1(x, shift) = mod.(x .+ shift, 1)

\end{lstlisting}
\Needspace{7\baselineskip}
\begin{lstlisting}
# The first 10 components of the vector from CKN2006 are
# (1, 182667, 469891, 498753, 110745, 446247,
# 250185, 118627, 245333, 283199).
# The full vector can be found online at the Magic Point Shop:
# https://people.cs.kuleuven.be/~dirk.nuyens/qmc-generators/
z = parse.(Int, readlines("exod2_base2_m20_CKN.txt"))

# For simplicity, generate each full rule
# separately instead of reusing the
# blocks or the radical-inverse sequence from the previous section.
lattice_rule_sequence(z, m1, m2) =
  (lattice_points(z, 2^m) for m in m1:m2)

m1 = 4; m2 = 20
ns = 2 .^ (m1:m2)
ds = (2, 10, 100)
M = 10
Random.seed!(2026) # For reproducibility.
shifts = rand(maximum(ds), M)

\end{lstlisting}
\Needspace{7\baselineskip}
\begin{lstlisting}
# A list of results for each d in ds
# where each result is a matrix with rows corresponding to 2^m points
# and columns to independent random shifts.
# This takes some time to compute... 2^20 is about one million.
Qs = [
  reduce(hcat,
    [mean(x -> gd(shift_mod1(x, shift)), points)
      for points in lattice_rule_sequence(z[1:d], m1, m2)]
    for shift in eachcol(shifts[1:d, :]))
  for d in ds
]
stderrs = hcat([vec(std(Q, dims=2)) ./ sqrt(M) for Q in Qs]...)

\end{lstlisting}
\Needspace{5\baselineskip}
\begin{lstlisting}
plt = scatter(ns, stderrs, xscale=:log10, yscale=:log10,
  label=["stderr d=$d" for _ in [1], d in ds])
plot!(plt, ns, stderrs[1, 1] .* ns[1] ./ ns,
  label=false, linestyle=:dash)
\end{lstlisting}

From the middle panel of Figure~\ref{fig:convergence-plots} we see that as $d$ increases the problem becomes harder: the estimated standard error curves shift upward and the observed (pre-asymptotic (!)) convergence rate slows down (the lines become flatter).
This is because our test integrand is not so innocent.
We can write it as
\begin{align*}
  g_d(\bsx)
  &=
  \prod_{j=1}^d (x_j + 1/2)
  =
  \prod_{j=1}^d (1 + (x_j - 1/2))
  =
  \sum_{\setu \subseteq \{1,\ldots,d\}} \prod_{j\in\setu} (x_j - 1/2)
  .
\end{align*}
The variance of this function is $\sigma^2(g_d) = (13/12)^d - 1$ and grows exponentially in~$d$.
Hence even Monte Carlo will suffer for increasing~$d$.
As will become clear later when we introduce weighted function spaces, we could try to moderate the importance of the different dimensions by adding in product weights $\beta_j$ and assuming $\beta_1 \ge \beta_2 \ge \cdots \ge \beta_d > 0$ and defining a nicer version
\begin{align*}
  g_{d,\bsbeta}(\bsx)
  &:=
  \prod_{j=1}^d (1 + \beta_j \, (x_j - 1/2))
  =
  \sum_{\setu \subseteq \{1,\ldots,d\}} \prod_{j\in\setu} (\beta_j \, (x_j - 1/2))
  .
\end{align*}
We encourage the reader to play with this function and compare the convergence for $g_{d,\bsbeta}$ with that of the unweighted $g_d$ function.
A possible choice is to take $\beta_j = j^{-2}$, in which case you quickly see that those terms with large cardinality $|\setu|$ will suddenly not matter much anymore.
In this case we have, for $\setu \ne \emptyset$,
\begin{align*}
  \prod_{j\in\setu} j^{-2}
  &\le
  (\max_{j\in\setu} j^{-2})^{|\setu|}
  .
\end{align*}
Of course, here we are playing with test functions, and we can make up test functions all day long, but in reality it depends on the actual function whether it has such nice or very poor behaviour.
This is why we will analyse weighted function spaces in Section~\ref{sec:mixed-spaces}, to understand which dimension weight structures allow for nice convergence results.

Our function $g_d$ actually belongs to the dominating mixed smoothness space $H^\alpha_\mix([0,1]^d)$ for any $\alpha \in \N$, you can check with Section~\ref{sec:mixed-spaces} later.
If we continue applying randomly shifted good lattice rules with larger and larger~$n$, then the root-mean-square convergence will eventually be $O_{d,\epsilon}(n^{-1+\epsilon})$ for every $\epsilon > 0$.
We just have to spend an exponential budget first since the hidden constant in the big-O notation is exponential in~$d$.
The function $g_d$ also belongs to any of the weighted dominating mixed smoothness spaces, say $H^{\alpha,\bsgamma}_\mix([0,1]^d)$ for any $\alpha \in \N$, with $\bsgamma = \{ \gamma_\setu \}_\setu$ and assuming all $\gamma_\setu > 0$, again, see Section~\ref{sec:mixed-spaces}.
The weights will reduce the influence of $d$ on the worst-case error, but they will blow up the influence of $d$ in the norm of the function.
We will discuss this shortly in Section~\ref{sec:ibc}, in particular see~\eqref{eq:weighted-error-bound}.
The weights of the function space make it possible to analyse what happens when we integrate functions like $g_{d,\bsbeta}$: if the weights in the norm correspond to the structure of the function, then the norm is small, and if this family of weights also gives a worst-case error whose dependence on~$d$ remains uniformly bounded as $d \to \infty$, then we achieve the holy grail of dimension-independent error bounds.
So a nice function and a correctly weighted function space will do the trick.

As a last experiment we will show that indeed the tent transform can still improve the convergence from $O(n^{-1})$ to $O(n^{-2})$ for $d \ge 2$ when using a lattice rule.
We will keep on using the function $g_d$ without weights, but we will keep the dimensionality relatively low at $d = 5$.
We apply the tent transform componentwise.
We will again use $M = 10$ randomly shifted lattice rules and estimate the standard error.
In this case it is important that we first apply the random shift to the point set, then apply the tent transform, and then apply the function, i.e., $f(\varphi((\bsx + \bsDelta) \bmod{1}))$.
Swapping the order of the tent transform and the random shift would not improve the convergence.
For more information on the tent transform see, e.g.,~\cite{Hic2002,DNP2014}.

\begin{figure}
  \centering
  \tikzsetnextfilename{figure_convergence}
\def\convergencedata{\articledata/convergence_experiments_data.txt}
\pgfplotstableread[col sep=tab]{\convergencedata}\convergencetable
\pgfplotstablegetelem{0}{n}\of{\convergencetable}
\pgfmathsetmacro{\referencen}{\pgfplotsretval}
\pgfplotstablegetelem{0}{stderr_d2}\of{\convergencetable}
\pgfmathsetmacro{\dimensionreference}{\pgfplotsretval * \referencen}
\pgfplotstablegetelem{0}{shift_stderr_d5}\of{\convergencetable}
\pgfmathsetmacro{\shiftreference}{\pgfplotsretval * \referencen}
\pgfplotstablegetelem{0}{tent_shift_stderr}\of{\convergencetable}
\pgfmathsetmacro{\tentreference}{\pgfplotsretval * \referencen^2}
\begin{tikzpicture}
    \begin{groupplot}[
      group style={group size=3 by 1, horizontal sep=0.6cm, y descriptions at=edge left},
      width=5.65cm,
      height=5.8cm,
      xmode=log,
      ymode=log,
      log basis x=2,
      xmin=16,
      xmax=1048576,
      xtick={16,1024,1048576},
      xticklabels={$2^4$,$2^{10}$,$2^{20}$},
      ymin=1e-15,
      ymax=1e1,
      every mark/.append style={scale=0.6},
      xlabel={$n$},
      grid=major,
      legend style={font=\scriptsize, draw=none, fill=none, cells={anchor=west}},
      tick label style={font=\scriptsize},
      title style={font=\small}
    ]
      \nextgroupplot[title={$d=1$}, legend pos=south west]
      \addplot[deepblue, mark=*] table[x=n, y=left_error, col sep=tab] {\convergencedata};
      \addlegendentry{left rectangle}
      \addplot[deepblue, dotted, very thick, mark=triangle*] table[x=n, y=tent_error, col sep=tab] {\convergencedata};
      \addlegendentry{tent transformed}
      \addplot[deepblue, densely dashed, mark=square*] table[x=n, y=midpoint_error, col sep=tab] {\convergencedata};
      \addlegendentry{midpoint}
      \addplot[black, dashed, thick, domain=16:1048576, samples=2, forget plot] {1/x};
      \addplot[black, dashed, thick, domain=16:1048576, samples=2, forget plot] {1/x^2};

      \nextgroupplot[title={$d=2,10,100$}, legend pos=south west]
      \addplot[deepblue, densely dotted, very thick, mark=asterisk, mark size=2.2pt] table[x=n, y=stderr_d100, col sep=tab] {\convergencedata};
      \addlegendentry{stderr $d=100$}
      \addplot[deepblue, loosely dashed, mark=pentagon*] table[x=n, y=stderr_d10, col sep=tab] {\convergencedata};
      \addlegendentry{stderr $d=10$}
      \addplot[deepblue, dashdotted, mark=diamond*] table[x=n, y=stderr_d2, col sep=tab] {\convergencedata};
      \addlegendentry{stderr $d=2$}
      \addplot[black, dashed, thick, domain=16:1048576, samples=2, forget plot] {\dimensionreference/x};

      \nextgroupplot[title={$d=5$, tent}, legend pos=south west]
      \addplot[deepblue, thin, dashdotted, forget plot] table[x=n, y=shift_error_d5, col sep=tab] {\convergencedata};
      \addplot[deepblue, thin, densely dashed, forget plot] table[x=n, y=tent_shift_error, col sep=tab] {\convergencedata};
      \addplot[deepblue, dashdotted, mark=triangle*, mark size=2.5pt] table[x=n, y=shift_stderr_d5, col sep=tab] {\convergencedata};
      \addlegendentry{stderr $d=5$}
      \addplot[deepblue, densely dashed, mark=filled cross, mark size=3pt] table[x=n, y=tent_shift_stderr, col sep=tab] {\convergencedata};
      \addlegendentry{stderr tent $d=5$}
      \addplot[black, dashed, thick, domain=16:1048576, samples=2, forget plot] {\shiftreference/x};
      \addplot[black, dashed, thick, domain=16:1048576, samples=2, forget plot] {\tentreference/x^2};
    \end{groupplot}
  \end{tikzpicture}
  \caption{Convergence plots for the three experiments using lattice rules.
  Left: absolute errors for three variants of a ``one-dimensional lattice rule'' on the function $f(x) = 2 \exp(-(2x)^5 + 2x)$.
  Middle: estimated standard errors (stderr) for $M = 10$ randomly shifted lattice rules applied to $g_d(\bsx) = \prod_{j=1}^d (x_j+1/2)$ for $d = 2$, $d = 10$ and $d = 100$.
  The dashed line is an $n^{-1}$ reference line.
  Right: estimated standard errors (stderr) with and without the tent transform for $g_5$ using $M = 10$ randomly shifted lattice rules.
  The thin lines correspond to the actual absolute errors (since we know the exact value of the integral) of the $\overline{Q}^\omega$ estimators which take the average over all shifts.
  The dashed lines are $n^{-1}$ and $n^{-2}$ reference lines.}\label{fig:convergence-plots}
\end{figure}

\journalonly{\Needspace{17\baselineskip}}
\begin{lstlisting}
# tent(x) = 1 .- abs.(1 .- 2 .* x) # as above

# First 5 components of exew_base2_m20_a3_HKKN.txt from HKKN2011
z = (1, 364981, 245389, 97823, 488939)
d = 5; m1 = 4; m2 = 20; M = 10
Random.seed!(2026) # For reproducibility.
shifts = rand(d, M)

# Rows correspond to 2^m points and columns to
# independent random shifts.
Qs_lr = reduce(hcat,
  [mean(x -> gd(shift_mod1(x, shift)), points)
    for points in lattice_rule_sequence(z, m1, m2)]
  for shift in eachcol(shifts))
\end{lstlisting}
\Needspace{6\baselineskip}
\begin{lstlisting}
Qs_tr = reduce(hcat,
  [mean(x -> gd(tent(shift_mod1(x, shift))), points)
    for points in lattice_rule_sequence(z, m1, m2)]
  for shift in eachcol(shifts))

Qbar_error_lr = abs.(vec(mean(Qs_lr, dims=2)) .- 1)
Qbar_error_tr = abs.(vec(mean(Qs_tr, dims=2)) .- 1)
stderr_lr = vec(std(Qs_lr, dims=2)) ./ sqrt(M)
stderr_tr = vec(std(Qs_tr, dims=2)) ./ sqrt(M)

\end{lstlisting}
\Needspace{12\baselineskip}
\begin{lstlisting}
ns = 2 .^ (m1:m2)
plt = plot(ns, [Qbar_error_lr Qbar_error_tr],
  xscale=:log10, yscale=:log10, label=false)
scatter!(plt, ns, [stderr_lr stderr_tr],
  label=["stderr d=$d" "stderr tent d=$d"])
plot!(plt, ns, stderr_lr[1] .* (ns[1] ./ ns),
  label=false, linestyle=:dash)
plot!(plt, ns, stderr_tr[1] .* (ns[1] ./ ns).^2,
  label=false, linestyle=:dash)
\end{lstlisting}

The resulting plot is the last panel in Figure~\ref{fig:convergence-plots}.
We see that for the randomly shifted lattice rule we have almost $O(n^{-1})$ convergence.
Consistent with the middle panel we expected the rate to not quite be $-1$ yet for this number of sample points.
But the nice part is the tent transformed randomly shifted lattice rule which gives a convergence akin to $O(n^{-2})$, but not quite yet.

Ready-to-use generating vectors can be found on the \emph{Magic Point Shop} website~\cite{NuyMagicPointShop}, while code for constructing generating vectors is available from the \emph{QMC4PDE} website~\cite{NuyQMC4PDE} which accompanies the overview article~\cite{KN2016}.
Construction of generating vectors is not part of this article.
An overview article on fast CBC methods can be found in~\cite{Nuy2014}.

\section{Function spaces}\label{sec:function-spaces}

After all these experiments from the previous sections it is time to dive into the actual theory.
In Section~\ref{sec:ibc} we first define some necessary information-based complexity concepts.
We start with non-periodic and periodic one-dimensional Sobolev spaces in Section~\ref{sec:spaces-d=1} and move into multivariate non-periodic spaces in Section~\ref{sec:non-periodic-spaces}.
Periodic multivariate spaces are discussed in Section~\ref{sec:periodic-spaces}, these are our main topic concerning lattice rules.
In Section~\ref{sec:upper-bound-lattice-rules} we show an upper bound on the worst-case error of lattice rules in the periodic spaces.

In principle, the reader can take a massive shortcut by just browsing through Section~\ref{sec:ibc} and then skip all intermediate sections to go straight to Section~\ref{sec:upper-bound-lattice-rules}.
Any questions that will then be triggered in Section~\ref{sec:upper-bound-lattice-rules} can then hopefully be resolved by selectively browsing through the intermediate sections.
In fact, the author thinks this is the recommended way.
For example, in Section~\ref{sec:upper-bound-lattice-rules} the reader might wonder why we are using such funny function spaces, or if we really need the mixed space, or the weights\ldots\ %
Well, that is exactly the long and winding story of the intermediate sections.

You will soon notice that our function spaces will be defined as subspaces of $L_2([0,1]^d)$ and you might remember that its elements are equivalence classes of functions that agree almost everywhere.
In these lecture notes we will mainly consider deterministic quadrature/cubature on spaces of continuous functions and we will hence identify each $L_2$ equivalence class with its unique continuous representative whenever one exists.
The reason to do this should be clear from the following example: changing the values at the finitely many cubature nodes, e.g., adding~$100$, does not change the $L_2$ equivalence class nor the integral but definitely changes the cubature result.
By only allowing the continuous representative we remove this ambiguity and make point evaluation well defined.
Admissible inputs to our deterministic cubature rules on these spaces, both in the worst-case error analysis and in an implementation, are the continuous functions themselves, equipped with the norm of the function space under consideration.
The modified function in the example is therefore not an admissible input despite having the same norm and integral.
It is however possible to have discontinuous functions in spaces with bounded (i.e., continuous) point evaluation, e.g., Walsh spaces which are used to analyse digital nets, cf.\ Section~\ref{sec:rkhs}.

\subsection{``Good'' quadrature/cubature rules and the worst-case error}\label{sec:ibc}

If we say that a certain quadrature/cubature rule is ``good'' then what do we mean?
We loosely recall some notions from information-based complexity.
The reader can consult \cite{TWW88,NW2008,NW2010,NW2012,DHP2015,DKS2013} for further details.

\paragraph{Worst-case error.}
Suppose you want to approximate a mapping $L : F \to R$ by an algorithm $A$ where $F$ and $R$ are normed spaces.
Then we can define the \emph{worst-case error} of the algorithm $A$ approximating $L$ by
\begin{align}\label{eq:def-wce}
  \wce(L, A, F, R)
  :=
  \sup_{\substack{f \in F \\ \|f\|_F \le 1}} \|L(f) - A(f)\|_R
  .
\end{align}
Typically we subscript the algorithm by some of the important parameters for which we want to study the worst-case error behaviour.
E.g., if the algorithm is constrained to use at most $n$ function values and we want to study the behaviour of the worst-case error in terms of $n$ then we write $A_n(f)$, if the dimension $d$ is also important to us then we write~$A_{d,n}(f)$.
How we measure $f$ and the approximation error $L(f) - A(f)$ is determined by the choice of the norms.

\paragraph{Deterministic versus randomised algorithms.}
The algorithm $A$ in~\eqref{eq:def-wce} is deterministic, and we call its error the deterministic worst-case error.
Given a probability space $(\Omega, \Sigma, P)$, a randomised algorithm is a family $\{ A^\omega \}_{\omega \in \Omega} = \{ A^\omega \}_\omega$ of deterministic algorithms, indexed by an outcome $\omega \in \Omega$, such that, for every fixed~$f$, the mapping $\omega \mapsto A^\omega(f)$ is an $R$-valued random element.
Here $\omega$ collects all random choices made by the algorithm, and fixing $\omega$ fixes one deterministic realisation~$A^\omega$.
Random elements could be randomly chosen sample points (like the Monte Carlo algorithm), a random shift applied to deterministic sample points, a random choice between possible deterministic sample points, etc.
For general background on randomised algorithms, see, e.g.,~\cite{Novak1988}.
The definition of the (deterministic) worst-case error~\eqref{eq:def-wce} could be applied to $A^\omega$ for a given $\omega$, but then the result is still a function of~$\omega$.
To obtain a single number we have to do something to the randomness such that we can measure the error of a randomised algorithm.
If we want to somehow stick to the worst-case choice, then we have the choice of handling $\omega$ before or after taking the supremum.
There are some common error measures for randomised algorithms, but unfortunately they are often just called the \emph{randomised error}, while in reality their definitions can be very different.
We introduce these common error measures with more specific names.
We define the \emph{worst-case expected error} and the \emph{worst-case root-mean-square error}, respectively, as follows
\begin{align*}
  \wce^{\mathrm{ran}}(L, \{A^\omega\}_\omega, F, R)
  &:=
  \sup_{\substack{f \in F \\ \|f\|_F \le 1}}
  \bbE_\omega\bigl[\|L(f) - A^\omega(f)\|_R\bigr]
  ,\\
  \wce^{\mathrm{rms}}(L, \{A^\omega\}_\omega, F, R)
  &:=
  \sup_{\substack{f \in F \\ \|f\|_F \le 1}}
  \left(\bbE_\omega\bigl[\|L(f) - A^\omega(f)\|_R^2\bigr]\right)^{1/2}
  .
\end{align*}
By Cauchy--Schwarz $\wce^{\mathrm{ran}}(L, \{A^\omega\}_\omega, F, R) \le \wce^{\mathrm{rms}}(L, \{A^\omega\}_\omega, F, R)$.
See, e.g., \cite{KKNU2019,KNW2023} for randomised lattice algorithms.
It is also possible to take the expectation over the worst-case errors.
This is often done when considering randomly shifted lattice rules.
We define the \emph{root-mean-square worst-case error}, or, when using random shifts, more explicitly, the \emph{shift-averaged worst-case error}, by
\begin{align*}
  \wce^{\mathrm{rms-sh}}(L, \{A^\omega\}_\omega, F, R)
  &:=
  \left(\bbE_\omega\left[\wce(L, A^\omega, F, R)^2\right]\right)^{1/2}
  \\
  &\hphantom{:}=
  \Bigg( \bbE_\omega\Big[ \sup_{\substack{f \in F \\ \|f\|_F \le 1}} \|L(f) - A^\omega(f)\|_R^2 \Big] \Bigg)^{1/2}
  ,
\end{align*}
this is, e.g., used in \cite{Hic2002,SKJ2002}.
Putting the expectation on the inside or on the outside of the supremum has different interpretations and effects on bounds that can be proven.
Specifically, if one has a deterministic lower bound on the worst-case error that is uniform over a class of algorithms that is large enough to include every realisation of your randomised algorithm, then taking the expectation on the outside is not going to help you circumvent this lower bound.
So in this case the randomness cannot help you.
However, if you take the expectation on the inside of the supremum, then the class of lattice rules with a random choice of the number of points in between $(n/2, n]$ can achieve a worst-case expected error of $n^{-\alpha - 1/2 + \delta}$, $\delta > 0$, instead of the deterministic $n^{-\alpha + \epsilon}$, $\epsilon > 0$, see~\cite{KKNU2019,KNW2023,NW2024} for the details.
Apart from Monte Carlo integration and using independent random shifts to estimate errors as we did in Section~\ref{sec:quadrature}, we do not study randomised algorithms in these lecture notes.

\paragraph{Standard information / sampling algorithms.}
Although we write $A(f)$, where the function $f$ is just an argument to the algorithm, we will need to specify what kind of information the algorithm can use on~$f$.
We will stick to what is called \emph{standard information} which means that we can evaluate $f$ freely at any point $\bsx$ in its domain and we then obtain the exact value of~$f(\bsx)$.
These are \emph{sampling algorithms}.
This matches neatly to an implementation in a computer program where we can do function calls to $f$ but we cannot look inside the implementation of the function; or more loosely defined, we cannot assume a certain representation of the function.
(The loose formulation covers the case as in, e.g., Julia where sampling algorithms in effect also have access to derivative values in a point $\bsx$ by using automatic differentiation based on the compiler's intermediate representations of user-defined functions.)

It is possible to consider more restricted forms of information, or less restricted forms of information.
E.g., restricted sampling which likely makes the problem harder, or allow a set of linear operators (e.g., access to exact Fourier coefficients), which likely makes the problem easier, but maybe less realistic.
We do not pursue these options here.

\paragraph{Information cost, function evaluation cost, arithmetic cost and total cost.}
The inner workings of the algorithm and the choice of the sample points where to evaluate the function (standard information) can depend on the properties of the space~$F$.
The choice of where to sample the function could be allowed to depend on previous evaluations, in which case we call the algorithm ``adaptive''.
We define the \emph{information cost} as the number of function evaluations (i.e., the information cost is in units of function evaluations) and for an algorithm $A_n$ which is allowed at most $n$ function values the information cost is hence at most~$n$.
The actual cost of these at most $n$ function evaluations depends on the ``cost model''.
For a finite dimensional domain of dimension~$d$ it is reasonable to assume that the cost of evaluating any $f \in F = F_d$ at a point $\bsx$ is bounded by a constant $C = C_d$.
The cost of at most $n$ function evaluations is therefore bounded by $C_d \, n \in O_d(n)$, where the dependence on~$d$ is absorbed into the implied constant.
Different cost models are possible, and become particularly important for infinite-dimensional integration, where the cost can depend on the number of active variables used in a function evaluation.
See, e.g.,~\cite{KSWW2010,PW2011,DG2014,KNPSW2017,GKNW2018}.
We will not discuss that here.
The \emph{arithmetic cost} is the cost of all the logical and arithmetic operations to combine all the function values into the output of the (possibly adaptive) algorithm.
For a non-adaptive algorithm which just takes a linear combination of function evaluations, as for a lattice rule, the arithmetic cost is~$O(n)$ if we assume the weights and nodes are given.
The \emph{total cost} is then the sum of the cost of all the function evaluations (not the information cost) and the arithmetic cost of the algorithm.

\paragraph{Quadrature/cubature and the construction cost of the rule.}
In the context of approximating high-dimensional integrals, as we discuss here, $L$ in~\eqref{eq:def-wce} is the $d$-dimensional integral $I_d$, see~\eqref{eq:Id}, and our algorithm $A = Q_{d,n}$ will be a certain quadrature/cubature rule using (at most) $n$ function evaluations (standard information).
As designers of the algorithm we aim to construct good sampling points that will work for any $f \in F_d$.
So in case of the algorithm being a rank-$1$ lattice rule $Q_{d,n,\bsz}$, see~\eqref{eq:R1L-rule}, this boils down to selecting/constructing a ``good'' generating vector~$\bsz = \bsz(n, F_d)$ with $n$ points for the space~$F_d$.
This generating vector is then constructed with the aim of minimising the worst-case error for the space~$F_d$, cf.~\eqref{eq:def-wce-integration}, \eqref{eq:wce-bound} and~\eqref{eq:error-bound} at the end of this section on how to then make use of this.
We refer to the cost of constructing $\bsz = \bsz(n, F_d)$ as the \emph{construction cost}.
Often it is assumed that this construction can be done off-line in advance, i.e., we have a pre-computed $\bsz$ for given $n$ and $F_d$ and then we do not take the construction cost into account for the total cost of applying the algorithm to an $f \in F_d$.

If the construction cost is acceptable, then we could construct the weights and the nodes on the fly.
For a lattice rule we just need to construct a good generating vector.
For the weighted unanchored periodic Sobolev spaces of dominating mixed smoothness, commonly called Korobov spaces, it is possible under suitable conditions to construct such a generating vector in cost $O(d \, n \log n)$, see, e.g., \cite{NC2006-prime,NC2006,CKN2006,Nuy2014}.
This makes it possible to consider constructing the rule on demand since we just discussed that a reasonable model for the cost of function evaluation is~$O_d(n)$.
When the generating vector is constructed on demand, its construction cost becomes part of the arithmetic cost of applying the algorithm.
Since $\log n \in o(n^\epsilon)$ for arbitrarily small $\epsilon > 0$, we can write this construction part of the arithmetic cost as $O_d(n^{1+\epsilon})$.
We can hence construct the generating vector on the fly and the total cost is then~$O_d(n^{1+\epsilon})$.
Furthermore, for these function spaces it is even possible to construct a generating vector $\bsz$ which works for a range of $n = 2^m$ with similar cost and one obtains a sequence of approximations with embedded sample points, see, e.g.,~\cite{CKN2006}.

So for a lattice rule that uses $n$ function evaluations the total cost is $O_d(n)$ when using a pre-computed rule or $O_d(n^{1+\epsilon})$ when constructing the rule on the fly in a suitable function space.
This means the information cost $n$ and the total cost are comparable.
This is not always the case, it depends on the amount of work the algorithm has to do.
E.g., one could consider an algorithm that first approximates the function based on $n$ function values and then computes an approximation to the integral from the approximant.
In such a case the arithmetic cost of approximating the function using the $n$ function values will likely dominate (e.g., it could be the cost of solving a, possibly structured, linear system of equations).
The algorithm in~\cite{JRH2016} uses lattice rules and estimates the error from the decay of Fourier coefficients.
Its total cost is $O_d(n^{1+\epsilon})$, $\epsilon > 0$, without including any construction cost.

\paragraph{Usage of the worst-case error for error bounds.}
For the particular case of approximating the $d$-dimensional integral $I_d$ by a quadrature/cubature rule $Q_{d,n}$, let us denote the worst-case error by
\begin{align}\label{eq:def-wce-integration}
  \boxed{
  e(Q_{d,n}, F_d)
  :=
  \wce(I_d, Q_{d,n}, F_d, \bbC)
  =
  \sup_{\substack{f \in F_d \\ \|f\|_{F_d} \le 1}} |I_d(f) - Q_{d,n}(f)|
  .
  }
\end{align}
Suppose we now manage to prove the following upper bound for a particular algorithm $Q_{d,n}$ and the space $F_d$:
\begin{align}\label{eq:wce-bound}
  e(Q_{d,n}, F_d)
  \le
  C_{F_d} \, n^{-\nu(F_d)}
  .
\end{align}
Then we call $\nu(F_d)$ the ``rate of convergence'' of the error.
It typically depends on the smoothness of the function space, which hence becomes another parameter in our analysis which we currently have not marked in the subscript yet, and the number of dimensions.
If we assume $Q_{d,n}$ to be linear, or, more generally, $I_d - Q_{d,n}$ to be homogeneous, then, for any $f \in F_d$ it holds
\begin{align}\label{eq:error-bound}
  |I_d(f) - Q_{d,n}(f)|
  &\le
  \|f\|_{F_d} \, e(Q_{d,n}, F_d)
  \le
  \|f\|_{F_d} \, C_{F_d} \, n^{-\nu(F_d)}
  .
\end{align}
Hence the above error bound gives us a split between a particular function $f$, in the form of how we measure our functions in the space by $\|f\|_{F_d}$, and how well the algorithm performs in terms of the information cost $n$ for the worst-case error in the space, by $e(Q_{d,n}, F_d)$.
This prompts us, e.g., for the rank-$1$ lattice rule $Q_{d,n,\bsz}$, see~\eqref{eq:R1L-rule}, to find a $\bsz$ for which we have a favourable bound on $e(Q_{d,n,\bsz}, F_d)$.

\paragraph{Modern QMC view (for integration).}
A given $d$-variate integrand $f$ can often be considered as a member of many different function spaces.
First there is what we will call the smoothness, i.e., how nice the function is, and we will denote this with the smoothness parameter~$\alpha$, which roughly measures how many weak derivatives the function has.
Next we will motivate weighted function spaces by modelling the importance of different subsets of variables by weights~$\gamma_\setu > 0$ for each $\setu \subseteq \{1,\ldots,d\}$.
We collect all $\gamma_\setu$ in a weight family $\bsgamma = \{ \gamma_\setu \}_\setu$.
Even after fixing the smoothness~$\alpha$, varying the weights $\bsgamma$ gives a whole scale of spaces $F_{d,\alpha,\bsgamma}$, and $f$ may belong to all of them.
We will show in Lemmas~\ref{lem:H-alpha-mix-weighted-equivalent-norms} and~\ref{lem:periodic-equivalent-norms-d} that, for the spaces that we are interested in, for any two positive weight families $\bsgamma$ and $\bsgamma'$, the corresponding spaces for each fixed~$d$ are equal as vector spaces and have equivalent norms, but their unit balls need not coincide.
We have~\eqref{eq:error-bound},
\begin{align}\label{eq:weighted-error-bound}
  |I_d(f) - Q_{d,n}(f)|
  &\le
  \|f\|_{F_{d,\alpha,\bsgamma}} \, e(Q_{d,n}, F_{d,\alpha,\bsgamma})
  ,
\end{align}
for each choice of~$\bsgamma$.
If we know additional properties of $f$, for example because $f$ is expressed in terms of the solution of a PDE and estimates for that solution reveal the relative importance of the variables and their interactions, then we can use this information to choose the weights $\bsgamma$ and construct the point set so as to try and minimise the product $\|f\|_{F_{d,\alpha,\bsgamma}} \, e(Q_{d,n}, F_{d,\alpha,\bsgamma})$, see, e.g.,~\cite{DKS2013,KSS2012,KN2016}.
Making the weights smaller decreases the worst-case error but increases the norm of the integrand.
We will also show that the weights will determine if our problem allows for dimension-independent error bounds under certain summability conditions on the weights.

\subsection{Some auxiliary useful bounds derived from Jensen's inequality}\label{sec:Jensen}

We have $\|\cdot\|_{\ell_\infty} \le \|\cdot\|_{\ell_2} \le \|\cdot\|_{\ell_1}$, that is,
\begin{align}\label{eq:ell-p-inequality}
  \|\cdot\|_{\ell_q}
  &\le
  \|\cdot\|_{\ell_p}
  &&
  \text{for } \infty \ge q \ge p > 0
  ,
\end{align}
where the norms (or quasi-norms) range over any index set $I \subseteq \Z^d$.
The direction of the inequality is easy to remember by the trivial case $\|\cdot\|_{\ell_\infty} \le \|\cdot\|_{\ell_1}$, but it holds for $\infty \ge q \ge p > 0$.
This inequality is often used in the following form
\begin{align}\label{eq:fake-jensen}
  \Big(\sum_i |a_i|\Big)^{1/\lambda}
  &\le
  \sum_i |a_i|^{1/\lambda}
  &&
  \text{for } \lambda \ge 1
  .
\end{align}
To show this we use~\eqref{eq:ell-p-inequality} and note that the left-hand side can be written as $\|(|a_i|^{1/\lambda})_i\|_{\ell_\lambda}$ while the right-hand side is $\|(|a_i|^{1/\lambda})_i\|_{\ell_1}$.
The next similar useful inequality follows equally directly
\begin{align}\label{eq:fake-jensen-2}
  \Big(\sum_i |a_i|\Big)^{\lambda}_{\hphantom{1/\lambda}}
  &\ge
  \sum_i |a_i|^{\lambda}_{\hphantom{1/\lambda}}
  &&
  \text{for } \lambda \ge 1
  .
\end{align}
Inequality~\eqref{eq:fake-jensen} is sometimes called Jensen's inequality in the QMC and IBC literature due to a ``misnomer'' in~\cite{HLP34}.
It is informative to also show the true Jensen inequality, which looks similar, but is used in the context of expectations, and goes in the opposite direction.
In our context (a discrete probability distribution and a simple power function), Jensen's inequality states
\begin{align}\label{eq:jensen1}
  \Big(\sum_i w_i \, |a_i|\Big)^{1/\lambda}
  &\ge
  \sum_i w_i \, |a_i|^{1/\lambda}
  &&
  \text{for } \lambda \ge 1,\; w_i \ge 0,\; \sum_i w_i = 1
  ,
\intertext{and similarly}
  \label{eq:jensen2}
  \Big(\sum_i w_i \, |a_i|\Big)^{\lambda}_{\hphantom{1/\lambda}}
  &\le
  \sum_i w_i \, |a_i|^{\lambda}_{\hphantom{1/\lambda}}
  &&
  \text{for } \lambda \ge 1,\; w_i \ge 0,\; \sum_i w_i = 1
  .
\end{align}
Note that Jensen's inequality holds for general concave and convex functions, not just $\cdot^{1/\lambda}$ and~$\cdot^\lambda$, as well as for expectations on arbitrary probability spaces.
Although inequalities~\eqref{eq:fake-jensen} and~\eqref{eq:jensen1} go opposite ways, a straightforward proof of~\eqref{eq:ell-p-inequality}, which we just used to directly prove~\eqref{eq:fake-jensen} above, can be done by normalising the sequence and then actually using Jensen's inequality.
So in a way, \eqref{eq:fake-jensen}~could be called Jensen's inequality as well, hence the title of this section.
A much more direct proof is given in~\cite[Theorem~2.2]{DHP2015}.
As a historical remark, Jensen in~\cite{Jen1906} states in an addendum that~\eqref{eq:fake-jensen-2}, in a finite sum version, was ``proved by L\"uroth according to Pringsheim''.

We note that for $L_p$ norms on $[0,1]^d$ (or on a probability space with measure~$1$) it holds that
\begin{align*}
  \|\cdot\|_{L_1([0,1]^d)}
  \le
  \|\cdot\|_{L_2([0,1]^d)}
  \le
  \|\cdot\|_{L_\infty([0,1]^d)}
  ,
\end{align*}
which is the opposite direction of~\eqref{eq:ell-p-inequality}.
Again, one can use Jensen's inequality and interpret $\int_{[0,1]^d} |f(\bsx)|^p \rd{\bsx} = \bbE[|f|^p]$ and convexity to prove this.

\subsection{Smoothness and one-dimensional Sobolev spaces}\label{sec:spaces-d=1}

We want to give a mathematical meaning to the smoothness of a function.
A function that exhibits large variation over short distances should not be considered very smooth.
Norms involving derivatives make this idea precise: large derivatives increase the norm, while membership of the corresponding function space requires weak derivatives up to a prescribed order to be integrable.
Function spaces defined in this way are called Sobolev spaces.
Since the deterministic algorithms considered in these lecture notes use function values, we will be particularly interested in Sobolev spaces whose functions have continuous representatives, so that point evaluation is well defined.
Sobolev spaces are conventionally defined on open domains in standard PDE texts, which would therefore write $H^\alpha((0,1))$.
After this identification, writing $H^\alpha((0,1))$ or $H^\alpha([0,1])$ does not change the normed space: the $L_2$-spaces and weak derivatives are understood on $(0,1)$, while the values at the endpoints are those of the continuous representative.
Following common notation in the QMC and IBC literature for integration on the unit cube, we hence write $H^\alpha([0,1])$.

\subsubsection{The typical standard Sobolev space}\label{sec:typical-Sobolev-d=1}

Sobolev smoothness of integer order is measured by requiring successively higher weak derivatives to belong to $L_2([0,1])$.
For functions on the interval $[0,1]$ and smoothness order $\alpha \in \N_0 = \{0,1,2,\ldots\}$, consider the norm
\begin{align}\label{eq:H-alpha-d=1}
\boxed{
  \|f\|_{H^\alpha([0,1])}^2
  :=
  \sum_{\tau = 0}^\alpha \|D^\tau f\|_{L_2([0, 1])}^2
  ,
}
\end{align}
where
\begin{align*}
  \|f\|_{L_2([0,1])}^2
  :=
  \int_{[0,1]} |f(x)|^2 \rd{x}
  .
\end{align*}
Equation~\eqref{eq:H-alpha-d=1} is a typical definition of a Sobolev space norm for $d=1$.
The ``order'' is sometimes called the ``regularity'' or ``smoothness'' and often denoted by $r$ or~$s$; we will stick with~$\alpha$.

Here and below, $D^\tau f$ denotes the $\tau$th weak derivative; the norm is understood to be infinite if any of the included weak derivatives does not exist in $L_2([0,1])$.
We write $D^\tau f(x)$ to mean $(D^\tau f)(x)$.
The norm above defines the function space
\begin{align}\label{eq:space-H-alpha-d=1}
  H^\alpha([0,1])
  :=
  \Big\{ f \in L_2([0,1]) : \|f\|_{H^\alpha([0,1])} < \infty \Big\}
  .
\end{align}
Membership in the space $H^\alpha([0,1])$ is to have the associated norm be finite.
In the current definition the smoothness $\alpha$ is integer, but this restriction will be removed when we switch to definitions in terms of Fourier coefficients.
We are mainly interested here in $\alpha \ge 1$; the Fourier spaces considered later will allow real $\alpha > 1/2$.
Sometimes these norms are defined without the squares on the right-hand side, and Sobolev spaces can more generally be defined using $L_p$-norms, but we restrict ourselves to the Hilbert space setting and sums of squared $L_2$-norms.
(With the exception of Theorem~\ref{thm:Walpha-p-lower-bound} which states a lower bound for general $L_p$ Sobolev spaces.)
Additionally we might have some extra conditions on functions to be part of the space.
E.g.\ we could ask that they have a certain property like $f(0) = 0$, or $f(0) = f(1)$ etc.
We note that for $\alpha = 0$ the norm~\eqref{eq:H-alpha-d=1} reduces to just the $L_2$-norm and we have $H^0([0,1]) = L_2([0,1])$.
\paragraph{Continuous embeddings and equivalent norms.}
Let $F$ and $G$ be normed spaces such that $F \subseteq G$.
We say that $F$ is \emph{continuously embedded} in $G$, and write $F \hookrightarrow G$, if there exists a constant $C > 0$ such that
\begin{align*}
  \|f\|_G
  &\le
  C \, \|f\|_F
  \qquad
  \text{for all } f \in F \subseteq G
  .
\end{align*}
Thus a continuous embedding $F \hookrightarrow G$ asserts both that every element of $F$ belongs to $G$, i.e., $F \subseteq G$, and that its $G$-norm is controlled by its $F$-norm.
Equivalently, the inclusion map from $F$ into $G$ is continuous.
If $F$ and $G$ are the same vector space equipped with different norms, then the two norms are equivalent if and only if there exist constants $0 < c \le C$ such that
\begin{align*}
  c \, \|f\|_F
  \le
  \|f\|_G
  \le
  C \, \|f\|_F
  \qquad
  \text{for all } f \in F = G
  .
\end{align*}
In this case the inclusion maps are continuous in both directions and we write
\begin{align*}
  F
  &\hookrightleftarrows
  G
  \qquad
  \Longleftrightarrow
  \qquad
  F \hookrightarrow G
  \quad\text{and}\quad
  G \hookrightarrow F
  .
\end{align*}
Note that $F = G$ just denotes equality of the underlying vector spaces; whether their norms are identical or merely equivalent will be stated separately.
When $\alpha = 0$, we have $H^0([0,1]) = L_2([0,1])$, whose functions need not be continuous and may even have point singularities.
For the integer smoothness considered here, the one-dimensional Sobolev embedding gives
\begin{align*}
  H^\alpha([0,1])
  &\hookrightarrow
  C([0,1])
  \qquad
  \text{for } \alpha \ge 1
  ,
\end{align*}
where $C([0,1]) := \{ f : [0,1] \to \bbC : f \text{ is continuous} \}$ is equipped with the uniform norm.
A direct proof, including an explicit embedding constant, follows from the absolute continuity discussed next; see Lemma~\ref{lem:H-alpha-C-embedding-d=1} below.
When we later define smoothness in terms of Fourier coefficients, the corresponding embedding into $C([0,1])$ will hold for $\alpha > 1/2$.
\paragraph{Absolute continuity.}
If $g \in L_1([0,1])$ has a weak derivative $Dg \in L_2([0,1])$, then $g$ has an absolutely continuous representative satisfying, for all $a,x \in [0,1]$,
\begin{align}\label{eq:absolute-continuity-d=1}
  g(x)
  &=
  g(a)
  +
  \int_a^x Dg(t) \rd{t}
  .
\end{align}
In particular, if $f \in H^\alpha([0,1])$, then $D^\tau f$ is absolutely continuous for all $\tau = 0,\ldots,\alpha-1$, while $D^\alpha f$ exists in the weak sense and belongs to $L_2([0,1])$.

\begin{lemma}\label{lem:H-alpha-C-embedding-d=1}
For $\alpha \in \N$, i.e., $\alpha \ge 1$, the continuous embedding $H^\alpha([0,1]) \hookrightarrow C([0,1])$ satisfies
\begin{align}\label{eq:H-alpha-C-embedding-d=1}
  \|f\|_{C([0,1])}
  &\le
  \sqrt{2} \, \|f\|_{H^\alpha([0,1])}
  .
\end{align}
\end{lemma}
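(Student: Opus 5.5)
The plan is to reduce to the case $\alpha = 1$ and then prove the bound with the absolute continuity identity~\eqref{eq:absolute-continuity-d=1}. Since $\|f\|_{H^1([0,1])} \le \|f\|_{H^\alpha([0,1])}$ for every $\alpha \ge 1$, it suffices to show
\begin{align*}
  \|f\|_{C([0,1])}^2 \le 2 \left( \|f\|_{L_2([0,1])}^2 + \|Df\|_{L_2([0,1])}^2 \right)
\end{align*}
for $f \in H^1([0,1])$. The right-hand side only drops nonnegative terms when passing from $H^\alpha$ to $H^1$.

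By~\eqref{eq:absolute-continuity-d=1}, $f$ has an absolutely continuous, hence continuous, representative. So $f \in C([0,1])$, and we identify $f$ with it. Fix $x \in [0,1]$. For every $a \in [0,1]$ we have $f(x) = f(a) + \int_a^x Df(t) \rd{t}$. Integrate this identity over $a \in [0,1]$ to get
\begin{align*}
  f(x) = \int_0^1 f(a) \rd{a} + \int_0^1 \int_a^x Df(t) \rd{t} \rd{a}.
\end{align*}
Taking absolute values, we bound $|\int_a^x Df(t) \rd{t}| \le \int_0^1 |Df(t)| \rd{t}$ for all $a$. Then
\begin{align*}
  |f(x)| \le \|f\|_{L_1([0,1])} + \|Df\|_{L_1([0,1])} \le \|f\|_{L_2([0,1])} + \|Df\|_{L_2([0,1])},
\end{align*}
where the last step uses the $L_1 \le L_2$ inequality on the unit interval noted at the end of Section~\ref{sec:Jensen}.

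Finally, $(u+v)^2 \le 2(u^2+v^2)$ (Cauchy--Schwarz in $\R^2$, or~\eqref{eq:jensen2} with weights $\tfrac12$) gives $|f(x)|^2 \le 2(\|f\|_{L_2}^2 + \|Df\|_{L_2}^2) = 2\|f\|_{H^1([0,1])}^2$. Taking the supremum over $x$ yields~\eqref{eq:H-alpha-C-embedding-d=1}.

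The only delicate step is justifying that the pointwise values of the continuous representative satisfy~\eqref{eq:absolute-continuity-d=1} for all $a, x$, including the endpoints. This is exactly what the stated absolute continuity fact provides. The measurability needed to integrate over $a$ is trivial because the integrand is continuous in $a$.
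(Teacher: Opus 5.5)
Your proof is correct and takes essentially the same route as the paper: average the identity $f(x) = f(a) + \int_a^x Df(t) \rd{t}$ over $a \in [0,1]$, bound the two resulting terms by $\|f\|_{L_2([0,1])}$ and $\|Df\|_{L_2([0,1])}$, and conclude with $(u+v)^2 \le 2(u^2+v^2)$ and $\|f\|_{H^1([0,1])} \le \|f\|_{H^\alpha([0,1])}$. The only cosmetic difference is in the bound on the derivative term: you go through $L_1$ norms and then use $L_1 \le L_2$ on the unit interval, whereas the paper applies Cauchy--Schwarz directly to get $|f(x)-f(y)| \le |x-y|^{1/2} \, \|Df\|_{L_2([0,1])}$.
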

\begin{proof}
Absolute continuity and Cauchy--Schwarz give, for $x, y \in [0,1]$,
\begin{align*}
  |f(x) - f(y)|
  &\le
  \left|\int_y^x Df(t) \rd{t}\right|
  \le
  |x-y|^{1/2} \, \|Df\|_{L_2([0,1])}
  \le
  \|Df\|_{L_2([0,1])}
  .
\end{align*}
Writing $f(x) = \int_0^1 f(y) \rd{y} + \int_0^1 (f(x)-f(y)) \rd{y}$, applying Cauchy--Schwarz to the first integral, and integrating the preceding bound with respect to~$y$, we obtain, for every $x \in [0,1]$,
\begin{align*}
  |f(x)|
  &\le
  \left|\int_0^1 f(y) \rd{y}\right|
  +
  \int_0^1 |f(x)-f(y)| \rd{y}
  \le
  \|f\|_{L_2([0,1])}
  +
  \|Df\|_{L_2([0,1])}
  .
\end{align*}
Using $|a| + |b| \le \sqrt{2} \, (|a|^2 + |b|^2)^{1/2}$ and taking the maximum over $x \in [0,1]$ therefore gives
\begin{align*}
  \|f\|_{C([0,1])}
  =
  \max_{x \in [0,1]} |f(x)|
  &\le
  \|f\|_{L_2([0,1])}
  +
  \|Df\|_{L_2([0,1])}
  \\
  &\le
  \sqrt{2} \, \|f\|_{H^1([0,1])}
  \le
  \sqrt{2} \, \|f\|_{H^\alpha([0,1])}
  .
  \qedhere
\end{align*}
\end{proof}

\subsubsection{Periodic Sobolev spaces and equivalent norms}\label{sec:periodic-spaces-d=1}

We first define $\widetilde{H}^\alpha([0,1])$ for integer $\alpha \in \N_0$ as a subspace of $H^\alpha([0,1])$, and then use Fourier coefficients to define several equivalent versions for arbitrary real~$\alpha \ge 0$.
After proving their equivalence, we use calligraphic
\begin{align*}
  \boxed{
  \widetilde{\calH}^\alpha([0,1])
  }
\end{align*}
to denote their common underlying vector space, without selecting a particular one of the equivalent norms.
These norms and spaces all occur in the literature and they can be used later to define spaces of dominating mixed smoothness by taking tensor products, or more complicated forms using weights, see Table~\ref{tbl:periodic-equivalent-norms} in Section~\ref{sec:periodic-spaces}.
Unfortunately there is no standard notation, and in fact, there are even more norms and spaces which we are not showing here.
There is no need to know all of these norms, but it may be useful to see how we can motivate all these different norms and how they relate to each other as equivalent norms.
Whether one or another of these norms is used in a publication depends on preference and convenience.

\paragraph{Fourier coefficients and convergence.}
For $f \in L_1([0,1])$ we define the Fourier coefficients by
\begin{align}\label{eq:Fourier-coeff-d=1}
  \hat{f}_h
  &:=
  \int_0^1 f(x) \, \exp(-2\pi\rmi \, h x) \rd{x}
  ,
  \qquad
  h \in \Z
  .
\end{align}
The corresponding Fourier series and its partial sums are
\begin{align*}
  \sum_{h \in \Z} \hat{f}_h \, \exp(2\pi\rmi \, h x)
  \qquad\text{and}\qquad
  S_N f(x)
  &:=
  \sum_{|h| \le N} \hat{f}_h \, \exp(2\pi\rmi \, h x)
  .
\end{align*}
The Fourier series of $f$ \emph{converges uniformly} to $f$ if
\begin{align*}
  \sup_{x \in [0,1]} |f(x) - S_N f(x)|
  &\to
  0
  \qquad
  \text{as } N \to \infty
  .
\end{align*}
Further, $f$ has an \emph{absolutely converging Fourier series} if its Fourier coefficients are absolutely summable, i.e.,
\begin{align*}
  \sum_{h \in \Z} |\hat{f}_h|
  &<
  \infty
  .
\end{align*}
The functions with absolutely converging Fourier series form the \emph{Wiener algebra}
\begin{align}\label{eq:def-Wiener-algebra-d=1}
\boxed{
  A([0,1])
  :=
  \Big\{ f \in L_1([0,1]) : \|f\|_{A([0,1])} := \sum_{h \in \Z} |\hat{f}_h| < \infty \Big\}
  .
}
\end{align}
This is often also written as $A(\bbT([0,1]))$, to denote that we identify $[0,1]$ with the torus by identifying the endpoints of~$[0,1]$.
As we show next, each $f \in A([0,1])$ is given pointwise by its absolutely and uniformly convergent Fourier series which automatically satisfies $f(0) = f(1)$.
(To be fair, the Wiener algebra is often just denoted by $A(\bbT)$, but then you do not know if these are $2\pi$-periodic functions or $1$-periodic functions, like here.)

Since $|\hat{f}_h \, \exp(2\pi\rmi \, h x)| = |\hat{f}_h|$ is independent of $x$, the Fourier series converges uniformly to a continuous $1$-periodic function; by uniqueness of Fourier coefficients in $L_1([0,1])$, this function is the continuous representative of~$f$.
Let us denote by $C(\bbT([0,1])) := \{ f \in C([0,1]) : f(0) = f(1) \}$, equipped with the uniform norm, the continuous functions on the torus obtained by identifying the endpoints of~$[0,1]$.
We have
\begin{align*}
  \|f\|_{C(\bbT([0,1]))}
  &=
  \sup_{x \in [0,1]} |f(x)|
  \\
  &=
  \sup_{x \in [0,1]}
  \Big| \sum_{h \in \Z} \hat{f}_h \, \exp(2\pi\rmi \, h x) \Big|
  \le
  \sum_{h \in \Z} |\hat{f}_h|
  =
  \|f\|_{A([0,1])}
  ,
\end{align*}
and hence $A([0,1]) \hookrightarrow C(\bbT([0,1]))$.
Since $C(\bbT([0,1])) \subset L_2([0,1]) \subset L_1([0,1])$ on $[0,1]$, see also the final paragraph of Section~\ref{sec:Jensen}, we could equivalently replace $L_1([0,1])$ by $L_2([0,1])$ in~\eqref{eq:def-Wiener-algebra-d=1}.

\paragraph{Periodic spaces and Fourier-based norms.}
First, for integer smoothness $\alpha \in \N_0$, we define
\begin{align}\label{eq:space-tildeH-alpha-d=1}
  \widetilde{H}^\alpha([0,1])
  &:=
  \Big\{
    f \in H^\alpha([0,1])
    :
    D^\tau f(0) = D^\tau f(1)
    \text{ for } \tau = 0,\ldots,\alpha-1
  \Big\}
  .
\end{align}
The endpoint values are well defined because $D^\tau f$ is absolutely continuous for $\tau = 0,\ldots,\alpha-1$, i.e., we have $D^\tau f \in H^{\alpha-\tau}([0,1]) \subseteq H^1([0,1])$ for all $0 \le \tau \le \alpha - 1$.
Equivalently, for every $\tau = 0,\ldots,\alpha-1$, the continuous representative of $D^\tau f$ on $[0,1]$ is identified with a continuous function on the torus $\bbT([0,1])$ formed by joining the endpoints.
We can thus also specify the space by
\begin{align}\label{eq:space-tildeH-alpha-d=1-b}
  \widetilde{H}^\alpha([0,1])
  &=
  \Big\{
    f \in H^\alpha([0,1])
    :
    D^\tau f \in C(\bbT([0,1]))
    \text{ for } \tau = 0,\ldots,\alpha-1
  \Big\}
  .
\end{align}
This is the sense in which the adjective ``periodic'' is used for the periodic Sobolev spaces.
Note that for $\alpha = 0$ we still have $\widetilde{H}^0([0,1]) = H^0([0,1]) = L_2([0,1])$ and so then there is no periodicity at all.
All the Fourier-based norms below have the form
\begin{align*}
  \|f\|_F^2
  &=
  \sum_{h \in \Z} |\hat{f}_h|^2 \, r_F^2(h)
  ,
\end{align*}
where we call $r_F(h)$ the Fourier multiplier of the norm or space.
Equivalence of such norms can then be proved by pointwise comparison of their Fourier multipliers in~$h \in \Z$.

We equip this subspace $\widetilde{H}^\alpha([0,1]) \subseteq H^\alpha([0,1])$, for $\alpha \in \N_0$, with the norm from $H^\alpha([0,1])$.
The $H^\alpha([0,1])$-norm has an explicit Fourier representation for $f \in \widetilde{H}^\alpha([0,1])$, and we define
\begin{align}\label{eq:norm-tildeH-alpha-d=1}
\boxed{
  \|f\|_{\widetilde{H}^\alpha([0,1])}^2
  :=
  \sum_{h \in \Z}
  |\hat{f}_h|^2
  \left(
    1
    +
    \sum_{\tau=1}^\alpha |2\pi h|^{2\tau}
  \right)
  .
}
\end{align}
We show in Lemma~\ref{lem:periodic-Fourier-characterisation-d=1} below that $\|f\|_{\widetilde{H}^\alpha([0,1])}^2 = \|f\|_{H^\alpha([0,1])}^2$ for $f \in \widetilde{H}^\alpha([0,1])$ and that this norm can be used to define the space~\eqref{eq:space-tildeH-alpha-d=1}, alternatively~\eqref{eq:space-tildeH-alpha-d=1-b}, by just asking the norm~\eqref{eq:norm-tildeH-alpha-d=1} to be finite.
This norm still requires $\alpha$ to be integer, since we have the sum over~$\tau$.
However, if we retain just the highest power, then $\alpha$ is not required to be integer anymore, and, when $\alpha$ is integer, we obtain an equivalent norm.
Hence, for arbitrary real $\alpha > 0$, we define the primed norm
\begin{align}\label{eq:norm-tildeH-prime-alpha-d=1}
\boxed{
  \|f\|_{\widetilde{H}'^\alpha([0,1])}^2
  :=
  \sum_{h \in \Z}
  |\hat{f}_h|^2
  \left(1 + |2\pi h|^{2\alpha}\right)
  ,
}
\end{align}
and the corresponding primed space by
\begin{align}\label{eq:space-tildeH-prime-alpha-d=1}
  \widetilde{H}'^\alpha([0,1])
  &:=
  \Big\{
    f \in L_2([0,1])
    :
    \|f\|_{\widetilde{H}'^\alpha([0,1])} < \infty
  \Big\}
  .
\end{align}
We do not include $\alpha=0$ in this definition, even though it is a valid choice, because then the term $|2\pi h|^{2\alpha}$ still survives, whereas the sum over the derivative norms in~\eqref{eq:norm-tildeH-alpha-d=1} is then empty and there we have $\|\cdot\|_{\widetilde{H}^0([0,1])} = \|\cdot\|_{L_2([0,1])}$ as expected, whilst that would not be the case here.
An easy modification from the previous norm, which works also for $\alpha = 0$, is the \emph{Bessel-potential norm} (on the torus) which we define, for any real $\alpha \ge 0$,
\begin{align}\label{eq:norm-tildeH-double-prime-alpha-d=1}
\boxed{
  \|f\|_{\widetilde{H}''^\alpha([0,1])}^2
  :=
  \sum_{h \in \Z}
  |\hat{f}_h|^2
  \left(1 + |2\pi h|^2\right)^\alpha
  ,
}
\end{align}
and the corresponding space
\begin{align}\label{eq:space-tildeH-double-prime-alpha-d=1}
  \widetilde{H}''^\alpha([0,1])
  &:=
  \Big\{
    f \in L_2([0,1])
    :
    \|f\|_{\widetilde{H}''^\alpha([0,1])} < \infty
  \Big\}
  .
\end{align}
For this last space we also recover $\widetilde{H}''^0([0,1]) = L_2([0,1])$ with identical norms.

Two other equivalent norms which occur frequently in the QMC literature are, for real $\alpha \ge 0$, the \emph{periodic unanchored Sobolev space norm}
\begin{align}\label{eq:norm-tildeH-alpha-u-d=1}
\boxed{
  \|f\|_{\widetilde{H}^{\alpha,\rmu}([0,1])}^2
  :=
  \sum_{h \in \Z}
  |\hat{f}_h|^2 \,
  \max\{1,|2\pi h|\}^{2\alpha}
  ,
}
\end{align}
and the corresponding norm without the factor $2\pi$, which is particularly popular in all kinds of so-called \emph{Korobov spaces} (although that name is used in a much more general sense, also for the norm including the $2\pi$),
\begin{align}\label{eq:norm-tildeH-alpha-u-1-d=1}
\boxed{
  \|f\|_{\widetilde{H}^{\alpha,\rmu}_1([0,1])}^2
  :=
  \sum_{h \in \Z}
  |\hat{f}_h|^2 \,
  \max\{1,|h|\}^{2\alpha}
  .
}
\end{align}
The superscript $\rmu$ denotes that this is an ``unanchored'' norm which we will introduce when discussing equivalent norms for the non-periodic space in~\eqref{eq:H-alpha-u-d=1}, see also Lemma~\ref{lem:H-alpha-u-norm-equivalence-d=1}.
We will use the subscript~$1$ to define alternative versions for any of the Fourier-based norms and it just means to replace the $2\pi$ scaling by~$1$.
A case in point is the following variant of the Bessel potential norm~\eqref{eq:norm-tildeH-double-prime-alpha-d=1}, omitting the factor $2\pi$ in~\eqref{eq:norm-tildeH-double-prime-alpha-d=1}, which occurs often like this in the literature,
\begin{align}\label{eq:norm-tildeH-double-prime-alpha-1-d=1}
\boxed{
  \|f\|_{\widetilde{H}''_1{}^\alpha([0,1])}^2
  :=
  \sum_{h \in \Z}
  |\hat{f}_h|^2
  \left(1 + |h|^2\right)^\alpha
  .
}
\end{align}
Since we are on a roll, we can also define the scale~$1$ variant of~\eqref{eq:norm-tildeH-prime-alpha-d=1}
\begin{align}\label{eq:norm-tildeH-prime-alpha-1-d=1}
\boxed{
  \|f\|_{\widetilde{H}'_1{}^\alpha([0,1])}^2
  :=
  \sum_{h \in \Z}
  |\hat{f}_h|^2
  \left(1 + |h|^{2\alpha}\right)
  .
}
\end{align}
The corresponding spaces are again defined by finiteness of their norms, and, for the relevant ranges of $\alpha$, it will turn out that all of them are equal as vector spaces and they just differ in norm.
We will show embedding constants in Lemma~\ref{lem:periodic-equivalent-norms-d=1}.

\paragraph{Fourier characterisation and equivalent norms.}
For some of the bounds below we will make use of the Riemann zeta function, which, for $x > 1$, is defined by the convergent series
\begin{align*}
  \zeta(x)
  &:=
  \sum_{h \ge 1} h^{-x}
  .
\end{align*}
In particular, $1 < \zeta(x) < \infty$ for $x > 1$, while $\zeta(x) \to \infty$ as $x \to 1^+$.

The following lemma gives the exact Fourier representation of the $\widetilde{H}^\alpha([0,1])$ norm when we start with the $H^\alpha([0,1])$ norm for a periodic function and shows that the Fourier representation automatically encodes the matching conditions in~\eqref{eq:space-tildeH-alpha-d=1}.
Since Lemma~\ref{lem:periodic-equivalent-norms-d=1}, further down below, will show that all of these Fourier-based norms induce the same space for their respective ranges of~$\alpha$, the result below implies that, for $\alpha \in \N_0$, the boundary matching conditions in~\eqref{eq:space-tildeH-alpha-d=1} hold for all these spaces (note however that these conditions are void for $\alpha = 0$).
More generally, for real $\alpha$, the analogous matching conditions hold for every derivative order $\tau \in \N_0$ satisfying $0 \le \tau < \alpha - 1/2$, as we will show later in Lemma~\ref{lem:periodic-derivatives-d=1}.

\begin{lemma}[Encoding ``periodicity'' conditions by the Fourier-based norm]\label{lem:periodic-Fourier-characterisation-d=1}
Let $\alpha \in \N_0$.
The space $\widetilde{H}^\alpha([0,1])$ defined in~\eqref{eq:space-tildeH-alpha-d=1}, or~\eqref{eq:space-tildeH-alpha-d=1-b}, can be equally well defined by asking finiteness of the norm~\eqref{eq:norm-tildeH-alpha-d=1}, i.e.,
\begin{align*}
  \widetilde{H}^\alpha([0,1])
  =
  \Bigg\{
    f \in L_2([0,1])
    :
    \sum_{h \in \Z}
    |\hat{f}_h|^2
    \left(
      1
      +
      \sum_{\tau=1}^\alpha |2\pi h|^{2\tau}
    \right)
    <
    \infty
  \Bigg\}
  .
\end{align*}
For $f \in \widetilde{H}^\alpha([0,1])$ we have $\|f\|_{\widetilde{H}^\alpha([0,1])} = \|f\|_{H^\alpha([0,1])}$ and the Fourier series of $D^\tau f$ converges absolutely and uniformly for $\tau = 0,\ldots,\alpha-1$, and
\begin{align*}
  D^\tau f(x)
  &=
  \sum_{h \in \Z}
  (2\pi\rmi \, h)^\tau \, \hat{f}_h \, \exp(2\pi\rmi \, h x)
  \qquad
  \text{for } x \in [0,1]
  .
\end{align*}
\end{lemma}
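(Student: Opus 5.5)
The proof runs in two directions, with integration by parts as the basic tool in the first.

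\emph{From the periodicity conditions to the Fourier norm.} Take $f \in \widetilde{H}^\alpha([0,1])$ as in~\eqref{eq:space-tildeH-alpha-d=1} and compute the Fourier coefficients of its weak derivatives. For $1 \le \tau \le \alpha$, $D^{\tau-1} f$ is absolutely continuous with weak derivative $D^\tau f \in L_2([0,1])$, so by~\eqref{eq:absolute-continuity-d=1} integration by parts is valid. Using $D^{\tau-1}f(0) = D^{\tau-1}f(1)$ and $\exp(-2\pi\rmi\,h) = 1$, the boundary term vanishes and
\begin{align*}
  \widehat{(D^\tau f)}_h
  &=
  (2\pi\rmi\,h)\,\widehat{(D^{\tau-1} f)}_h .
\end{align*}
Induction on $\tau$ then gives $\widehat{(D^\tau f)}_h = (2\pi\rmi\,h)^\tau \hat f_h$ for $\tau = 0,\ldots,\alpha$.

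Parseval in $L_2([0,1])$ gives $\|D^\tau f\|_{L_2}^2 = \sum_h |2\pi h|^{2\tau}|\hat f_h|^2$. Summing over $\tau$ yields $\|f\|_{H^\alpha}^2 = \|f\|_{\widetilde{H}^\alpha}^2$, which is finite. This proves the inclusion ``$\subseteq$'' and the norm identity.

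\emph{Absolute convergence of the derivative series.} For $\tau \le \alpha-1$ and $h \ne 0$, Cauchy--Schwarz gives
\begin{align*}
  \sum_{h\ne0} |2\pi h|^\tau |\hat f_h|
  &\le
  \Big(\sum_{h \ne 0} |2\pi h|^{2\alpha}|\hat f_h|^2\Big)^{1/2}
  \Big(\sum_{h\ne0} |2\pi h|^{-2(\alpha-\tau)}\Big)^{1/2} .
\end{align*}
The last sum equals $2(2\pi)^{-2(\alpha-\tau)}\zeta(2(\alpha-\tau)) < \infty$ because $\alpha - \tau \ge 1$. Hence $D^\tau f \in A([0,1])$. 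As discussed after~\eqref{eq:def-Wiener-algebra-d=1}, its Fourier series then converges absolutely and uniformly to the continuous representative of $D^\tau f$. This gives the pointwise formula for all $x \in [0,1]$, the endpoints included.

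\emph{From the Fourier norm back to the space.} This is the main obstacle, since we must construct the weak derivatives. Let $f \in L_2$ have finite Fourier norm, and define
\begin{align*}
  g_\tau := \sum_h (2\pi\rmi\,h)^\tau \hat f_h\, e^{2\pi\rmi hx}, \qquad \tau = 0,\ldots,\alpha .
\end{align*}
Each $g_\tau$ lies in $L_2$ by Riesz--Fischer, since $\sum_h |2\pi h|^{2\tau}|\hat f_h|^2 < \infty$. For $\tau \le \alpha-1$ the same Cauchy--Schwarz bound shows that $g_\tau$ is a continuous $1$-periodic function given by an absolutely convergent series.

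We next show $g_{\tau}$ is the weak derivative of $g_{\tau-1}$. By~\eqref{eq:absolute-continuity-d=1} it suffices to check
\begin{align*}
  g_{\tau-1}(x) = g_{\tau-1}(0) + \int_0^x g_\tau(t)\rd t .
\end{align*}
The zeroth coefficient of $g_\tau$ vanishes, so integrating term by term is justified by $L_2$ convergence of the series of $g_\tau$, with the integral over $[0,x]$ being a continuous functional on $L_2$. Each term $(2\pi\rmi h)^\tau\hat f_h e^{2\pi\rmi ht}$ integrates to $(2\pi\rmi h)^{\tau-1}\hat f_h(e^{2\pi\rmi hx}-1)$, and summing these reproduces $g_{\tau-1}(x) - g_{\tau-1}(0)$.

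Hence $f = g_0$ almost everywhere, and its continuous representative has $D^\tau f = g_\tau \in L_2$ for $\tau \le \alpha$. Moreover $D^\tau f(0) = D^\tau f(1)$ for $\tau \le \alpha-1$, because each such $g_\tau$ is an absolutely convergent trigonometric series and hence periodic. So $f \in \widetilde{H}^\alpha([0,1])$.

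For $\alpha = 0$ every statement reduces to Parseval, and the periodicity conditions are void.
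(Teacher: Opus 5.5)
Your proof is correct and follows the same route as the paper. You use integration by parts with the matching conditions plus Parseval for one inclusion and the norm identity, then the Cauchy--Schwarz/zeta-function bound to get absolute and uniform convergence of the derivative series, which gives the periodicity conditions in the converse; where the paper only asserts that finiteness of the Fourier norm yields weak derivatives through order $\alpha$, you verify it by term-by-term integration of the $L_2$-convergent series, showing that the $g_\tau$ are successive weak derivatives (strictly this uses the converse of the quoted absolute-continuity identity, i.e.\ the fundamental theorem of calculus for indefinite Lebesgue integrals).
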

\begin{proof}
For $\alpha=0$, the statement is obvious.
For the remainder of the proof, assume $\alpha \ge 1$.
First let $f$ satisfy the conditions in~\eqref{eq:space-tildeH-alpha-d=1}.
Repeated integration by parts, using the matching conditions in~\eqref{eq:space-tildeH-alpha-d=1} to eliminate the boundary terms, gives
\begin{align*}
  \widehat{D^\tau f}_h
  &=
  (2\pi\rmi \, h)^\tau \, \hat{f}_h
  \qquad
  \text{for } \tau = 1,\ldots,\alpha
  .
\end{align*}
Parseval's identity then gives the norm in terms of Fourier coefficients~\eqref{eq:norm-tildeH-alpha-d=1}, and hence one inclusion.
Conversely, suppose that the Fourier-based norm~\eqref{eq:norm-tildeH-alpha-d=1} is finite.
This implies that $\sum_{h\in\Z} |(2\pi\rmi\,h)^\tau \hat{f}_h|^2 < \infty$ for $\tau = 0,\ldots,\alpha$.
Consequently, $f$ has weak derivatives through order $\alpha$ in $L_2([0,1])$ and hence $f \in H^\alpha([0,1])$.
It remains only to verify the matching conditions in~\eqref{eq:space-tildeH-alpha-d=1}.
For $\alpha \ge 1$, define
\begin{align*}
  (\widetilde{C}_{\alpha,1})^2
  &:=
  \sum_{h \in \Z}
  \left(
    1
    +
    \sum_{k=1}^\alpha |2\pi h|^{2k}
  \right)^{-1}
  =
  1 + 2 \sum_{h \ge 1}
  \left(
    1
    +
    \sum_{k=1}^\alpha |2\pi h|^{2k}
  \right)^{-1}
  \\
  &\hphantom{:}\le
  1 + 2 \sum_{h \ge 1} |2\pi h|^{-2\alpha}
  \le
  1
  +
  2 \, (2\pi)^{-2\alpha} \, \zeta(2\alpha)
  <
  \infty
  ,
\end{align*}
which is finite since $2 \alpha > 1$.
For $\alpha \ge 2$, define
\begin{align*}
  (\widetilde{C}_{\alpha,2})^2
  &:=
  \max_{1 \le \tau \le \alpha-1}
  \sum_{h \in \Z}
  |2\pi h|^{2\tau} \,
  \left(
    1
    +
    \sum_{k=1}^\alpha |2\pi h|^{2k}
  \right)^{-1}
  \\
  &\hphantom{:}=
  \max_{1 \le \tau \le \alpha-1}
  2 \sum_{h \ge 1}
  |2\pi h|^{2\tau} \,
  \left(
    1
    +
    \sum_{k=1}^\alpha |2\pi h|^{2k}
  \right)^{-1}
  \\
  &\hphantom{:}\le
  \max_{1 \le \tau \le \alpha-1}
  2 \sum_{h \ge 1}
  |2\pi h|^{2\tau} \,
  |2\pi h|^{-2\alpha}
  \\
  &\hphantom{:}=
  \max_{1 \le \tau \le \alpha-1}
  2 \, (2\pi)^{-2 (\alpha - \tau)} \, \zeta(2 (\alpha - \tau))
  <
  \infty
  ,
\end{align*}
which is finite since $2 (\alpha - \tau) > 1$ for all $1 \le \tau \le \alpha-1$.
We can show that the Fourier series of $f$ is absolutely summable, since Cauchy--Schwarz gives
\begin{align*}
  \sum_{h \in \Z} |\hat{f}_h|
  &\le
  \widetilde{C}_{\alpha,1} \, \|f\|_{\widetilde{H}^\alpha([0,1])}
  <
  \infty
  .
\end{align*}
Hence the Fourier series of $f$ converges absolutely and uniformly and defines a continuous $1$-periodic representative.
When $\alpha \ge 2$, we must additionally consider $\tau = 1,\ldots,\alpha-1$.
For each such $\tau$, Cauchy--Schwarz gives
\begin{align*}
  \sum_{h \in \Z} |(2\pi h)^\tau \, \hat{f}_h|
  &\le
  \widetilde{C}_{\alpha,2} \, \|f\|_{\widetilde{H}^\alpha([0,1])}
  <
  \infty
  .
\end{align*}
Hence the Fourier series of $D^\tau f$ also converges absolutely and uniformly and defines a continuous $1$-periodic representative for $\tau = 1,\ldots,\alpha-1$.
It follows that the conditions in~\eqref{eq:space-tildeH-alpha-d=1} hold, so $f \in \widetilde{H}^\alpha([0,1])$.
\end{proof}

We next show that all of these spaces are continuously embedded into one another for the relevant ranges of~$\alpha$.
(Note that the constants in the lemma below could be improved by taking into account that $h \in \Z$, while the proof considers inequalities for $x \in \R$.)

\begin{lemma}[Reciprocal continuous embeddings]\label{lem:periodic-equivalent-norms-d=1}
For $\alpha > 0$ we have the continuous embeddings $\widetilde{H}'^\alpha([0,1]) \hookrightleftarrows \widetilde{H}''^\alpha([0,1])$ and $\widetilde{H}'_1{}^\alpha([0,1]) \hookrightleftarrows \widetilde{H}''_1{}^\alpha([0,1])$ with
\begin{align*}
  \min\{1, \sqrt{2^{\alpha-1}}\} \,
  \|f\|_{\widetilde{H}'^\alpha([0,1])}
  &\le
  \|f\|_{\widetilde{H}''^\alpha([0,1])}
  \le
  \max\{1, \sqrt{2^{\alpha-1}}\} \,
  \|f\|_{\widetilde{H}'^\alpha([0,1])}
  ,
  \\
  \min\{1, \sqrt{2^{\alpha-1}}\} \,
  \|f\|_{\widetilde{H}'_1{}^\alpha([0,1])}
  &\le
  \|f\|_{\widetilde{H}''_1{}^\alpha([0,1])}
  \le
  \max\{1, \sqrt{2^{\alpha-1}}\} \,
  \|f\|_{\widetilde{H}'_1{}^\alpha([0,1])}
  ,
\end{align*}
and $\widetilde{H}^{\alpha,\rmu}([0,1]) \hookrightleftarrows \widetilde{H}'^\alpha([0,1])$ and $\widetilde{H}^{\alpha,\rmu}_1([0,1]) \hookrightleftarrows \widetilde{H}'_1{}^\alpha([0,1])$ with
\begin{align*}
  \|f\|_{\widetilde{H}^{\alpha,\rmu}([0,1])}
  &\le
  \|f\|_{\widetilde{H}'^\alpha([0,1])}
  \le
  \sqrt{2} \, \|f\|_{\widetilde{H}^{\alpha,\rmu}([0,1])}
  ,
  \\
  \|f\|_{\widetilde{H}^{\alpha,\rmu}_1([0,1])}
  &\le
  \|f\|_{\widetilde{H}'_1{}^\alpha([0,1])}
  \le
  \sqrt{2} \, \|f\|_{\widetilde{H}^{\alpha,\rmu}_1([0,1])}
  ,
\end{align*}
and $\widetilde{H}'_1{}^\alpha([0,1]) \hookrightleftarrows \widetilde{H}'^\alpha([0,1])$ with
\begin{align*}
  \|f\|_{\widetilde{H}'_1{}^\alpha([0,1])}
  &\le
  \|f\|_{\widetilde{H}'^\alpha([0,1])}
  \le
  (2\pi)^\alpha \, \|f\|_{\widetilde{H}'_1{}^\alpha([0,1])}
  .
\end{align*}
For $\alpha \ge 0$ we have the continuous embeddings $\widetilde{H}^{\alpha,\rmu}([0,1]) \hookrightleftarrows \widetilde{H}''^\alpha([0,1])$, $\widetilde{H}^{\alpha,\rmu}_1([0,1]) \hookrightleftarrows \widetilde{H}''_1{}^\alpha([0,1])$, $\widetilde{H}''_1{}^\alpha([0,1]) \hookrightleftarrows \widetilde{H}''^\alpha([0,1])$ and $\widetilde{H}^{\alpha,\rmu}_1([0,1]) \hookrightleftarrows \widetilde{H}^{\alpha,\rmu}([0,1])$ with
\begin{align*}
  \|f\|_{\widetilde{H}^{\alpha,\rmu}([0,1])}
  &\le
  \|f\|_{\widetilde{H}''^\alpha([0,1])}
  \le
  2^{\alpha/2} \, \|f\|_{\widetilde{H}^{\alpha,\rmu}([0,1])}
  ,
  \\
  \|f\|_{\widetilde{H}^{\alpha,\rmu}_1([0,1])}
  &\le
  \|f\|_{\widetilde{H}''_1{}^\alpha([0,1])}
  \le
  2^{\alpha/2} \, \|f\|_{\widetilde{H}^{\alpha,\rmu}_1([0,1])}
  ,
  \\
  \|f\|_{\widetilde{H}''_1{}^\alpha([0,1])}
  &\le
  \|f\|_{\widetilde{H}''^\alpha([0,1])}
  \le
  (2\pi)^\alpha \, \|f\|_{\widetilde{H}''_1{}^\alpha([0,1])}
  ,
  \\
  \|f\|_{\widetilde{H}^{\alpha,\rmu}_1([0,1])}
  &\le
  \|f\|_{\widetilde{H}^{\alpha,\rmu}([0,1])}
  \le
  (2\pi)^\alpha \, \|f\|_{\widetilde{H}^{\alpha,\rmu}_1([0,1])}
  .
\end{align*}
For $\alpha \in \N$ we have the continuous embeddings $\widetilde{H}'^\alpha([0,1]) \hookrightleftarrows \widetilde{H}^\alpha([0,1])$ and $\widetilde{H}^\alpha([0,1]) \hookrightleftarrows \widetilde{H}''^\alpha([0,1])$ with
\begin{align*}
  \|f\|_{\widetilde{H}'^\alpha([0,1])}
  &\le
  \|f\|_{\widetilde{H}^\alpha([0,1])}
  \le
  \sqrt{\frac{\alpha+1}{2}} \, \|f\|_{\widetilde{H}'^\alpha([0,1])}
  ,
  \\
  \|f\|_{\widetilde{H}^\alpha([0,1])}
  &\le
  \|f\|_{\widetilde{H}''^\alpha([0,1])}
  \le
  \sqrt{\frac{2^\alpha}{\alpha+1}} \, \|f\|_{\widetilde{H}^\alpha([0,1])}
  .
\end{align*}
These bounds can be combined to get, e.g., for $\alpha \in \N$,
\begin{align*}
  \|f\|_{\widetilde{H}^\alpha([0,1])}
  &\le
  \sqrt{\frac{\alpha+1}{2}} \, \|f\|_{\widetilde{H}'^\alpha([0,1])}
  \le
  \sqrt{\alpha+1} \, \|f\|_{\widetilde{H}^{\alpha,\rmu}([0,1])}
  .
\end{align*}
A direct comparison gives the better, and $\alpha$-independent, bound
\begin{align*}
  \|f\|_{\widetilde{H}^{\alpha,\rmu}([0,1])}
  &\le
  \|f\|_{\widetilde{H}^\alpha([0,1])}
  \le
  \left(1-(2\pi)^{-2}\right)^{-1/2} \, \|f\|_{\widetilde{H}^{\alpha,\rmu}([0,1])}
  ,
\end{align*}
where the upper bound is sharp for $\alpha \to \infty$.
\end{lemma}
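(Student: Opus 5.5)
All norms in the statement have the form $\sum_{h\in\Z} |\hat f_h|^2 \, r_F^2(h)$. By Parseval, each two-sided norm comparison therefore reduces to a pointwise comparison of Fourier multipliers, namely $c^2 r_F^2(h) \le r_G^2(h) \le C^2 r_F^2(h)$ for all $h\in\Z$. Equality of the underlying vector spaces then follows at once, since finiteness of one norm implies finiteness of the other. So the plan is to substitute $x = |2\pi h| \ge 0$ (or $x=|h|$ for the scale-$1$ variants) and prove elementary real-variable inequalities for $x \ge 0$. Because the $2\pi$ and scale-$1$ variants use the same multiplier shapes in $x$, each pair of statements follows from one inequality.

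The basic inequalities, in order, are as follows.
\begin{itemize}
\item For $(1+x^2)^\alpha$ versus $1+x^{2\alpha}$: with $t=x^2$, the power-mean inequality (Jensen, Section~\ref{sec:Jensen}, i.e.~\eqref{eq:jensen2} or~\eqref{eq:jensen1} with weights $1/2$) gives $\big(\tfrac{1+t}{2}\big)^\alpha \le \tfrac{1+t^\alpha}{2}$ for $\alpha\ge1$, and the reverse for $\alpha\le1$. Combined with~\eqref{eq:fake-jensen-2} and~\eqref{eq:fake-jensen}, this yields $1+t^\alpha \le (1+t)^\alpha$ for $\alpha \ge 1$, and the reverse for $\alpha \le 1$. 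Together these give the constants $\min\{1,2^{\alpha-1}\}$ and $\max\{1,2^{\alpha-1}\}$, whose square roots appear in the statement.
\item For $\max\{1,x\}^{2\alpha}$ versus $1+x^{2\alpha}$: use $\max\{1,s\} \le 1+s \le 2\max\{1,s\}$ with $s = x^{2\alpha}$.
\item For $\max\{1,x\}^{2\alpha}$ versus $(1+x^2)^\alpha$: use $\max\{1,x^2\} \le 1+x^2 \le 2\max\{1,x^2\}$, raised to the power $\alpha \ge 0$.
\item For the $2\pi$ scaling: for $h\in\Z$, $h=0$ gives equal multipliers. For $|h|\ge1$ we have $|h| \le |2\pi h| \le 2\pi|h|$, and each multiplier is monotone in its argument, so the scaled multiplier lies between the unscaled one and $(2\pi)^{2\alpha}$ times it. For instance, $1+(2\pi h)^{2\alpha} \le (2\pi)^{2\alpha}(1+h^{2\alpha})$ and $(1+(2\pi h)^2)^\alpha \le ((2\pi)^2(1+h^2))^\alpha$.
\end{itemize}

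For integer $\alpha$, write $S(x) = 1+\sum_{\tau=1}^\alpha x^{2\tau}$. The lower bound $1+x^{2\alpha}\le S(x)$ is trivial. For the upper bound, each middle term $x^{2\tau}$ with $1\le\tau\le\alpha-1$ is at most $\max\{1,x^{2\alpha}\}$, and this gives roughly $\alpha/2$ copies of $1+x^{2\alpha}$ when $x$ is large. More precisely, I would pair the terms $x^{2\tau}+x^{2(\alpha-\tau)} \le 1 + x^{2\alpha}$, which holds since $(x^{2\tau}-1)(x^{2(\alpha-\tau)}-1)\ge0$. Summing over all $\tau=0,\dots,\alpha$ counts each pair twice, so $S(x) \le \frac{\alpha+1}{2}(1+x^{2\alpha})$. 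Similarly, $S(x) \le (1+x^2)^\alpha$ follows by the binomial expansion, since all binomial coefficients are at least $1$. For the remaining inequality $(1+x^2)^\alpha \le \frac{2^\alpha}{\alpha+1} S(x)$ I would use Chebyshev's sum inequality: the terms $\binom{\alpha}{\tau}$ and $t^\tau$ are not similarly ordered, so this needs care. My fallback is to argue that $(1+t)^\alpha/S(t)$ with $t=x^2$ is maximised at $t=1$ by symmetry under $t\mapsto1/t$, which multiplies both numerator and denominator by $t^{-\alpha}$, together with a log-concavity argument. I expect this to be the main obstacle.

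Finally, for the sharp $\alpha$-independent bound, compare $S(x)$ with $\max\{1,x\}^{2\alpha}$. For $x<1$, $S(x)\ge 1$, and for $x\ge1$, $S(x)\ge x^{2\alpha}$, which gives the lower bound. For $x\ge 2\pi$, which covers every $h\ne0$, the geometric series gives $S(x)\le x^{2\alpha}\sum_{k\ge0}x^{-2k} = x^{2\alpha}/(1-x^{-2}) \le x^{2\alpha}/(1-(2\pi)^{-2})$; for $h=0$ both multipliers equal $1$. Sharpness as $\alpha\to\infty$ follows from taking $h=1$, where the ratio tends to the geometric limit. The chained bound in the statement is just a product of the two stated constants.
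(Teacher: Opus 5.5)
Apart from one inequality, your plan is correct and is essentially the paper's proof. It uses the same pointwise multiplier comparisons and the same power-mean sandwich for $(1+t)^\alpha$ against $1+t^\alpha$. It also uses the same pairing $(t^\tau-1)(t^{\alpha-\tau}-1)\ge0$ to get $S\le\frac{\alpha+1}{2}(1+x^{2\alpha})$, and the same geometric-series argument for the sharp $\alpha$-independent bound.

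The gap is the upper bound $\|f\|_{\widetilde{H}''^\alpha([0,1])}\le\sqrt{2^\alpha/(\alpha+1)}\,\|f\|_{\widetilde{H}^\alpha([0,1])}$. Equivalently, you need $(1+t)^\alpha\le\frac{2^\alpha}{\alpha+1}\sum_{\tau=0}^\alpha t^\tau$ for $t\ge0$, and you do not actually prove it. Neither suggested route works as stated:
\begin{itemize}
\item Chebyshev's sum inequality cannot be applied to $\binom{\alpha}{\tau}$ and $t^\tau$ directly, since, as you note, they are not oppositely ordered.
\item The log-concavity fallback fails. For $\alpha=2$ the ratio is $R(t)=(1+t)^2/(1+t+t^2)=1+t/(1+t+t^2)$. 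As $t\to\infty$, $\log R(t)\sim 1/t$, which is convex both in $t$ and in $\log t$. So the symmetry $R(1/t)=R(t)$ together with "log-concavity" does not place the maximum at $t=1$.
\end{itemize}

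The paper closes this step with a short convexity argument. For $\alpha\ge1$ the function $s\mapsto((1-s)+st)^\alpha$ is convex on $[0,1]$. By Jensen its integral over $[0,1]$ is at least its value at $s=1/2$, which is $2^{-\alpha}(1+t)^\alpha$. A direct computation gives $\int_0^1((1-s)+st)^\alpha\,\mathrm{d}s=\frac{1}{\alpha+1}\sum_{\tau=0}^\alpha t^\tau$, which yields the bound.

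Your own ideas can also be repaired.
\begin{itemize}
\item For Chebyshev, symmetrise first. Since $\binom{\alpha}{\tau}=\binom{\alpha}{\alpha-\tau}$, you have $(1+t)^\alpha=\sum_\tau\binom{\alpha}{\tau}u_\tau$ with $u_\tau=\tfrac12(t^\tau+t^{\alpha-\tau})$. Now $\binom{\alpha}{\tau}$ is nonincreasing and $u_\tau$ is nondecreasing in $|\tau-\alpha/2|$, so they are oppositely ordered. Chebyshev then gives $\sum_\tau\binom{\alpha}{\tau}u_\tau\le\frac{1}{\alpha+1}\,2^\alpha\sum_\tau u_\tau=\frac{2^\alpha}{\alpha+1}\sum_\tau t^\tau$.
\item For the symmetry route, replace log-concavity by monotonicity. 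With $P(t)=\sum_{\tau=0}^\alpha t^\tau$ one finds $\alpha P(t)-(1+t)P'(t)=\sum_{\tau=0}^{\alpha-1}(\alpha-1-2\tau)t^\tau$. This is $\ge0$ on $[0,1]$ by pairing $\tau$ with $\alpha-1-\tau$. Hence $R$ is nondecreasing on $[0,1]$, and by $R(1/t)=R(t)$ it is maximal at $t=1$, with value $2^\alpha/(\alpha+1)$.
\end{itemize}
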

\begin{proof}
For $\alpha \ge 1$ and $a, b \ge 0$ we have, see~\eqref{eq:fake-jensen}--\eqref{eq:jensen2} in Section~\ref{sec:Jensen},
\begin{align*}
  (a^\alpha + b^\alpha)
  \le
  (a + b)^\alpha
  \le
  2^{\alpha-1} \, (a^\alpha + b^\alpha)
  ,
\end{align*}
while for $0 \le \alpha \le 1$ we have
\begin{align*}
  2^{\alpha-1} \, (a^\alpha + b^\alpha)
  \le
  (a + b)^\alpha
  \le
  (a^\alpha + b^\alpha)
  ,
\end{align*}
combining these gives, for $\alpha \ge 0$ and $a, b \ge 0$,
\begin{align*}
  \min\{1, 2^{\alpha-1}\} \, (a^\alpha + b^\alpha)
  \le
  (a + b)^\alpha
  \le
  \max\{1, 2^{\alpha-1}\} \, (a^\alpha + b^\alpha)
  .
\end{align*}
We can now compare the Fourier multipliers and arrive at the following, for $\alpha \ge 0$, $x \in \R$, under the convention that $0^0 = 1$,
\begin{align*}
  \min\{1, 2^{\alpha-1}\} \, (1 + |x|^{2\alpha})
  &\le
  (1 + |x|^2)^\alpha
  \le
  \max\{1, 2^{\alpha-1}\} \, (1 + |x|^{2\alpha})
  ,
  \\
  \max\{1, |x|\}^{2\alpha}
  &\le
  1 + |x|^{2\alpha}
  \le
  2 \, \max\{1, |x|\}^{2\alpha}
  ,
  \\
  \max\{1, |x|\}^{2\alpha}
  =
  (\max\{1, |x|^2\})^\alpha
  &\le
  (1 + |x|^2)^\alpha
  \le
  2^\alpha \, \max\{1, |x|\}^{2\alpha}
  ,
\end{align*}
and
\begin{align*}
  (1 + |x|^{2\alpha})
  &\le
  (1 + |2\pi x|^{2\alpha})
  \le
  (2\pi)^{2\alpha} \, (1 + |x|^{2\alpha})
  ,
  \\
  (1 + |x|^2)^\alpha
  &\le
  (1 + |2\pi x|^2)^\alpha
  \le
  (2\pi)^{2\alpha} \, (1 + |x|^2)^\alpha
  ,
  \\
  \max\{1, |x|\}^{2\alpha}
  &
  \le
  \max\{1, |2\pi x|\}^{2\alpha}
  \le
  (2\pi)^{2\alpha} \, \max\{1, |x|\}^{2\alpha}
  .
\end{align*}

Consider $t \ge 0$.
Note that for $\tau=0,\ldots,\alpha$ we have $(1-t^\tau) \, (1-t^{\alpha-\tau}) = 1 + t^\alpha - t^\tau - t^{\alpha-\tau} \ge 0$ and hence $t^\tau + t^{\alpha-\tau} \le 1 + t^\alpha$.
Summing both sides over $\tau$ gives $\sum_{\tau=0}^\alpha (t^\tau + t^{\alpha-\tau}) = 2 \sum_{\tau=0}^\alpha t^\tau$ for the left-hand side and $\sum_{\tau=0}^\alpha (1 + t^\alpha) = (\alpha+1) \, (1+t^\alpha)$ for the right-hand side, hence
\begin{align*}
  \left( 1 + \sum_{\tau=1}^\alpha t^\tau \right)
  &\le
  \frac{\alpha+1}{2} \, (1 + t^\alpha)
  .
\end{align*}
Next, for $\alpha \ge 1$ consider the function $f(s) = ((1 - s) + s \, t)^\alpha$ for $0 \le s \le 1$.
This function is convex and hence $\int_0^1 f(s) \rd{s} \ge f(1/2) = (1+t)^\alpha / 2^\alpha$ while $\int_0^1 f(s) \rd{s} = (1/(\alpha+1)) \sum_{\tau=0}^\alpha t^\tau$.
Hence
\begin{align*}
  (1 + t)^\alpha
  &\le
  \frac{2^\alpha}{\alpha + 1} (1 + \sum_{\tau=1}^\alpha t^\tau)
  .
\end{align*}

Finally, these let us prove, for $\alpha \in \N$ and $x \in \R$,
\begin{align*}
  (1 + |x|^{2\alpha})
  &\le
  \left( 1 + \sum_{\tau = 1}^\alpha |x|^{2 \tau} \right)
  \le
  \frac{\alpha + 1}{2} \, (1 + |x|^{2\alpha})
  ,
\end{align*}
and for $\alpha \in \N_0$ and $x \in \R$,
\begin{align*}
  \left(1 + \sum_{\tau=1}^\alpha |x|^{2\tau} \right)
  &\le
  (1 + |x|^2)^\alpha
  \le
  \frac{2^\alpha}{\alpha+1} \,
  \left(1 + \sum_{\tau=1}^\alpha |x|^{2\tau} \right)
  .
\end{align*}
From these inequalities all the results follow, except for the last result.
For the last result, Cauchy--Schwarz and Parseval's identity immediately give
\begin{align*}
  \|f\|_{\widetilde{H}^{\alpha,\rmu}([0,1])}^2
  &=
  \left|\int_0^1 f(x) \rd{x}\right|^2
  +
  \|D^\alpha f\|_{L_2([0,1])}^2
  \\
  &\le
  \|f\|_{L_2([0,1])}^2
  +
  \|D^\alpha f\|_{L_2([0,1])}^2
  \le
  \|f\|_{\widetilde{H}^\alpha([0,1])}^2
  .
\end{align*}
For the upper inequality, the Fourier multipliers are equal to one for $h=0$, while for $h \ne 0$ we have
\begin{align*}
  \frac{\sum_{\tau=0}^\alpha |2\pi h|^{2\tau}}{|2\pi h|^{2\alpha}}
  =
  \sum_{j=0}^\alpha |2\pi h|^{-2j}
  \le
  \sum_{j \ge 0} (2\pi)^{-2j}
  =
  \left(1-(2\pi)^{-2}\right)^{-1}
  .
\end{align*}
Since $(1-(2\pi)^{-2})^{-1} > 1$ this proves the bound.
One can take the function $\exp(2\pi\rmi \, x)$ which has Fourier support $h=1$ and then the limit of the ratio gives exactly $(1-(2\pi)^{-2})^{-1}$ as above for $\alpha \to \infty$.
\end{proof}

Thus, for the relevant ranges of $\alpha$, the Fourier-based spaces covered by the lemma coincide as vector spaces and have equivalent norms.
For $\alpha > 0$, define their common underlying vector space by
\begin{align*}
  \widetilde{\calH}^\alpha([0,1])
  &:=
  \widetilde{H}'^\alpha([0,1])
  =
  \widetilde{H}''^\alpha([0,1])
  =
  \widetilde{H}^{\alpha,\rmu}([0,1])
  \\
  &\hphantom{:}=
  \widetilde{H}'_1{}^\alpha([0,1])
  =
  \widetilde{H}''_1{}^\alpha([0,1])
  =
  \widetilde{H}^{\alpha,\rmu}_1([0,1])
  ,
\end{align*}
where these are equalities of vector spaces, and the corresponding norms are equivalent.
Similarly, for $\alpha = 0$, we leave out the primed space and define
\begin{align*}
  \widetilde{\calH}^0([0,1])
  &:=
  \widetilde{H}''^0([0,1])
  =
  \widetilde{H}^{0,\rmu}([0,1])
  \\
  &\hphantom{:}=
  \widetilde{H}''_1{}^0([0,1])
  =
  \widetilde{H}^{0,\rmu}_1([0,1])
  =
  L_2([0,1])
  =
  \widetilde{H}^\alpha([0,1])
  .
\end{align*}
Obviously, for $\alpha \in \N_0$, $\widetilde{H}^\alpha([0,1]) = \widetilde{\calH}^\alpha([0,1])$ as well.

\paragraph{Continuity, nestedness and derivative regularity.}
Next we show that $\widetilde{\calH}^\alpha([0,1])$ is continuously embedded into the continuous functions for any of those norms when $\alpha > 1/2$.
For us this means that when point evaluations are required, e.g., for deterministic algorithms, we restrict to $\alpha > 1/2$ for any of the periodic Sobolev spaces defined here.

\begin{lemma}[Embedding into continuous functions (and Wiener algebra)]\label{lem:periodic-continuous-embeddings-d=1}
For $\alpha > 1/2$ we have $\widetilde{\calH}^\alpha([0,1]) \subset A([0,1]) \hookrightarrow C(\bbT([0,1])) \hookrightarrow C([0,1])$, and this inclusion is continuous for any of the equivalent norms.
Let $F$ denote any of the spaces $\widetilde{H}'^\alpha([0,1])$, $\widetilde{H}''^\alpha([0,1])$ and $\widetilde{H}^{\alpha,\rmu}([0,1])$, or $\widetilde{H}^\alpha([0,1])$ when $\alpha \in \N$, and $r_F(h)$ its Fourier multiplier such that $\|f\|_F^2 = \sum_{h \in \Z} |\hat{f}_h|^2 \, r_F^2(h)$, then
\begin{align*}
  \|f\|_{C([0,1])}
  &\le
  \|f\|_{A([0,1])}
  \le
  \widetilde{C}_{\alpha,F} \, \|f\|_F
  ,
\end{align*}
with
\begin{align*}
  (\widetilde{C}_{\alpha,F})^2
  &:=
  \sum_{h \in \Z} r_F^{-2}(h)
  \le
  1 + 2 \, (2\pi)^{-2\alpha} \, \zeta(2\alpha)
  <
  \infty
  .
\end{align*}
Let $F_1$ denote any of the spaces $\widetilde{H}'_1{}^\alpha([0,1])$, $\widetilde{H}''_1{}^\alpha([0,1])$, $\widetilde{H}^{\alpha,\rmu}_1([0,1])$, and $r_{F_1}(h)$ its Fourier multiplier, then $F_1$ is continuously embedded into $A([0,1]) \hookrightarrow C(\bbT([0,1]))$ $\hookrightarrow C([0,1])$ with
\begin{align*}
  \|f\|_{C([0,1])}
  &\le
  \|f\|_{A([0,1])}
  \le
  \widetilde{C}_{\alpha,F_1} \, \|f\|_{F_1}
  ,
\end{align*}
with
\begin{align*}
  (\widetilde{C}_{\alpha,F_1})^2
  &:=
  \sum_{h \in \Z} r_{F_1}^{-2}(h)
  \le
  1 + 2 \, \zeta(2\alpha)
  <
  \infty
  .
\end{align*}
In each case the continuous representative is given by the absolutely and uniformly convergent Fourier series
\begin{align*}
  f(x)
  &=
  \sum_{h \in \Z}
  \hat{f}_h \, \exp(2\pi\rmi \, h x)
  \qquad
  \text{for } x \in [0,1]
  .
\end{align*}
\end{lemma}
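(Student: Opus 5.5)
The whole statement reduces to a single Cauchy--Schwarz estimate on the Fourier coefficients, combined with a pointwise lower bound on each Fourier multiplier. I will first handle a generic space $F$ with multiplier $r_F$, write $|\hat f_h| = |\hat f_h|\, r_F(h)\cdot r_F^{-1}(h)$, and apply Cauchy--Schwarz over $h\in\Z$:
\begin{align*}
  \|f\|_{A([0,1])} = \sum_{h\in\Z}|\hat f_h|
  \le \Big(\sum_{h\in\Z} r_F^{-2}(h)\Big)^{1/2}\Big(\sum_{h\in\Z}|\hat f_h|^2 r_F^2(h)\Big)^{1/2}
  = \widetilde{C}_{\alpha,F}\,\|f\|_F .
\end{align*}
The inequality $\|f\|_{C([0,1])}\le\|f\|_{A([0,1])}$ is exactly the estimate shown after \eqref{eq:def-Wiener-algebra-d=1}. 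That same discussion shows the absolutely convergent Fourier series converges uniformly to a continuous $1$-periodic function. By uniqueness of Fourier coefficients in $L_1$, this function is the continuous representative of $f$. This gives both the representation formula and the chain $A([0,1])\hookrightarrow C(\bbT([0,1]))\hookrightarrow C([0,1])$, the last embedding being trivial with constant~$1$.

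It remains to bound $\sum_h r_F^{-2}(h)$ uniformly over the listed multipliers, which I will do by comparing the multipliers pointwise. For $h=0$, every multiplier equals $1$: this holds for $1+|2\pi h|^{2\alpha}$, $(1+|2\pi h|^2)^\alpha$, $\max\{1,|2\pi h|\}^{2\alpha}$, and $1+\sum_{\tau=1}^\alpha|2\pi h|^{2\tau}$ (with $0^{2\alpha}=0$ since $\alpha>0$). For $h\neq0$, each of the four $2\pi$-scaled multipliers is $\ge |2\pi h|^{2\alpha}$. This is immediate for the primed, Bessel and unanchored ones, and for $\widetilde H^\alpha$ we keep only the $\tau=\alpha$ term. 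Hence
\begin{align*}
  \sum_{h\in\Z} r_F^{-2}(h) \le 1 + 2\sum_{h\ge1}(2\pi h)^{-2\alpha} = 1+2\,(2\pi)^{-2\alpha}\zeta(2\alpha),
\end{align*}
which is finite exactly because $2\alpha>1$. The same argument without the factor $2\pi$ gives $r_{F_1}^2(h)\ge |h|^{2\alpha}$ for $h\ne0$ and the bound $1+2\zeta(2\alpha)$.

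Finally, the statement that $\widetilde{\calH}^\alpha([0,1])\subset A([0,1])$ continuously for \emph{any} of the equivalent norms follows directly. Either each of the listed norms has been treated individually above, or one combines one of them with the equivalence constants of Lemma~\ref{lem:periodic-equivalent-norms-d=1}. The $\widetilde H^\alpha$ case for $\alpha\in\N$ is covered by its Fourier multiplier form via Lemma~\ref{lem:periodic-Fourier-characterisation-d=1}.

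There is no real obstacle here. The only points needing care are the $h=0$ term, including the convention for the $\widetilde H'$ multiplier at $\alpha>0$, and checking that the zeta sum converges precisely when $\alpha>1/2$.
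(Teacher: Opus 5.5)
Your proposal is correct and follows the paper's proof. The paper uses the same two steps: the pointwise multiplier bounds $r_F^2(0)=1$ and $r_F^2(h)\ge|2\pi h|^{2\alpha}$ (respectively $|h|^{2\alpha}$) for $h\ne0$, which give the zeta-function bound, followed by the multiply-and-divide Cauchy--Schwarz estimate yielding the absolutely and uniformly convergent Fourier series as the continuous representative.
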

\begin{proof}
For every squared Fourier multiplier with frequency scaling $2\pi$ listed in the lemma, we have $r_F^2(0) = 1$ and $r_F^2(h) \ge |2\pi h|^{2\alpha}$ for $h \ne 0$.
Consequently,
\begin{align*}
  \sum_{h \in \Z} r_F^{-2}(h)
  \le
  1 + 2 \sum_{h \ge 1} |2\pi h|^{-2\alpha}
  =
  1 + 2 \, (2\pi)^{-2\alpha} \, \zeta(2\alpha)
  <
  \infty
  .
\end{align*}
For the squared Fourier multipliers with frequency scaling~$1$ instead of~$2\pi$, the same argument uses $r_F^2(h) \ge |h|^{2\alpha}$ for $h \ne 0$ and gives the second bound in the statement.
Both bounds are finite by the zeta-function properties stated above because $2\alpha > 1$.
For any of these Fourier multipliers, Cauchy--Schwarz gives
\begin{align*}
  \sum_{h \in \Z} |\hat{f}_h|
  &\le
  \left(\sum_{h \in \Z} |\hat{f}_h|^2 \, r_F^2(h)\right)^{1/2}
  \left(\sum_{h \in \Z} r_F^{-2}(h) \right)^{1/2}
  =
  \widetilde{C}_{\alpha,F} \, \|f\|_F
  <
  \infty
  .
\end{align*}
Hence the Fourier series converges absolutely and uniformly and defines a continuous representative satisfying, for every $x \in [0,1]$,
\begin{align*}
  |f(x)|
  &\le
  \sum_{h \in \Z} |\hat{f}_h|
  \le
  \widetilde{C}_{\alpha,F} \, \|f\|_F
  .
  \qedhere
\end{align*}
\end{proof}

The following result tells us that our spaces are nested with respect to the smoothness.
This follows immediately because their Fourier multipliers are nondecreasing with~$\alpha$.

\begin{lemma}[Nestedness]\label{lem:periodic-decreasing-smoothness-d=1}
For $\alpha \ge \beta \ge 0$ we have the inclusion
\begin{align*}
  \widetilde{\calH}^\alpha([0,1])
  &\subseteq
  \widetilde{\calH}^\beta([0,1])
  ,
\end{align*}
and, for any one of these families of spaces, denoted by $F^\alpha$ and $F^\beta$, such that they are the same spaces with possibly different smoothness,
\begin{align*}
  \|f\|_{F^\beta}
  &\le
  \|f\|_{F^\alpha}
  .
\end{align*}
\end{lemma}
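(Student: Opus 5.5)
The plan is to reduce everything to a pointwise comparison of Fourier multipliers, as the paper does for the equivalence lemmas. Each norm in the families considered has the form $\|f\|_{F^\alpha}^2 = \sum_{h \in \Z} |\hat{f}_h|^2 \, r_{F^\alpha}^2(h)$. If we show $r_{F^\beta}^2(h) \le r_{F^\alpha}^2(h)$ for every $h \in \Z$ whenever $\alpha \ge \beta \ge 0$ (within the admissible range of each family), then termwise comparison gives $\|f\|_{F^\beta} \le \|f\|_{F^\alpha}$. In particular, finiteness of the $F^\alpha$-norm implies finiteness of the $F^\beta$-norm, which is the inclusion $F^\alpha \subseteq F^\beta$. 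Since all members of $\widetilde{\calH}^\alpha([0,1])$ coincide as vector spaces (Lemma~\ref{lem:periodic-equivalent-norms-d=1}), it suffices to prove the inclusion for one family, say the Bessel-potential one, which is valid for all $\alpha \ge 0$.

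The key steps are as follows.
\begin{enumerate}
\item For $h = 0$, every multiplier equals $1$, independently of the smoothness. This uses $\max\{1,0\}^{2\alpha} = 1$, $(1+0)^\alpha = 1$ and $1 + 0^{2\alpha} = 1$ for $\alpha > 0$. For $\widetilde{H}^\alpha$ we use $1 + \sum_{\tau=1}^\alpha 0 = 1$.
\item For $h \ne 0$, write $t = |2\pi h|$ or $t = |h|$, depending on the scaling. In all cases $t \ge 1$, because $|h| \ge 1$ and $2\pi > 1$.
\item Monotonicity in $\alpha$ of each multiplier then follows from bases being at least one:
\begin{itemize}
\item $\max\{1,t\}^{2\alpha} = t^{2\alpha}$ is nondecreasing in $\alpha$ since $t \ge 1$.
\item $(1+t^2)^\alpha$ is nondecreasing since $1 + t^2 \ge 1$.
\item $1 + t^{2\alpha}$ is nondecreasing since $t \ge 1$.
\item For integer $\alpha \ge \beta$, $1 + \sum_{\tau=1}^\alpha t^{2\tau} \ge 1 + \sum_{\tau=1}^\beta t^{2\tau}$, because it only adds nonnegative terms. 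This step does not even need $t \ge 1$.
\end{itemize}
\end{enumerate}

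There is essentially no obstacle; the only care needed is bookkeeping at the edges. The first point is the $h = 0$ term and the convention $0^0 = 1$ in the primed norms. The second is the fact that the primed families exclude $\alpha = 0$; there the statement is read with $\beta > 0$, or via the common space using the Bessel-potential representative. The crucial fact to flag is that the frequency scaling guarantees $t \ge 1$ for $h \ne 0$. Without this, for example with a scaling $c < 1$, powers $|ch|^{2\alpha}$ could decrease in $\alpha$ and the norm inequality without constants would fail.
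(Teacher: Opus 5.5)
Your proposal is correct and follows the paper's own argument: the paper justifies the lemma in one line by noting that every Fourier multiplier is nondecreasing in $\alpha$, and termwise comparison then gives both the norm inequality and the inclusion. Your proof makes the implicit details explicit. These are that the multiplier at $h=0$ equals $1$, that the frequency base is at least one for $h \ne 0$, that $\alpha=0$ is excluded for the primed families (where $0^0=1$ would otherwise give the value $2$ at $h=0$ and break monotonicity), and that the standard family $\widetilde{H}^\alpha([0,1])$ only allows integer smoothness.
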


Finally, as an extension to Lemma~\ref{lem:periodic-Fourier-characterisation-d=1}, we show for which derivative orders the Fourier series of $D^\tau f$ converges absolutely and uniformly and which $D^\tau f$ are absolutely continuous, since these ranges are slightly different for real~$\alpha$.

\begin{lemma}\label{lem:periodic-derivatives-d=1}
Let $\alpha \ge 0$ be real and $f \in \widetilde{\calH}^\alpha([0,1])$.
For every $\tau \in \N_0$ satisfying $0 \le \tau \le \alpha - 1$, i.e., $0 \le \tau \le \lfloor\alpha\rfloor - 1$, the derivative $D^\tau f$ is absolutely continuous.
For every $\tau \in \N_0$ satisfying $0 \le \tau < \alpha - 1/2$, i.e., $0 \le \tau \le \lceil \alpha - 1/2 \rceil - 1$, the Fourier series of $D^\tau f$ converges absolutely and uniformly, with
\begin{align*}
  D^\tau f(x)
  &=
  \sum_{h \in \Z}
  (2\pi\rmi \, h)^\tau \, \hat{f}_h \, \exp(2\pi\rmi \, h x)
  \qquad
  \text{for } x \in [0,1]
  ,
\end{align*}
and thus $D^\tau f \in A([0,1]) \subset C(\bbT([0,1]))$, i.e., $D^\tau f \in C([0,1])$ and $D^\tau f(0) = D^\tau f(1)$, for integer $0 \le \tau < \alpha - 1/2$.
\end{lemma}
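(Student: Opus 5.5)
Since all the norms on $\widetilde{\calH}^\alpha([0,1])$ are equivalent by Lemma~\ref{lem:periodic-equivalent-norms-d=1}, I would fix one convenient representative, say the unanchored norm with multiplier $r^2(h)=\max\{1,|2\pi h|\}^{2\alpha}$, so that $\sum_{h\in\Z}|\hat f_h|^2\max\{1,|2\pi h|\}^{2\alpha}<\infty$. The whole argument then rests on two Cauchy--Schwarz estimates on the Fourier side, in the spirit of the proofs of Lemmas~\ref{lem:periodic-Fourier-characterisation-d=1} and~\ref{lem:periodic-continuous-embeddings-d=1}.

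\emph{Absolute and uniform convergence for $0\le\tau<\alpha-1/2$.} For integer $\tau\ge 0$ with $2(\alpha-\tau)>1$ I would bound
\begin{align*}
  \sum_{h\in\Z}|(2\pi h)^\tau\hat f_h|
  \le
  \Big(\sum_{h\ne0}|2\pi h|^{2\tau-2\alpha}\Big)^{1/2}\|f\|_{\widetilde{H}^{\alpha,\rmu}([0,1])}
  +|\hat f_0|\,[\tau=0],
\end{align*}
and the first factor equals $(2(2\pi)^{2\tau-2\alpha}\zeta(2(\alpha-\tau)))^{1/2}<\infty$. Hence the series $g_\tau(x):=\sum_h(2\pi\rmi h)^\tau\hat f_h e^{2\pi\rmi hx}$ converges absolutely and uniformly to a continuous $1$-periodic function, so $g_\tau\in A([0,1])\subset C(\bbT([0,1]))$.

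It remains to identify $g_\tau$ with $D^\tau f$. Since $g_\tau$ is continuous with Fourier coefficients $(2\pi\rmi h)^\tau\hat f_h$, I would show that it is the weak derivative of $g_{\tau-1}$, and then induct on $\tau$ starting from $g_0=f$, the latter holding by Lemma~\ref{lem:periodic-continuous-embeddings-d=1}. For the inductive step, $\hat{(g_\tau)}_0=0$ when $\tau\ge1$, so integrating the uniformly convergent series term by term gives
\begin{align*}
  \int_0^x g_\tau(t)\,\mathrm{d}t=g_{\tau-1}(x)-g_{\tau-1}(0).
\end{align*}
Hence $g_{\tau-1}$ is absolutely continuous with derivative $g_\tau$.

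\emph{Absolute continuity for $0\le\tau\le\alpha-1$.} Here the argument must work in $L_2$ rather than through uniform convergence, since $\tau+1$ may be as large as $\alpha$. I would define $g_{\tau+1}\in L_2$ via Parseval; this is legitimate because $\sum_h|2\pi h|^{2(\tau+1)}|\hat f_h|^2\le\|f\|^2<\infty$ when $\tau+1\le\alpha$. Because $\tau\le\alpha-1<\alpha-1/2$, the first part already gives $D^\tau f=g_\tau$ continuous. I would then check that $x\mapsto g_\tau(0)+\int_0^x g_{\tau+1}$ coincides with $g_\tau$. Both sides are continuous, and the integral against $1_{[0,x]}$ is an $L_2$ pairing, so it can be computed with Fourier coefficients by Parseval: pairing with $1_{[0,x]}$, whose coefficients are $(1-e^{-2\pi\rmi hx})/(2\pi\rmi h)$, reproduces $g_\tau(x)-g_\tau(0)$, using $\hat{(g_{\tau+1})}_0=0$. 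Therefore $D^\tau f$ is absolutely continuous with weak derivative $D^{\tau+1}f\in L_2$. The case $\tau=0$ is included and gives $f\in H^1$ when $\alpha\ge1$.

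I expect the main obstacle to be this last identification step, that is, justifying the Parseval pairing and the term-by-term integration for possibly non-integer $\alpha$. Everything else is the routine zeta-function Cauchy--Schwarz bound.
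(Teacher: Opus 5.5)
Your proof is correct. For the absolute-continuity claim it takes a different route from the paper.

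The paper argues there by reduction. For $\alpha\ge1$, nestedness (Lemma~\ref{lem:periodic-decreasing-smoothness-d=1}) gives $\widetilde{\calH}^\alpha([0,1])\subseteq\widetilde{\calH}^{\lfloor\alpha\rfloor}([0,1])=\widetilde{H}^{\lfloor\alpha\rfloor}([0,1])\subseteq H^{\lfloor\alpha\rfloor}([0,1])$. Absolute continuity of $D^\tau f$ for $\tau\le\lfloor\alpha\rfloor-1$ is then inherited from the classical Sobolev space via~\eqref{eq:absolute-continuity-d=1}.

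For the convergence claim the paper uses the same Cauchy--Schwarz/zeta estimate as you. It works with the norm of $\widetilde{H}'^\alpha([0,1])$ instead of the unanchored one, which is immaterial. However, it only asserts that the definition of the norm ``shows'' $D^\tau f$ exists with coefficients $(2\pi\rmi\,h)^\tau\hat f_h$.

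Your induction by termwise integration, and your Parseval pairing of $g_{\tau+1}$ with the indicator of $[0,x]$, actually prove this identification. It is the same fact the paper takes for granted in the converse direction of Lemma~\ref{lem:periodic-Fourier-characterisation-d=1}, on which its reduction rests. The paper's proof is shorter because it reuses the integer-smoothness characterisation. Yours is self-contained and treats real $\alpha$ uniformly without passing through the integer case.

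The step you flag as the main obstacle is already settled by what you wrote:
\begin{itemize}
\item termwise integration needs only uniform convergence of the series for $g_\tau$;
\item the pairing is the polarised Parseval identity, valid for any two $L_2$ functions whether or not $\alpha$ is an integer;
\item splitting off $g_\tau(0)$ needs only absolute summability of $\sum_h(2\pi h)^\tau\hat f_h$, which you have since $\tau<\alpha-1/2$.
\end{itemize}
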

\begin{proof}
For $\alpha \ge 1$, Lemma~\ref{lem:periodic-decreasing-smoothness-d=1} gives
\begin{align*}
  \widetilde{\calH}^\alpha([0,1])
  &\subseteq
  \widetilde{\calH}^{\lfloor \alpha \rfloor}([0,1])
  =
  \widetilde{H}^{\lfloor \alpha \rfloor}([0,1])
  .
\end{align*}
The assertion about absolute continuity follows from the definition of $\widetilde{H}^{\lfloor \alpha \rfloor}([0,1])$.

For $\alpha = 0$ the statement is trivially true.
Now let $\alpha > 0$.
For an integer $\tau$ satisfying $0 \le \tau < \alpha-1/2$, we use $f \in \widetilde{H}'^\alpha([0,1])$.
The definition of the norm~\eqref{eq:norm-tildeH-prime-alpha-d=1} shows that $f$ has a weak derivative $D^\tau f$ with Fourier coefficients $(2\pi\rmi \, h)^\tau \, \hat{f}_h$.
For $\tau = 0$ we already established that $f$ is given by its absolutely converging Fourier series, see Lemma~\ref{lem:periodic-continuous-embeddings-d=1}.
For the higher derivatives $D^\tau f$ with integer $1 \le \tau < \alpha-1/2$, we use $1 + |2\pi h|^{2\alpha} \ge |2\pi h|^{2\alpha}$ and apply Cauchy--Schwarz to obtain, see also the proof of Lemma~\ref{lem:periodic-Fourier-characterisation-d=1},
\begin{align*}
  \sum_{h \in \Z}
  |(2\pi\rmi \, h)^\tau \, \hat{f}_h|
  &\le
  \left(
    2 \sum_{h \ge 1}
    \frac{|2\pi h|^{2\tau}}{1+|2\pi h|^{2\alpha}}
  \right)^{1/2}
  \|f\|_{\widetilde{H}'^\alpha([0,1])}
  \\
  &\le
  \left(
    2 \, (2\pi)^{-2(\alpha-\tau)} \, \zeta(2(\alpha-\tau))
  \right)^{1/2}
  \|f\|_{\widetilde{H}'^\alpha([0,1])}
  <
  \infty
  ,
\end{align*}
which is finite since $2 (\alpha - \tau) > 1$.
Hence the Fourier series of $D^\tau f$ converges absolutely and uniformly and gives the stated representation.
\end{proof}

All these univariate results can be used trivially when constructing tensor product spaces.
E.g., for a $d$-fold tensor product space the same continuous embeddings are achieved with the constants raised to the power~$d$.
Raising a constant to the power $d$ might have unintended effects when one wants to prove tractability, so we will have to proceed slightly more carefully.
The techniques that we used in the proofs in this section will still be of use in Sections~\ref{sec:weighted-spaces} and~\ref{sec:periodic-spaces}.
Sometimes we will adjust them for product weights to get rid of the $d$ dependence.
An overview of the multivariate spaces that we will build can be found in Table~\ref{tbl:periodic-equivalent-norms}.

After all of this, you might still be wondering where all these different variants of the same space come from.
We will not withhold that from you and will now give some references, but there are of course many more.
E.g., all three spaces $\widetilde{H}^\alpha([0,1])$, $\widetilde{H}'^\alpha([0,1])$ and $\widetilde{H}''^\alpha([0,1])$ are treated, together with even a further variant, in~\cite{KSU2015}.
For $\widetilde{H}^{\alpha,\rmu}([0,1])$, see, e.g.,~\cite{NW2008,NW2010,NS2023}, and for $\widetilde{H}^{\alpha,\rmu}_1([0,1])$, see, e.g.,~\cite{NW2008,DNP2014,DHP2015,DKS2013,Nuy2014,KPV2015,DKP2022}.
(These references all treat these as multivariate spaces.)

\subsubsection{Equivalent norms for the non-periodic Sobolev space}\label{sec:non-periodic-equivalent-norms-d=1}

In the information-based complexity and quasi-Monte Carlo literature there are two commonly used equivalent norms in the non-periodic space.
We will describe these equivalent norms here first for~$d=1$.
It will turn out that they have again equivalent norms and so we could introduce the notation $\calH^\alpha([0,1])$ to denote the vector space without specifying the norm, but we will not do that this time.
In Section~\ref{sec:mixed-spaces} we will define their multivariate variants of dominating mixed smoothness spaces that are used in QMC and IBC.

\paragraph{The unanchored Sobolev norm.}
The first modification is to create what is called an \emph{unanchored} norm.
For this modification, instead of taking $L_2$-norms of the intermediate derivatives, we will just take the absolute value of the integral of the derivative.
That is, we define, for $\alpha \in \N_0$,
\begin{align}\label{eq:H-alpha-u-d=1}
\boxed{
  \|f\|_{H^{\alpha,\rmu}([0,1])}^2
  :=
  \sum_{\tau=0}^{\alpha-1} \left|\int_0^1 D^\tau f(x) \rd{x} \right|^2
  +
  \|D^\alpha f\|_{L_2([0, 1])}^2
  .
}
\end{align}
The superscript ``$\rmu$'' is used to denote that this is the \emph{unanchored Sobolev space norm} in contrast to the ``anchored'' norm which we will define next.
When $f$ is in the periodic space then this norm can be written in terms of Fourier multipliers and we have already defined that norm in~\eqref{eq:norm-tildeH-alpha-u-d=1}.
Again, the space $H^{\alpha,\rmu}([0,1])$ is equal to $H^\alpha([0,1])$ as a vector space and they have equivalent norms.
We show the embedding constants and the connection to the Fourier form in the next result.

\begin{lemma}\label{lem:H-alpha-u-norm-equivalence-d=1}
Let $\alpha \in \N_0$.
We have the continuous embedding $H^{\alpha,\rmu}([0,1]) \hookrightleftarrows H^\alpha([0,1])$ with
\begin{align*}
  \|f\|_{H^{\alpha,\rmu}([0,1])}
  \le
  \|f\|_{H^\alpha([0,1])}
  \le
  (1-\pi^{-2})^{-1/2} \, \|f\|_{H^{\alpha,\rmu}([0,1])}
  .
\end{align*}
The unanchored norm restricted to the periodic space agrees with the periodic unanchored norm,
\begin{align*}
  \|f\|_{H^{\alpha,\rmu}([0,1])}
  &=
  \|f\|_{\widetilde{H}^{\alpha,\rmu}([0,1])}
  \qquad
  \text{for all } f \in \widetilde{H}^\alpha([0,1])
  ,
\end{align*}
and in this case we have the stronger continuous embedding, already stated in Lemma~\textup{\ref{lem:periodic-equivalent-norms-d=1}},
\begin{align*}
  \|f\|_{\widetilde{H}^{\alpha,\rmu}([0,1])}
  &\le
  \|f\|_{\widetilde{H}^\alpha([0,1])}
  \le
  \left(1-(2\pi)^{-2}\right)^{-1/2} \, \|f\|_{\widetilde{H}^{\alpha,\rmu}([0,1])}
  ,
\end{align*}
for which the upper bound is sharp for $\alpha \to \infty$.
\end{lemma}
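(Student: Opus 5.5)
The plan is to compare the two norms term by term, using a Poincaré--Wirtinger inequality for mean-zero functions on $[0,1]$. The lower bound is immediate. By Cauchy--Schwarz, $|\int_0^1 D^\tau f(x)\rd{x}|^2 \le \|D^\tau f\|_{L_2([0,1])}^2$ for every $\tau = 0,\ldots,\alpha-1$, while the top-order terms $\|D^\alpha f\|_{L_2([0,1])}^2$ agree in both norms. Summing over $\tau$ gives $\|f\|_{H^{\alpha,\rmu}([0,1])} \le \|f\|_{H^\alpha([0,1])}$. For $\alpha = 0$ the two norms coincide, so there is nothing to prove.

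For the upper bound, fix $g \in H^1([0,1])$. Split $g = \bar g + (g-\bar g)$ with $\bar g := \int_0^1 g(x)\rd{x}$. By orthogonality,
\begin{align*}
  \|g\|_{L_2([0,1])}^2 = |\bar g|^2 + \|g-\bar g\|_{L_2([0,1])}^2 .
\end{align*}
The Poincaré--Wirtinger inequality on $[0,1]$ with the sharp Neumann constant gives $\|g-\bar g\|_{L_2([0,1])}^2 \le \pi^{-2}\|Dg\|_{L_2([0,1])}^2$. One proof expands $g-\bar g$ in the cosine basis $\sqrt2\cos(\pi k x)$, $k\ge1$: the coefficients of $Dg$ in the sine basis are $-\pi k$ times those of $g$ (integration by parts has no boundary terms for cosines), and Parseval then gives the bound.

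Apply this with $g = D^\tau f$ for $\tau = 0,\ldots,\alpha-1$, writing $a_\tau := |\int_0^1 D^\tau f(x)\rd{x}|^2$ and $b_\tau := \|D^\tau f\|_{L_2([0,1])}^2$. This yields the recursion $b_\tau \le a_\tau + \pi^{-2} b_{\tau+1}$, with $b_\alpha = \|D^\alpha f\|_{L_2([0,1])}^2$. Unrolling it gives $b_\tau \le \sum_{j=\tau}^{\alpha-1}\pi^{-2(j-\tau)} a_j + \pi^{-2(\alpha-\tau)} b_\alpha$. Summing over $\tau = 0,\ldots,\alpha$ and collecting coefficients, each $a_j$ and $b_\alpha$ is multiplied by a partial geometric sum $\sum_{i\ge0}\pi^{-2i} \le (1-\pi^{-2})^{-1}$. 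Hence $\|f\|_{H^\alpha([0,1])}^2 \le (1-\pi^{-2})^{-1}\|f\|_{H^{\alpha,\rmu}([0,1])}^2$, which is the claim. This mirrors the proof of the last part of Lemma~\ref{lem:periodic-equivalent-norms-d=1}, with $\pi$ in place of $2\pi$.

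For the periodic statement, take $f \in \widetilde{H}^\alpha([0,1])$. Lemma~\ref{lem:periodic-Fourier-characterisation-d=1} gives $\widehat{D^\tau f}_h = (2\pi\rmi h)^\tau \hat f_h$ for $\tau\le\alpha$. So $\int_0^1 D^\tau f(x)\rd{x} = \widehat{D^\tau f}_0$, which is $\hat f_0$ for $\tau=0$ and $0$ for $\tau \ge 1$. By Parseval, $\|D^\alpha f\|_{L_2([0,1])}^2 = \sum_h |2\pi h|^{2\alpha}|\hat f_h|^2$. Adding these gives exactly the multiplier $\max\{1,|2\pi h|\}^{2\alpha}$ of~\eqref{eq:norm-tildeH-alpha-u-d=1}; for $\alpha=0$ both sides are simply the $L_2$ norm. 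The stronger embedding and its sharpness are already in Lemma~\ref{lem:periodic-equivalent-norms-d=1}.

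The main obstacle is the sharp Poincaré constant $\pi^{-2}$. In particular, one must justify the cosine-series argument for a general $H^1$ function, i.e., that the sine coefficients of $Dg$ are $-\pi k$ times the cosine coefficients of $g$. This follows from integration by parts for absolutely continuous $g$ via~\eqref{eq:absolute-continuity-d=1}, together with completeness of the sine basis in $L_2([0,1])$.
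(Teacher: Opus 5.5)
Your proof is correct and follows essentially the same route as the paper. You use Cauchy--Schwarz for the lower bound; for the upper bound you apply the Poincar\'e--Wirtinger inequality with constant $\pi^{-2}$ recursively to $D^\tau f$ and sum with a geometric series; for the periodic identity you use Parseval, obtaining $\int_0^1 D^\tau f(x)\rd{x}=0$ from $\widehat{D^\tau f}_0=0$ rather than from $D^{\tau-1}f(1)=D^{\tau-1}f(0)$, which is equivalent. The one point to tighten is membership, i.e.\ that a finite unanchored norm forces $f\in H^\alpha([0,1])$. For this, run your recursion downward from $\tau=\alpha-1$, as the paper does. At each step $D^\tau f\in L_1([0,1])$ with $D^{\tau+1}f\in L_2([0,1])$ is absolutely continuous by~\eqref{eq:absolute-continuity-d=1}, hence already in $H^1([0,1])$, which justifies your standing assumption $g\in H^1([0,1])$.
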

\begin{proof}
The results are immediate for $\alpha = 0$, so we can assume $\alpha \in \N$ for the remainder of the proof.

Let $f \in H^\alpha([0,1])$.
Using Cauchy--Schwarz and $\int_{[0,1]} \rd{x} = 1$, we immediately find
\begin{align*}
  \|f\|_{H^{\alpha,\rmu}([0,1])}^2
  &=
  \sum_{\tau = 0}^{\alpha-1} \left| \int_0^1 D^\tau f(x) \rd{x} \right|^2
  +
  \|D^\alpha f\|_{L_2([0, 1])}^2
  \\
  &\le
  \sum_{\tau = 0}^{\alpha-1} \int_0^1 |D^\tau f(x)|^2 \rd{x}
  +
  \|D^\alpha f\|_{L_2([0, 1])}^2
  =
  \|f\|_{H^\alpha([0,1])}^2
  .
\end{align*}
This proves the first inequality and shows $H^\alpha([0,1]) \hookrightarrow H^{\alpha,\rmu}([0,1])$.

The Poincaré--Wirtinger inequality for $g \in L_1([0,1])$ with weak derivative $Dg \in L_2([0,1])$ gives
\begin{align*}
  \left\| g - \int_0^1 g(x) \rd{x} \right\|_{L_2([0,1])}^2
  &\le
  \pi^{-2} \, \|Dg\|_{L_2([0,1])}^2
  ,
\end{align*}
hence
\begin{align*}
  \|g\|_{L_2([0,1])}^2
  &\le
  \left|\int_0^1 g(x) \rd{x}\right|^2
  +
  \pi^{-2} \, \|Dg\|_{L_2([0,1])}^2
  .
\end{align*}

Let $f \in H^{\alpha,\rmu}([0,1])$.
Applying this successively to $g = D^\tau f$, starting with $\tau = \alpha-1$ and proceeding downwards to $\tau = 0$, gives, for each $0 \le \tau \le \alpha-1$,
\begin{align*}
  \|D^\tau f\|_{L_2([0,1])}^2
  &\le
  \left|\int_0^1 D^\tau f(x) \rd{x}\right|^2
  +
  \pi^{-2} \, \|D^{\tau+1} f\|_{L_2([0,1])}^2
  \\
  &\le
  \sum_{k=\tau}^{\alpha-1} \pi^{-2(k-\tau)} \, \left|\int_0^1 D^k f(x) \rd{x}\right|^2
  +
  \pi^{-2(\alpha-\tau)} \, \|D^\alpha f\|_{L_2([0,1])}^2
  .
\end{align*}
Summing over $\tau$ and using $\sum_{j \ge 0} \pi^{-2j} = (1-\pi^{-2})^{-1}$ proves the second inequality, and shows that $H^{\alpha,\rmu}([0,1]) \hookrightarrow H^\alpha([0,1])$.

If $f \in \widetilde{H}^\alpha([0,1])$ and $1 \le \tau \le \alpha-1$, then, see, e.g., Lemma~\ref{lem:periodic-derivatives-d=1},
\begin{align*}
  \int_0^1 D^\tau f(x) \rd{x}
  &=
  D^{\tau-1}f(1) - D^{\tau-1}f(0)
  =
  0
  .
\end{align*}
Moreover, $\int_0^1 f(x) \rd{x} = \hat{f}_0$, and Parseval's identity gives
\begin{align*}
  \|D^\alpha f\|_{L_2([0,1])}^2
  &=
  \sum_{0 \ne h \in \Z}
  |\hat{f}_h|^2 \, |2\pi h|^{2\alpha}
  .
\end{align*}
Hence
\begin{align*}
  \|f\|_{H^{\alpha,\rmu}([0,1])}^2
  &=
  \sum_{h \in \Z}
  |\hat{f}_h|^2 \, \max\{1,|2\pi h|\}^{2\alpha}
  =
  \|f\|_{\widetilde{H}^{\alpha,\rmu}([0,1])}^2
  .
  \qedhere
\end{align*}
\end{proof}

\paragraph{The anchored Sobolev norm.}
A second modification is to create what is called an \emph{anchored} norm.
For this modification we need to choose an anchor $a \in [0,1]$ and we replace all intermediate derivatives by evaluating them at the anchor point.
That is, we define, for $\alpha \in \N_0$,
\begin{align}\label{eq:H-alpha-a-d=1}
\boxed{
  \|f\|_{H^{\alpha,a}([0,1])}^2
  :=
  \sum_{\tau = 0}^{\alpha-1} \left|D^\tau f(a)\right|^2
  +
  \|D^\alpha f\|_{L_2([0, 1])}^2
  .
}
\end{align}
In this case, the superscript ``$a$'' denotes the anchor.
Contrary to ``$\rmu$'' we denote ``$a$'' with the normal maths font for variables as it is supposed to represent a value $a \in [0,1]$.
As such the space for the specific choice of $a=1/2$ is denoted by $H^{\alpha,1/2}([0,1])$.
This space is often used in functional analysis, see, e.g.,~\cite[Chapter~9]{Moo85}.

Again, this space is equal to $H^\alpha([0,1])$ as a vector space and they have equivalent norms.
We show the embedding constants in the next statement.
The embedding constants between the anchored and unanchored spaces for $\alpha=1$ are also given in~\cite{GHHRW2017}.

\begin{lemma}\label{lem:H-alpha-a-norm-equivalence-d=1}
Let $\alpha \in \N$ and $a \in [0,1]$.
We have the continuous embeddings $H^{\alpha,a}([0,1]) \hookrightleftarrows H^\alpha([0,1])$ with
\begin{align*}
  \frac{1}{\sqrt{3}} \, \|f\|_{H^{\alpha,a}([0,1])}
  \le
  \|f\|_{H^\alpha([0,1])}
  \le
  \sqrt{2\alpha} \, \|f\|_{H^{\alpha,a}([0,1])}
  ,
\end{align*}
and $H^{\alpha,a}([0,1]) \hookrightleftarrows H^{\alpha,\rmu}([0,1])$ with
\begin{align*}
  \sqrt{\frac{1-\pi^{-2}}{3}} \, \|f\|_{H^{\alpha,a}([0,1])}
  &\le
  \|f\|_{H^{\alpha,\rmu}([0,1])}
  \le
  \sqrt{2\alpha} \, \|f\|_{H^{\alpha,a}([0,1])}
  .
\end{align*}
When restricted to the periodic subspace, where $\|\cdot\|_{H^{\alpha,\rmu}([0,1])} = \|\cdot\|_{\widetilde{H}^{\alpha,\rmu}([0,1])}$, this comparison can be improved to
\begin{align*}
  \frac{\sqrt{3}}{2} \, \|f\|_{H^{\alpha,a}([0,1])}
  &\le
  \|f\|_{\widetilde{H}^{\alpha,\rmu}([0,1])}
  \le
  \frac{2}{\sqrt{3}} \, \|f\|_{H^{\alpha,a}([0,1])}
  \qquad
  \text{for all } f \in \widetilde{H}^\alpha([0,1])
  .
\end{align*}
Consequently, on the periodic subspace we also have
\begin{align*}
  \frac{\sqrt{3}}{2} \, \|f\|_{H^{\alpha,a}([0,1])}
  \le
  \|f\|_{\widetilde{H}^\alpha([0,1])}
  \le
  \frac{2}{\sqrt{3}} \left(1-(2\pi)^{-2}\right)^{-1/2} \, \|f\|_{H^{\alpha,a}([0,1])}
  \journalonly{\qquad\\*}\qquad
  \text{for all } f \in \widetilde{H}^\alpha([0,1])
  .
\end{align*}
\end{lemma}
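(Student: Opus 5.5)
The plan is to prove two one-dimensional ``trace'' inequalities for a single function $g$ with $g, Dg \in L_2([0,1])$, apply them to $g = D^\tau f$ for each $\tau$ and sum over $\tau$, and then obtain the unanchored comparison by chaining with Lemma~\ref{lem:H-alpha-u-norm-equivalence-d=1}. The periodic improvement will instead go through Fourier coefficients.

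\emph{Lower bound $\|f\|_{H^{\alpha,a}} \le \sqrt3\,\|f\|_{H^\alpha}$.}
I use absolute continuity~\eqref{eq:absolute-continuity-d=1} in the squared form
\begin{align*}
  |g(a)|^2
  =
  |g(y)|^2 + \int_y^a 2\,\mathrm{Re}\big(\overline{g}\,Dg\big)(t) \rd{t} .
\end{align*}
Integrating over $y \in [0,1]$, and then applying Cauchy--Schwarz and $2|u||v| \le |u|^2+|v|^2$, gives
\begin{align*}
  |g(a)|^2 \le 2\|g\|_{L_2}^2 + \|Dg\|_{L_2}^2 .
\end{align*}
Now apply this with $g = D^\tau f$ for $\tau = 0,\ldots,\alpha-1$ and add $\|D^\alpha f\|_{L_2}^2$. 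The total coefficient on $\|D^\tau f\|_{L_2}^2$ is then $2$ for $\tau=0$, $2+1=3$ for $1 \le \tau \le \alpha-1$, and $1+1=2$ for $\tau=\alpha$. Hence every coefficient is at most $3$.

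\emph{Upper bound $\|f\|_{H^\alpha} \le \sqrt{2\alpha}\,\|f\|_{H^{\alpha,a}}$.}
Here I write $g(x) = g(a) + \int_a^x Dg$ and use $|u+v|^2 \le 2|u|^2+2|v|^2$ together with Cauchy--Schwarz. Integrating over $x$ and using $\int_0^1|x-a|\rd{x} \le 1/2$ gives
\begin{align*}
  \|g\|_{L_2}^2 \le 2|g(a)|^2 + \|Dg\|_{L_2}^2 .
\end{align*}
Iterating downward from $\tau=\alpha-1$ yields
\begin{align*}
  \|D^\tau f\|_{L_2}^2 \le \sum_{k=\tau}^{\alpha-1} 2|D^kf(a)|^2 + \|D^\alpha f\|_{L_2}^2 .
\end{align*}
Summing over $\tau = 0,\ldots,\alpha$, each anchor term occurs at most $\alpha$ times with weight $2$, and $\|D^\alpha f\|_{L_2}^2$ occurs $\alpha+1 \le 2\alpha$ times. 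This gives the factor $2\alpha$.

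\emph{General anchored/unanchored comparison.}
This follows by chaining. For the upper bound, $\|f\|_{H^{\alpha,\rmu}} \le \|f\|_{H^\alpha} \le \sqrt{2\alpha}\,\|f\|_{H^{\alpha,a}}$. For the lower bound, $\|f\|_{H^{\alpha,a}} \le \sqrt3\,\|f\|_{H^\alpha} \le \sqrt3\,(1-\pi^{-2})^{-1/2}\|f\|_{H^{\alpha,\rmu}}$, using Lemma~\ref{lem:H-alpha-u-norm-equivalence-d=1} in both chains.

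\emph{Periodic case.}
By Lemma~\ref{lem:H-alpha-u-norm-equivalence-d=1} we have $\|f\|_{\widetilde H^{\alpha,\rmu}}^2 = |\hat f_0|^2 + \|D^\alpha f\|_{L_2}^2$. By Lemma~\ref{lem:periodic-derivatives-d=1}, $D^\tau f(a)$ is given by its absolutely convergent Fourier series, which has no $h=0$ term when $\tau \ge 1$. Cauchy--Schwarz therefore gives, for $0 \le \tau \le \alpha-1$ and $j = \alpha-\tau$,
\begin{align*}
  \Big|\sum_{h\ne0}(2\pi\rmi h)^\tau \hat f_h \exp(2\pi\rmi h a)\Big|^2
  \le
  c_j^2\,\|D^\alpha f\|_{L_2}^2 ,
  \qquad
  c_j^2 := 2(2\pi)^{-2j}\zeta(2j) .
\end{align*}
In particular $c_1^2 = 1/12$, $c_2^2 = 1/720$, and $c_j$ is decreasing in $j$.

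For $\tau=0$ we have $f(a) = \hat f_0 + (\text{tail})$, and the tail is bounded by $c_\alpha^2 \le 1/12$. Using $|u+v|^2 \le (1+t)|u|^2 + (1+t^{-1})|v|^2$ with $t = 1/3$ in both directions gives the two bounds.
\begin{itemize}
  \item For the lower bound: $|\hat f_0|^2 + \|D^\alpha f\|^2 \le \tfrac43|f(a)|^2 + (4c_\alpha^2+1)\|D^\alpha f\|^2$, and $4c_\alpha^2 + 1 \le 4/3$.
  \item For the upper bound: $\|f\|_{H^{\alpha,a}}^2 \le \tfrac43|\hat f_0|^2 + \big(4c_\alpha^2 + \sum_{j=1}^{\alpha-1}c_j^2 + 1\big)\|D^\alpha f\|^2$.
\end{itemize}

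The main obstacle is checking that the bracket in the upper bound is at most $4/3$, uniformly in $\alpha$.
\begin{itemize}
  \item For $\alpha = 1$ the bracket is $4/12 + 1 = 4/3$ exactly.
  \item For $\alpha \ge 2$ I bound $4c_\alpha^2 + \sum_{j\ge1} c_j^2$. Summing the geometric series in $j$ first gives
  \begin{align*}
    \sum_{j\ge1} c_j^2 = \sum_{h\ge1} \frac{2}{(2\pi h)^2-1} \approx 0.09,
  \end{align*}
  and $4c_\alpha^2 \le 4/720$, so the total stays below $1/3$.
\end{itemize}
Finally, the last display of the statement follows by combining this periodic comparison with the sharp bound $\|f\|_{\widetilde H^{\alpha,\rmu}} \le \|f\|_{\widetilde H^\alpha} \le (1-(2\pi)^{-2})^{-1/2}\|f\|_{\widetilde H^{\alpha,\rmu}}$ from Lemma~\ref{lem:periodic-equivalent-norms-d=1}.
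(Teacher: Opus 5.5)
Your proposal is correct and follows essentially the same route as the paper. It uses the same two trace inequalities $|g(a)|^2 \le 2\|g\|_{L_2([0,1])}^2 + \|Dg\|_{L_2([0,1])}^2$ and $\|g\|_{L_2([0,1])}^2 \le 2|g(a)|^2 + \|Dg\|_{L_2([0,1])}^2$ summed over derivative orders, the same chaining through Lemma~\ref{lem:H-alpha-u-norm-equivalence-d=1}, and in the periodic case the same Fourier tail bounds with $2(2\pi)^{-2j}\zeta(2j)$, combined via $(u+v)^2 \le \tfrac43 u^2 + 4v^2$ and checked against $1/3$. The only differences are cosmetic: you derive the first trace inequality by differentiating $|g|^2$ rather than from $g(a) = g(x) - \int_a^x Dg(t) \rd{t}$, and your numerical estimate $\approx 0.09$ should be made rigorous, e.g.\ via $\sum_{j\ge1} 2(2\pi)^{-2j}\zeta(2j) \le \tfrac{1}{12}\,(1-(2\pi)^{-2})^{-1}$, which is the bound the paper uses.
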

\begin{proof}
For $h \in L_2([0,1])$, Cauchy--Schwarz gives
\begin{align*}
  \int_0^1 \left|\int_a^x h(t) \rd{t}\right|^2 \rd{x}
  &\le
  \frac{a^2 + (1 - a)^2}2 \, \|h\|_{L_2([0,1])}^2
  \le
  \frac12 \, \|h\|_{L_2([0,1])}^2
  .
\end{align*}
Let $f \in H^{\alpha,a}([0,1])$.
Using absolute continuity~\eqref{eq:absolute-continuity-d=1} of $g = D^\tau f$ for $\tau = \alpha-1$ down to~$0$ gives
\begin{align*}
  D^\tau f(x)
  &=
  D^\tau f(a)
  +
  \int_a^x D^{\tau+1} f(t) \rd{t}
  .
\end{align*}
Using $(r+s)^2 \le 2 \, (r^2+s^2)$ and then the previous estimate with $h = D^{\tau+1} f$ gives
\begin{align*}
  \|D^\tau f\|_{L_2([0,1])}^2
  &\le
  2 \, |D^\tau f(a)|^2
  +
  2 \, \int_0^1 \left|\int_a^x D^{\tau+1} f(t) \rd{t}\right|^2 \rd{x}
  \\
  &\le
  2 \, |D^\tau f(a)|^2
  +
  \|D^{\tau+1} f\|_{L_2([0,1])}^2
  \\
  &\le
  2 \, \sum_{k=\tau}^{\alpha-1} |D^k f(a)|^2
  +
  \|D^\alpha f\|_{L_2([0,1])}^2
  .
\end{align*}
Summing over $\tau$ and adding $\|D^\alpha f\|_{L_2([0,1])}^2$ proves the second inequality, since $2(k+1) \le 2\alpha$ and $\alpha+1 \le 2\alpha$, and shows that $H^{\alpha,a}([0,1]) \subseteq H^\alpha([0,1])$.
Conversely, let $f \in H^\alpha([0,1])$.
The same argument gives, for $g \in H^1([0,1])$,
\begin{align*}
  |g(a)|^2
  &\le
  2 \, \|g\|_{L_2([0,1])}^2
  +
  \|Dg\|_{L_2([0,1])}^2
  .
\end{align*}
Applying this to $g = D^\tau f$, for $\tau = 0,\ldots,\alpha-1$, and summing gives
\begin{align*}
  \|f\|_{H^{\alpha,a}([0,1])}^2
  &=
  \sum_{\tau=0}^{\alpha-1} |D^\tau f(a)|^2
  +
  \|D^\alpha f\|_{L_2([0,1])}^2
  \\
  &\le
  2 \, \sum_{\tau=0}^{\alpha-1} \|D^\tau f\|_{L_2([0,1])}^2
  +
  \sum_{\tau=0}^{\alpha-1} \|D^{\tau+1} f\|_{L_2([0,1])}^2
  +
  \|D^\alpha f\|_{L_2([0,1])}^2
  \\
  &\le
  3 \, \|f\|_{H^\alpha([0,1])}^2
  .
\end{align*}
This proves the first inequality and shows that $H^\alpha([0,1]) \subseteq H^{\alpha,a}([0,1])$.
The second embedding now follows from Lemma~\ref{lem:H-alpha-u-norm-equivalence-d=1}.

It remains to prove the improved bounds on the periodic subspace.
Let $f \in \widetilde{H}^\alpha([0,1])$.
For $k \in \N$ we have $2 \, (2\pi)^{-2k} \, \zeta(2k) \le 1/12$ which is achieved for $k = 1$.
Note that
\begin{align*}
  |f(x) - \hat{f}_0|
  &=
  \bigg| \sum_{0 \ne h \in \Z} \hat{f}_h \, \exp(2\pi\rmi \, h x) \, |2\pi h|^\alpha \, |2\pi h|^{-\alpha} \bigg|
  \\
  &\le
  \bigg( \sum_{0 \ne h \in \Z} |\hat{f}_h|^2 \, |2\pi h|^{2\alpha} \bigg)^{1/2}
  \,
  \bigg( 2 \sum_{h \ge 1} |2\pi h|^{-2\alpha} \bigg)^{1/2}
  \\
  \numberthis{eq:periodic-anchor-mean-Fourier-bound}
  &=
  \|D^\alpha f\|_{L_2([0,1])}
  \,
  \sqrt{2 \, (2\pi)^{-2\alpha} \, \zeta(2\alpha)}
  \le
  \frac1{\sqrt{12}} \, \|D^\alpha f\|_{L_2([0,1])}
  ,
\end{align*}
and, for $1 \le \tau \le \alpha - 1$,
\begin{align*}
  |D^\tau f(x)|
  &=
  \bigg| \sum_{0 \ne h \in \Z} \hat{f}_h \, (2\pi\rmi h)^\tau \, \exp(2\pi\rmi \, h x) \, |2\pi h|^\alpha \, |2\pi h|^{-\alpha} \bigg|
  \\
  &\le
  \bigg( \sum_{0 \ne h \in \Z} |\hat{f}_h|^2 \, |2\pi h|^{2\alpha} \bigg)^{1/2}
  \,
  \bigg( 2 \sum_{h \ge 1} |2\pi h|^{-2(\alpha-\tau)} \bigg)^{1/2}
  \\
  &=
  \|D^\alpha f\|_{L_2([0,1])}
  \,
  \sqrt{2 \, (2\pi)^{-2(\alpha-\tau)} \, \zeta(2(\alpha-\tau))}
  \le
  \frac1{\sqrt{12}} \, \|D^\alpha f\|_{L_2([0,1])}
  .
\end{align*}
These bounds hold uniformly for~$x$ and thus also for $x = a$.
Then
\begin{align*}
  |\hat{f}_0|
  =
  |\hat{f}_0 - f(a) + f(a)|
  &\le
  |f(a)| + |f(a) - \hat{f}_0|
  \le
  |f(a)| + \frac1{\sqrt{12}} \, \|D^\alpha f\|_{L_2([0,1])}
  ,
\end{align*}
from which we obtain
\begin{align*}
  \|f\|_{\widetilde{H}^{\alpha,\rmu}([0,1])}^2
  &=
  |\hat{f}_0|^2 + \|D^\alpha f\|_{L_2([0,1])}^2
  \\
  &\le
  \Big( |f(a)| + \frac1{\sqrt{12}} \, \|D^\alpha f\|_{L_2([0,1])} \Big)^2 + \|D^\alpha f\|_{L_2([0,1])}^2
  \\
  &\le
  \frac43 \, \Big( |f(a)|^2 + \|D^\alpha f\|_{L_2([0,1])}^2 \Big)
  \\
  &\le
  \frac43 \, \|f\|_{H^{\alpha,a}([0,1])}^2
  ,
\end{align*}
where we used Jensen's inequality~\eqref{eq:jensen2} with $3/4 + 1/4 = 1$ on $(a + b)^2 = ((3/4) \, (4/3) \, a + (1/4) \, 4 \, b)^2 \le (3/4) \, (4/3)^2 \, a^2 + (1/4) \, 4^2 \, b^2 = (4/3) \, a^2 + 4 \, b^2$ for $a, b \ge 0$.
Likewise,
\begin{align*}
  |f(a)|
  &=
  |f(a) - \hat{f}_0 + \hat{f}_0|
  \le
  |\hat{f}_0| + |f(a) - \hat{f}_0|
  \journalbreak\le
  |\hat{f}_0| + \sqrt{2 \, (2\pi)^{-2\alpha} \, \zeta(2\alpha)} \, \|D^\alpha f\|_{L_2([0,1])}
  .
\end{align*}
Now, again, using the same Jensen's inequality, we obtain
\begin{align*}
  \journalonly{\MoveEqLeft[1]} \|f\|_{H^{\alpha,a}([0,1])}^2
  \journalonly{\\*}&=
  |f(a)|^2
  +
  \sum_{\tau = 1}^{\alpha-1} \left|D^\tau f(a)\right|^2
  +
  \|D^\alpha f\|_{L_2([0, 1])}^2
  \\
  &\le
  \Big( |\hat{f}_0| + \sqrt{2 \, (2\pi)^{-2\alpha} \, \zeta(2\alpha)} \, \|D^\alpha f\|_{L_2([0,1])} \Big)^2
  \\
  &\qquad+
  \Big(
  1
  +
  \sum_{\tau=1}^{\alpha-1}
  2 \, (2\pi)^{-2(\alpha-\tau)} \, \zeta(2(\alpha-\tau))
  \Big)
  \,
  \|D^\alpha f\|_{L_2([0,1])}^2
  \\
  &\le
  \frac43 \, |\hat{f}_0|^2
  \journalbreak\journalonly{\qquad}
  +
  \Big(
  1
  +
  \sum_{\tau=1}^{\alpha-1}
  2 \, (2\pi)^{-2(\alpha-\tau)} \, \zeta(2(\alpha-\tau))
  +
  8 \, (2\pi)^{-2\alpha} \, \zeta(2\alpha)
  \Big)
  \,
  \|D^\alpha f\|_{L_2([0,1])}^2
  \\
  &\le
  \frac43 \, \|f\|_{\widetilde{H}^{\alpha,\rmu}([0,1])}^2
  ,
\end{align*}
where we used
\begin{align*}
  \sum_{\tau=1}^{\alpha-1} 2 \, (2\pi)^{-2(\alpha-\tau)} \, \zeta(2(\alpha-\tau))
  +
  8 \, (2\pi)^{-2\alpha} \, \zeta(2\alpha)
  &\le
  \frac13
  .
\end{align*}
For $\alpha = 1$ this is an equality, while for $\alpha \ge 2$ it follows from $2 \, (2\pi)^{-2k} \, \zeta(2k) \le (1/12) \, (2\pi)^{-2(k-1)}$ for $k \ge 1$ and $2 \, (2\pi)^{-2\alpha} \, \zeta(2\alpha) \le 1/720$, since $1/(12 \, (1-(2\pi)^{-2})) + 1/180 < 1/3$.
The constants for the last embedding follow from using Lemma~\ref{lem:H-alpha-u-norm-equivalence-d=1}.
\end{proof}

Again these different spaces occur all over the literature.
The three spaces $H^\alpha([0,1])$, $H^{\alpha,\rmu}([0,1])$ and $H^{\alpha,a}([0,1])$ are, e.g., all treated in~\cite{GHHR2017}.
Further, for $H^{\alpha,\rmu}([0,1])$, see, e.g.,~\cite{KSWW2010b,DNP2014,GSY2017,NS2023,DKP2022}, and for $H^{\alpha,a}([0,1])$, see, e.g.,~\cite{KSWW2010b,DHP2015,NN2021uniform,NN2021lognormal}; for $\alpha=1$, both spaces are also treated in~\cite{NW2008,DKS2013,DKP2022}.
(Again, these references all treat these as multivariate spaces.)

As a final remark, we started with defining the Sobolev norm in terms of an $\ell_2$-sum of $L_2$-norms of derivatives, see~\eqref{eq:H-alpha-d=1}.
In light of all the equivalent norms that we just discussed, it should be clear that this $\ell_2$-sum could easily be replaced by any other $\ell_p$-norm, since, with $m = \alpha + 1$ and $\bsx \in \bbC^m$, $1 \le p \le q \le \infty$,
\begin{align*}
  \|\bsx\|_{\ell_\infty(\bbC^m)}
  \le
  \|\bsx\|_{\ell_q(\bbC^m)}
  &\le
  \|\bsx\|_{\ell_p(\bbC^m)}
  \le
  m^{(1/p) - (1/q)} \, \|\bsx\|_{\ell_q(\bbC^m)}
  .
\end{align*}
Thus moving from any such norm with an $\ell_p$-sum to an $\ell_q$-sum will just adjust the embedding constants.
For a multivariate tensor product space this will however be raised to the power~$d$.

\subsection{Multivariate mixed smoothness spaces}\label{sec:non-periodic-spaces}

Generalising Sobolev space norms for use in high-dimensional settings will be a two-step procedure for us.
The notation for these spaces will be introduced precisely below but we will first try to explain the two steps.

\textbf{(1)} In Section~\ref{sec:mixed-spaces} we move from the classical space $H^\alpha([0,1]^d)$ to the space of dominating mixed smoothness $H^\alpha_\mix([0,1]^d)$.
This has the effect of replacing an upper error bound for good algorithms of the form $C_{d,\alpha} \, n^{-\alpha/d}$ by one of the form $C^\mix_{d,\alpha} \, n^{-\alpha}$.
So we go from a dimension-dependent rate $-\alpha/d$ to a dimension-independent rate $-\alpha$.
The constants $C_{d,\alpha} = C_{d,\alpha}(\calQ)$ and $C^\mix_{d,\alpha} = C^\mix_{d,\alpha}(\calQ)$ will depend on the choice of algorithms~$\calQ = \{Q_{d,n}\}_{d,n}$.
We will not prove these upper bounds, or show these good algorithms, but we will show lower bounds in Section~\ref{sec:lower-bounds} that hold for any algorithm (from a certain class).
That is, we will show that no algorithm that uses function values (sampling) can do better than $c_{d,\alpha} \, n^{-\alpha/d}$ and $c^\mix_{d,\alpha} \, n^{-\alpha}$ for some constants $c_{d,\alpha}$ and $c^\mix_{d,\alpha}$.
See Theorems~\ref{thm:Halpha-lower-bound} and~\ref{thm:Hmixalpha-lower-bound}, which specify the setting in detail.

\textbf{(2)} In Section~\ref{sec:weighted-spaces} we move from $H^\alpha_\mix([0,1]^d)$ to the weighted space $H^{\alpha,\bsgamma}_\mix([0,1]^d)$.
Here, $\bsgamma = \{ \gamma_\setu \}_{\setu}$ is a collection of weights $\gamma_\setu$ with $\gamma_\setu > 0$ for each subset of dimensions~$\setu$ to express the importance of each subset.
This will give us upper bounds for good algorithms of the form $C_{d,\alpha,\bsgamma}^\mix \, n^{-\alpha}$ where again $C_{d,\alpha,\bsgamma}^\mix = C_{d,\alpha,\bsgamma}^\mix(\calQ)$ depends on the choice of algorithms~$\calQ = \{Q_{d,n}\}_{d,n}$.
For suitable weights~$\bsgamma$ we have $\sup_{d \in \N} C^\mix_{d,\alpha,\bsgamma} \le C^\mix_{\alpha,\bsgamma} < \infty$ with a completely dimension-independent constant $C^\mix_{\alpha,\bsgamma} = C^\mix_{\alpha,\bsgamma}(\calQ)$.
We use the supremum when monotonicity in~$d$ has not been established.
For the explicit subset sums and products below, the quantities are nondecreasing in~$d$, and hence their supremum equals their limit as $d \to \infty$.
We will prove such an upper bound specifically with lattice rules as the choice of $\calQ$ later in Section~\ref{sec:upper-bound-lattice-rules} in multivariate periodic mixed function spaces.

Together, these two steps separate the convergence rate in the upper error bound from the dimension~$d$ and then control the dimension dependence of its constant.
The ``curse of dimensionality'' is an important concept when talking about high-dimensional problems.
Having an upper bound of the form $C_{\alpha,\bsgamma}^\mix \, n^{-\alpha}$ for a dimension-independent constant $C_{\alpha,\bsgamma}^\mix < \infty$ shows directly that there is no curse of dimensionality, since there is no~$d$ in the rate nor in the constant, and we then have what is called ``strong tractability'', see, e.g., \cite{SW98,SKJ2002,SWW2004,SW2001,SW2002,NW2010}.
Whether this is possible depends on space and on the weights~$\bsgamma$.
On the other hand, just having a lower bound of the form $c_{d,\alpha}^\mix \, n^{-\alpha}$ for the unweighted space does not mean we have the curse of dimensionality.
The lower bound constant $c_{d,\alpha}^\mix$ in Theorem~\ref{thm:Hmixalpha-lower-bound} will in fact be superexponentially small.
We will properly define what we mean by the curse of dimensionality in Section~\ref{sec:curse-definition} and will then show in Theorem~\ref{thm:curse} that the unweighted space $H^\alpha_\mix([0,1]^d)$ does suffer from the curse of dimensionality.

We are about to introduce several variants of these multivariate spaces.
Most of the references given above for the univariate norms already treat the corresponding multivariate spaces with dominating mixed smoothness, so naturally many of the same references reappear here.
We give a few pointers to the literature.
For $H^\alpha([0,1]^d)$, see, e.g.,~\cite{Eva2010,Nguyen2017thesis}.
For $H^\alpha_\mix([0,1]^d)$, see, e.g.,~\cite{Vybiral2005thesis,Nguyen2017thesis,Markhasin2013,NUU2017,UU2016}.
Another often used convention is the $\ell_1$-sum instead of the $\ell_2$-sum, e.g., used for both the classical and mixed norms in~\cite{Nguyen2017thesis} and for the mixed norm in~\cite{Vybiral2005thesis}.
Spaces similar to our increasing smoothness space $H^\bsalpha_\mix([0,1]^d)$ are treated in, e.g.,~\cite{Vybiral2005thesis,PW2010,DG2016,GHHRW2019}.
The three weighted spaces $H^{\alpha,\bsgamma}_\mix([0,1]^d)$, $H^{\alpha,\bsgamma,\bsa}_\mix([0,1]^d)$ and $H^{\alpha,\bsgamma,\rmu}_\mix([0,1]^d)$ are, e.g., treated together in~\cite{GHHR2017}, while further examples of the anchored and unanchored spaces can be found in~\cite{NW2008,NW2010,KSWW2010b,DP2010,DHP2015,DKS2013,DG2014,DNP2014,GSY2017,GHHRW2017,NN2021uniform,NN2021lognormal,NS2023,DKP2022}.
Further related spaces can be found, e.g., in~\cite{NUU2017,UU2016}.

\subsubsection{From classical smoothness to dominating mixed smoothness}\label{sec:mixed-spaces}

The Sobolev space $H^\alpha([0,1])$ introduced in Section~\ref{sec:typical-Sobolev-d=1} on $[0,1]$ is often directly defined in the multivariate setting.
Such spaces appear prominently in the numerical analysis of solutions of PDEs, see, e.g.,~\cite{Eva2010}.
For simplicity, let us still assume that the domain is~$[0, 1]^d$.
The most typical norm one encounters is the order-$1$ smoothness norm
\begin{align*}
  \|f\|_{H^1([0,1]^d)}^2
  \journalonly{&}:=
  \|f\|_{L_2([0,1]^d)}^2 + \|\nabla f\|_{L_2([0,1]^d)}^2
  \journalonly{\\*}&\journalonly{\hphantom{:}}=
  \int_{[0,1]^d} |f(\bsx)|^2 \rd\bsx + \int_{[0,1]^d} |\nabla f(\bsx)|^2 \rd\bsx
  \\
  &\journalonly{\hphantom{:}}=
  \int_{[0,1]^d} |f(\bsx)|^2 \rd\bsx + \int_{[0,1]^d} \sum_{j=1}^d |\partial_j f(\bsx)|^2 \rd\bsx
  \\
  &\journalonly{\hphantom{:}}=
  \|f\|_{L_2([0,1]^d)}^2 + \sum_{\substack{\bstau \in \{0,1\}^d \\ \|\bstau\|_1 = 1}} \|D^\bstau f\|_{L_2([0,1]^d)}^2
  \\
  \numberthis{eq:H-1-norm}
  &\journalonly{\hphantom{:}}=
  \sum_{\substack{\bstau \in \{0,1\}^d \\ \|\bstau\|_1 \le 1}} \|D^\bstau f\|_{L_2([0,1]^d)}^2
  .
\end{align*}
This norm is usually defined by the first expression after the $:=$~sign as the $\ell_2$-sum of the $L_2$-norm of~$f$ and its gradient~$\nabla f$; interpreting $|\nabla f|^2$ as $\sum_{j=1}^d |\partial_j f|^2$ as another squared $\ell_2$-sum.
We expanded the norm until the last expression for two reasons: (\textit{i}) we can easily generalise the last expression to larger order of the derivatives, (\textit{ii}) we will mimic this definition when defining our ``mixed smoothness''.

For $\alpha \in \N_0$ we generalise the norm~\eqref{eq:H-1-norm} to the arbitrary classical smoothness norm which then defines the \emph{classical Sobolev space} of smoothness $\alpha$,
\begin{align}\label{eq:norm-Halpha}
\boxed{  \|f\|_{H^\alpha([0,1]^d)}^2
  :=
  \sum_{\substack{\bstau \in \{0,\ldots,\alpha\}^d \\ \|\bstau\|_1 \le \alpha}} \|D^\bstau f\|_{L_2([0,1]^d)}^2
  .
}
\end{align}
For $\alpha = 0$ we interpret this as the standard $L_2$ norm.
We will call these norms ``classical Sobolev space norms''.
What they do is take derivatives up to the ``total order'' $\|\bstau\|_1 = \tau_1 + \cdots + \tau_d \le \alpha$.

As in the univariate definition, $D^\bstau f$ denotes the weak derivative and its norm is understood to be infinite if it is not in $L_2([0,1]^d)$.
Again we define the space $H^\alpha([0,1]^d)$ by finiteness of the associated norm.
We will not prove the Sobolev embedding into the continuous functions but we refer to, e.g., \cite{Eva2010}, which states $H^\alpha([0,1]^d) \hookrightarrow C([0,1]^d)$ for $\alpha > d/2$.
For deterministic algorithms using function values on this space we therefore assume $\alpha > d/2$, similarly to the univariate case.
The best possible convergence deteriorates from $n^{-\alpha}$ in the one-dimensional case to $n^{-\alpha/d}$ in $d$~dimensions.
We will prove a lower bound, that holds for any sampling algorithm, in Theorem~\ref{thm:Halpha-lower-bound}.
Later in Theorem~\ref{thm:curse} we will also show that this space suffers from the curse of dimensionality.

But, sure there must be functions, and hence function spaces, for which such bad behaviour can be avoided?
Yes indeed, we just have to ask ``a bit'' more from our functions.
It turns out that the ``divide by $d$'' in the rate can be avoided if we switch to ``dominating mixed smoothness spaces''.
The difference is that we add more derivatives to the norm.
We do this by switching from $\|\bstau\|_1 \le \alpha$ to $\|\bstau\|_\infty \le \alpha$, and define the \emph{Sobolev space of dominating mixed smoothness} of order $\alpha$ by the norm
\begin{align}\label{eq:norm-Halpha-mix}
  \boxed{
  \begin{aligned}
  \|f\|_{H^\alpha_\mix([0,1]^d)}^2
  &:=
  \sum_{\substack{\bstau \in \{0,\ldots,\alpha\}^d \\ \|\bstau\|_\infty \le \alpha}} \|D^\bstau f\|_{L_2([0,1]^d)}^2
  \journalbreak\journalonly{\hphantom{:}}=
  \sum_{\bstau \in \{0,\ldots,\alpha\}^d} \|D^\bstau f\|_{L_2([0,1]^d)}^2
  .
  \end{aligned}
  }
\end{align}
This space is exactly the completion of the tensor product of the one-dimensional spaces with respect to the Hilbert tensor-product norm,  $H^\alpha_\mix([0,1]^d) = \smash{\hat{\bigotimes}}_{j=1}^d H^\alpha([0,1])$.
Note that this does not mean that every function in the space is a product of univariate functions.
The space is the completion of finite linear combinations of such product functions.
The condition for the continuous embedding into the continuous functions however follows immediately from the tensor product form.
The next lemma shows that for $\alpha \in \N$, i.e., $\alpha \ge 1$, we have continuous representatives for all elements of $H^\alpha_\mix([0,1]^d)$, so there is no $d$-dependence in the condition like there is for $H^\alpha([0,1]^d)$, but there is in the constant for the norm embedding.
Later for the periodic spaces $\widetilde{H}^\alpha_\mix([0,1]^d)$ and the like in Section~\ref{sec:periodic-spaces} this will become $\alpha > 1/2$.

We will repeatedly use that if the embedding $J_j$ in coordinate $j$ has embedding constant $C_j = \|J_j\|$, then the corresponding tensor-product embedding satisfies
\begin{align}\label{eq:tensorisation}
  C^{(d)}
  &:=
  \Big\| \bigotimes_{j=1}^d J_j \Big\|
  =
  \prod_{j=1}^d \|J_j\|
  =
  \prod_{j=1}^d C_j
  .
\end{align}
For identical factors $C$ this gives $C^{(d)} = C^d$ and thus generally a dimension-dependent bound.
We will introduce ``product weights'' further below, and then the constants $C_j$ depend on numbers $\gamma_j$ and the product can remain uniformly bounded in~$d$ under suitable summability conditions, see Lemma~\ref{lem:H-1-mix-product-weight-embeddings}.

\begin{lemma}\label{lem:H-alpha-mix-C-embedding}
For $\alpha \in \N$, the continuous embedding $H^\alpha_\mix([0,1]^d) \hookrightarrow C([0,1]^d)$ satisfies
\begin{align}\label{eq:H-alpha-mix-C-embedding}
  \|f\|_{C([0,1]^d)}
  &\le
  2^{d/2} \, \|f\|_{H^\alpha_\mix([0,1]^d)}
  .
\end{align}
\end{lemma}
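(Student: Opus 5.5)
The plan is to prove the statement by tensorising the univariate bound of Lemma~\ref{lem:H-alpha-C-embedding-d=1}. First we reduce to $\alpha = 1$: since $H^\alpha_\mix([0,1]^d)$ contains all $\bstau \in \{0,1\}^d$ among its summands, we have $\|f\|_{H^1_\mix([0,1]^d)} \le \|f\|_{H^\alpha_\mix([0,1]^d)}$. It therefore suffices to show $\|f\|_{C([0,1]^d)} \le 2^{d/2} \|f\|_{H^1_\mix([0,1]^d)}$.

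For $\alpha = 1$ we prove the pointwise bound by induction on $d$, freezing variables one at a time, rather than invoking an abstract operator-norm identity like~\eqref{eq:tensorisation}. The bound we will actually derive is
\begin{align*}
  |f(\bsx)|^2 \le 2^d \sum_{\bstau \in \{0,1\}^d} \|D^\bstau f\|_{L_2([0,1]^d)}^2 .
\end{align*}
Fix $x_d$ and regard $g(x_d) := f(\cdot, x_d)$ as an $H^1_\mix([0,1]^{d-1})$-valued function of one variable. The univariate argument of Lemma~\ref{lem:H-alpha-C-embedding-d=1} combines absolute continuity, Cauchy--Schwarz and $|a|+|b| \le \sqrt2 (|a|^2+|b|^2)^{1/2}$. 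It works verbatim for Hilbert-space-valued functions: norms replace absolute values, and Bochner integrals replace the scalar integrals. This gives
\begin{align*}
  \sup_{x_d} \|f(\cdot,x_d)\|_{H^1_\mix([0,1]^{d-1})}^2 \le 2 \left( \|f\|_{L_2(H^1_\mix)}^2 + \|\partial_d f\|_{L_2(H^1_\mix)}^2 \right).
\end{align*}
By Fubini, the right-hand side equals $2 \|f\|_{H^1_\mix([0,1]^d)}^2$. Applying the induction hypothesis to $f(\cdot,x_d)$ then yields the factor $2^{d-1} \cdot 2 = 2^d$.

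An alternative route is to apply the scalar lemma coordinate by coordinate. For fixed $\bsx$, we write
\begin{align*}
  f(\bsx) = \int_0^1 f(y_1, x_2, \ldots) \rd y_1 + \int_0^1 \bigl(f(x_1, \ldots) - f(y_1, \ldots)\bigr) \rd y_1
\end{align*}
and iterate over the coordinates. This expands $|f(\bsx)|$ as a sum of $2^d$ terms, each bounded by $\|D^\bstau f\|_{L_2}$. Cauchy--Schwarz over the $2^d$ terms then gives exactly the factor $2^{d/2}$, consistent with the constant $\sqrt2$ per coordinate.

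The main obstacle is rigour about regularity, namely that $f$ has a continuous representative to which these slice arguments apply. We handle this by density. The bound is first proved for smooth $f$, say finite sums of products of $C^\infty$ univariate functions, which are dense in the Hilbert tensor product $H^1_\mix([0,1]^d) = \hat{\bigotimes} H^1([0,1])$. The inequality then shows that Cauchy sequences in $H^1_\mix$ are uniformly Cauchy. Hence every element has a continuous representative satisfying~\eqref{eq:H-alpha-mix-C-embedding}, which coincides almost everywhere with the $L_2$ limit.
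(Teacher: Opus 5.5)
Your proof is correct, but it proves by hand what the paper gets in one line, so it is a genuinely different route.

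The paper takes the univariate constant $\sqrt2$ from Lemma~\ref{lem:H-alpha-C-embedding-d=1} and applies the tensorisation identity~\eqref{eq:tensorisation}. With $H^\alpha_\mix([0,1]^d)=\hat{\bigotimes}_{j=1}^d H^\alpha([0,1])$, point evaluation at $\bsx$ is the tensor product of the univariate point evaluations at $x_1,\ldots,x_d$. Norms of such tensor products multiply on a Hilbert tensor product, which gives $2^{d/2}$ immediately. Equivalently, the reproducing kernel is a product, and its diagonal is bounded by $2^d$.

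You instead do the following:
\begin{itemize}
\item reduce to $\alpha=1$, which is harmless because $\{0,1\}^d\subseteq\{0,\ldots,\alpha\}^d$ and the univariate constant already comes from $H^1$;
\item run the univariate argument for $H^1_\mix([0,1]^{d-1})$-valued functions;
\item use Fubini to recombine the slice norms into $\|f\|_{H^1_\mix([0,1]^d)}^2$;
\item induct on $d$;
\item close with a density argument.
\end{itemize}
Your scalar ``alternative route'' is just this induction unrolled, and it also works.

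Your approach buys self-containedness. It needs only the vector-valued fundamental theorem of calculus and Fubini, not any operator theory on tensor products. It also explicitly constructs the continuous representative, which the paper leaves implicit in ``follows immediately from the tensor product form''.

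The paper's approach buys brevity and an identity rather than an inequality. Because~\eqref{eq:tensorisation} holds with equality, sharp univariate constants stay sharp after tensorising. This is exactly what the paper exploits later for the product-weight constants in Lemma~\ref{lem:H-1-mix-product-weight-embeddings}. Your induction only ever yields upper bounds.
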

\begin{proof}
From Lemma~\ref{lem:H-alpha-C-embedding-d=1} we have a bound with constant~$\sqrt2$ for $d=1$, the tensorisation~\eqref{eq:tensorisation} then proves the claimed result.
\end{proof}

By taking the mixed norm instead of the classical norm we will be able to have a dimension-independent rate of convergence, going from $n^{-\alpha/d}$ to $n^{-\alpha}$.
We will however still have the curse of dimensionality, see Theorem~\ref{thm:curse}.
As one possible way to achieve tractability and avoid the curse, we will introduce weights in the next section.
Comparing the two norms, \eqref{eq:norm-Halpha} and~\eqref{eq:norm-Halpha-mix}, we notice that the mixed norm just added in some extra terms.
So we have $H^\alpha_\mix([0,1]^d) \subseteq H^\alpha([0,1]^d)$ and if $f \in H^\alpha_\mix([0,1]^d) \subseteq H^\alpha([0,1]^d)$ then $\|f\|_{H^\alpha([0,1]^d)} \le \|f\|_{H^\alpha_\mix([0,1]^d)}$.
This means $H^\alpha_\mix([0,1]^d) \hookrightarrow H^\alpha([0,1]^d)$.
Finally, let us remark that for $d=1$ the classical norms and their corresponding mixed norms coincide as there is no choice to make with respect to which mixed derivatives to include.
I.e., there is no ``mixed'' subscript for~$d=1$.

One further extension is to define a different smoothness for each dimension.
Given $\bsalpha \in \N_0^d$ define
\begin{align}\label{eq:H-bsalpha-mixed-norm}
\boxed{
  \|f\|_{H^\bsalpha_\mix([0,1]^d)}^2
  :=
  \sum_{\substack{\bstau \in \N_0^d \\ \bstau \le \bsalpha}}
  \|D^\bstau f\|_{L_2([0,1]^d)}^2
  ,
}
\end{align}
where $\bstau \in \N_0^d,\; \bstau \le \bsalpha$ means that $0 \le \tau_j \le \alpha_j$ for all $1 \le j \le d$.
The previous mixed space can then be recovered by taking $\bsalpha = (\alpha, \ldots, \alpha)$.
When $\alpha_1 \le \cdots \le \alpha_d$ we call these spaces \emph{increasing smoothness spaces}; similar spaces are treated in~\cite{Vybiral2005thesis,PW2010,DG2016,GHHRW2019}.
We will not pursue increasing smoothness spaces here but will focus on weighted spaces to beat the curse of dimensionality.

\subsubsection{From unweighted to weighted spaces}\label{sec:weighted-spaces}

Our next step is to add ``weights'' to our norms to obtain the corresponding weighted function spaces.
The weights are used to moderate how much a function is allowed to vary in certain dimensions and subspaces.
Note that next to our previous aim of encoding the smoothness of the functions, the $L_2$-norms of the mixed derivatives in the norm are in some way measuring the variation of the function for the dimensions in which we take derivatives.
In full generality we consider all $2^d$ subsets $\setu \subseteq \{1, \ldots, d\}$ and associate a positive weight $\gamma_\setu > 0$ with terms of the form $\smash{\|D^\bstau f\|_{L_2([0,1]^d)}^2}$ for which $\setu = \supp(\bstau)$.
The smaller the weight, the less important this subspace is.
Specifically, we will scale the term $\smash{\|D^\bstau f\|_{L_2([0,1]^d)}^2}$ by $\gamma_\setu^{-1}$ for $\setu = \supp(\bstau)$ such that components involving ``less important'' variable interactions, i.e., those with smaller $\gamma_\setu$, are ``punished'', i.e., will contribute more to the norm.
The norm measures the ``size'' or ``difficulty'' of our function.

For our exposition we will assume $\gamma_\emptyset = 1$ and $\gamma_\setu > 0$ for all~$\setu$.
For the weighted mixed norm $\|\cdot\|_{H^{\alpha,\bsgamma}_\mix([0,1]^d)}$, that we introduce below, this leaves the $L_2$ term unscaled, while for the weighted unanchored norm $\|\cdot\|_{H^{\alpha,\bsgamma,\rmu}_\mix([0,1]^d)}$, also introduced below, it leaves the integral term unscaled.
This has the effect of having the ``initial error'' (the error of the optimal algorithm that uses no information) to be~$1$.
We will need this in Section~\ref{sec:curse-definition}.
For further background on weighted spaces, see, e.g.,~\cite{SW98,SW2002,DSWW2004,DKS2013,GHHR2017,GHHRW2017}.

We now modify the unweighted mixed norm~\eqref{eq:norm-Halpha-mix} to obtain the norm of the \emph{weighted dominating mixed smoothness space}, $\alpha \in \N_0$,
\begin{align}\label{eq:norm-Halpha-bsgamma-mix}
  \boxed{
  \begin{aligned}
  \|f\|_{H^{\alpha,\bsgamma}_\mix([0,1]^d)}^2
  &:=
  \sum_{\substack{\bstau \in \N_0^d \\ \|\bstau\|_\infty \le \alpha}} \gamma_{\supp(\bstau)}^{-1} \, \|D^\bstau f\|_{L_2([0,1]^d)}^2
  \journalbreak\journalonly{\hphantom{:}}=
  \sum_{\bstau \in \{0,\ldots,\alpha\}^d} \gamma_{\supp(\bstau)}^{-1} \, \|D^\bstau f\|_{L_2([0,1]^d)}^2
  .\end{aligned}
  }
\end{align}
As mentioned in the univariate section, we could also consider other $\ell_p$-norms instead of the $\ell_2$-norm over the derivatives.
In some references the weights $\gamma_\setu$ are taken to be squared in the definition of the norm; squared since it is an $\ell_2$-norm.
This simplifies scaling of the norm or using $\ell_p$-inequalities, but we will not do that here.

\paragraph{Weight families.}
For a particular $d$-variate function space it is of course enough to consider the finite set of weights $\{ \gamma_\setu \}_{\setu \subseteq \{1, \ldots, d\}}$.
However, when we want to study worst-case errors and the effect of increasing the dimension $d$, e.g., tractability and the curse of dimensionality, then we are interested in the behaviour when $d \to \infty$.
Define
\begin{align*}
  \frakU &:=
  \big\{ \setu \subset \N : |\setu| < \infty \big\}
  \subset
  2^\N
  .
\end{align*}
We will use the notation $\bsgamma$ to define a family of weights as a mapping $\bsgamma : \frakU \to \R_{>0} : \setu \mapsto \gamma_\setu$ with the notation
\begin{align*}
  \bsgamma
  &=
  \{ \gamma_\setu \}_{\setu \in \frakU}
  .
\end{align*}
With this in place it should be not too surprising that worst-case errors as in~\eqref{eq:def-wce-integration} and~\eqref{eq:def-wce}, where we take the sup over the unit ball of the function space, might now be controlled with the weights.
In fact, we will be able to derive error bounds in which certain sums over the weights will appear.
Let's repeat again that they could be bounded by a constant $C_{d,\alpha,\bsgamma}^\mix(\calQ)$, and under the right summability conditions of the weights we will have that $\sup_{d \in \N} C_{d,\alpha,\bsgamma}^\mix(\calQ) \le C_{\alpha,\bsgamma}^\mix(\calQ) < \infty$, depending on the choice of algorithms~$\calQ = \{Q_{d,n}\}_{d,n}$.
We will make this precise for lattice rules in Section~\ref{sec:upper-bound-lattice-rules} in our periodic weighted Sobolev spaces of dominating mixed smoothness.

We first show that for fixed~$d$ these spaces are continuously embedded into each other.
The straightforward comparison in the next lemma gives bounds that depend on~$d$.
Lemma~\ref{lem:H-1-mix-product-weight-embeddings} below will give better bounds that are independent of~$d$ under suitable conditions.

\begin{lemma}[Straightforward embedding]\label{lem:H-alpha-C-embedding-d}
Given $d \in \N$ and a family of weights $\bsgamma = \{ \gamma_\setu \}_{\setu \in \frakU}$ with $\gamma_\setu > 0$ for every~$\setu \in \frakU$, let
\begin{align}\label{eq:def-gamma-max-min-d}
  \gamma_{\min,d}
  &:=
  \min_{\setu \subseteq \{1,\ldots,d\}} \gamma_\setu
  ,
  &
  \gamma_{\max,d}
  &:=
  \max_{\setu \subseteq \{1,\ldots,d\}} \gamma_\setu
  ,
\end{align}
then we have the continuous embedding $H^{\alpha,\bsgamma}_\mix([0,1]^d) \hookrightleftarrows H^\alpha_\mix([0,1]^d)$ with
\begin{align}\label{eq:H-alpha-mix-weighted-equivalence}
  \frac1{\sqrt{\gamma_{\max,d}}} \, \|f\|_{H^\alpha_\mix([0,1]^d)}
  &\le
  \|f\|_{H^{\alpha,\bsgamma}_\mix([0,1]^d)}
  \le
  \frac1{\sqrt{\gamma_{\min,d}}} \, \|f\|_{H^\alpha_\mix([0,1]^d)}
  .
\end{align}
For $\alpha \in \N$ we also have the continuous embedding $H^{\alpha,\bsgamma}_\mix([0,1]^d) \hookrightarrow C([0,1]^d)$ with
\begin{align*}
  \|f\|_{C([0,1]^d)}
  &\le
  2^{d/2} \, \sqrt{\gamma_{\max,d}} \, \|f\|_{H^{\alpha,\bsgamma}_\mix([0,1]^d)}
  .
\end{align*}
\end{lemma}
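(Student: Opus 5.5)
The plan is a termwise comparison of the two norms, followed by an appeal to Lemma~\ref{lem:H-alpha-mix-C-embedding}. Both squared norms, \eqref{eq:norm-Halpha-mix} and \eqref{eq:norm-Halpha-bsgamma-mix}, are sums over the same index set $\bstau \in \{0,\ldots,\alpha\}^d$ of the same nonnegative quantities $\|D^\bstau f\|_{L_2([0,1]^d)}^2$. They differ only in the factor $\gamma_{\supp(\bstau)}^{-1}$ in front of each term. Since $\supp(\bstau) \subseteq \{1,\ldots,d\}$, this factor lies between $\gamma_{\max,d}^{-1}$ and $\gamma_{\min,d}^{-1}$, as defined in \eqref{eq:def-gamma-max-min-d}. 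Both quantities are finite and positive because there are only $2^d$ subsets and every $\gamma_\setu > 0$.

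First I multiply each term of the unweighted sum by these bounds and sum. This gives
\begin{align*}
  \gamma_{\max,d}^{-1} \, \|f\|_{H^\alpha_\mix([0,1]^d)}^2
  \le
  \|f\|_{H^{\alpha,\bsgamma}_\mix([0,1]^d)}^2
  \le
  \gamma_{\min,d}^{-1} \, \|f\|_{H^\alpha_\mix([0,1]^d)}^2
  .
\end{align*}
Taking square roots yields \eqref{eq:H-alpha-mix-weighted-equivalence}. The same inequality shows that one norm is finite exactly when the other is. The weak derivatives involved are identical, so the two spaces coincide as vector spaces and the embedding holds in both directions. This also covers $\alpha = 0$, where both spaces equal $L_2([0,1]^d)$.

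For the embedding into $C([0,1]^d)$ with $\alpha \in \N$, I start from Lemma~\ref{lem:H-alpha-mix-C-embedding}, which gives $\|f\|_{C([0,1]^d)} \le 2^{d/2} \, \|f\|_{H^\alpha_\mix([0,1]^d)}$. I then use the left inequality of \eqref{eq:H-alpha-mix-weighted-equivalence} in the form $\|f\|_{H^\alpha_\mix([0,1]^d)} \le \sqrt{\gamma_{\max,d}} \, \|f\|_{H^{\alpha,\bsgamma}_\mix([0,1]^d)}$. Chaining the two bounds gives the claimed constant $2^{d/2}\sqrt{\gamma_{\max,d}}$. The continuous representative is the one provided by Lemma~\ref{lem:H-alpha-mix-C-embedding}, because the two spaces are the same as vector spaces.

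No step here is a real obstacle. The only point needing care is the direction of the inequalities: a larger weight gives a smaller factor $\gamma_\setu^{-1}$, so $\gamma_{\max,d}$ enters the lower bound and $\gamma_{\min,d}$ the upper bound.
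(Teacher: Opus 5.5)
Your proposal is correct and follows the same approach as the paper: the two squared norms are compared term by term using $\gamma_{\max,d}^{-1} \le \gamma_{\supp(\bstau)}^{-1} \le \gamma_{\min,d}^{-1}$, and then Lemma~\ref{lem:H-alpha-mix-C-embedding} is combined with the left inequality of \eqref{eq:H-alpha-mix-weighted-equivalence}. Your version simply writes out the steps that the paper compresses into a single sentence, and it gets the direction of the inequalities and the final constant $2^{d/2}\sqrt{\gamma_{\max,d}}$ right.
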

\begin{proof}
This follows immediately from the definitions of the two norms, \eqref{eq:norm-Halpha-mix} and~\eqref{eq:norm-Halpha-bsgamma-mix}, as well as combining this with Lemma~\ref{lem:H-alpha-mix-C-embedding}.
\end{proof}

\paragraph{Weighted anchored and unanchored Sobolev spaces of dominating mixed smoothness.}
We now build multivariate versions of the equivalent univariate norms in Section~\ref{sec:non-periodic-equivalent-norms-d=1}.
Based on the univariate anchored norm~\eqref{eq:H-alpha-a-d=1} we define the \emph{weighted anchored mixed Sobolev space} with anchor $\bsa \in [0,1]^d$, by finiteness of its norm, with the norm defined by, $\alpha \in \N_0$,
\begin{align}\label{eq:norm-H-alpha-bsgamma-a}
  \boxed{
  \|f\|_{H^{\alpha,\bsgamma,\bsa}_\mix([0,1]^d)}^2
  :=
  \sum_{\substack{\bstau \in \N_0^d \\ \|\bstau\|_\infty \le \alpha \\ \setv := \{ j : \tau_j \ne \alpha \}}} \gamma_{\supp(\bstau)}^{-1} \, \|D^\bstau f(\cdot_{-\setv}; \bsa_\setv)\|_{L_2([0,1]^d)}^2
  .
  }
\end{align}
Here the notation $f(\cdot_{-\setv}; \bsa_\setv)$ for a set $\setv \subseteq \{1,\ldots,d\}$ means to fix (i.e., ``anchor'') all components in $\setv$ to their respective anchor value (remember that $D^\bstau f(\bsx) = (D^\bstau f)(\bsx)$).
So this means $\bsx \mapsto f(\bsx_{-\setv}; \bsa_\setv) = f(\bsy)$ with $y_j = x_j$ for $j \in -\setv$ and $y_j = a_j$ for $j \in \setv$.
This results in $\bsx \mapsto f(\bsx_{-\setv}; \bsa_\setv)$ being a function of $\bsx_{-\setv} = (x_j)_{j\in -\setv}$, i.e., of those $x_j$ with $j$ in the complement set $-\setv := \{1,\ldots,d\} \setminus \setv$.

For a set $\setv \subseteq \{1,\ldots,d\}$ we define
\begin{align*}
  I_\setv f
  &:=
  \int_{[0,1]^{|\setv|}} f(\bsx_\setv; \bsx_{-\setv}) \rd\bsx_\setv
  ,
\end{align*}
and let $I_\emptyset$ be the identity operator.
Then, likewise, based on the univariate unanchored norm~\eqref{eq:H-alpha-u-d=1}, we obtain the \emph{weighted unanchored mixed Sobolev space} with norm, $\alpha \in \N_0$,
\begin{align}\label{eq:norm-H-alpha-bsgamma-u}
  \boxed{
  \|f\|_{H^{\alpha,\bsgamma,\rmu}_\mix([0,1]^d)}^2
  :=
  \sum_{\substack{\bstau \in \N_0^d \\ \|\bstau\|_\infty \le \alpha \\ \setv := \{ j : \tau_j \ne \alpha \}}} \gamma_{\supp(\bstau)}^{-1} \, \|I_\setv D^\bstau f\|_{L_2([0,1]^d)}^2
  .
  }
\end{align}
Written like this, this norm is used in, e.g.,~\cite{NN2021lognormal,NS2023}.
For $\alpha \in \N$, this norm also appears in the literature in a slightly more complicated written form, e.g., in~\cite{DKS2013,GSY2017,GSY2019,DKP2022}, as
\begin{align*}
  \numberthis{eq:H-alpha-u-d-alternative-indexing}
  &\|f\|_{H^{\alpha,\bsgamma,\rmu}_\mix([0,1]^d)}^2
  \\
  &\quad=
  \sum_{\setu \subseteq \{1,\ldots,d\}} \gamma_\setu^{-1}
  \sum_{\substack{\setv \subseteq \setu \\ \bstau_{\setu \setminus \setv} \in \{1,\ldots,\alpha-1\}^{|\setu \setminus \setv|}}}
  \journalbreak\journalonly{\qquad\qquad}
  \int_{[0,1]^{|\setv|}}
  \left|
  \int_{[0,1]^{d-|\setv|}}
  D^{(\alpha_\setv,\bstau_{\setu \setminus \setv},\bszero_{-\setu})} f(\bsx) \rd\bsx_{-\setv}
  \right|^2
  \rd\bsx_\setv
  ,
\end{align*}
where $\bstau = (\bsalpha_\setv,\bstau_{\setu \setminus \setv},\bszero_{-\setu}) \in \N_0^d$ denotes the multi-index with $\tau_j = \alpha$ for $j \in \setv$, $\tau_j \in \{1,\ldots,\alpha-1\}$ on $\setu \setminus \setv$, and $\tau_j = 0$ for $j \notin \setu$.

Note again that the $\bsa$ for the anchored norm is a mathematical variable, while $\rmu$ for the unanchored norm is a normal letter to denote ``unanchored''.
We remark that for both~\eqref{eq:norm-H-alpha-bsgamma-a} and~\eqref{eq:norm-H-alpha-bsgamma-u} the integrals in the $L_2$-norms over $[0,1]^d$ could be reduced to integrals over $[0,1]^{d - |\setv|}$ since the remaining function is constant in the components in~$\setv$.

The functional \emph{ANOVA decomposition} of a function is orthogonal with respect to the inner product associated with the unanchored norm~\eqref{eq:norm-H-alpha-bsgamma-u}, which is why this norm is also called an \emph{ANOVA norm} to say that it has the ANOVA property (although some references write ``the'' ANOVA norm, different norms can have the ANOVA orthogonality property too).
Likewise, for $\alpha \ge 1$, the \emph{anchored decomposition} is orthogonal with respect to the inner product associated with the anchored norm~\eqref{eq:norm-H-alpha-bsgamma-a}, see~\cite{KSWW2010b}.
The functional ANOVA decomposition requires the computation of multiple integrals up to dimension~$d$, which makes it generally impractical.
On the other hand, the anchored components are explicitly computable from function values and form the basis of the \emph{multivariate decomposition method} (MDM) for finite- and infinite-dimensional integration~\cite{KSWW2010,KSWW2010b,DG2014,NN2021uniform,NN2021lognormal}.

For fixed $d$ and all weights $\gamma_\setu > 0$ all these spaces are again continuously embedded into each other and so they are the same vector space of functions.
We will state this in the next lemma.
Since any of the unweighted spaces could be seen as the same space with all $\gamma_\setu \equiv 1$, e.g., $H^\alpha_\mix([0,1]^d) = H^{\alpha,\bsone}_\mix([0,1]^d)$ (as spaces and norms), it follows that, also for two different families of weights $\bsgamma$ and $\bsgamma'$, we have the same vector space and we can express the norm equivalences in terms of the maximum and minimum ratio of these weights.
In the lemma we take the most straightforward approach to obtain embedding constants.
This is not a good idea and we get embedding constants that depend exponentially on~$d$.
One can get better results by assuming some structure on the weights.
We will introduce several common types of weights a little further below and for ``product weights'' and $\alpha = 1$, we will give sharp embedding constants below in Lemma~\ref{lem:H-1-mix-product-weight-embeddings} that are independent of~$d$ under certain conditions on the weights.
So the following is an example of what not to do if one wants good constants!

\begin{lemma}[Straightforward embeddings (with (bad) dependence on~$d$)]\label{lem:H-alpha-mix-weighted-equivalent-norms}
Let $d \in \N$, $\alpha \in \N_0$, $\bsa \in [0,1]^d$ and $\bsgamma$ be a family of weights with $\gamma_\setu > 0$ for all~$\setu$.
With $\gamma_{\min,d}$ and $\gamma_{\max,d}$ as defined in~\eqref{eq:def-gamma-max-min-d}, set
\begin{align}\label{eq:def-rho-d}
  \rho_d(\bsgamma)
  &:=
  \frac{\sqrt{\gamma_{\max,d}}}{\sqrt{\gamma_{\min,d}}}
  .
\end{align}
For $\alpha = 0$ we have
\begin{align*}
  \|f\|_{H^{0,\bsgamma}_\mix([0,1]^d)}
  =
  \|f\|_{H^{0,\bsgamma,\rmu}_\mix([0,1]^d)}
  =
  \|f\|_{H^{0,\bsgamma,\bsa}_\mix([0,1]^d)}
  =
  \gamma_\emptyset^{-1/2} \, \|f\|_{L_2([0,1]^d)}
  .
\end{align*}
For $\alpha \in \N$ we have the continuous embeddings $H^{\alpha,\bsgamma,\rmu}_\mix([0,1]^d) \hookrightleftarrows H^{\alpha,\bsgamma}_\mix([0,1]^d) \hookrightleftarrows H^{\alpha,\bsgamma,\bsa}_\mix([0,1]^d)$ with
\begin{align}
  \label{eq:H-alpha-gamma-mix-equivalence-unanchored}
  \journalonly{&}
  \rho_d(\bsgamma)^{-1} \, (1-\pi^{-2})^{d/2} \, \|f\|_{H^{\alpha,\bsgamma}_\mix([0,1]^d)}
  \articleonly{&}\le
  \|f\|_{H^{\alpha,\bsgamma,\rmu}_\mix([0,1]^d)}
  \le
  \|f\|_{H^{\alpha,\bsgamma}_\mix([0,1]^d)}
  ,
  \\
  \label{eq:H-alpha-gamma-mix-equivalence-anchored}
  \journalonly{&\begin{aligned}[t]}
  \rho_d(\bsgamma)^{-1} \, (2\alpha)^{-d/2} \, \|f\|_{H^{\alpha,\bsgamma}_\mix([0,1]^d)}
  \articleonly{&}\le \journalonly{{}&}
  \|f\|_{H^{\alpha,\bsgamma,\bsa}_\mix([0,1]^d)}
  \journalonly{\\*&\qquad\qquad}
  \le
  \rho_d(\bsgamma) \, 3^{d/2} \, \|f\|_{H^{\alpha,\bsgamma}_\mix([0,1]^d)}
  ,
  \journalonly{\end{aligned}}
\end{align}
and the continuous embedding $H^{\alpha,\bsgamma,\rmu}_\mix([0,1]^d) \hookrightleftarrows H^{\alpha,\bsgamma,\bsa}_\mix([0,1]^d)$ with
\begin{align*}
  \numberthis{eq:H-alpha-gamma-mix-equivalence-unanchored-anchored}
  \rho_d(\bsgamma)^{-1} \, (2\alpha)^{-d/2} \, \|f\|_{H^{\alpha,\bsgamma,\rmu}_\mix([0,1]^d)}
  \le {}&
  \|f\|_{H^{\alpha,\bsgamma,\bsa}_\mix([0,1]^d)}
  \journalonly{\\* &}
  \le
  \rho_d(\bsgamma) \, \left(\frac{3}{1-\pi^{-2}}\right)^{d/2} \, \|f\|_{H^{\alpha,\bsgamma,\rmu}_\mix([0,1]^d)}
  ,
\end{align*}
and the continuous embeddings $H^{\alpha,\bsgamma,\rmu}_\mix([0,1]^d) \hookrightarrow C([0,1]^d)$ and $H^{\alpha,\bsgamma,\bsa}_\mix([0,1]^d) \hookrightarrow C([0,1]^d)$ with
\begin{align*}
  \|f\|_{C([0,1]^d)}
  &\le
  \sqrt{\gamma_{\max,d}} \, \left(\frac{2}{1-\pi^{-2}}\right)^{d/2} \, \|f\|_{H^{\alpha,\bsgamma,\rmu}_\mix([0,1]^d)}
  ,
  \\
  \|f\|_{C([0,1]^d)}
  &\le
  \sqrt{\gamma_{\max,d}} \, (4\alpha)^{d/2} \, \|f\|_{H^{\alpha,\bsgamma,\bsa}_\mix([0,1]^d)}
  .
\end{align*}
For $\bsgamma$ and $\bsgamma'$ two families of weights with $\gamma_\setu, \gamma'_\setu > 0$ for all~$\setu$, define
\begin{align*}
  c_d(\bsgamma,\bsgamma')
  &:=
  \max_{\setu \subseteq \{1,\ldots,d\}} \frac{\sqrt{\gamma_\setu}}{\sqrt{\gamma'_\setu}}
  ,
\end{align*}
then we have the continuous embeddings
\begin{align*}
  H^{\alpha,\bsgamma'}_\mix([0,1]^d)
  &\hookrightleftarrows
  H^{\alpha,\bsgamma}_\mix([0,1]^d)
  ,
  \\
  H^{\alpha,\bsgamma',\rmu}_\mix([0,1]^d)
  &\hookrightleftarrows
  H^{\alpha,\bsgamma,\rmu}_\mix([0,1]^d)
  ,
  \\
  H^{\alpha,\bsgamma',\bsa}_\mix([0,1]^d)
  &\hookrightleftarrows
  H^{\alpha,\bsgamma,\bsa}_\mix([0,1]^d)
  ,
\end{align*}
with
\begin{align*}
  c_d(\bsgamma',\bsgamma)^{-1} \, \|f\|_{H^{\alpha,\bsgamma}_\mix([0,1]^d)}
  &\le
  \|f\|_{H^{\alpha,\bsgamma'}_\mix([0,1]^d)}
  \le
  c_d(\bsgamma,\bsgamma') \, \|f\|_{H^{\alpha,\bsgamma}_\mix([0,1]^d)}
  ,
  \\
  c_d(\bsgamma',\bsgamma)^{-1} \, \|f\|_{H^{\alpha,\bsgamma,\rmu}_\mix([0,1]^d)}
  &\le
  \|f\|_{H^{\alpha,\bsgamma',\rmu}_\mix([0,1]^d)}
  \le
  c_d(\bsgamma,\bsgamma') \, \|f\|_{H^{\alpha,\bsgamma,\rmu}_\mix([0,1]^d)}
  ,
  \\
  c_d(\bsgamma',\bsgamma)^{-1} \, \|f\|_{H^{\alpha,\bsgamma,\bsa}_\mix([0,1]^d)}
  &\le
  \|f\|_{H^{\alpha,\bsgamma',\bsa}_\mix([0,1]^d)}
  \le
  c_d(\bsgamma,\bsgamma') \, \|f\|_{H^{\alpha,\bsgamma,\bsa}_\mix([0,1]^d)}
  .
\end{align*}
\end{lemma}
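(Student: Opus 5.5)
The plan is to first prove every statement for the unweighted case $\gamma_\setu \equiv 1$, where the three norms are Hilbert tensor products of the univariate norms and their constants tensorise by~\eqref{eq:tensorisation}. The weights are then brought in by a crude comparison that produces the factor $\rho_d(\bsgamma)$. The case $\alpha=0$ is immediate: the only multi-index is $\bstau=\bszero$, with $\setv=\{1,\ldots,d\}$ for the anchored and unanchored norms. At first sight one gets $I_{\{1,\ldots,d\}} f$ and $f(\bsa)$, but by the convention of~\eqref{eq:norm-H-alpha-bsgamma-a} and~\eqref{eq:norm-H-alpha-bsgamma-u} with $\alpha=0$ we have $\tau_j=\alpha=0$ for all $j$, so $\setv=\emptyset$. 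Hence all three norms equal $\gamma_\emptyset^{-1/2}\|f\|_{L_2}$.

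For $\alpha\in\N$, I would observe that $\|f\|_{H^{\alpha,\bsone}_\mix}$, $\|f\|_{H^{\alpha,\bsone,\rmu}_\mix}$ and $\|f\|_{H^{\alpha,\bsone,\bsa}_\mix}$ are exactly the Hilbert tensor product norms of $H^\alpha([0,1])$, $H^{\alpha,\rmu}([0,1])$ and $H^{\alpha,a_j}([0,1])$. Indeed, expanding the product of the univariate squared norms, each factor contributes either a term with $\tau_j<\alpha$ (an integral or an anchor evaluation in coordinate $j$, so $j\in\setv$) or the term $\tau_j=\alpha$ (an $L_2$ term). Since the integrals $I_\setv$ and the anchorings in different coordinates commute with derivatives in the other coordinates, the sum reproduces~\eqref{eq:norm-H-alpha-bsgamma-u} and~\eqref{eq:norm-H-alpha-bsgamma-a}.

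The univariate identity maps between these spaces have norms bounded by Lemmas~\ref{lem:H-alpha-u-norm-equivalence-d=1} and~\ref{lem:H-alpha-a-norm-equivalence-d=1}: the constants are $1$ and $(1-\pi^{-2})^{-1/2}$, then $\sqrt3$ and $\sqrt{2\alpha}$, and then $\sqrt{2\alpha}$ and $\sqrt{3/(1-\pi^{-2})}$. Tensorising gives the unweighted versions of~\eqref{eq:H-alpha-gamma-mix-equivalence-unanchored}--\eqref{eq:H-alpha-gamma-mix-equivalence-unanchored-anchored} with these constants raised to the power $d$. The main technical point is justifying~\eqref{eq:tensorisation} for these identity maps. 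They are bounded maps between Hilbert spaces, so the tensor product of the univariate operators is bounded with norm equal to the product of the norms on the algebraic tensor product. By density this extends to the completion, which is the whole space.

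The weights are handled next. For every $\bstau$ the weight factor satisfies $\gamma_{\max,d}^{-1}\le\gamma_{\supp(\bstau)}^{-1}\le\gamma_{\min,d}^{-1}$, so each weighted norm lies between $\gamma_{\max,d}^{-1/2}$ and $\gamma_{\min,d}^{-1/2}$ times its unweighted counterpart. Chaining gives, for example,
\[
\|f\|_{\bsgamma,\rmu}\ge\gamma_{\max,d}^{-1/2}\|f\|_{\rmu}\ge\gamma_{\max,d}^{-1/2}(1-\pi^{-2})^{d/2}\|f\|_{\mix}\ge\rho_d(\bsgamma)^{-1}(1-\pi^{-2})^{d/2}\|f\|_{\bsgamma,\mix},
\]
and the other inequalities follow in the same way. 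The upper bound $\|f\|_{\bsgamma,\rmu}\le\|f\|_{\bsgamma,\mix}$ with constant $1$ is better proved directly, term by term. For each fixed $\bstau$, Cauchy--Schwarz gives $\|I_\setv D^\bstau f\|_{L_2}\le\|D^\bstau f\|_{L_2}$, and the weights match term by term. I would also check whether the anchored lower constant can be made term by term; if not, the factor $\rho_d$ stays.

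For the embeddings into $C([0,1]^d)$, I would combine the unweighted bound $2^{d/2}$ from Lemma~\ref{lem:H-alpha-mix-C-embedding} with the equivalences just proved, using only the $\gamma_{\max}$ side. We have $\|f\|_{\mix}\le(1-\pi^{-2})^{-d/2}\|f\|_{\rmu}\le(1-\pi^{-2})^{-d/2}\gamma_{\max,d}^{1/2}\|f\|_{\bsgamma,\rmu}$, and the anchored case is the same with $(2\alpha)^{d/2}$. Finally, the comparison between two weight families is a term-by-term comparison with the factor $\gamma'^{-1}_\setu/\gamma^{-1}_\setu=\gamma_\setu/\gamma'_\setu\le c_d(\bsgamma,\bsgamma')^2$, together with the symmetric bound. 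This needs no tensorisation.
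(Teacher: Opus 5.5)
Your proposal is correct and follows essentially the same route as the paper. You tensorise the univariate equivalences of Lemmas~\ref{lem:H-alpha-u-norm-equivalence-d=1} and~\ref{lem:H-alpha-a-norm-equivalence-d=1} for the unweighted norms, sandwich each weighted norm between $\gamma_{\max,d}^{-1/2}$ and $\gamma_{\min,d}^{-1/2}$ times its unweighted counterpart to produce $\rho_d(\bsgamma)$, prove the constant-one upper bound term by term via Cauchy--Schwarz, route the $C([0,1]^d)$ embeddings through Lemma~\ref{lem:H-alpha-mix-C-embedding} using only the $\gamma_{\max,d}$ side, and compare weight families term by term. Your explicit justification of the tensorisation step and your handling of $\setv=\emptyset$ when $\alpha=0$ spell out slightly more than the paper does, but they do not change the argument.
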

\begin{proof}
The case $\alpha = 0$ is clear, hence we assume $\alpha \in \N$ for the rest of the proof.
Clearly for any of the spaces we have an inequality as in~\eqref{eq:H-alpha-mix-weighted-equivalence} from Lemma~\ref{lem:H-alpha-C-embedding-d} to go from a weighted version to a non-weighted version.
Tensorising the one-dimensional inequalities in Lemmas~\ref{lem:H-alpha-u-norm-equivalence-d=1} and~\ref{lem:H-alpha-a-norm-equivalence-d=1} to go from unanchored or anchored to just the plain mixed form gives
\begin{align*}
  \|f\|_{H^{\alpha,\rmu}_\mix([0,1]^d)}
  &\le
  \|f\|_{H^\alpha_\mix([0,1]^d)}
  \le
  (1-\pi^{-2})^{-d/2} \, \|f\|_{H^{\alpha,\rmu}_\mix([0,1]^d)}
  ,
  \\
  3^{-d/2} \, \|f\|_{H^{\alpha,\bsa}_\mix([0,1]^d)}
  &\le
  \|f\|_{H^\alpha_\mix([0,1]^d)}
  \le
  (2\alpha)^{d/2} \, \|f\|_{H^{\alpha,\bsa}_\mix([0,1]^d)}
  .
\end{align*}
Now combining these proves~\eqref{eq:H-alpha-gamma-mix-equivalence-unanchored} and~\eqref{eq:H-alpha-gamma-mix-equivalence-anchored}, the upper bound in~\eqref{eq:H-alpha-gamma-mix-equivalence-unanchored} follows directly from using Cauchy--Schwarz, for all $\setv \subseteq \{1,\ldots,d\}$ and all $\bstau \in \N_0^d$,
\begin{align*}
  \|I_\setv D^\bstau f\|_{L_2([0,1]^d)}^2
  &=
  \int_{[0,1]^{d-|\setv|}} \left|
  \int_{[0,1]^{|\setv|}} D^\bstau f(\bsx_{-\setv}; \bsx_\setv) \rd\bsx_\setv
  \right|^2 \rd\bsx_{-\setv}
  \\
  &\le
  \int_{[0,1]^{d-|\setv|}}
  \int_{[0,1]^{|\setv|}} |D^\bstau f(\bsx_{-\setv}; \bsx_\setv)|^2 \rd\bsx_\setv
  \rd\bsx_{-\setv}
  \journalbreak=
  \|D^\bstau f\|_{L_2([0,1]^d)}^2
  ,
\end{align*}
and hence $\|\cdot\|_{H^{\alpha,\bsgamma,\rmu}_\mix([0,1]^d)} \le \|\cdot\|_{H^{\alpha,\bsgamma}_\mix([0,1]^d)}$, independent of the weights~$\bsgamma$.
Combining the two unweighted estimates first and then adding in the weights gives~\eqref{eq:H-alpha-gamma-mix-equivalence-unanchored-anchored}.
Combining with the embedding for $H^\alpha_\mix([0,1]^d)$ in Lemma~\ref{lem:H-alpha-mix-C-embedding} gives the embeddings into $C([0,1]^d)$.
The comparison between different weight families follows by comparing the norms term by term.
\end{proof}

\paragraph{Three common types of weights.}
The weight families that have been used in the literature often are not of such ``general'' form as above but have structure.
We give three common forms.
The original introduction of these weights considered \emph{product weights} \cite{SW98,SW2001,SW2002,Hic98a}, which take the form
\begin{align}\label{eq:ptw}
  \gamma_\setu
  =
  \prod_{j \in \setu} \gamma_j
  ,
\end{align}
for a positive sequence $\{ \gamma_j \}_{j \ge 1}$ and by definition of the empty product we have $\gamma_\emptyset = 1$.
These are probably the most popular kind of weights.

Next we have \emph{order-dependent weights} which take the form
\begin{align}\label{eq:odw}
  \gamma_\setu = \Gamma_{|\setu|}
  ,
\end{align}
for a positive sequence $\{ \Gamma_\ell \}_{\ell \ge 0}$ and we set $\gamma_\emptyset = \Gamma_0 = 1$, see, e.g.,~\cite{DSWW2006,DKS2013,Nuy2014}.
These are not used too often.

A combination of the two previous weights is called \emph{product and order-dependent weights} (POD) and takes the form
\begin{align}\label{eq:podw}
  \gamma_\setu = \Gamma_{|\setu|} \, \prod_{j \in \setu} \gamma_j
  ,
\end{align}
for two positive sequences $\{\Gamma_\ell\}_{\ell \ge 0}$ and $\{ \gamma_j \}_{j \ge 1}$, see, e.g.,~\cite{KSS2012,DKGNS2014,KN2016,DKS2013,Nuy2014}.
To have $\gamma_\emptyset = 1$ we need to set $\Gamma_0 = 1$ here as well.
Note that order-dependent weights are a special case of POD weights with $\gamma_j = 1$ for all $j \ge 1$.
The POD weights warrant special attention since they arise naturally when analysing the regularity of the solution of a PDE with random-field coefficients.
E.g., if we represent the solution of the PDE in terms of an expansion of the random field with coefficients $\bsy$, which we would like to sample from a QMC point set, then bounds on the norm of the mixed first order partial derivatives w.r.t.\ the random parameters typically take the form
\begin{align}\label{eq:PDE-bound}
  \left\|\frac{\partial^{|\setu|} u(\bsy)}{\partial \bsy_\setu}\right\|_V
  &\le
  C \, |\setu|! \, \prod_{j \in \setu} \gamma_j
  ,
\end{align}
where $C > 0$ is independent of $d$, $\setu$ and $\bsy$ and $\|\cdot\|_V$ is the norm on the solution space of the PDE in the physical variable~$\bsx$, see, e.g.,~\cite{CDS2010,CDS2011,KSS2012,DKGNS2014,KN2016}.
For more QMC and weighted function spaces, including higher order derivatives, see the overview in \cite{KN2016} which is accompanied by the QMC4PDE software package~\cite{NuyQMC4PDE}.
The bound~\eqref{eq:PDE-bound} motivates POD weights~\eqref{eq:podw} with $\Gamma_\ell = \ell!$.
Higher-order regularity of the solution of a PDE is normally analysed using \emph{smoothness driven product and order-dependent weights} (SPOD), see, e.g., \cite{DKGNS2014,KN2016}, but we will not discuss those here.

It will turn out that the sum of the weights over all subsets of $\{1,\ldots,d\}$ will become important in analysing worst-case errors and other quantities.
We hence formulate the following lemma for product weights.
This is used all over the literature inside many proofs.

\begin{lemma}[Sum of product weights]\label{lem:product-weight-sum}
Let $\bsgamma$ be product weights as in~\eqref{eq:ptw}.
For an arbitrary constant $\kappa > 0$ we have
\begin{align*}
  \sum_{\setu \subseteq \{1,\ldots,d\}} \gamma_\setu \, \kappa^{|\setu|}
  &=
  \sum_{\setu \subseteq \{1,\ldots,d\}} \Big( \prod_{j \notin \setu} 1 \Big) \, \Big( \prod_{j \in \setu} \gamma_j \, \kappa \Big)
  \journalbreak=
  \prod_{j=1}^d (1 + \gamma_j \, \kappa)
  \le
  \exp\left(\kappa \sum_{j=1}^d \gamma_j\right)
  .
\end{align*}
Hence, if $\sum_{j \ge 1} \gamma_j < \infty$ then
\begin{align*}
  \lim_{d \to \infty} \sum_{\setu \subseteq \{1,\ldots,d\}} \gamma_\setu \, \kappa^{|\setu|}
  &\le
  \exp\left(\kappa \sum_{j \ge 1} \gamma_j\right)
  <
  \infty
  .
\end{align*}
\end{lemma}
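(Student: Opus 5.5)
The plan is to prove the identity by expanding a product over coordinates, and then to obtain the bound from the elementary inequality $1 + x \le \exp(x)$.

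\textbf{Step 1: the identity.} Write $\gamma_\setu \, \kappa^{|\setu|} = \prod_{j\in\setu} (\gamma_j\,\kappa)$, which follows from the definition~\eqref{eq:ptw} of product weights; the empty set contributes $1$ by the empty-product convention. Now expand $\prod_{j=1}^d (1 + \gamma_j\,\kappa)$ by distributivity. Each term of the expansion corresponds to choosing, for every $j$, either the factor $1$ or the factor $\gamma_j\,\kappa$. Such a choice is exactly a subset $\setu \subseteq \{1,\ldots,d\}$ (the indices where we picked $\gamma_j\,\kappa$), and the resulting term is $\prod_{j\notin\setu} 1 \cdot \prod_{j\in\setu} \gamma_j\,\kappa$. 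This gives the stated equality. If a formal argument is preferred, I would run an induction on $d$: split the subsets of $\{1,\ldots,d\}$ according to whether $d \in \setu$, which gives the recursion $P_d = P_{d-1}\,(1+\gamma_d\,\kappa)$.

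\textbf{Step 2: the bound.} Since $\gamma_j\,\kappa > 0$, apply $1 + x \le \exp(x)$ (valid for all real $x$, by convexity of $\exp$) to each factor. Multiplying these bounds gives $\prod_{j=1}^d (1+\gamma_j\,\kappa) \le \exp\bigl(\kappa \sum_{j=1}^d \gamma_j\bigr)$.

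\textbf{Step 3: the limit.} The partial products are nondecreasing in $d$, because every factor is at least $1$. They are bounded above by $\exp\bigl(\kappa\sum_{j\ge1}\gamma_j\bigr)$, which is finite under the summability assumption. Hence the limit exists and satisfies the claimed bound.

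I expect no real obstacle here. The only point needing care is the bookkeeping between subsets and the terms of the expanded product, including the empty set.
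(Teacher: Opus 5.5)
Your proof is correct and follows the paper's argument: expand the product to get the identity, then apply $1+x\le\exp(x)$ to each factor. The paper justifies that inequality via the Taylor series of $\exp$ for $x\ge 0$ rather than by convexity, and your Step~3 (the partial products are nondecreasing and bounded, so the limit exists) makes explicit a point the paper leaves implicit.
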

\begin{proof}
The identity follows by expanding the product, and the inequality follows from $1+x \le \exp(x)$ for $x \ge 0$ which can easily be seen by the Taylor expansion of $\exp(x) = \sum_{k\ge0} x^k / k!$.
\end{proof}

In Lemma~\ref{lem:H-1-mix-product-weight-embeddings} below we will give embeddings for product weights and we will not go further than product weights in these lecture notes.
Although theory can, has, and is, developed directly for POD and SPOD weights, here we only record straightforward bounds for POD weights obtained either by comparison with product weights or by directly estimating sums over POD weights.
This is shown in the next lemma.
Related estimates for sums over weighted subsets can be found, e.g., in~\cite{KSS2012,DG2014,GMR2014,KN2016,KNPSW2017,GKNW2018,NN2021uniform,NN2021lognormal,Pan2026}.

\begin{lemma}[Bounds for POD weights]\label{lem:POD-product-weight-bound}
Let $\bsgamma$ be POD weights as in~\eqref{eq:podw}, $\gamma_\setu = \Gamma_{|\setu|} \prod_{j\in\setu} \gamma_j$.

(i) If $\Gamma_\ell$ grows at most exponentially, i.e., there is a constant
\begin{align*}
  C_\Gamma
  &:=
  \sup_{\ell \ge 1} \Gamma_\ell^{1/\ell}
  <
  \infty
  ,
\end{align*}
then, uniformly in~$d$, we have the product-weight bound
\begin{align*}
  \gamma_\setu
  &\le
  \prod_{j \in \setu} (C_\Gamma \, \gamma_j)
  \qquad
  \text{for all}\quad \setu \subseteq \{1,\ldots,d\}
  ,
\end{align*}
and $\lim_{d\to\infty} \sum_{\setu \subseteq \{1,\ldots,d\}} \gamma_\setu < \infty$ if $\sum_{j \ge 1} \gamma_j < \infty$.

(ii) We have, the sum of POD weights,
\begin{align*}
  \sum_{\setu \subseteq \{1,\ldots,d\}} \gamma_\setu
  =
  \sum_{\setu \subseteq \{1,\ldots,d\}} \Gamma_{|\setu|} \, \prod_{j \in \setu} \gamma_j
  &\le
  \sum_{\ell=0}^d \frac{\Gamma_\ell}{\ell!} \left(\sum_{j=1}^d \gamma_j\right)^{\!\!\ell}
  .
\end{align*}

(iii) Suppose $\Gamma_\ell \le B^\ell \, \ell!$ for some $B > 0$ and $\sum_{j \ge 1} \gamma_j < \infty$, fix $m \in \N$ such that
\begin{align*}
  T_m
  &:=
  \sum_{j > m} \gamma_j
  <
  \frac{1}{B}
  ,
\end{align*}
then
\begin{align*}
  \lim_{d\to\infty} \sum_{\setu \subseteq \{1,\ldots,d\}} \gamma_\setu
  &\le
  \sum_{\setv \subseteq \{1,\ldots,m\}}
  \frac{B^{|\setv|} \, |\setv|!}{(1 - B \, T_m)^{|\setv|+1}} \, \prod_{j \in \setv} \gamma_j
  <
  \infty
  .
\end{align*}
\end{lemma}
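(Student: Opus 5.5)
For part~(i) I will apply the definition of $C_\Gamma$ directly. For $\ell = |\setu| \ge 1$ we have $\Gamma_\ell \le C_\Gamma^{\ell}$, so $\gamma_\setu = \Gamma_{|\setu|} \prod_{j\in\setu}\gamma_j \le \prod_{j\in\setu} (C_\Gamma \gamma_j)$. For $\setu=\emptyset$ both sides equal~$1$, since $\Gamma_0 = 1$. Lemma~\ref{lem:product-weight-sum} with $\kappa = C_\Gamma$ then gives
\[
\sum_{\setu\subseteq\{1,\ldots,d\}} \gamma_\setu \le \exp\Bigl(C_\Gamma \sum_{j\ge1}\gamma_j\Bigr),
\]
which is uniform in~$d$. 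The partial sums are nondecreasing in~$d$, so the limit exists and is bounded by the same quantity.

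For part~(ii) I will group the subsets by cardinality~$\ell$. The key observation is that $\bigl(\sum_{j=1}^d \gamma_j\bigr)^\ell$ expands into ordered $\ell$-tuples $(j_1,\ldots,j_\ell)$. Every $\ell$-subset $\setu$ appears exactly $\ell!$ times in this expansion, once for each of its orderings, and all remaining terms (those with repeated indices) are nonnegative. Hence
\[
\sum_{|\setu|=\ell} \prod_{j\in\setu}\gamma_j \le \frac{1}{\ell!}\Bigl(\sum_{j=1}^d \gamma_j\Bigr)^\ell .
\]
Multiplying by $\Gamma_\ell$ and summing over $\ell=0,\ldots,d$ gives the claim.

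For part~(iii), applying~(ii) naively would give $\sum_\ell B^\ell S^\ell$ with $S = \sum_j \gamma_j$, which need not converge; this is the main obstacle. I will split each $\setu\subseteq\{1,\ldots,d\}$ (taking $d>m$) as $\setu = \setv \cup \setw$ with $\setv \subseteq \{1,\ldots,m\}$ and $\setw \subseteq \{m+1,\ldots,d\}$. With $|\setv| = k$ and $|\setw| = \ell$, I will use $\Gamma_{k+\ell} \le B^{k+\ell}(k+\ell)!$ together with the same counting argument as in~(ii) applied to the tail, namely $\sum_{|\setw|=\ell} \prod_{j\in\setw}\gamma_j \le T_m^\ell/\ell!$. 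This bounds the total by
\[
\sum_{\setv} B^k \prod_{j\in\setv}\gamma_j \sum_{\ell\ge0} \frac{(k+\ell)!}{\ell!}\,(B T_m)^\ell .
\]
The inner series is the negative binomial series
\[
\sum_{\ell\ge0} \binom{k+\ell}{\ell} x^\ell = (1-x)^{-(k+1)},
\]
so $\sum_{\ell\ge0} \frac{(k+\ell)!}{\ell!}x^\ell = k!\,(1-x)^{-(k+1)}$, which converges for $x = BT_m < 1$. This yields exactly the stated bound, and it is finite because it is a sum over finitely many subsets~$\setv$. Monotonicity in~$d$ again gives the bound on the limit.
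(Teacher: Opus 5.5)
Your proposal is correct and follows the paper's proof essentially step by step. Part~(i) uses $\Gamma_{|\setu|}\le C_\Gamma^{|\setu|}$ together with Lemma~\ref{lem:product-weight-sum}. Part~(ii) groups subsets by cardinality and observes that each $\ell$-subset appears $\ell!$ times in the expansion of $\bigl(\sum_{j=1}^d\gamma_j\bigr)^\ell$; the paper phrases this via multinomial coefficients restricted to $a_j\in\{0,1\}$. Part~(iii) uses the same split $\setu=\setv\cup\setw$ with $\Gamma_{k+\ell}\le B^{k+\ell}(k+\ell)!$, the tail bound $T_m^\ell/\ell!$, and the negative binomial series.
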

\begin{proof}
The first claim follows immediately from $\Gamma_{|\setu|} \le C_\Gamma^{|\setu|}$ and Lemma~\ref{lem:product-weight-sum}.
For the second claim we have
\begin{align*}
  \sum_{\setu \subseteq \{1,\ldots,d\}} \Gamma_{|\setu|} \, \prod_{j\in\setu} \gamma_j
  =
  \sum_{\ell=0}^d
  \Gamma_\ell
  \sum_{\substack{\setu \subseteq \{1,\ldots,d\} \\ |\setu|=\ell}} \prod_{j \in \setu} \gamma_j
  \le
  \sum_{\ell=0}^d
  \frac{\Gamma_\ell}{\ell!}
  \left(\sum_{j=1}^d \gamma_j\right)^{\!\!\ell}
  ,
\end{align*}
since, for $\ell \in \N$,
\begin{align*}
  \left(\sum_{j=1}^d \gamma_j\right)^{\!\!\ell}
  &=
  \sum_{\substack{a_1,\ldots,a_d \ge 0 \\ a_1 + \cdots + a_d = \ell}}
  \binom{\ell}{a_1,\ldots,a_d}
  \prod_{j=1}^d \gamma_j^{a_j}
  \journalbreak\ge
  \sum_{\substack{a_1,\ldots,a_d \in \{0, 1\} \\ a_1 + \cdots + a_d = \ell}}
  \ell! \, \prod_{j=1}^d \gamma_j^{a_j}
  \journalbreak=
  \ell! \, \sum_{\substack{\setu \subseteq \{1,\ldots,d\} \\ |\setu|=\ell}} \prod_{j \in \setu} \gamma_j
  .
\end{align*}
For the final claim, take $d > m$ and write $\setu = \setv \cup \setw$ with $\setv \subseteq \{1,\ldots,m\}$ and $\setw \subseteq \{m+1,\ldots,d\}$.
Grouping the inner sum according to $\ell = |\setw|$ and using the second claim gives
\begin{align*}
  \sum_{\setu \subseteq \{1,\ldots,d\}} \Gamma_{|\setu|} \, \prod_{j \in \setu} \gamma_j
  &\le
  \sum_{\setv \subseteq \{1,\ldots,m\}}
  B^{|\setv|} \, \Big[ \prod_{j \in \setv} \gamma_j \Big]
  \sum_{\ell=0}^{d-m} B^\ell \, (|\setv|+\ell)!
  \journalonly{\mkern-10mu}\sum_{\substack{\setw \subseteq \{m+1,\ldots,d\} \\ |\setw|=\ell}} \prod_{j \in \setw} \gamma_j
  \\
  &\le
  \sum_{\setv \subseteq \{1,\ldots,m\}}
  B^{|\setv|} \, \Big[ \prod_{j \in \setv} \gamma_j \Big]
  \sum_{\ell=0}^{d-m} \frac{(|\setv|+\ell)!}{\ell!}
  \left(B \sum_{j=m+1}^d \gamma_j\right)^{\!\!\ell}
  \\
  &\le
  \sum_{\setv \subseteq \{1,\ldots,m\}}
  B^{|\setv|} \, \Big[ \prod_{j \in \setv} \gamma_j \Big]
  \sum_{\ell \ge 0} \frac{(|\setv|+\ell)!}{\ell!} \, (B \, T_m)^\ell
  .
\end{align*}
The sum over $\ell$ in the last inequality converges by the ratio test since we have $B \, T_m < 1$.
In fact
\begin{align*}
  \sum_{\ell \ge 0} \frac{(|\setv|+\ell)!}{\ell!} \, (B \, T_m)^\ell
  &=
  |\setv|! \, \sum_{\ell \ge 0} \binom{|\setv| + \ell}{|\setv|} \, (B \, T_m)^\ell
  =
  \frac{|\setv|!}{(1 - B \, T_m)^{|\setv|+1}}
  ,
\end{align*}
where we used $(1 + x)^{-s} = \sum_{k \ge 0} \binom{-s}{k} \, x^k = \sum_{k\ge0} \binom{s+k-1}{k} \, (-1)^k \, x^k$ for $|x| < 1$.
This finishes the proof.
\end{proof}

We now specify the previous embeddings from Lemmas~\ref{lem:H-alpha-C-embedding-d} and~\ref{lem:H-alpha-mix-weighted-equivalent-norms}, which were done for general weights, to product weights.
We also fix $\alpha = 1$ to simplify things and get nice expressions.
(For higher-order smoothness $\alpha > 1$ we would probably consider SPOD weights anyway.)
The embedding constants obtained here can now be independent of~$d$ under certain conditions on the weights.
The proof reuses the techniques that were used for the univariate spaces in Section~\ref{sec:non-periodic-equivalent-norms-d=1} with the addition of product weights.

\begin{lemma}[Embeddings specifically for product weights with $\alpha = 1$]\label{lem:H-1-mix-product-weight-embeddings}
Let $d \in \N$, $\alpha = 1$, $\bsa \in [0,1]^d$ and let $\bsgamma$ be product weights $\gamma_\setu = \prod_{j \in \setu} \gamma_j$ for given $\gamma_j > 0$.
Then we have the continuous embedding $H^{1,\bsgamma,\rmu}_\mix([0,1]^d) \hookrightleftarrows H^{1,\bsgamma}_\mix([0,1]^d)$ with
\begin{align}\label{eq:H-1-mix-product-weight-standard-unanchored}
  \|f\|_{H^{1,\bsgamma,\rmu}_\mix([0,1]^d)}
  &\le
  \|f\|_{H^{1,\bsgamma}_\mix([0,1]^d)}
  \le
  \prod_{j=1}^d \left(1 + \frac{\gamma_j}{\pi^2}\right)^{1/2} \, \|f\|_{H^{1,\bsgamma,\rmu}_\mix([0,1]^d)}
  ,
\end{align}
and $H^{1,\bsgamma,\rmu}_\mix([0,1]^d) \hookrightleftarrows H^{1,\bsgamma,\bsa}_\mix([0,1]^d)$ with
\begin{align}\label{eq:H-1-mix-product-weight-anchored-unanchored}
  \kappa_d(\bsgamma,\bsa)^{-1} \, \|f\|_{H^{1,\bsgamma,\rmu}_\mix([0,1]^d)}
  &\le
  \|f\|_{H^{1,\bsgamma,\bsa}_\mix([0,1]^d)}
  \le
  \kappa_d(\bsgamma,\bsa) \, \|f\|_{H^{1,\bsgamma,\rmu}_\mix([0,1]^d)}
  ,
\end{align}
where
\begin{align}\label{eq:def-kappa-d}
  \kappa_d(\bsgamma,\bsa)
  &:=
  \prod_{j=1}^d \frac{\sqrt{4 + \gamma_j \, m(a_j)} + \sqrt{\gamma_j \, m(a_j)}}{2}
  ,
  &
  m(a)
  &:=
  \frac{a^3 + (1-a)^3}{3}
  .
\end{align}
The function $m(a)$ attains its minimum $1/12$ at $a = 1/2$ and its maximum $1/3$ at $a = 0$ or $a = 1$.
The constants in~\eqref{eq:H-1-mix-product-weight-standard-unanchored} and~\eqref{eq:H-1-mix-product-weight-anchored-unanchored} are sharp.
As $d \to \infty$, the constants in~\eqref{eq:H-1-mix-product-weight-standard-unanchored} are uniformly bounded if and only if $\sum_{j \ge 1} \gamma_j < \infty$, while the constants in~\eqref{eq:H-1-mix-product-weight-anchored-unanchored} are uniformly bounded if and only if $\sum_{j \ge 1} \sqrt{\gamma_j} < \infty$.

We have the continuous embeddings into $C([0,1]^d)$ with
\begin{align}\label{eq:H-1-mix-product-weight-C-embeddings}
  \|f\|_{C([0,1]^d)}
  &\le
  \prod_{j=1}^d \left(\sqrt{\gamma_j} \, \coth(\sqrt{\gamma_j})\right)^{1/2} \, \|f\|_{H^{1,\bsgamma}_\mix([0,1]^d)}
  \\
  \|f\|_{C([0,1]^d)}
  &\le
  \prod_{j=1}^d \left(1 + \frac{\gamma_j}{3}\right)^{1/2} \, \|f\|_{H^{1,\bsgamma,\rmu}_\mix([0,1]^d)}
  \label{eq:H-1-mix-product-weight-C-embedding-unanchored}
  \\
  \|f\|_{C([0,1]^d)}
  &\le
  \prod_{j=1}^d \left(1 + \gamma_j \, \max\{a_j,1-a_j\}\right)^{1/2} \, \|f\|_{H^{1,\bsgamma,\bsa}_\mix([0,1]^d)}
  \label{eq:H-1-mix-product-weight-C-embedding-anchored}
  .
\end{align}
Each of these embedding constants is sharp, and they are uniformly bounded as $d \to \infty$ if and only if $\sum_{j \ge 1} \gamma_j < \infty$.
\end{lemma}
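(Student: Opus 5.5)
The plan is to reduce everything to $d=1$ with a weight $\gamma>0$, and then tensorise using~\eqref{eq:tensorisation}. This works because for product weights and $\alpha=1$ every norm in the statement is a Hilbert tensor product of univariate weighted norms:
\begin{align*}
  \|f\|_{H^{1,\gamma}}^2 &= \|f\|_{L_2}^2 + \gamma^{-1}\|Df\|_{L_2}^2,
  \\
  \|f\|_{H^{1,\gamma,\rmu}}^2 &= \Bigl|\int_0^1 f\Bigr|^2 + \gamma^{-1}\|Df\|_{L_2}^2,
  \\
  \|f\|_{H^{1,\gamma,a}}^2 &= |f(a)|^2 + \gamma^{-1}\|Df\|_{L_2}^2 .
\end{align*}
Indeed $\gamma_{\supp(\bstau)}^{-1}=\prod_{j:\tau_j=1}\gamma_j^{-1}$, and the operations $I_\setv$ and anchoring factor coordinatewise. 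Sharpness of the product constants then follows from sharpness in each factor, because operator norms of tensor products of Hilbert space maps are multiplicative. The uniform boundedness claims follow from $\prod_j(1+c\gamma_j)<\infty$ if and only if $\sum_j\gamma_j<\infty$, and from $\frac{\sqrt{4+\gamma m}+\sqrt{\gamma m}}{2}=1+\tfrac12\sqrt{\gamma m}+O(\gamma)$, which shows that the anchored/unanchored constant is bounded if and only if $\sum_j\sqrt{\gamma_j}<\infty$. Here $m(a_j)\in[1/12,1/3]$ is bounded away from zero.

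For each univariate constant I would compute the exact ratio of the two quadratic forms.

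\emph{Standard versus unanchored.} Write $f=c+g$ with $\int_0^1 g=0$. The lower bound is Cauchy--Schwarz. For the upper bound, the sharp Poincaré--Wirtinger inequality $\|g\|^2\le\pi^{-2}\|Dg\|^2$ gives $\|f\|^2+\gamma^{-1}\|Df\|^2\le |c|^2+(\pi^{-2}+\gamma^{-1})\|Df\|^2\le(1+\gamma\pi^{-2})\|f\|_{H^{1,\gamma,\rmu}}^2$. Equality is attained by $g=\cos(\pi x)$ with $c=0$.

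\emph{Anchored versus unanchored.} Both norms share the term $\gamma^{-1}\|Df\|^2$. Put $\delta=f(a)-\int_0^1 f=\int_0^1 k_a(t)\,Df(t)\rd t$, with $k_a=\bbone_{[0,a)}\,t-\bbone_{[a,1]}\,(1-t)$, so that $\|k_a\|_{L_2}^2=m(a)$. Then $|\delta|\le\sqrt{m(a)}\,\|Df\|$, and this bound is attained for $Df\propto k_a$. Setting $s=\gamma^{-1/2}\|Df\|$ and $u=|\int_0^1 f|$, the question reduces to the sharp constant in $(u+\sqrt{\gamma m}\,s)^2+s^2\le\kappa^2(u^2+s^2)$. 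This constant is the largest eigenvalue of $\begin{pmatrix}1&b\\ b&1+b^2\end{pmatrix}$ with $b=\sqrt{\gamma m}$, namely $\bigl(\tfrac{b+\sqrt{b^2+4}}{2}\bigr)^2$, which is exactly the factor in~\eqref{eq:def-kappa-d}. By symmetry of the roles of $u$ and $f(a)$, the same constant serves in the reverse direction. Sharpness comes from choosing $u$ proportional to the top eigenvector together with $Df\propto k_a$. The extremes of $m$ follow from elementary calculus.

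\emph{Embeddings into $C$.} Each is the square root of the diagonal of the reproducing kernel, $\sup_x K(x,x)$, because point evaluation in a Hilbert space has norm $\sqrt{K(x,x)}$. I expect this to be the main obstacle, since it anticipates Section~\ref{sec:rkhs}. A fallback is to compute $\sup_{\|f\|\le1}|f(x)|$ directly by a variational argument. For the anchored norm, $f(x)=f(a)+\int_a^x Df$, and Cauchy--Schwarz gives $|f(x)|^2\le(1+\gamma|x-a|)\|f\|^2$; maximising over $x$ yields $1+\gamma\max\{a,1-a\}$. For the unanchored norm, $f(x)=\int_0^1 f+\int_0^1 k_x Df$, which gives $1+\gamma m(x)$, maximised at $x=0$ with value $1+\gamma/3$. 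For the standard norm, the extremiser solves $-\gamma^{-1}f''+f=\delta_x$ with Neumann conditions, leading to $K(x,x)=\sqrt\gamma\,\cosh(\sqrt\gamma x)\cosh(\sqrt\gamma(1-x))/\sinh\sqrt\gamma$. Its maximum, attained at $x=0$, equals $\sqrt\gamma\coth\sqrt\gamma$. In every case the extremiser realises the Cauchy--Schwarz equality, which proves sharpness. Finally $\sqrt\gamma\coth\sqrt\gamma=1+\gamma/3+O(\gamma^2)$, so the product over $j$ is bounded if and only if $\sum_j\gamma_j<\infty$.
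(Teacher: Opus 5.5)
Your proposal is correct and follows the paper's proof. The paper also tensorises sharp univariate constants, obtained from the Poincar\'e--Wirtinger inequality, from the kernel representation of $f(a)-\int_0^1 f$ combined with a $2\times 2$ quadratic optimisation (your top-eigenvalue computation is exactly the completed-square inequality~\eqref{eq:scalar-quadratic-inequality}), and from Cauchy--Schwarz for point evaluation. The only deviation is that you get $\sqrt{\gamma}\coth\sqrt{\gamma}$ from the Neumann Green's function $K(x,x)=\sqrt{\gamma}\cosh(\sqrt{\gamma}x)\cosh(\sqrt{\gamma}(1-x))/\sinh\sqrt{\gamma}$ rather than from the paper's cosine expansion with Parseval; this is equally valid.
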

\begin{proof}
We repurpose the proofs for the univariate case with the addition of the corresponding weight $\gamma_j$ for each dimension.
The multivariate constants then come by simply multiplying all weighted univariate constants.

Our first one-dimensional estimate extends the argument in the proof of Lemma~\ref{lem:H-alpha-u-norm-equivalence-d=1} to the weighted norm.
We use the Poincaré--Wirtinger inequality, with its optimal constant,
\begin{align*}
  \journalonly{\MoveEqLeft[1]} \left|\int_0^1 f(x) \rd{x}\right|^2 + \gamma^{-1} \, \|f'\|_{L_2([0,1])}^2
  \journalonly{\\*} &\le
  \|f\|_{L_2([0,1])}^2 + \gamma^{-1} \, \|f'\|_{L_2([0,1])}^2
  \\
  &=
  \left|\int_0^1 f(x) \rd{x}\right|^2 + \gamma^{-1} \, \|f'\|_{L_2([0,1])}^2 + \left\|f-\int_0^1 f(x) \rd{x}\right\|_{L_2([0,1])}^2
  \\
  &\le
  \left|\int_0^1 f(x) \rd{x}\right|^2 + \gamma^{-1} \, \|f'\|_{L_2([0,1])}^2 + \frac1{\pi^2} \, \|f'\|_{L_2([0,1])}^2
  \\
  &\le
  \left(1 + \frac{\gamma}{\pi^2}\right) \left(\left|\int_0^1 f(x) \rd{x}\right|^2 + \gamma^{-1} \, \|f'\|_{L_2([0,1])}^2\right)
  .
\end{align*}
Tensorising this constant proves~\eqref{eq:H-1-mix-product-weight-standard-unanchored}.

To prove the norm inequalities between the anchored and unanchored norms we start with $f(x) = f(a) + \int_a^x f'(t) \rd{t}$ and then integrate $f(x) - f(a)$ over $x$ to obtain
\begin{align*}
  \journalonly{\MoveEqLeft[1]} \int_0^1 (f(x) - f(a)) \rd{x}
  =
  \int_0^1 f(x) \rd{x} - f(a)
  \journalonly{\\* } &=
  - \int_0^1 \int_x^a f'(t) \rd{t} \rd{x}
  \\
  &=
  - \int_0^a \int_x^a f'(t) \rd{t} \rd{x} + \int_a^1 \int_a^x f'(t) \rd{t} \rd{x}
  \\
  &=
  - \int_0^a f'(t) \int_0^t \rd{x} \rd{t} + \int_a^1 f'(t) \int_t^1 \rd{x} \rd{t}
  \\
  &=
  - \int_0^a t \, f'(t) \rd{t} + \int_a^1 (1-t) \, f'(t) \rd{t}
  \numberthis{eq:H-1-mean-anchor-representation}
  .
\end{align*}
Cauchy--Schwarz then gives
\begin{align*}
  \left| \int_0^1 f(x) \rd{x} - f(a) \right|
  &\le
  \left(\int_0^a t^2 \rd{t} + \int_a^1 (1-t)^2 \rd{t}\right)^{1/2} \, \|f'\|_{L_2([0,1])}
  \journalonly{\\&}=
  \sqrt{m(a)} \, \|f'\|_{L_2([0,1])}
  ,
\end{align*}
with
\begin{align*}
  m(a)
  &=
  \int_0^a t^2 \rd{t} + \int_a^1 (1-t)^2 \rd{t}
  =
  \frac{a^3 + (1-a)^3}{3}
  .
\end{align*}
The reverse triangle inequality then gives
\begin{align}
  \left|\int_0^1 f(x) \rd{x}\right|
  &\le
  |f(a)| + \sqrt{m(a)} \, \|f'\|_{L_2([0,1])}
  \label{eq:H-1-mean-from-anchor}
  ,
  \\
  |f(a)|
  &\le
  \left|\int_0^1 f(x) \rd{x}\right| + \sqrt{m(a)} \, \|f'\|_{L_2([0,1])}
  \label{eq:H-1-anchor-from-mean}
  .
\end{align}

The following inequality for $u,v,r \ge 0$ can easily be checked by completing the square and rearranging,
\begin{align*}
  \numberthis{eq:scalar-quadratic-inequality}
  \big((u+r v)^2+v^2\big)^{1/2}
  &\le
  \frac{\sqrt{4+r^2}+r}{2} \, \big(u^2+v^2\big)^{1/2}
  .
\end{align*}
Equality holds for $v = (\sqrt{4+r^2}+r) \, u/2$.
Applying the scalar inequality first with $u = |f(a)|$, $v = \gamma^{-1/2} \, \|f'\|_{L_2([0,1])}$ and $r = \sqrt{\gamma \, m(a)}$, and then with $u = |\int_0^1 f(x) \rd{x}|$ and the same $v$ and $r$, and setting $\kappa_1 = (\sqrt{4+\gamma \, m(a)}+\sqrt{\gamma \, m(a)})/2$, we obtain from~\eqref{eq:H-1-mean-from-anchor} and~\eqref{eq:H-1-anchor-from-mean}, respectively,
\begin{align*}
  \journalonly{&}\left(\left|\int_0^1 f(x) \rd{x}\right|^2 + \gamma^{-1} \, \|f'\|_{L_2([0,1])}^2\right)^{1/2}
  \articleonly{&}\le
  \kappa_1 \,
  \left(|f(a)|^2 + \gamma^{-1} \, \|f'\|_{L_2([0,1])}^2\right)^{1/2}
  ,
  \\
  \journalonly{&}\left(|f(a)|^2 + \gamma^{-1} \, \|f'\|_{L_2([0,1])}^2\right)^{1/2}
  \articleonly{&}\le
  \kappa_1 \,
  \left(\left|\int_0^1 f(x) \rd{x}\right|^2 + \gamma^{-1} \, \|f'\|_{L_2([0,1])}^2\right)^{1/2}
  .
\end{align*}
Equality in the first and second estimates is obtained, respectively, by taking $f'(t) = \mp \sqrt{\gamma/m(a)} \, t$ for $t<a$ and $f'(t) = \pm \sqrt{\gamma/m(a)} \, (1-t)$ for $t \ge a$, and choosing $f(a) = \kappa_1^{-1}$ for the upper signs and $f(a) = \kappa_1$ for the lower signs, so both constants are sharp.
Tensorising gives~\eqref{eq:H-1-mix-product-weight-anchored-unanchored}.

Next we show weighted univariate embeddings into the continuous functions.
Every continuous representative of an element of the one-dimensional Sobolev space of smoothness~$1$ has the absolutely and uniformly convergent cosine expansion, see, e.g.,~\cite{DNP2014},
\begin{align*}
  f(x)
  &=
  \hat{f}^{\cos}_0
  +
  \sum_{k \ge 1} \hat{f}^{\cos}_k \, \sqrt2 \cos(\pi k x)
  ,\journalonly{\mkern-8mu}
  &
  \hat{f}^{\cos}_k
  &:=
  \begin{cases}
  \int_0^1 f(t) \rd{t}
  ,
  &
  \text{if } k = 0
  ,
  \\[2mm]
  \int_0^1 f(t) \, \sqrt2 \cos(\pi k t) \rd{t}
  ,
  &
  \text{if } k \in \N
  .
  \end{cases}
\end{align*}
By Parseval
\begin{align*}
  \|f\|_{L_2([0,1])}^2 + \gamma^{-1} \, \|f'\|_{L_2([0,1])}^2
  &=
  |\hat{f}^{\cos}_0|^2
  +
  \sum_{k \ge 1} \left(1 + \frac{\pi^2 k^2}{\gamma}\right) |\hat{f}^{\cos}_k|^2
  .
\end{align*}
Hence Cauchy--Schwarz gives
\begin{align*}
  |f(x)|^2
  &=
  \Big| \hat{f}^{\cos}_0 + \sum_{k \ge 1} \hat{f}^{\cos}_k \, \sqrt2 \cos(\pi k x) \Big|^2
  \\
  &\le
  \left(1 + 2 \sum_{k \ge 1} \frac{\gamma \, \cos^2(\pi k x)}{\gamma + \pi^2 k^2}\right)
  \left(|\hat{f}^{\cos}_0|^2
  +
  \sum_{k \ge 1} \left(1 + \frac{\pi^2 k^2}{\gamma}\right) |\hat{f}^{\cos}_k|^2\right)
  \\
  &=
  \left(1 + 2 \sum_{k \ge 1} \frac{\gamma \, \cos^2(\pi k x)}{\gamma + \pi^2 k^2}\right)
  \left(\|f\|_{L_2([0,1])}^2 + \gamma^{-1} \, \|f'\|_{L_2([0,1])}^2\right)
  .
\end{align*}
The factor is maximised at $x = 0$, and $x = 1$, and
\begin{align*}
  1 + 2 \sum_{k \ge 1} \frac{\gamma}{\gamma + \pi^2 k^2}
  &=
  \sqrt{\gamma} \coth(\sqrt{\gamma})
  .
\end{align*}
Alternatively this same constant can be obtained by rescaling the result for $H^1(0,s)$ from~\cite{Marti1983}, with $s=\sqrt{\gamma}$.

For the unanchored norm, rearranging~\eqref{eq:H-1-mean-anchor-representation} and setting $a = x$ gives
\begin{align}\label{eq:H-1-unanchored-point-evaluation-representation}
  f(x)
  &=
  \int_0^1 f(t) \rd{t}
  +
  \int_0^x t \, f'(t) \rd{t} - \int_x^1 (1-t) \, f'(t) \rd{t}
  .
\end{align}
With
\begin{align*}
  h(t)
  &:=
  \begin{cases}
  t,      & 0 \le t \le x, \\
  -(1-t), & x < t \le 1 ,
  \end{cases}
\end{align*}
the triangle inequality applied to~\eqref{eq:H-1-unanchored-point-evaluation-representation}, followed by Cauchy--Schwarz in $L_2([0,1])$ and then in $\R^2$, gives
\begin{align*}
  |f(x)|^2
  &\le
  \left(\left|\int_0^1 f(t) \rd{t}\right| + \left|\int_0^1 h(t) \, f'(t) \rd{t}\right|\right)^2
  \\
  &\le
  \left(\left|\int_0^1 f(t) \rd{t}\right| + \|h\|_{L_2([0,1])} \, \|f'\|_{L_2([0,1])}\right)^2
  \\
  &=
  \left(\left|\int_0^1 f(t) \rd{t}\right| \cdot 1
  + \big(\gamma^{-1/2} \, \|f'\|_{L_2([0,1])}\big) \big(\gamma^{1/2} \, \|h\|_{L_2([0,1])}\big)\right)^2
  \\
  &\le
  \left(1 + \gamma \, \|h\|_{L_2([0,1])}^2\right)
  \left(\left|\int_0^1 f(t) \rd{t}\right|^2 + \gamma^{-1} \, \|f'\|_{L_2([0,1])}^2\right)
  \\
  &=
  \big(1 + \gamma \, m(x)\big)
  \left(\left|\int_0^1 f(t) \rd{t}\right|^2 + \gamma^{-1} \, \|f'\|_{L_2([0,1])}^2\right)
  .
\end{align*}
Since the maximum of $m(x)$ is $1/3$, the maximum squared point-evaluation norm is $1 + \gamma/3$.

Finally, for the anchored norm, we have $f(x) = f(a) + \int_a^x f'(t) \rd{t}$, and the same two applications of Cauchy--Schwarz give
\begin{align*}
  |f(x)|^2
  &\le
  \left(|f(a)| + \left|\int_a^x f'(t) \rd{t}\right|\right)^2
  \\
  &\le
  \left(|f(a)| + |x-a|^{1/2} \, \|f'\|_{L_2([0,1])}\right)^2
  \\
  &=
  \left(|f(a)| \cdot 1 + \big(\gamma^{-1/2} \, \|f'\|_{L_2([0,1])}\big) \big(\gamma^{1/2} \, |x-a|^{1/2}\big)\right)^2
  \\
  &\le
  \big(1 + \gamma \, |x-a|\big)
  \left(|f(a)|^2 + \gamma^{-1} \, \|f'\|_{L_2([0,1])}^2\right)
  .
\end{align*}
The maximum factor is therefore $1+ \gamma \max\{a, 1-a\}$.

Equality can be attained in each of these Cauchy--Schwarz estimates, and tensorising the point-evaluation functionals, see~\eqref{eq:tensorisation}, proves all of the constants.

Finally, the assertions about uniform boundedness for $d \to \infty$ follow from $1/12 \le m(a) \le 1/3$, the Taylor expansion $\sqrt{\gamma} \coth(\sqrt{\gamma}) = 1 + \gamma/3 + O(\gamma^2)$ as $\gamma \to 0$, and the product estimate in Lemma~\ref{lem:product-weight-sum}, after comparing the individual factors with $1+c\gamma$ or $1+c\sqrt{\gamma}$ for small~$\gamma$.
\end{proof}

We now compare these results with some of the literature.
The anchored--unanchored equivalence~\eqref{eq:H-1-mix-product-weight-anchored-unanchored} and its summability condition agree with the Hilbert product weight specialisation of~\cite{GHHRW2017}, where the weights enter squared in the norm.
For the anchor $\bsa = \bszero$, the constant in~\eqref{eq:H-1-mix-product-weight-anchored-unanchored} appears in~\cite[Theorem~8]{KPW2016}, which likewise uses squared weights in the norm, see also~\cite[Theorem~7.35]{DKP2022}.
The standard and unanchored norms in~\eqref{eq:H-1-mix-product-weight-standard-unanchored} correspond to the standard and ANOVA norms considered in~\cite{GHHR2017}.
The unanchored and anchored point-evaluation constants in~\eqref{eq:H-1-mix-product-weight-C-embedding-unanchored} and~\eqref{eq:H-1-mix-product-weight-C-embedding-anchored} follow directly from the reproducing kernels in~\cite[Section~7.1]{DKP2022}, while the standard constant in~\eqref{eq:H-1-mix-product-weight-C-embeddings} follows by tensorising the one-dimensional result from~\cite{Marti1983} after rescaling.

\paragraph{Transplanting upper bounds and lower bounds.}
Let us now demonstrate how these embedding bounds can be used to transfer upper bounds and lower bounds on the worst-case error.
Suppose we have two spaces, $F_d$ and $G_d$, and the continuous embedding $F_d \hookrightarrow G_d$ with
\begin{align*}
  \|f\|_{G_d}
  \le
  C_d \, \|f\|_{F_d}
  .
\end{align*}
Let $\frakQ_{d,n}$ be a class of admissible algorithms for numerical integration in $d$ dimensions using at most $n$ admissible information evaluations and let $\calQ = \{Q_{d,n}\}_{d,n}$, where $Q_{d,n} \in \frakQ_{d,n}$, be a particular choice of such algorithms.
For example, $\frakQ_{d,n}$ could be the class of all deterministic algorithms using at most $n$ function values, while $\calQ$ could be a specified family of lattice rules.

We can now transplant an upper bound from $G_d$ to~$F_d$.
Suppose that the selected algorithms $Q_{d,n}$ in $\calQ$ are linear and satisfy
\begin{align*}
  e(Q_{d,n},G_d)
  \le
  B_{d,n}
  ,
\end{align*}
for some nice expression $B_{d,n}$.
Then, for every $f \in F_d$, we have $f \in G_d$, and
\begin{align*}
  |I_d(f) - Q_{d,n}(f)|
  &\le
  \|f\|_{G_d} \, e(Q_{d,n},G_d)
  \le
  C_d \, B_{d,n} \, \|f\|_{F_d}
  ,
\end{align*}
and we thus obtain
\begin{align}\label{eq:transplant-upper-bound}
  e(Q_{d,n},F_d)
  \le
  C_d \, B_{d,n}
  .
\end{align}

We can also transplant a lower bound in the opposite direction, from $F_d$ to~$G_d$.
Here we assume that $\frakQ_{d,n}$ is closed under the rescaling $Q_{d,n}(f) \mapsto C_d \, Q_{d,n}(f/C_d)$, as is the case for the classes of linear rules and of deterministic nonlinear adaptive algorithms considered below.
Suppose we have
\begin{align*}
  \inf_{Q_{d,n} \in \frakQ_{d,n}} e(Q_{d,n},F_d)
  \ge
  b_{d,n}
  ,
\end{align*}
then
\begin{align*}
  \inf_{Q_{d,n} \in \frakQ_{d,n}} e(Q_{d,n},G_d)
  &\ge
  C_d^{-1} \, \inf_{Q_{d,n} \in \frakQ_{d,n}} e(Q_{d,n},F_d)
  \ge
  C_d^{-1} \, b_{d,n}
  .
\end{align*}

\subsubsection{Information complexity and the curse of dimensionality}\label{sec:curse-definition}

We now formally define the curse of dimensionality.
For this we first need to define what information complexity is.
For general background, see~\cite{TWW88,NW2008,NW2010,NW2012}.

For a sequence of normed function spaces $\{ F_d \}_{d \ge 1}$ (e.g., $H^\alpha([0,1]^d)$, or $H^\alpha_\mix([0,1]^d)$, for increasing~$d$) define \emph{the information complexity for the absolute error criterion} in terms of the worst-case error for numerical integration, cf.~\eqref{eq:def-wce-integration}, by
\begin{align}\label{eq:information-complexity}
  n(\varepsilon, F_d)
  &:=
  \min \Big\{ n \in \N_0 : \inf_{Q_{d,n} \in \frakQ_{d,n}} \sup_{\substack{f \in F_d \\ \|f\|_{F_d} \le 1}} |I_d(f) - Q_{d,n}(f)| \le \varepsilon \Big\}
  ,
\end{align}
For the information complexity, $\frakQ_{d,n}$ denotes the class of admissible algorithms under consideration in dimension~$d$ that use at most $n$ admissible information evaluations, and $Q_{d,n}$ denotes an individual algorithm in this class.
We do not annotate the symbol $n(\varepsilon, F_d)$ with the class of algorithms and allowed information evaluations since we only use it here locally and now describe them.
For our lower bounds we consider deterministic algorithms using at most $n$ admissible information evaluations; these algorithms are allowed to be linear or nonlinear, and the information evaluations are allowed to be chosen adaptively based on the previously observed values.
The admissible information evaluations that we consider are function values or sometimes, when mentioned, even derivative values (when well defined).
The same definition of information complexity can also be used for a smaller class, such as deterministic linear algorithms using function values only, e.g., lattice rules, by restricting the infimum accordingly.
Since restricting the class of algorithms can only increase the information complexity, any lower bound proved for the larger class automatically remains valid for every such smaller class.
The most important keyword here was ``deterministic''.
(A randomised algorithm can have improved bounds for a properly defined error criterion.)

We could also define the \emph{normalised error criterion}, which instead replaces $\varepsilon$ in the preceding definition by $\varepsilon \; e(\emptyset_{F_d}, F_d)$, for $0 < \varepsilon < 1$, where $e(\emptyset_{F_d}, F_d)$ denotes the initial worst-case error without information ($\emptyset_{F_d}$~represents the optimal ``zero algorithm'', i.e., $n = 0$, for the space~$F_d$).
The unit balls of the spaces $F_d$ that we will consider are symmetric, and hence the optimal algorithm without information returns zero, cf.~the radius of information in Section~\ref{sec:lower-bounds} and in particular~\eqref{eq:triangle-fp-fpp}.
For $F_d$ any of the spaces we consider, except the anchored space $H^{\alpha,\bsgamma,\bsa}_\mix([0,1]^d)$, and with $\gamma_\emptyset = 1$, we have
\begin{align*}
  e(\emptyset_{F_d}, F_d)
  &=
  \sup_{\substack{f \in F_d \\ \|f\|_{F_d} \le 1}} |I_d(f)|
  =
  1
  ,
\end{align*}
since $|I_d(f)| \le \|f\|_{F_d}$ and the constant function $f = 1$ has $F_d$-norm and integral equal to~$1$.
For the norms containing the $L_2$-norm the inequality follows from $\|f\|_{L_2([0,1]^d)} \le \|f\|_{F_d}$, while for $H^{\alpha,\bsgamma,\rmu}_\mix([0,1]^d)$ the integral is directly one of the terms in the norm.
Thus the absolute and normalised error criteria coincide for these spaces.
If $\gamma_\emptyset \ne 1$, then the initial error of $H^{\alpha,\bsgamma}_\mix([0,1]^d)$ and $H^{\alpha,\bsgamma,\rmu}_\mix([0,1]^d)$ equals $\sqrt{\gamma_\emptyset}$ instead.
For the anchored space $H^{\alpha,\bsgamma,\bsa}_\mix([0,1]^d)$ the preceding argument does not apply and the initial error can depend on the anchor and the weights.

Given the sequence of normed function spaces $F_d$ and the admissible information evaluations, following, e.g.,~\cite{Nov2016,NW2010}, we then define the integration problem to suffer from the \emph{curse of dimensionality} if there exist fixed constants $c, \epsilon_0, \eta > 0$ such that
\begin{align}\label{eq:curse}
  n(\varepsilon, F_d)
  &\ge
  c \, (1 + \eta)^d
  \qquad
  \text{for infinitely many } d \in \N
  \text{ and every } 0 < \varepsilon \le \varepsilon_0
  .
\end{align}
In particular, an algebraic convergence rate such as $n^{-\alpha/d}$ does not by itself imply the curse of dimensionality, since the dimension dependence of the constant in the error bound also matters.

\subsubsection{Lower bounds and the curse of dimensionality}\label{sec:lower-bounds}

We first establish that no deterministic algorithm using function values can converge faster than the lower bound $c_{d,\alpha} \, n^{-\alpha/d}$ for the space $H^\alpha([0,1]^d)$.
We state this result in the next theorem, Theorem~\ref{thm:Halpha-lower-bound}, and provide a proof.
The proof is maybe atypical w.r.t.\ other current references, but similar to the 1959 proof of Bakhvalov~\cite{Bak59}, adjusted to the $L_2$ norm and our function space.
It is easy to extend this result to general $L_p$-based Sobolev spaces, and although we said we were not interested in these more general $L_p$-based Sobolev spaces, we will handle this extension exceptionally in Theorem~\ref{thm:Walpha-p-lower-bound}.
Afterwards we will show a similar proof for the space $H^\alpha_\mix([0,1]^d)$ in Theorem~\ref{thm:Hmixalpha-lower-bound}, showing the lower bound $c_{d,\alpha}^\mix \, n^{-\alpha}$ (or, more refined, $c_{d,\alpha}^\mix \, n^{-\alpha} \, (\log_2 n)^{(d-1)/2}$).

However, while these constants $c_{d,\alpha}$ and $c_{d,\alpha}^\mix$ are independent of~$n$, they still depend on $\alpha$ and~$d$.
So it is possible that the constant is exponentially small in~$d$ and then this is not a proof of the curse of dimensionality.
To fulfil the definition of the curse of dimensionality~\eqref{eq:curse} we still need a separate argument.
Finally, Theorem~\ref{thm:curse} then shows that the unweighted spaces $H^\alpha([0,1]^d)$, $H^\alpha_\mix([0,1]^d)$ and $H^{\alpha,\rmu}_\mix([0,1]^d)$ do suffer from the curse of dimensionality.

\paragraph{A lower bound for $H^\alpha([0,1]^d)$.}
Our aim is to prove a lower bound on the worst-case error for integrating functions $f \in H^\alpha([0,1]^d)$ with $\|f\|_{H^\alpha([0,1]^d)} \le 1$ for \emph{any deterministic algorithm} $Q_{d,n}$ that uses information on at most $n$ function values.
We use $Q_{d,n}$ to denote such a general deterministic algorithm.
We will use the \emph{diameter of information}, see, e.g., \cite{TWW88}, to argue that, since our algorithm only has partial information on the function $f \in H^\alpha([0,1]^d)$, it cannot distinguish between all functions sharing the same information.

``Sharing the same information'' here means that the algorithm has access to function values of $f$ on at most $n$ points $\bsx_k$ chosen by the algorithm.
Since for the deterministic worst-case error an algorithm is a deterministic map, the same information leads to the same outcome.
Therefore, to show such a lower bound, we want to find two functions $f^\fmarkone \in H^\alpha([0,1]^d)$ and $f^\fmarktwo \in H^\alpha([0,1]^d)$, for which the algorithm uses the same $n$ sample points and hence sees the same function values, but for which the difference between $I_d(f^\fmarkone)$ and $I_d(f^\fmarktwo)$ is as large as possible, yet we would have $Q_{d,n}(f^\fmarkone) = Q_{d,n}(f^\fmarktwo) = q$ for any such algorithm.
The triangle inequality gives $|I_d(f^\fmarkone) - I_d(f^\fmarktwo)| \le |I_d(f^\fmarkone) - q| + |I_d(f^\fmarktwo) - q| \le 2 \max\{|I_d(f^\fmarkone) - q|, |I_d(f^\fmarktwo) - q|\}$, hence
\begin{align}\label{eq:triangle-fp-fpp}
  \max\{|I_d(f^\fmarkone) - q|, |I_d(f^\fmarktwo) - q|\}
  &\ge
  \frac12 \, |I_d(f^\fmarkone) - I_d(f^\fmarktwo)|
  .
\end{align}
The supremum of the distances $|I_d(f^\fmarkone) - I_d(f^\fmarktwo)|$ over functions producing the same information is called the \emph{diameter of information} for the given information and function class.
The worst-case error~\eqref{eq:def-wce-integration} is hence bounded \emph{from below} by half the diameter.
The best an algorithm could do is to return the ``centroid'', i.e., the midpoint of $I_d(f^\fmarkone)$ and $I_d(f^\fmarktwo)$.
Figure~\ref{fig:diameter-of-information} illustrates the diameter of information.
See \cite{TWW88} for a more thorough discussion of these concepts.

If we want to use this argument to show a lower bound on the worst-case error~\eqref{eq:def-wce-integration} in the unit ball of a certain space $H$, then it is sufficient to consider a subclass $F \subseteq H$, and the reasoning from above translates into
\begin{align*}
  \sup_{\substack{f \in H \\ \|f\|_H \le 1}} |I_d(f) - Q_{d,n}(f)|
  &\ge
  \sup_{\substack{f \in F \subseteq H \\ \|f\|_H \le 1}} |I_d(f) - Q_{d,n}(f)|
  \journalbreak\ge
  \frac12 \hspace*{-4mm} \sup_{\substack{f^\fmarkone, f^\fmarktwo \in F \subseteq H \\ \|f^\fmarkone\|_H, \|f^\fmarktwo\|_H \le 1 \\ N(Q_{d,n}, f^\fmarkone) = N(Q_{d,n}, f^\fmarktwo)}} \hspace*{-4mm} |I_d(f^\fmarkone) - I_d(f^\fmarktwo)|
  .
\end{align*}
Here $N(Q_{d,n}, f)$ denotes the complete information consumed by the algorithm (possibly adaptively) on~$f$.
Equality of the information implies equality of the deterministic algorithm's output; the converse need not hold.
In the terminology of~\cite{TWW88}, the worst-case error is defined over a ``function class''~$F$.
In our setting this class is usually the unit ball $F = \{ f \in H : \|f\|_H \le 1 \}$ of a function space~$H$.
Describing $F$ directly is useful when the admissible functions are specified by additional conditions rather than by a norm.

\ifdefined\galleryview
\else
  \begin{figure}[htb]
  \centering
\fi
\tikzsetnextfilename{figure_diameter_of_information}
\begin{tikzpicture}[
  x=1cm,y=1cm,font=\small,
  infoarrow/.style={-{Stealth[length=3mm,width=2mm]},thick},
]
  \fill[softblue] (0.2,0) ellipse (2 and 2.35);
  \draw[deepblue,very thick] (0.2,0) ellipse (2 and 2.35);
  \node[deepblue,anchor=base] at (0.2,2.9) {\strut Function class $F$};
  \node at (0.2,-2.9) {$F \subseteq \{g \in H : \|g\|_H \le 1\}$};

  \coordinate (fp) at (0.5,1.4);
  \coordinate (f) at (-0.2,0.2);
  \coordinate (fpp) at (0.55,-1.6);
  \coordinate (q) at (-7.8,-0.55);
  \coordinate (ifp) at (3.2,1.9);
  \coordinate (if) at (3.2,0.55);
  \coordinate (ifpp) at (3.2,-1.9);

  \draw[deepgreen,very thick]
    (0.8,2.05) .. controls (0.75,1.9) and (0.65,1.65) .. (fp)
    .. controls (0.35,1.15) and (-0.2,0.65) .. (f)
    .. controls (-0.2,-0.25) and (0.4,-1.35) .. (fpp)
    .. controls (0.7,-1.85) and (0.8,-2.05) .. (0.8,-2.15);
  \node[deepgreen,anchor=west] at (0.02,-0.25) {$N(Q_{d,n},f)$};

  \node[deepblue,anchor=base] at (-4.8,2.9) {\strut Sampled information};
  \node[deepblue,anchor=base] at (-4.8,2.43) {$N(Q_{d,n},f)$};
  \node[draw=deepblue,thick,fill=softblue!35,inner sep=7pt] (data) at (-4.8,0) {%
    \renewcommand{\arraystretch}{1.4}%
    \begin{tabular}{@{}l@{\hspace{0.5em}}|@{\hspace{0.5em}}c@{\quad}c@{\quad}c@{}}
      & $\bsx_1$ & $\cdots$ & $\bsx_m$ \\
      \hline
      $f$ & $y_1$ & $\cdots$ & $y_m$ \\
      $f^\fmarkone$ & $y_1$ & $\cdots$ & $y_m$ \\
      $f^\fmarktwo$ & $y_1$ & $\cdots$ & $y_m$
    \end{tabular}%
  };
  \node[below=5pt,deepblue] at (data.south) {$m \le n$};

  \draw[infoarrow,deepblue] (fp) -- ([yshift=-5pt]data.east);
  \draw[infoarrow,deepblue] (f) -- ([yshift=7pt]data.east);
  \draw[infoarrow,deepblue] (fpp) -- ([yshift=-17pt]data.east);
  \draw[infoarrow,deepblue] (data.west) -- (q);

  \draw[infoarrow,deeporange] (fp) -- (ifp);
  \draw[infoarrow,deeporange] (f) -- (if);
  \draw[infoarrow,deeporange] (fpp) -- (ifpp);

  \foreach \point/\label in {fp/f^\fmarkone,f/f} {
    \fill[black] (\point) circle (2.2pt);
    \node[above=5pt] at (\point) {$\label$};
  }
  \fill[black] (fpp) circle (2.2pt);
  \node[above=3pt,xshift=3pt] at (fpp) {$f^\fmarktwo$};

  \draw[very thick,-{Stealth[length=3mm,width=2mm]}] (-7.8,-2.35) -- (-7.8,2.35);
  \node[deepblue,anchor=base] at (-7.8,2.9) {\strut $Q_{d,n}$};
  \fill[deepblue] (q) circle (2.5pt) node[left=6pt] {$q$};
  \draw[very thick,-{Stealth[length=3mm,width=2mm]}] (3.2,-2.35) -- (3.2,2.35);
  \node[deeporange,anchor=base] at (3.2,2.9) {\strut $I_d$};
  \fill[deeporange] (ifp) circle (2.2pt) node[right=5pt] {$I_d(f^\fmarkone)$};
  \fill[deeporange] (if) circle (2.2pt) node[right=5pt] {$I_d(f)$};
  \fill[deeporange] (ifpp) circle (2.2pt) node[right=5pt] {$I_d(f^\fmarktwo)$};
  \fill[deepblue] (q -| if) circle (2.5pt) node[right=5pt] {$q$};
  \foreach \axisx in {-7.8,3.2} {
    \draw[thick] ({\axisx-0.1},-2.2) -- ({\axisx+0.1},-2.2);
  }

  \draw[deeporange,thick,{Stealth[length=3mm,width=2mm]}-{Stealth[length=3mm,width=2mm]}]
    (4.85,-1.9) -- (4.85,1.9);
  \node[rotate=90,deeporange] at (5.35,0)
    {$|I_d(f^\fmarkone) - I_d(f^\fmarktwo)|$};

  \node at (0,-3.6) {$N(Q_{d,n},f) = N(Q_{d,n},f^\fmarkone) = N(Q_{d,n},f^\fmarktwo)$};
  \node at (0,-4.2) {$Q_{d,n}(f) = Q_{d,n}(f^\fmarkone) = Q_{d,n}(f^\fmarktwo) = q$};
\end{tikzpicture}
\ifdefined\galleryview
\else
\caption{Diameter of information.
We pick functions $f$, $f^\fmarkone$ and $f^\fmarktwo$ that belong to the admissible class $F$, are sampled by the deterministic algorithm $Q_{d,n}$ at the same nodes $\bsx_1,\ldots,\bsx_m$ and have the same values $f(\bsx_k) = f^\fmarkone(\bsx_k) = f^\fmarktwo(\bsx_k) = y_k$ for every~$k$.
Thus $N(Q_{d,n},f) = N(Q_{d,n},f^\fmarkone) = N(Q_{d,n},f^\fmarktwo)$ and the algorithm returns the same output $q$ for all three functions.
The common output $q$ is also marked on the integral axis.
Their integrals can differ, and at least one of $f^\fmarkone$ and $f^\fmarktwo$ has error at least $\tfrac12 |I_d(f^\fmarkone) - I_d(f^\fmarktwo)|$.
The diameter of information is the supremum of this integral difference over pairs of functions giving the same information.}
\label{fig:diameter-of-information}
\end{figure}
\fi

Since the algorithms considered below use function values, point evaluation must be well defined on the function space.
The Sobolev embedding theorem provides a unique continuous representative for every $f \in H^\alpha([0,1]^d)$ when $\alpha > d/2$, which we therefore assume in the following theorem.

\begin{theorem}[{Lower bound for $H^\alpha([0,1]^d)$}]\label{thm:Halpha-lower-bound}
Let $d, \alpha, n \in \N$ with $\alpha>d/2$.
For every deterministic algorithm $Q_{d,n}(f)$ that tries to approximate the integral $I_d(f)$ using information on at most $n$ function values at points $\bsx_k$, which are up to the algorithm to decide, possibly adaptively, there is a function $f \in H^\alpha([0,1]^d)$ with $\|f\|_{H^\alpha([0,1]^d)} \le 1$ that has error bounded from below by $c_{d,\alpha} \, n^{-\alpha/d}$ with
\begin{align}\label{eq:def-c-d-alpha}
  c_{d,\alpha}
  &:=
  \frac{I_d(\phi_{d,\alpha})}{2^{1+\alpha+\alpha/d} \, \|\phi_{d,\alpha}\|_{H^\alpha([0,1]^d)}}
  ,
\end{align}
where the function $\phi_{d,\alpha} = \phi_\alpha^{\otimes d}$ is defined by~\eqref{eq:phidalpha} and~\eqref{eq:phialpha} in the proof below.
Hence
\begin{align*}
  \inf_{Q_{d,n} \in \frakQ_{d,n}} \sup_{\substack{f \in H^\alpha([0,1]^d) \\ \|f\|_{H^\alpha([0,1]^d)} \le 1}} |I_d(f) - Q_{d,n}(f)|
  \ge
  c_{d,\alpha} \, n^{-\alpha/d}
  .
\end{align*}
\end{theorem}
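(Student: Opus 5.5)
We use the fooling argument via the diameter of information set up just before the theorem, in the style of Bakhvalov's proof. Fix the deterministic, possibly adaptive, algorithm $Q_{d,n}$. First we run it on the zero function $f^\fmarkone = 0$. It then samples at most $n$ points $\bsx_1,\ldots,\bsx_m$ and sees only zeros. We will build a nonnegative bump function $f^\fmarktwo$ that vanishes at all of these points. Adaptivity does not matter: because $f^\fmarktwo$ returns the same values as $0$ at every queried point, the algorithm follows exactly the same path and $N(Q_{d,n},f^\fmarkone)=N(Q_{d,n},f^\fmarktwo)$. By~\eqref{eq:triangle-fp-fpp} the worst-case error is then at least $\tfrac12 I_d(f^\fmarktwo)$, provided $\|f^\fmarktwo\|_{H^\alpha([0,1]^d)}\le1$.

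For the bump we first fix a univariate function $\phi_\alpha$ on $[0,1]$ that is $C^{\alpha}$-smooth (or at least in $H^\alpha$), nonnegative with positive integral, and vanishing together with its derivatives up to order $\alpha-1$ at $0$ and $1$. A concrete choice is $\phi_\alpha(x)=(x(1-x))^{\alpha}$, which is~\eqref{eq:phialpha}. We then set $\phi_{d,\alpha}(\bsx)=\prod_{j=1}^d\phi_\alpha(x_j)$, which is~\eqref{eq:phidalpha}. Extending it by zero outside the cube keeps it in $H^\alpha(\R^d)$, because all derivatives of order below $\alpha$ vanish on the boundary of the cube.

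Next we partition $[0,1]^d$ into $M^d$ congruent subcubes of side $1/M$, taking $M$ to be the smallest integer with $M^d\ge 2n$. This gives $M\le 2n^{1/d}$ once we note $(2n)^{1/d}\le 2 n^{1/d}$; we will check whether the constant $2^{1+\alpha+\alpha/d}$ forces a slightly more careful choice such as $M=\lceil (2n)^{1/d}\rceil\le 2(2n)^{1/d}$. At most $n$ subcubes contain a sample point, so at least $M^d-n\ge M^d/2$ subcubes are free of sample points. On each free subcube $C_i$ with lower corner $\bsc_i$ we place a scaled copy $\phi_{d,\alpha}(M(\bsx-\bsc_i))$, and we let $f^\fmarktwo=\lambda\sum_{i\,\mathrm{free}}\phi_{d,\alpha}(M(\cdot-\bsc_i))$. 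This function vanishes at every sample point, including points on subcube boundaries, since the bumps vanish there.

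The main work is the scaling computation. The integral of each bump is $M^{-d}I_d(\phi_{d,\alpha})$, so $I_d(f^\fmarktwo)\ge \lambda\,\tfrac12 I_d(\phi_{d,\alpha})$. The bumps have disjoint supports, so their squared norms add. For a multi-index with $\|\bstau\|_1\le\alpha$ we get $\|D^\bstau \phi(M\cdot)\|_{L_2(C_i)}^2=M^{2\|\bstau\|_1-d}\|D^\bstau\phi_{d,\alpha}\|_{L_2}^2\le M^{2\alpha-d}\|D^\bstau\phi_{d,\alpha}\|^2_{L_2}$ because $M\ge1$. Summing over at most $M^d$ bumps gives $\|f^\fmarktwo\|_{H^\alpha}\le\lambda M^{\alpha}\|\phi_{d,\alpha}\|_{H^\alpha}$. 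We therefore choose $\lambda=M^{-\alpha}/\|\phi_{d,\alpha}\|_{H^\alpha}$, which yields an error of at least $\tfrac14 M^{-\alpha}I_d(\phi_{d,\alpha})/\|\phi_{d,\alpha}\|_{H^\alpha}$. Inserting $M\le 2(2n)^{1/d}$ gives $M^{-\alpha}\ge 2^{-\alpha-\alpha/d}n^{-\alpha/d}$. Combining the factor $\tfrac12$ from~\eqref{eq:triangle-fp-fpp} with the factor $\tfrac12$ from the free-cube count should give exactly the stated $c_{d,\alpha}$.

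The step that needs the most care is this bookkeeping of constants: making the choice of $M$ and the free-cube fraction match $2^{1+\alpha+\alpha/d}$ precisely, while also verifying that the zero-extended, rescaled bumps really lie in $H^\alpha$ with the claimed derivative norms.
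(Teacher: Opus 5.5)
Your route is the paper's: Bakhvalov-type bumps on the $M^d$ subcubes with $M=\lceil (2n)^{1/d}\rceil$, the same normalisation $\lambda=M^{-\alpha}/\|\phi_{d,\alpha}\|_{H^\alpha([0,1]^d)}$, at least $M^d/2$ unsampled cells, and one run of the algorithm to fix the sample points despite adaptivity. This correctly gives the rate $n^{-\alpha/d}$, but it does not give the stated constant.

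With the fooling pair $(0,f^\fmarktwo)$, \eqref{eq:triangle-fp-fpp} only gives $\tfrac12 I_d(f^\fmarktwo)\ge \tfrac14 M^{-\alpha} I_d(\phi_{d,\alpha})/\|\phi_{d,\alpha}\|_{H^\alpha([0,1]^d)}$. Then $M<2(2n)^{1/d}$ yields $2^{-2-\alpha-\alpha/d}\,n^{-\alpha/d}\,I_d(\phi_{d,\alpha})/\|\phi_{d,\alpha}\|_{H^\alpha([0,1]^d)}=\tfrac12\,c_{d,\alpha}\,n^{-\alpha/d}$. The two factors $\tfrac12$ multiply to $\tfrac14$, so your closing claim that they reproduce $2^{-1-\alpha-\alpha/d}$ is an arithmetic slip, and the argument loses a factor~$2$.

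The sharper bound $M\le 2n^{1/d}$ does not repair this in general. It is true, but not for the reason you give: $(2n)^{1/d}\le 2n^{1/d}$ says nothing about the ceiling. It follows instead from $((k+1)/2)^d\le (k^d+1)/2\le n$ with $k=M-1$. Even so, it only gives the constant $2^{-2-\alpha} I_d(\phi_{d,\alpha})/\|\phi_{d,\alpha}\|_{H^\alpha([0,1]^d)}$, which is at least $c_{d,\alpha}$ only when $\alpha\ge d$.

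The missing ingredient is a symmetric fooling pair, which cancels the $\tfrac12$ coming from the diameter of information. The simplest choice is $\pm f^\fmarktwo$. Both functions vanish at every point the algorithm queries on the zero function, so it returns the same $q=Q_{d,n}(0)$ on both. Hence $\max\{|I_d(f^\fmarktwo)-q|,|I_d(f^\fmarktwo)+q|\}\ge I_d(f^\fmarktwo)\ge\tfrac12 M^{-\alpha}I_d(\phi_{d,\alpha})/\|\phi_{d,\alpha}\|_{H^\alpha([0,1]^d)}$, which with $M<2(2n)^{1/d}$ is exactly $c_{d,\alpha}n^{-\alpha/d}$.

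The paper does the equivalent thing with signs. Its $f^\fmarkone$ carries coefficient $+1$ on all $M^d$ cells; your norm estimate already allowed $M^d$ bumps, so it lies in the unit ball. Its $f^\fmarktwo$ flips to $-1$ on the unsampled cells, so the integral gap is twice the total integral of the unsampled bumps. Combined with $M\le 2n^{1/d}$, either version would even improve $c_{d,\alpha}$ by a factor $2^{\alpha/d}$.

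Finally, $c_{d,\alpha}$ is defined through the specific bump \eqref{eq:phialpha}, which is $(x(1-x))^{\alpha+1}$, not $(x(1-x))^\alpha$. Your argument goes through verbatim with that choice. With yours, however, the ratio $I_d(\phi_{d,\alpha})/\|\phi_{d,\alpha}\|_{H^\alpha([0,1]^d)}$, and hence the constant, is a different one.
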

\begin{proof}
We divide $[0,1]^d$ into $m^d$ equal subcubes and place properly scaled versions of a nice smooth function on these subcubes.
We need to ensure that the overall function still has the required smoothness where we glue these functions together.
With $n \in \N$ the number of function evaluations given, define $m \in \N$ by
\begin{align}\label{eq:def-m}
  m = m(d, n)
  &:=
  \lceil (2n)^{1/d} \rceil
  ,
\end{align}
so that $n \le m^d / 2$.
(We note that $(2n)^{1/d} > 1$ for all $n, d \in \N$ and $m(d, 1) = \lceil 2^{1/d} \rceil = 2$ for all $d \in \N$, and hence $m^d \ge 2^d$.
Furthermore $m$ keeps on taking the value $2$ up to and including $n = 2^{d-1}$.)
We partition the unit cube $[0, 1]^d$ into $m^d$ equal subcubes of volume $m^{-d}$.
The location of such a subcube can be denoted by a vector $\bsa \in \Z_m^d$, such that for $\bsx$ in that subcube we have $a_j/m \le x_j \le (a_j+1)/m$.
Define, for $\alpha \in \N$, $x \in \R$,
\begin{align}\label{eq:phialpha}
  \phi_\alpha(x)
  &:=
  \begin{cases}
    (1 - x)^{\alpha+1} \, x^{\alpha+1}, & \text{when } 0 < x < 1 , \\
    0 , & \text{when } x \in \R \setminus (0, 1).
  \end{cases}
\end{align}
Figure~\ref{fig:phi-alpha} shows $\phi_\alpha$ and its derivatives up to order~$\alpha$.
This function is non-negative.
This function and its derivatives up to order $\alpha$ are continuous and go to zero when approaching $0$ or~$1$.
This guarantees that if we scale this function and glue it together, we have the required smoothness.
(In fact the exponent $\alpha + 1$ is stronger than we need, but will make some bounds easier.)

\ifdefined\galleryview
\else
  \begin{figure}
  \centering
\fi
\tikzsetnextfilename{figure_bump_functions}
\begin{tikzpicture}
\begin{groupplot}[
  group style={group size=3 by 4,horizontal sep=1.65cm,vertical sep=1.15cm},
  width=0.23\textwidth,
  height=0.17\textwidth,
  scale only axis,
  clip=false,
  xmin=-0.1,xmax=1.1,
  enlarge y limits=0.08,
  axis x line=middle,
  axis y line=left,
  xlabel={},
  xtick={0,0.5,1},
  scaled y ticks=false,
  yticklabel style={/pgf/number format/fixed,/pgf/number format/precision=5},
  tick label style={font=\small},
  samples=193,
  domain=-0.1:1.1,
  every axis plot/.append style={line width=1.5pt},
]
  \nextgroupplot[title={$\alpha = 1$},title style={yshift=12pt},ymin=0,ymax=0.0675]
  \addplot[deepblue] {(x >= 0 && x <= 1) ? ((x*(1-x))^2) : 0};
  \node[anchor=south west,xshift=3pt,yshift=2pt] at (axis description cs:0,1) {$\phi_1(x)$};
  \nextgroupplot[title={$\alpha = 2$},title style={yshift=12pt},ymin=0,ymax=0.016875]
  \addplot[deepgreen] {(x >= 0 && x <= 1) ? ((x*(1-x))^3) : 0};
  \node[anchor=south west,xshift=3pt,yshift=2pt] at (axis description cs:0,1) {$\phi_2(x)$};
  \nextgroupplot[title={$\alpha = 3$},title style={yshift=12pt},ymin=0,ymax=0.00421875]
  \addplot[deeporange] {(x >= 0 && x <= 1) ? ((x*(1-x))^4) : 0};
  \node[anchor=south west,xshift=3pt,yshift=2pt] at (axis description cs:0,1) {$\phi_3(x)$};

  \nextgroupplot[xlabel={$x$},ymin=-0.21,ymax=0.21]
  \addplot[deepblue] {(x >= 0 && x <= 1) ? (2*x*(1-x)*(1-2*x)) : 0};
  \node[anchor=south west,xshift=3pt,yshift=2pt] at (axis description cs:0,1) {$\phi_1^{(1)}(x)$};
  \nextgroupplot[ymin=-0.058,ymax=0.058]
  \addplot[deepgreen] {(x >= 0 && x <= 1) ? (3*x^2*(1-x)^2*(1-2*x)) : 0};
  \node[anchor=south west,xshift=3pt,yshift=2pt] at (axis description cs:0,1) {$\phi_2^{(1)}(x)$};
  \nextgroupplot[ymin=-0.0165,ymax=0.0165]
  \addplot[deeporange] {(x >= 0 && x <= 1) ? (4*x^3*(1-x)^3*(1-2*x)) : 0};
  \node[anchor=south west,xshift=3pt,yshift=2pt] at (axis description cs:0,1) {$\phi_3^{(1)}(x)$};

  \nextgroupplot[group/empty plot,ymin=0,ymax=1]
  \nextgroupplot[xlabel={$x$},ymin=-0.405,ymax=0.324]
  \addplot[deepgreen] {(x >= 0 && x <= 1) ? (6*x*(1-x)*(1-5*x+5*x^2)) : 0};
  \node[anchor=south west,xshift=3pt,yshift=2pt] at (axis description cs:0,1) {$\phi_2^{(2)}(x)$};
  \nextgroupplot[ymin=-0.135,ymax=0.089]
  \addplot[deeporange] {(x >= 0 && x <= 1) ? (4*x^2*(1-x)^2*(3-14*x+14*x^2)) : 0};
  \node[anchor=south west,xshift=3pt,yshift=2pt] at (axis description cs:0,1) {$\phi_3^{(2)}(x)$};

  \nextgroupplot[group/empty plot,ymin=0,ymax=1]
  \nextgroupplot[group/empty plot,ymin=0,ymax=1]
  \nextgroupplot[xlabel={$x$},ymin=-1.08,ymax=1.08]
  \addplot[deeporange] {(x >= 0 && x <= 1) ? (24*x*(1-x)*(1-2*x)*(1-7*x+7*x^2)) : 0};
  \node[anchor=south west,xshift=3pt,yshift=2pt] at (axis description cs:0,1) {$\phi_3^{(3)}(x)$};
\end{groupplot}
\end{tikzpicture}
\ifdefined\galleryview
\else
\caption{The function $\phi_\alpha(x) = (x \, (1-x))^{\alpha+1} \, \bbone\{x \in [0,1]\}$ for $\alpha = 1, 2, 3$ and its derivatives through order $\alpha$.}
\label{fig:phi-alpha}
\end{figure}
\fi

Let us denote the $d$-fold tensor product of this function by
\begin{align}\label{eq:phidalpha}
  \phi_{d,\alpha}(\bsx)
  =
  \phi_\alpha^{\otimes d}(\bsx)
  &:=
  \prod_{j=1}^d \phi_\alpha(x_j)
  .
\end{align}
We easily see that $\phi_{d,\alpha} \in H^\alpha([0,1]^d)$.
We can compute the integral exactly
\begin{align}\label{eq:int-phialpha}
  \int_0^1 \phi_\alpha(x) \rd{x}
  &=
  \frac{((\alpha + 1)!)^2}{(2\alpha+3)!}
  ,
  &
  I_d(\phi_{d,\alpha})
  =
  \int_{[0,1]^d} \phi_{d,\alpha}(\bsx) \rd\bsx
  &=
  \frac{((\alpha + 1)!)^{2d}}{((2\alpha+3)!)^d}
  .
\end{align}

Next we define a $d$-dimensional scaled product version of this function which has only non-zero values inside the subcube $[\bsa/m,(\bsa+\bsone)/m]$, $\bsa \in \Z_m^d$,
\begin{align}\label{eq:phi-d-alpha-m-bsa}
  \phi_{d,\alpha,m,\bsa}(\bsx)
  :=
  m^{-\alpha} \, \prod_{j=1}^d \phi_\alpha(m \, x_j - a_j)
  .
\end{align}
The extra scaling $m^{-\alpha}$ is for convenience and such that for any $\bstau \in \N_0^d$ with $\|\bstau\|_1 \le \alpha$ we have
\begin{align*}
  |D^\bstau \phi_{d,\alpha,m,\bsa}(\bsx)|
  &=
  \Big|m^{-\alpha} \, m^{\|\bstau\|_1} \, \prod_{j=1}^d \phi_\alpha^{(\tau_j)}(m \, x_j - a_j)\Big|
  \le
  \Big|\prod_{j=1}^d \phi_\alpha^{(\tau_j)}(m \, x_j - a_j)\Big|
  .
\end{align*}
Now consider putting this function on each of the subcubes by taking a linear combination with coefficients $c_\bsa$, for $\bsa \in \Z_m^d$, collected in the vector~$\bsc$,
\begin{align*}
  f_{d,\alpha,m,\bsc}(\bsx)
  &:=
  \sum_{\bsa \in \Z_m^d} c_\bsa \, \phi_{d,\alpha,m,\bsa}(\bsx)
  .
\end{align*}
This function belongs to $H^\alpha([0,1]^d)$ since the function and its derivatives continuously match with zero on the boundaries of the subcubes, see Figure~\ref{fig:neighbouring-bumps} for an illustration.
\ifdefined\galleryview
\else
  \begin{figure}
  \centering
\fi
\tikzsetnextfilename{figure_glued_bumps}
\begin{tikzpicture}
\begin{axis}[
  width=0.7\textwidth,
  height=0.32\textwidth,
  scale only axis,
  clip=false,
  xmin=0,xmax=1,
  ymin=-0.013,ymax=0.009,
  axis x line=middle,
  axis y line=left,
  xlabel={$x$},
  xtick={0,0.25,0.5,0.75,1},
  ytick={-0.01,-0.005,0,0.005},
  scaled y ticks=false,
  yticklabel style={/pgf/number format/fixed,/pgf/number format/precision=3},
  tick label style={font=\small},
  samples=161,
  every axis plot/.append style={very thick},
]
  \addplot[deepblue,domain=0:0.5] {2*2^(-2)*(2*x*(1-2*x))^3};
  \addplot[deeporange,domain=0.5:1] {-3*2^(-2)*((2*x-1)*(2-2*x))^3};
  \node[deepblue,anchor=south] at (axis description cs:0.25,1) {$c_0 = 2$};
  \node[deeporange,anchor=south] at (axis description cs:0.75,1) {$c_1 = -3$};
\end{axis}
\end{tikzpicture}
\ifdefined\galleryview
\else
\caption{Gluing scaled copies of $\phi_{d,\alpha} = \phi_\alpha^{\otimes d}$ on neighbouring subcubes gives a function in $H^\alpha([0,1]^d)$.
The plot shows $2^{-\alpha} \, \big(2 \phi_\alpha(2x) - 3 \phi_\alpha(2x-1)\big)$ for $d = 1$ and $\alpha = 2$.}
\label{fig:neighbouring-bumps}
\end{figure}
\fi
Since the functions $\phi_{d,\alpha,m,\bsa}$ are supported on their respective subcubes and they do not overlap, for $\|\bstau\|_1 \le \alpha$,
\begin{align*}
  \journalonly{\MoveEqLeft[1]} \|D^\bstau f_{d,\alpha,m,\bsc}\|_{L_2([0,1]^d)}^2
  \journalonly{\\*}&=
  m^{-2\alpha} \sum_{\bsa \in \Z_m^d} |c_\bsa|^2 \, \prod_{j=1}^d \int_{a_j/m}^{(a_j+1)/m} |m^{\tau_j} \, \phi_\alpha^{(\tau_j)}(m x_j - a_j)|^2 \rd{x_j}
  \\
  &=
  m^{-2\alpha+2\|\bstau\|_1} \sum_{\bsa \in \Z_m^d} |c_\bsa|^2 \, \prod_{j=1}^d m^{-1} \int_0^1 |\phi_\alpha^{(\tau_j)}(x_j)|^2 \rd{x_j}
  \\\numberthis{eq:pull-out-c-equality}
  &=
  m^{-2\alpha+2\|\bstau\|_1} \|D^\bstau \phi_{d,\alpha}\|_{L_2([0,1]^d)}^2 \, m^{-d} \sum_{\bsa \in \Z_m^d} |c_\bsa|^2
  \\
  &\le
  \left(\max_{\bsa \in \Z_m^d} |c_\bsa|^2\right) \,
  \|D^\bstau \phi_{d,\alpha}\|_{L_2([0,1]^d)}^2
  .
\end{align*}
We have
\begin{align*}
  \|f_{d,\alpha,m,\bsc}\|_{H^\alpha([0,1]^d)}
  &=
  \Bigg(\sum_{\substack{\bstau \in \N_0^d \\ \|\bstau\|_1 \le \alpha}} \|D^\bstau f_{d,\alpha,m,\bsc}\|_{L_2([0,1]^d)}^2\Bigg)^{1/2}
  \journalbreak\le
  \left(\max_{\bsa \in \Z_m^d} |c_\bsa|\right) \,
  \|\phi_{d,\alpha}\|_{H^\alpha([0,1]^d)}
  .
\end{align*}
We also have
\begin{align*}
  \int_{[0,1]^d} f_{d,\alpha,m,\bsc}(\bsx) \rd{\bsx}
  &=
  m^{-\alpha} \sum_{\bsa \in \Z_m^d} c_\bsa \, \prod_{j=1}^d \int_{a_j/m}^{(a_j+1)/m} \phi_\alpha(m x_j - a_j) \rd{x_j}
  \\
  &=
  m^{-\alpha} \sum_{\bsa \in \Z_m^d} c_\bsa \, \prod_{j=1}^d m^{-1} \int_0^1 \phi_\alpha(x_j) \rd{x_j}
  \\
  &=
  m^{-\alpha} \, m^{-d} \Bigg( \sum_{\bsa \in \Z_m^d} c_\bsa \Bigg) \int_{[0,1]^d} \phi_{d,\alpha}(\bsx) \rd{\bsx}
  \\
  &=
  m^{-\alpha} \, m^{-d} \, I_d(\phi_{d,\alpha}) \sum_{\bsa \in \Z_m^d} c_\bsa
  .
\end{align*}

We now specify a subclass of functions in the unit ball of $H^\alpha([0,1]^d)$
\begin{align*}
  F
  &:=
  \big\{ f_{d,\alpha,m,\bsc} / \|\phi_{d,\alpha}\|_{H^\alpha([0,1]^d)} \in H^\alpha([0,1]^d) : c_\bsa \in \{\pm1\} \text{ for all } \bsa \in \Z_m^d \big\}
  .
\end{align*}
Note that all $f \in F$ satisfy $\|f\|_{H^\alpha([0,1]^d)} \le 1$ and in fact all have the same norm, cf.~\eqref{eq:pull-out-c-equality},
\begin{align*}
   \|f\|_{H^\alpha([0,1]^d)}^2
   &=
   \sum_{\substack{\bstau \in \N_0^d \\ \|\bstau\|_1 \le \alpha}} \frac{m^{-2\alpha+2\|\bstau\|_1}}{\|\phi_{d,\alpha}\|_{H^\alpha([0,1]^d)}^2} \, \|D^\bstau \phi_{d,\alpha}\|_{L_2([0,1]^d)}^2
  .
\end{align*}
Note that we have a very specific set of functions, but the behaviour on the different subcubes is independent of any of the other subcubes.
If our algorithm only has access to $n$ function values then we have no information on at least half of the subcubes since $m^d - n \ge m^d/2$.
(Note that the function (and its derivatives) vanish on the boundaries of the subcubes.)

This means we can construct two functions $f^\fmarkone, f^\fmarktwo \in F$ which take the same values on all the subcubes where we have information and differ maximally in those where we do not.
So, let us fix $f^\fmarkone = f_{d,\alpha,m,\bsc^\fmarkone} / \|\phi_{d,\alpha}\|_{H^\alpha([0,1]^d)}$ with $c^\fmarkone_\bsa = +1$ for all $\bsa \in \Z_m^d$ and $f^\fmarktwo = f_{d,\alpha,m,\bsc^\fmarktwo} / \|\phi_{d,\alpha}\|_{H^\alpha([0,1]^d)}$ with $c^\fmarktwo_\bsa = c^\fmarkone_\bsa = +1$ on every subcube where we have information and $c^\fmarktwo_\bsa = -1$ on every subcube where we do not have information.
Note that, since the algorithm is deterministic, we can first run the algorithm on the function $f^\fmarkone$ and keep track which subcubes were sampled.
Afterwards we can construct $f^\fmarktwo$.
Even if the algorithm is adaptive, running it on $f^\fmarktwo$ will result in exactly the same sampling being done, since it sees the same information, and hence it computes the same result.
Figure~\ref{fig:subcubes} illustrates the idea.

\ifdefined\galleryview
\else
  \begin{figure}[t]
  \centering
\fi
\tikzsetnextfilename{figure_subcube_construction}
\begin{tikzpicture}[x=5.3cm,y=5.3cm,font=\small]
  \foreach \panel in {0,1} {
    \begin{scope}[shift={({1.5*\panel},0)}]
      \foreach \cellx in {0,...,3} {
        \foreach \celly in {0,...,3} {
          \pgfmathtruncatemacro{\sampledcell}{
            (\cellx == 0 || \cellx == 2) ? (\celly < 2) : (\celly >= 2)
          }
          \ifnum\panel=0
            \def\cellcolour{softgreen}
            \def\signcolour{deepgreen!60!black}
            \def\cellsign{+1}
          \else
            \ifnum\sampledcell=1
              \def\cellcolour{softgreen}
              \def\signcolour{deepgreen!60!black}
              \def\cellsign{+1}
            \else
              \def\cellcolour{softorange}
              \def\signcolour{deeporange!60!black}
              \def\cellsign{-1}
            \fi
          \fi
          \fill[\cellcolour] ({\cellx/4},{\celly/4})
            rectangle ({(\cellx+1)/4},{(\celly+1)/4});
          \node[\signcolour,font=\boldmath] at ({(\cellx+0.5)/4},{(\celly+0.5)/4}) {$\cellsign$};
        }
      }
      \foreach \k in {1,2,3} {
        \draw[gray] ({\k/4},0) -- ({\k/4},1);
        \draw[gray] (0,{\k/4}) -- (1,{\k/4});
      }
      \draw[very thick] (0,0) rectangle (1,1);
      \pic {construction samples};
      \node[below left=2pt] at (0,0) {$0$};
      \node[below=2pt] at (1,0) {$1$};
      \node[left=2pt] at (0,1) {$1$};
      \node[below=13pt] at (0.5,0) {$x_1$};
      \node[left=13pt] at (0,0.5) {$x_2$};
    \end{scope}
  }
  \node at (0.5,1.13) {$f^\fmarkone$: all coefficients $+1$};
  \node at (2,1.13) {$f^\fmarktwo$: signs reversed in unsampled cells};
  \node at (1.25,-0.20) {$d = 2,\quad n = 8,\quad m = \lceil\sqrt{2n}\rceil = 4,\quad m^d = 16$};
  \node at (1.25,-0.30) {$N(Q_{d,n},f^\fmarkone) = N(Q_{d,n},f^\fmarktwo)$};
\end{tikzpicture}
\ifdefined\galleryview
\else
\caption{Construction of $f^\fmarkone$ and $f^\fmarktwo$.
Each cell has $\phi_{d,\alpha,m,\bsa}$ with the indicated coefficient.
The samples are marked by black dots.
The function $f^\fmarkone$ has coefficient $+1$ in every cell, while $f^\fmarktwo$ keeps $+1$ in the sampled cells and uses $-1$ in the unsampled cells.
After the common norm normalisation, the two functions agree at every sample point but have different integrals.}
\label{fig:subcubes}
\end{figure}
\fi

We have
\begin{align*}
  \int_{[0,1]^d} f^\fmarkone(\bsx) \rd{\bsx}
  -
  \int_{[0,1]^d} f^\fmarktwo(\bsx) \rd{\bsx}
  &=
  m^{-\alpha} \, m^{-d} \, \frac{I_d(\phi_{d,\alpha})}{\|\phi_{d,\alpha}\|_{H^\alpha([0,1]^d)}} \sum_{\substack{\bsa \in \Z_m^d \\ c^\fmarktwo_\bsa = -1}} 2
  .
\end{align*}
Hence
\begin{align*}
  \sup_{\substack{f \in H^\alpha([0,1]^d) \\ \|f\|_{H^\alpha([0,1]^d)} \le 1}} |I_d(f) - Q_{d,n}(f)|
  &\ge
  \frac12 \frac{m^{-\alpha} \, m^{-d} \, 2 \, (m^d - n) \, I_d(\phi_{d,\alpha})}{\|\phi_{d,\alpha}\|_{H^\alpha([0,1]^d)}}
  \ge
  c_{d,\alpha} \, n^{-\alpha/d}
  ,
\end{align*}
with
\begin{align*}
  c_{d,\alpha}
  &=
  \frac{I_d(\phi_{d,\alpha})}{2^{1+\alpha+\alpha/d} \, \|\phi_{d,\alpha}\|_{H^\alpha([0,1]^d)}}
  .
\end{align*}
To obtain this bound, first note from~\eqref{eq:def-m} that
$n \le m^d/2$ and therefore $m^d - n \ge m^d/2$ and thus
\begin{align*}
  m^{-d} \, 2 \, (m^d - n)
  &=
  \frac{m^d-n}{m^d / 2}
  \ge
  1
  .
\end{align*}
Furthermore
\begin{align*}
  \frac1{m^\alpha}
  =
  \frac1{(m^d)^{\alpha/d}}
  >
  \frac1{(2^d \, 2n)^{\alpha/d}}
  =
  \frac1{2^{\alpha+\alpha/d} \, n^{\alpha/d}}
  ,
\end{align*}
since $\lceil x \rceil < 1 + x < 2 x$ for $x > 1$ and with $x = (2 n)^{1/d} > 1$ this yields $m^d = \lceil (2n)^{1/d} \rceil^d < 2^d \, 2 n$.
This finishes the proof.
\end{proof}

Let us remark that, in the proof of the previous theorem, we only assumed function values, but we could as well have assumed  derivative values at those sample points (if the function is sufficiently regular); this does not help since the subcubes do not leak information to other subcubes.
Even more importantly, nonlinear algorithms or adaptive algorithms will not help either, as information on the sampled subcubes does not reveal information on the other subcubes.

\paragraph{Generalisation to $L_p$-based Sobolev space norms.}
From the proof of the previous theorem it is also easy to extend the result to Sobolev spaces which take $L_p$ norms instead of $L_2$ norms.
We denote those Sobolev spaces by $W^\alpha_p([0,1]^d)$ where for $p = 2$ we have $H^\alpha([0,1]^d) = W^\alpha_2([0,1]^d)$.
The norms are defined by
\begin{align}\label{eq:W-alpha-p-norms}
  \|f\|_{W^\alpha_p([0,1]^d)}
  &:=
  \begin{cases}
    \displaystyle
    \Bigg(
    \sum_{\substack{\bstau \in \N_0^d \\ \|\bstau\|_1 \le \alpha}}\|D^\bstau f\|_{L_p([0,1]^d)}^p
    \Bigg)^{1/p}
    ,
    &
    1 \le p < \infty
    ,
    \\[10mm]
    \displaystyle
    \max_{\substack{\bstau \in \N_0^d \\ \|\bstau\|_1 \le \alpha}} \|D^\bstau f\|_{L_\infty([0,1]^d)}
    ,
    &
    p = \infty
    .
  \end{cases}
\end{align}
For function values to be well defined, we need to assume $\alpha > d / p$ here.
We formulate this as the next theorem which includes the previous theorem as a special case.

\begin{theorem}[{Lower bound for $W^\alpha_p([0,1]^d)$ for $1 \le p \le \infty$}]\label{thm:Walpha-p-lower-bound}
Let $d, \alpha, n \in \N$, let $1 \le p \le \infty$, and assume $\alpha > d / p$.
Then for every deterministic algorithm $Q_{d,n}$ which approximates $I_d$ using at most $n$ function values, at points chosen possibly adaptively, there is a function $f \in W^\alpha_p([0,1]^d)$ with $\|f\|_{W^\alpha_p([0,1]^d)} \le 1$ that has error bounded from below by $c_{d,\alpha,p} \, n^{-\alpha/d}$ with
\begin{align*}
  c_{d,\alpha,p}
  &:=
  \frac{I_d(\phi_{d,\alpha})}{2^{1+\alpha+\alpha/d} \, \|\phi_{d,\alpha}\|_{W^\alpha_p([0,1]^d)}}
  ,
\end{align*}
where the function $\phi_{d,\alpha} = \phi_\alpha^{\otimes d}$ is defined by~\eqref{eq:phidalpha} and~\eqref{eq:phialpha}.
Hence,
\begin{align*}
  \inf_{Q_{d,n} \in \frakQ_{d,n}}
  \sup_{\substack{f \in W^\alpha_p([0,1]^d) \\ \|f\|_{W^\alpha_p([0,1]^d)} \le 1}}
  |I_d(f) - Q_{d,n}(f)|
  &\ge
  c_{d,\alpha,p} \, n^{-\alpha/d}.
\end{align*}
\end{theorem}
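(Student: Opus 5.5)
The plan is to repeat the proof of Theorem~\ref{thm:Halpha-lower-bound} verbatim, replacing only the $L_2$ computations by $L_p$ computations. We keep the same choice $m = \lceil (2n)^{1/d}\rceil$, the same partition of $[0,1]^d$ into $m^d$ subcubes, and the same scaled bumps $\phi_{d,\alpha,m,\bsa}$ from~\eqref{eq:phi-d-alpha-m-bsa}. We also keep the same sign-pattern functions $f_{d,\alpha,m,\bsc}$ with $c_\bsa \in \{\pm1\}$, now normalised by $\|\phi_{d,\alpha}\|_{W^\alpha_p([0,1]^d)}$. The integral formula for $f_{d,\alpha,m,\bsc}$ does not involve the norm, so it carries over unchanged.

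The one new computation replaces~\eqref{eq:pull-out-c-equality}. For $1 \le p < \infty$ and $\|\bstau\|_1 \le \alpha$, disjointness of the supports and the change of variables $x_j \mapsto m x_j - a_j$ give
\begin{align*}
  \|D^\bstau f_{d,\alpha,m,\bsc}\|_{L_p([0,1]^d)}^p
  &=
  m^{p(\|\bstau\|_1-\alpha)} \, \|D^\bstau \phi_{d,\alpha}\|_{L_p([0,1]^d)}^p \, m^{-d} \sum_{\bsa \in \Z_m^d} |c_\bsa|^p
  \le
  \|D^\bstau \phi_{d,\alpha}\|_{L_p([0,1]^d)}^p ,
\end{align*}
because $|c_\bsa| = 1$ and $\|\bstau\|_1 \le \alpha$. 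For $p = \infty$ the same scaling gives $\|D^\bstau f_{d,\alpha,m,\bsc}\|_{L_\infty} = m^{\|\bstau\|_1-\alpha}\|D^\bstau\phi_{d,\alpha}\|_{L_\infty} \le \|D^\bstau\phi_{d,\alpha}\|_{L_\infty}$. Summing the $p$-th powers over $\bstau$, or taking the maximum when $p=\infty$, yields $\|f_{d,\alpha,m,\bsc}\|_{W^\alpha_p} \le \|\phi_{d,\alpha}\|_{W^\alpha_p}$. Hence the normalised class $F$ lies in the unit ball of $W^\alpha_p([0,1]^d)$.

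Membership in $W^\alpha_p$ needs a check. The bump $\phi_\alpha$ is $C^\alpha$ on $\R$ with compact support, because $(x(1-x))^{\alpha+1}$ vanishes to order $\alpha+1$ at $0$ and $1$. The glued function is therefore $C^\alpha$ on $[0,1]^d$, and its classical derivatives up to order $\alpha$ are its weak derivatives and are bounded, so they lie in every $L_p$. The assumption $\alpha > d/p$ is needed only so that point evaluation is well defined on the whole space; our test functions are continuous anyway.

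The diameter-of-information argument is then identical. We run the deterministic, possibly adaptive, algorithm on $f^\fmarkone$ (all signs $+1$) and record the at most $n$ sampled subcubes. We let $f^\fmarktwo$ flip the sign on the at least $m^d - n \ge m^d/2$ unsampled subcubes. The algorithm sees the same information on both functions and so returns the same output, and~\eqref{eq:triangle-fp-fpp} bounds the error from below by $m^{-\alpha}m^{-d}(m^d-n)\,I_d(\phi_{d,\alpha})/\|\phi_{d,\alpha}\|_{W^\alpha_p}$. The same estimates $m^{-d}\,2(m^d-n)\ge1$ and $m^{-\alpha} > 2^{-\alpha-\alpha/d}n^{-\alpha/d}$ then give $c_{d,\alpha,p}$. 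There is no real obstacle here. The only step that needs care is the $L_p$ scaling identity, in particular checking that the factor $m^{-d}$ coming from the Jacobian exactly cancels the sum of $m^d$ terms $|c_\bsa|^p$.
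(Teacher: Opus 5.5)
Your proposal is correct and is exactly the adaptation of the proof of Theorem~\ref{thm:Halpha-lower-bound} that the paper intends; its own proof simply says to modify that argument. The only genuinely new step is the $L_p$ version of~\eqref{eq:pull-out-c-equality}, where the Jacobian factor $m^{-d}$ cancels the $m^d$ terms $|c_\bsa|^p$ (or one takes the maximum when $p=\infty$); you handle this step correctly, and the rest carries over verbatim.
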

\begin{proof}
One can easily modify the proof of Theorem~\ref{thm:Halpha-lower-bound} to obtain this result.
\end{proof}

\paragraph{A lower bound for $H^\alpha_\mix([0,1]^d)$ and $H^{\alpha,\rmu}_\mix([0,1]^d)$.}
We now use a similar proof to show a lower bound for the spaces $H^\alpha_\mix([0,1]^d)$ and $H^{\alpha,\rmu}_\mix([0,1]^d)$.
Point evaluation is well defined in these spaces for $\alpha \in \N$ by Lemmas~\ref{lem:H-alpha-mix-C-embedding} and~\ref{lem:H-alpha-mix-weighted-equivalent-norms}, similarly for $D^\bstau f$ for $\bstau \in \{0,\ldots,\alpha-1\}^d$.
We demand $n \ge 2$ such that we can simply write $\log_2 n$ instead of having a form with $1 + \log_2 n$.
A similar lower bound can be obtained for the anchored space $H^{\alpha,\bsa}_\mix([0,1]^d)$ using the same construction in the proof.
We just need to estimate the norms, which is easiest if the anchor $\bsa \in [0,1]^d$ is on the boundary of the cube, e.g., $\bsa = \bszero$ or $\bsa = \bsone$.
We will not give that proof.
For $\alpha = 1$, a lower bound (using the simpler function $\phi(x) = (1 - x) \, x$ for $0 < x < 1$ and $0$ elsewhere in the proof) can be found in~\cite[Section~2.7]{DHP2015}.

\begin{theorem}[{Lower bound for $H^\alpha_\mix([0,1]^d)$ and $H^{\alpha,\rmu}_\mix([0,1]^d)$}]\label{thm:Hmixalpha-lower-bound}
Let $d, \alpha, n \in \N$ with $n \ge 2$.
For every deterministic algorithm $Q_{d,n}$ which approximates $I_d(f)$ using point values of the function, or its mixed derivatives $D^\bstau f$ with $\bstau \in \{0,\ldots,\alpha-1\}^d$, at no more than $n$ points, chosen possibly adaptively, there is an $f \in H^\alpha_\mix([0,1]^d)$ with $\|f\|_{H^\alpha_\mix([0,1]^d)} \le 1$ that has error bounded from below by $c^\mix_{d,\alpha} \, n^{-\alpha} \, (\log_2 n)^{(d-1)/2}$ with $c^\mix_{d,\alpha}$ given by
\begin{align*}
  \numberthis{eq:def-c-mix-d-alpha}
  c^\mix_{d,\alpha}
  &:=
  \frac{I_d(\phi_{d,\alpha})}{2^{2\alpha+1} \, 3^{d/2} \, C_\alpha^{d/2} \, (\alpha + 1)^{d/2} \, \sqrt{(d-1)!}}
  \\
  &\hphantom{:}=
  \frac{((\alpha+1)!)^d \, (2\alpha+3)^{d/2}}{2^{2\alpha+1} \, 3^{d/2} \, (\alpha+1)^{d/2} \, ((2\alpha+3)!)^d \, \sqrt{(d-1)!}}
  ,
\end{align*}
where the function $\phi_{d,\alpha} = \phi_\alpha^{\otimes d}$ is defined by~\eqref{eq:phidalpha} and~\eqref{eq:phialpha} and $C_\alpha$ is defined by~\eqref{eq:Calpha}.
Hence,
\begin{align*}
  \inf_{Q_{d,n} \in \frakQ_{d,n}} \sup_{\substack{f \in H^\alpha_\mix([0,1]^d) \\ \|f\|_{H^\alpha_\mix([0,1]^d)} \le 1}} |I_d(f) - Q_{d,n}(f)|
  &\ge
  c^\mix_{d,\alpha} \, n^{-\alpha} \, (\log_2 n)^{(d-1)/2}
  .
\end{align*}
The same lower bound holds with $H^{\alpha,\rmu}_\mix([0,1]^d)$ in place of $H^\alpha_\mix([0,1]^d)$,
\begin{align*}
  \inf_{Q_{d,n} \in \frakQ_{d,n}} \sup_{\substack{f \in H^{\alpha,\rmu}_\mix([0,1]^d) \\ \|f\|_{H^{\alpha,\rmu}_\mix([0,1]^d)} \le 1}} |I_d(f) - Q_{d,n}(f)|
  &\ge
  c^\mix_{d,\alpha} \, n^{-\alpha} \, (\log_2 n)^{(d-1)/2}
  .
\end{align*}
We also have $c^\mix_{d,\alpha} \, n^{-\alpha} \, (\log_2 n)^{(d-1)/2} \ge c^\mix_{d,\alpha} \, n^{-\alpha}$ as a simpler lower bound for both $H^\alpha_\mix([0,1]^d)$ and $H^{\alpha,\rmu}_\mix([0,1]^d)$.
\end{theorem}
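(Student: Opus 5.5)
Following the proof of Theorem~\ref{thm:Halpha-lower-bound}, we build a fooling set from scaled bumps, but now on anisotropic dyadic boxes (a hyperbolic cross) instead of $m^d$ equal cubes. This is where the factor $(\log_2 n)^{(d-1)/2}$ comes from. Let $r \in \N$ be minimal with $2^r \ge 2n$, so that $2^r < 4n$. For each level vector $\bsk \in \N_0^d$ with $\|\bsk\|_1 = r$ we partition $[0,1]^d$ into the $2^r$ boxes $\prod_j [a_j 2^{-k_j}, (a_j+1)2^{-k_j}]$. On each such box we place
\begin{align*}
  \phi_{\bsk,\bsa}(\bsx) := 2^{-\alpha r} \prod_{j=1}^d \phi_\alpha(2^{k_j} x_j - a_j).
\end{align*}
For a fixed $\bsk$ the boxes have disjoint interiors. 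The algorithm samples at most $n$ points, and each point lies in at most one box interior per $\bsk$ (derivatives vanish on box boundaries up to order $\alpha$, so boundary samples give no information). Hence for every $\bsk$ at least $2^r - n \ge 2^{r-1}$ boxes are unsampled. The number of admissible $\bsk$ is $N_r = \binom{r+d-1}{d-1} \ge r^{d-1}/(d-1)!$.

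We first run the deterministic, possibly adaptive, algorithm on $f^\fmarkone \equiv 0$ and record which boxes contain sample points. We then set $g := \sum_{\bsk}\sum_{\bsa \text{ unsampled}} \phi_{\bsk,\bsa}$ and $f^\fmarktwo := \pm g/\|g\|$. All bumps are nonnegative, so no cancellation occurs in the integral. Both $f^{\fmarktwo}$ and $-f^{\fmarktwo}$ give the same information as $0$: zero values and zero derivatives of order $\le \alpha-1$ at every sample point. By~\eqref{eq:triangle-fp-fpp} the error is at least $I_d(g)/\|g\|_{H^\alpha_\mix}$. The integral is straightforward:
\begin{align*}
  I_d(g) \ge N_r \, 2^{r-1} \, 2^{-\alpha r} \, 2^{-r} \, I_d(\phi_{d,\alpha}) = \tfrac12 N_r \, 2^{-\alpha r} \, I_d(\phi_{d,\alpha}).
\end{align*}

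The main obstacle is the norm bound, where we need $\|g\|^2 \lesssim C^d N_r \, 2^{-2\alpha r}$ and not $N_r^2$. We plan to expand $\|D^\bstau g\|_{L_2}^2$ as a double sum over pairs $(\bsk,\bsa),(\bsk',\bsa')$. It factorises over coordinates into one-dimensional integrals $\int 2^{(k_j+k'_j)\tau_j}\phi_\alpha^{(\tau_j)}(2^{k_j}x-a_j)\,\phi_\alpha^{(\tau_j)}(2^{k'_j}x-a'_j)\rd x$. Bounding the absolute values and summing over the translates $a'_j$ of the finer bump for fixed $a_j$ gives a one-dimensional factor of order $2^{\tau_j(k_j+k'_j)}\,2^{-\max(k_j,k'_j)}\,\|\phi^{(\tau_j)}_\alpha\|_\infty\|\phi^{(\tau_j)}_\alpha\|_{L_1}$. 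With the prefactor $2^{-2\alpha r}$ and $\tau_j\le\alpha$, each coordinate then contributes at most $C_\alpha\, 2^{-\alpha|k_j-k'_j|}$, up to the factor $2^{-\min}$ absorbed by the number of boxes. Summing over $\bstau \in\{0,\ldots,\alpha\}^d$ produces the factor $(\alpha+1)^d$.

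Summing over $\bsk'$ for fixed $\bsk$ gives $\sum_{\bsk'}\prod_j 2^{-\alpha|k_j-k'_j|}\le (\sum_{\ell\in\Z}2^{-\alpha|\ell|})^d\le 3^d$. Therefore
\begin{align*}
  \|g\|^2 \le 3^d C_\alpha^d (\alpha+1)^d \, N_r \, 2^{-2\alpha r}.
\end{align*}
Here $C_\alpha$ collects the $\phi_\alpha$ integrals, which should match~\eqref{eq:Calpha}. Dividing, the error is at least
\begin{align*}
  \frac{I_d(\phi_{d,\alpha})\sqrt{N_r}\,2^{-\alpha r}}{2\,(3C_\alpha(\alpha+1))^{d/2}}.
\end{align*}
Finally we insert $2^{-\alpha r}> (4n)^{-\alpha}=2^{-2\alpha}n^{-\alpha}$ and $\sqrt{N_r}\ge r^{(d-1)/2}/\sqrt{(d-1)!}\ge(\log_2 n)^{(d-1)/2}/\sqrt{(d-1)!}$, using $r\ge \log_2(2n)$. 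This yields $c^\mix_{d,\alpha}$.

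For $H^{\alpha,\rmu}_\mix$, Lemma~\ref{lem:H-alpha-mix-weighted-equivalent-norms} gives $\|g\|_{H^{\alpha,\rmu}_\mix}\le\|g\|_{H^\alpha_\mix}$. The same fooling functions then give at least the same lower bound. The simpler bound follows because $\log_2 n\ge1$ for $n\ge2$.
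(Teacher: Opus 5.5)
The gap is in the step you yourself flag as the main obstacle. Your construction matches the paper: the same bumps $2^{-\alpha r}\prod_j\phi_\alpha(2^{k_j}x_j-a_j)$ on dyadic boxes with $\|\bsk\|_1=r=1+\lceil\log_2 n\rceil$. The counting of unsampled boxes, the integral bound and the final arithmetic also match. Running the algorithm on the zero function and using $\pm g/\|g\|$ is a fine variant of the paper's $c^\fmarkone/c^\fmarktwo$ sign flip.

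\textbf{Why absolute values fail.} The decay in $|k_j-k_j'|$ cannot come from bounding absolute values. Take a coordinate with $k_j\le k_j'$. A single integral $\int|\phi_\alpha^{(\tau_j)}(2^{k_j}x-a_j)|\,|\phi_\alpha^{(\tau_j)}(2^{k_j'}x-a_j')|\rd x$ is of order $2^{-\max\{k_j,k_j'\}}$. However, the number of overlapping pairs $(a_j,a_j')$ is $2^{\max\{k_j,k_j'\}}$, since each of the $2^{\min}$ coarse intervals contains $2^{|k_j-k_j'|}$ fine ones. Your accounting effectively uses only $2^{\min}$ pairs. After summing over both translates, the coordinate therefore contributes $2^{-(\alpha-\tau_j)(k_j+k_j')}\,\|\phi_\alpha^{(\tau_j)}\|_{L_\infty([0,1])}\|\phi_\alpha^{(\tau_j)}\|_{L_1([0,1])}$. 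For $\tau_j<\alpha$ this decays like $2^{-|k_j-k_j'|}$, but for $\tau_j=\alpha$ it is of order one. The loss is genuine and not a matter of cruder constants: the sum of absolute values equals the integral of a product of two periodised copies of $|\phi_\alpha^{(\alpha)}|$ at scales $2^{-k_j}$ and $2^{-k_j'}$. That integral tends to $\|\phi_\alpha^{(\alpha)}\|_{L_1([0,1])}^2>0$, not to zero. So for $\bstau=(\alpha,\ldots,\alpha)$ your estimate is of order $N_r^2$. This is the paper's first, Minkowski-type, estimate, and it yields only $c\,n^{-\alpha}$ without the factor $(\log_2 n)^{(d-1)/2}$.

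\textbf{The missing idea: cancellation.} This is what Lemma~\ref{lem:phialpha-derivative-inner-product} provides. For $1\le\tau\le\alpha$, $\phi_\alpha^{(\tau)}$ has zero mean on $[0,1]$, because $\phi_\alpha^{(\tau-1)}$ vanishes at both endpoints; this is why the exponent $\alpha+1$ is used. In the inner product you may therefore subtract from the coarse factor its value at the left endpoint of the fine interval. The coarse factor $x\mapsto\phi_\alpha^{(\tau)}(2^{k_j}x-a_j)$ is Lipschitz with constant $2^{k_j}\|\phi_\alpha^{(\tau+1)}\|_{L_\infty([0,1])}$, so on a fine interval of length $2^{-k_j'}$ it varies by at most $2^{-|k_j-k_j'|}\|\phi_\alpha^{(\tau+1)}\|_{L_\infty([0,1])}$. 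This gives $|\langle\cdot,\cdot\rangle|\le C_\alpha\,2^{-\max\{k_j,k_j'\}}2^{-|k_j-k_j'|}$ per coordinate, before summing over pairs. Only after this cancellation do the $2^{\max}$ overlapping pairs leave $C_\alpha 2^{-|k_j-k_j'|}$. For $\tau_j=0$ the prefactor $2^{-\alpha(k_j+k_j')}\le 2^{-|k_j-k_j'|}$ already suffices.

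\textbf{A bookkeeping slip.} Your displayed bound $\|g\|^2\le 3^dC_\alpha^d(\alpha+1)^dN_r\,2^{-2\alpha r}$, and the target in your obstacle sentence, carry a spurious factor $2^{-2\alpha r}$. The top-order derivatives $D^{(\alpha,\ldots,\alpha)}\phi_{\bsk,\bsa}=\prod_j\phi_\alpha^{(\alpha)}(2^{k_j}x_j-a_j)$ are not scaled down. Dividing $I_d(g)$ by your stated bound would give a lower bound independent of $n$. The correct target, which the paper proves, is $\|g\|^2\le 3^dC_\alpha^d(\alpha+1)^dN_r$. Your final expression for the error is consistent with that corrected target.
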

\begin{proof}
We follow a similar strategy as in the proof of Theorem~\ref{thm:Halpha-lower-bound}.
There we constructed a function that we could control on any of the $m^d$ subcubes for a well chosen~$m$, here we use a superposition of functions indexed by $\bsv \in \N_0^d$ with $\|\bsv\|_1 = t$ for some well chosen~$t$.
Each such partition consists of $2^t$ boxes which will be independent, but over the partitions there can be overlap.
We choose $t$ so that $2^t / 4 < n \le 2^t / 2$, namely
\begin{align}\label{eq:def-t}
  t
  =
  t(n)
  &:=
  1 + \lceil \log_2 n \rceil
  .
\end{align}

Define, for $\bsv \in \N_0^d$ and $\bsa \in \Z_{2^{v_1}} \oplus \cdots \oplus \Z_{2^{v_d}}$,
\begin{align*}
  \psi_{d,\alpha,\bsv,\bsa}(\bsx)
  &:=
  \prod_{j=1}^d 2^{-v_j \alpha} \, \phi_\alpha(2^{v_j} x_j - a_j)
  ,
\end{align*}
where $\phi_\alpha$ is defined by~\eqref{eq:phialpha} in the proof of Theorem~\ref{thm:Halpha-lower-bound}.
This mimics the functions $\phi_{d,\alpha,m,\bsa}$ defined in~\eqref{eq:phi-d-alpha-m-bsa} and we have that $\psi_{d,\alpha,\bsv,\bsa}$ is non-zero only inside a particular dyadic box.
A specific $\bsv \in \N_0^d$ will partition the unit cube $[0, 1]^d$ in $\prod_{j=1}^d 2^{v_j} = 2^{v_1+\cdots+v_d} = 2^{\|\bsv\|_1}$ pieces.
We restrict $\bsv$ to have $\|\bsv\|_1 = v_1 + \cdots + v_d = t$.
The scaling with $\prod_{j=1}^d 2^{-v_j \alpha}$ is such that for any $\bstau \in \N_0^d$ with $\|\bstau\|_\infty \le \alpha$ we have
\begin{align*}
  |D^\bstau \psi_{d,\alpha,\bsv,\bsa}(\bsx)|
  =
  \Big| \prod_{j=1}^d 2^{-v_j \alpha} \, 2^{v_j \tau_j} \, \phi^{(\tau_j)}_\alpha(2^{v_j} x_j - a_j) \Big|
  \le
  \Big| \prod_{j=1}^d \, \phi^{(\tau_j)}_\alpha(2^{v_j} x_j - a_j) \Big|
  .
\end{align*}

With the same purpose as the function $f_{d,\alpha,m,\bsc}$ in the proof of Theorem~\ref{thm:Halpha-lower-bound} we now define a function
\begin{align*}
  g_{d,\alpha,t,\bsc}(\bsx)
  :=
  \sum_{\substack{\bsv \in \N_0^d \\ \|\bsv\|_1 = t}}
  \sum_{\substack{a_1 \in \Z_{2^{v_1}} \\ \cdots \\ a_d \in \Z_{2^{v_d}}}}
  c_{\bsv,\bsa} \, \psi_{d,\alpha,\bsv,\bsa}(\bsx)
  ,
\end{align*}
where the coefficient vector $\bsc$ is the collection of all the $c_{\bsv,\bsa}$.
Note that the sum over $\bsa$ is a particular partition of the unit cube and so inside the first sum, when $\bsv$ is fixed, we have a non-overlapping superposition of functions supported on $2^{\|\bsv\|_1} = 2^t$ smaller boxes.
As in the proof of Theorem~\ref{thm:Halpha-lower-bound}, each $\psi_{d,\alpha,\bsv,\bsa}$ and all the mixed derivatives needed here vanish on the boundary of its box.
Hence each glued function for fixed $\bsv$, and therefore their finite sum $g_{d,\alpha,t,\bsc}$, belongs to $H^\alpha_\mix([0,1]^d)$.
See Figure~\ref{fig:dyadic-partition} for all five partitions for $d = 2$ and $t = 4$.

\ifdefined\galleryview
\else
  \begin{figure}[htb]
  \centering
\fi
\tikzsetnextfilename{figure_dyadic_partitions}
\begin{tikzpicture}[x=3.9cm,y=3.9cm,font=\small]
  \foreach \level/\posx/\posy in {0/0/1.4,1/1.28/1.4,2/2.56/1.4,3/0/0,4/1.28/0} {
    \begin{scope}[shift={(\posx,\posy)}]
      \pgfmathtruncatemacro{\otherlevel}{4-\level}
      \pgfmathtruncatemacro{\columns}{2^\level}
      \pgfmathtruncatemacro{\rows}{2^\otherlevel}
      \pgfmathtruncatemacro{\lastcolumn}{\columns-1}
      \pgfmathtruncatemacro{\lastrow}{\rows-1}
      \def\cellcolour{softgray}
      \ifnum\level=1
        \def\cellcolour{softorange}
      \fi
      \ifnum\level=3
        \def\cellcolour{softblue}
      \fi
      \fill[\cellcolour] (0,0) rectangle (1,1);
      \ifcase\level
        \def\occupiedcells{}
      \or
        \def\occupiedcells{0/0,1/1,0/2,1/3,0/4,1/5,0/6,1/7}
      \or
        \def\occupiedcells{0/0,0/1,1/2,1/3,2/0,2/1,3/2,3/3}
      \or
        \def\occupiedcells{0/0,1/0,2/1,3/1,4/0,5/0,6/1,7/1}
      \or
        \def\occupiedcells{}
      \fi
      \ifx\occupiedcells\empty
      \else
        \foreach \cellx/\celly in \occupiedcells {
          \fill[white] ({\cellx/\columns},{\celly/\rows})
            rectangle ({(\cellx+1)/\columns},{(\celly+1)/\rows});
        }
      \fi
      \ifnum\columns>1
        \foreach \k in {1,...,\lastcolumn} {
          \draw[gray!70] ({\k/\columns},0) -- ({\k/\columns},1);
        }
      \fi
      \ifnum\rows>1
        \foreach \k in {1,...,\lastrow} {
          \draw[gray!70] (0,{\k/\rows}) -- (1,{\k/\rows});
        }
      \fi
      \ifnum\level=1
        \draw[deeporange,very thick] (0,0.625) rectangle (0.5,0.75);
        \node[deeporange!60!black] at (0.25,0.6875) {$J'$};
        \node[anchor=base] at (0.5,1.15) {\strut $\bsv = (1,3)$};
      \else
        \node[anchor=base] at (0.5,1.15) {\strut $\bsv = (\level,\otherlevel)$};
      \fi
      \ifnum\level=3
        \draw[deepblue,very thick] (0.125,0.5) rectangle (0.25,1);
        \node[deepblue!60!black] at (0.1875,0.75) {$J$};
      \fi
      \draw[very thick] (0,0) rectangle (1,1);
      \pic {construction samples};
      \node[below left=2pt] at (0,0) {$0$};
      \node[below=2pt] at (1,0) {$1$};
      \node[left=2pt] at (0,1) {$1$};
    \end{scope}
  }

  \begin{scope}[shift={(2.56,0)}]
    \fill[softblue] (0.125,0.5) rectangle (0.25,1);
    \fill[softorange] (0,0.625) rectangle (0.5,0.75);
    \fill[softgreen] (0.125,0.625) rectangle (0.25,0.75);
    \draw[deepblue,very thick] (0.125,0.5) rectangle (0.25,1);
    \draw[deeporange,very thick] (0,0.625) rectangle (0.5,0.75);
    \draw[deepgreen,thick] (0.125,0.625) rectangle (0.25,0.75);
    \node[deepblue!60!black] at (0.1875,0.875) {$J$};
    \node[deeporange!60!black] at (0.375,0.6875) {$J'$};
    \draw[very thick] (0,0) rectangle (1,1);
    \pic {construction samples};
    \node[anchor=base] at (0.5,1.15) {\strut Selected boxes};
    \node[below left=2pt] at (0,0) {$0$};
    \node[below=2pt] at (1,0) {$1$};
    \node[left=2pt] at (0,1) {$1$};
    \node[deepgreen!60!black,anchor=west,inner sep=1pt] (overlaplabel) at (0.40,0.52) {$J \cap J'$};
    \draw[infoarrow,deepgreen] ([yshift=3pt]overlaplabel.west) -- (0.1875,0.6875);
  \end{scope}
  \node at (1.78,-0.27) {$d = 2,\quad t = 4,\quad n = 8,\quad 2^t = 16$ boxes in each partition};
\end{tikzpicture}
\ifdefined\galleryview
\else
\caption{Example of dyadic partitions for the functions $g_{d,\alpha,t,\bsc}$.
In each of the boxes of each partition we have $\psi_{d,\alpha,\bsv,\bsa}$ with its coefficient.
In a particular partition these functions do not overlap, but in the superposition they can overlap, as illustrated in the final panel for two unsampled boxes $J$ and~$J'$.}
\label{fig:dyadic-partition}
\end{figure}
\fi

We can express its integral in terms of the integral of $\phi_{d,\alpha}$ as follows
\begin{align*}
  \int_{[0,1]^d} g_{d,\alpha,t,\bsc}(\bsx) \rd\bsx
  &=
  \sum_{\substack{\bsv \in \N_0^d \\ \|\bsv\|_1 = t}}
  \int_{[0,1]^d}
  \sum_{\substack{a_1 \in \Z_{2^{v_1}} \\ \cdots \\ a_d \in \Z_{2^{v_d}}}}\journalonly{\mkern-12mu}
  c_{\bsv,\bsa} \,
  \psi_{d,\alpha,\bsv,\bsa}(\bsx)
  \rd{\bsx}
  \\
  &=
  \sum_{\substack{\bsv \in \N_0^d \\ \|\bsv\|_1 = t}}
  \sum_{\substack{a_1 \in \Z_{2^{v_1}} \\ \cdots \\ a_d \in \Z_{2^{v_d}}}}\journalonly{\mkern-12mu}
  c_{\bsv,\bsa}
  \prod_{j=1}^d \int_{a_j/2^{v_j}}^{(a_j+1)/2^{v_j}} 2^{-v_j \alpha} \, \phi_\alpha(2^{v_j} x_j - a_j) \rd{x_j}
  \\
  &=
  \sum_{\substack{\bsv \in \N_0^d \\ \|\bsv\|_1 = t}}
  \sum_{\substack{a_1 \in \Z_{2^{v_1}} \\ \cdots \\ a_d \in \Z_{2^{v_d}}}}\journalonly{\mkern-12mu}
  c_{\bsv,\bsa}
  \prod_{j=1}^d \int_0^1 2^{-v_j \alpha} \, 2^{-v_j} \, \phi_\alpha(x_j) \rd{x_j}
  \\
  &=
  I_d(\phi_{d,\alpha})
  \sum_{\substack{\bsv \in \N_0^d \\ \|\bsv\|_1 = t}}
  2^{-\|\bsv\|_1 \alpha} \; 2^{-\|\bsv\|_1}
  \sum_{\substack{a_1 \in \Z_{2^{v_1}} \\ \cdots \\ a_d \in \Z_{2^{v_d}}}}\journalonly{\mkern-12mu}
  c_{\bsv,\bsa}
  \\
  \numberthis{eq:integral-g}
  &=
  2^{-t \alpha} \, 2^{-t} \, I_d(\phi_{d,\alpha})
  \sum_{\substack{\bsv \in \N_0^d \\ \|\bsv\|_1 = t}}
  \sum_{\substack{a_1 \in \Z_{2^{v_1}} \\ \cdots \\ a_d \in \Z_{2^{v_d}}}}\journalonly{\mkern-12mu}
  c_{\bsv,\bsa}
  .
\end{align*}
Note that $|\{ \bsv \in \N_0^d : \|\bsv\|_1 = t \}| = \binom{t + d - 1}{d - 1}$ and the sum over $\bsa \in \Z_{2^{v_1}} \oplus \cdots \oplus \Z_{2^{v_d}}$ has $2^t$ terms.

We now want to estimate the norm $\|g_{d,\alpha,t,\bsc}\|_{H^\alpha_\mix([0,1]^d)}$.
We will show two ways of estimating this norm.
The first one is the easier bound and uses the Minkowski inequality, but gives a slightly inferior result where we would obtain a factor $(\log_2 n)^{d-1}$ in the norm bound instead of $(\log_2 n)^{(d-1)/2}$.
After that we show a more technically involved bound which will give the claimed $(\log_2 n)^{(d-1)/2}$ factor in the lower bound.
The first estimate uses the Minkowski inequality $\|\sum_i f_i\|_{L_p} \le \sum_i \|f_i\|_{L_p}$ for $1 \le p \le \infty$, which gives
\begin{align*}
  \| g_{d,\alpha,t,\bsc} \|_{H^\alpha_\mix([0,1]^d)}^2
  &=
  \sum_{\bstau \in \{0,\ldots,\alpha\}^d}
  \| D^\bstau g_{d,\alpha,t,\bsc} \|_{L_2([0,1]^d)}^2
  \\
  &=
  \sum_{\bstau \in \{0,\ldots,\alpha\}^d}
  \Bigg\|
  \sum_{\substack{\bsv \in \N_0^d \\ \|\bsv\|_1 = t}}
  \sum_{\substack{a_1 \in \Z_{2^{v_1}} \\ \cdots \\ a_d \in \Z_{2^{v_d}}}}
  c_{\bsv,\bsa} \, D^\bstau \psi_{d,\alpha,\bsv,\bsa}
  \Bigg\|_{L_2([0,1]^d)}^2
  \\
  \numberthis{eq:Halphamix-norm-bound}
  &\le
  \binom{t+d-1}{d-1}^{\!2} \, \|\bsc\|_\infty^2 \,
  \| \phi_{d,\alpha} \|_{H^\alpha_\mix([0,1]^d)}^2
  ,
\end{align*}
where $\|\bsc\|_\infty$ is taken over all coefficients
$c_{\bsv,\bsa}$.
We now show the second estimate.
We use the forthcoming Lemma~\ref{lem:phialpha-derivative-inner-product} to estimate the $L_2$ inner products
\begin{align*}
  \big| \langle
    2^{-v \, (\alpha-\tau)} \,
    \phi_\alpha^{(\tau)}(2^v \cdot - a)
    ,
    2^{-v' \, (\alpha-\tau)} \,
    \phi_\alpha^{(\tau)}(2^{v'} \cdot - a')
    \rangle \big|
  &\le
  C_\alpha \,
  2^{-\max\{v, v'\}} \,
  2^{-|v - v'|}
  ,
\end{align*}
and zero if the supports $J$ and $J'$ of $\phi_\alpha(2^v \cdot - a)$ and $\phi_\alpha(2^{v'} \cdot - a')$ do not overlap, with $C_\alpha$ given by~\eqref{eq:Calpha}.
Then, using the notation of $J$ and $J'$ for those supports,
\begin{align*}
  &\| g_{d,\alpha,t,\bsc} \|_{H^\alpha_\mix([0,1]^d)}^2
  =
  \sum_{\bstau \in \{0,\ldots,\alpha\}^d}
  \| D^\bstau g_{d,\alpha,t,\bsc} \|_{L_2([0,1]^d)}^2
  \\
  &\quad=
  \sum_{\bstau \in \{0,\ldots,\alpha\}^d}
  \Bigg\|
  \sum_{\substack{\bsv \in \N_0^d \\ \|\bsv\|_1 = t}}
  \sum_{\substack{a_1 \in \Z_{2^{v_1}} \\ \cdots \\ a_d \in \Z_{2^{v_d}}}}
  c_{\bsv,\bsa} \, D^\bstau \psi_{d,\alpha,\bsv,\bsa}
  \Bigg\|_{L_2([0,1]^d)}^2
  \\
  &\quad=
  \sum_{\bstau \in \{0,\ldots,\alpha\}^d}
  \sum_{\substack{\bsv, \bsv' \in \N_0^d \\ \|\bsv\|_1 = \|\bsv'\|_1 = t}}
  \sum_{\substack{\bsa \in \oplus_{j=1}^d \Z_{2^{v_j}} \\ \bsa' \in \oplus_{j=1}^d \Z_{2^{v'_j}}}}
  c_{\bsv,\bsa} \,
  c_{\bsv',\bsa'}
  \\*
  &\hspace{\ifjournalstyle 1cm\else 4cm\fi}
  \prod_{j=1}^d
  \langle
  2^{-v_j \, (\alpha-\tau_j)} \,
  \phi_\alpha^{(\tau_j)}(2^{v_j} \cdot - a_j)
  ,
  2^{-v'_j \, (\alpha-\tau_j)} \,
  \phi_\alpha^{(\tau_j)}(2^{v_j'} \cdot - a_j')
  \rangle
  \\
  &\quad\le
  \|\bsc\|_\infty^2
  \sum_{\bstau \in \{0,\ldots,\alpha\}^d}
  \sum_{\substack{\bsv, \bsv' \in \N_0^d \\ \|\bsv\|_1 = \|\bsv'\|_1 = t}}
  \sum_{\substack{\bsa \in \oplus_{j=1}^d \Z_{2^{v_j}} \\ \bsa' \in \oplus_{j=1}^d \Z_{2^{v'_j}} \\ J \cap J' \ne \emptyset}}
  \prod_{j=1}^d
  C_\alpha \,
  2^{-\max\{v_j, v_j'\}} \,
  2^{-|v_j - v_j'|}
  \\
  &\quad\le
  C_\alpha^d \,
  \|\bsc\|_\infty^2 \,
  (\alpha + 1)^d \,
  \sum_{\substack{\bsv, \bsv' \in \N_0^d \\ \|\bsv\|_1 = \|\bsv'\|_1 = t}}
  2^{-\|\bsv - \bsv'\|_1}
  \\
  &\quad=
  C_\alpha^d \,
  \|\bsc\|_\infty^2 \,
  (\alpha + 1)^d \,
  \sum_{\substack{\bsv \in \N_0^d \\ \|\bsv\|_1 = t}}
  \;
  \sum_{\substack{\bsk \in \Z^d \\ \sum_{j=1}^d k_j = 0 \\ \bsv - \bsk \in \N_0^d}}
  2^{-\|\bsk\|_1}
  \\
  &\quad\le
  C_\alpha^d \,
  \|\bsc\|_\infty^2 \,
  (\alpha + 1)^d \,
  \sum_{\substack{\bsv \in \N_0^d \\ \|\bsv\|_1 = t}}
  \sum_{\bsk \in \Z^d}
  2^{-\|\bsk\|_1}
  \\
  &\quad\le
  C_\alpha^d \,
  \|\bsc\|_\infty^2 \,
  (\alpha + 1)^d \,
  \binom{t + d - 1}{d - 1}
  \prod_{j=1}^d \Big( 1 + 2 \sum_{k \ge 1}
  2^{-k} \Big)
  \\
  \numberthis{eq:Halphamix-norm-bound2}
  &\quad=
  \underbrace{3^d \, C_\alpha^d \, (\alpha + 1)^d \, \binom{t + d - 1}{d - 1}}_{=: M_{d,\alpha,t}^2} \,
  \|\bsc\|_\infty^2
  ,
\end{align*}
where the norm $\|\bsc\|_\infty$ is taken over all coefficients~$c_{\bsv,\bsa}$ and we defined the square of $M_{d,\alpha,t}$ in the underbrace notation.
Note that~\eqref{eq:Halphamix-norm-bound2} now only has $\binom{t+d-1}{d-1}$ instead of its square as in~\eqref{eq:Halphamix-norm-bound}.

As before we now specify a subclass of functions in the unit ball of $H^\alpha_\mix([0,1]^d)$
\begin{align*}
  G
  &:=
  \big\{ g_{d,\alpha,t,\bsc} / M_{d,\alpha,t} \in H^\alpha_\mix([0,1]^d) : c_{\bsv,\bsa} \in \{\pm1\} \big\}
  .
\end{align*}
Run the algorithm on $g^\fmarkone = g_{d,\alpha,t,\bsc^\fmarkone} / M_{d,\alpha,t}$, where all $c^\fmarkone_{\bsv,\bsa} = +1$, and let $\bsx_1,\ldots,\bsx_n$ be the resulting, possibly adaptively chosen, sample points.
We define $g^\fmarktwo = g_{d,\alpha,t,\bsc^\fmarktwo} / M_{d,\alpha,t}$ by setting $c^\fmarktwo_{\bsv,\bsa} = +1$ for every open dyadic box $(\bsv, \bsa)$ containing at least one sample point and $c^\fmarktwo_{\bsv,\bsa} = -1$ for all other boxes.
The functions $g^\fmarkone$ and $g^\fmarktwo$, together with all mixed derivatives of coordinatewise order at most~$\alpha$, agree at every sample point, so the algorithm obtains the same information and produces the same output for both functions.
For each fixed $\bsv$, at most $n$ of its $2^t$ dyadic boxes contain a sample point.

We obtain
\begin{align*}
  I_d(g^\fmarkone) - I_d(g^\fmarktwo)
  &=
  2^{-t \alpha} \, 2^{-t} \, \frac{I_d(\phi_{d,\alpha})}{M_{d,\alpha,t}}
  \sum_{\substack{\bsv \in \N_0^d \\ \|\bsv\|_1 = t}}
  \sum_{\substack{a_1 \in \Z_{2^{v_1}} \\ \cdots \\ a_d \in \Z_{2^{v_d}} \\ c^\fmarktwo_{\bsv,\bsa} = -1}}
  2
  .
\end{align*}
Hence
\begin{align*}
  \sup_{\substack{f \in H^\alpha_\mix([0,1]^d) \\ \|f\|_{H^\alpha_\mix([0,1]^d)} \le 1}} |I_d(f) - Q_{d,n}(f)|
  &\ge
  \frac12
  \frac{2^{-t\alpha} \, 2^{-t} \, I_d(\phi_{d,\alpha}) \, 2 \, (2^t - n) \, \binom{t+d-1}{d-1}}{3^{d/2} \, C_\alpha^{d/2} \, (\alpha + 1)^{d/2} \, \sqrt{\binom{t+d-1}{d-1}}}
  \\
  &\ge
  c^\mix_{d,\alpha} \, n^{-\alpha} \, (\log_2 n)^{(d-1)/2}
  ,
\end{align*}
with
\begin{align*}
  c^\mix_{d,\alpha}
  &=
  \frac{I_d(\phi_{d,\alpha})}{2^{2\alpha+1} \, 3^{d/2} \, C_\alpha^{d/2} \, (\alpha + 1)^{d/2} \, \sqrt{(d-1)!}}
  .
\end{align*}
This follows since \eqref{eq:def-t} implies $n \le 2^t / 2$ and hence $2^t - n \ge 2^t / 2$; \eqref{eq:def-t}~also implies $2^t < 4 n$, and therefore $2^{-t \alpha} > 2^{-2 \alpha} \, n^{-\alpha}$.
Finally,
\begin{align*}
  \binom{t+d-1}{d-1}
  &=
  \frac{(t+1) \cdots (t+d-1)}{(d-1)!}
  \ge
  \frac{t^{d-1}}{(d-1)!}
  \ge
  \frac{(\log_2 n)^{d-1}}{(d-1)!}
  .
\end{align*}
This proves the claimed lower bound for $H^\alpha_\mix([0,1]^d)$.
The embedding~\eqref{eq:H-alpha-gamma-mix-equivalence-unanchored} with $\bsgamma \equiv \bsone$, i.e., $\|f\|_{H^{\alpha,\rmu}_\mix([0,1]^d)} \le \|f\|_{H^\alpha_\mix([0,1]^d)}$, shows that the unit ball of $H^\alpha_\mix([0,1]^d)$ is contained in the unit ball of $H^{\alpha,\rmu}_\mix([0,1]^d)$, so the same lower bound holds for the latter.
\end{proof}


\paragraph{The curse of dimensionality for $H^\alpha([0,1]^d)$ and $H^\alpha_\mix([0,1]^d)$.}
The lower bounds in Theorems~\ref{thm:Halpha-lower-bound} and~\ref{thm:Hmixalpha-lower-bound} do not establish the curse of dimensionality.
Indeed, a general lower bound of the form
\begin{align*}
  \inf_{Q_{d,n} \in \frakQ_{d,n}} e(Q_{d,n}, F_{d,\alpha})
  &\ge
  c(F_{d,\alpha}) \, n^{-r(F_{d,\alpha})}
  ,
\end{align*}
only implies
\begin{align*}
  n(\varepsilon, F_{d,\alpha})
  &\ge
  \left(\frac{c(F_{d,\alpha})}{\varepsilon}\right)^{1/r(F_{d,\alpha})}
  .
\end{align*}
Although they give the dependence on $n$ for every fixed $d$, the constant $c_{d,\alpha}^\mix$ tends to zero superexponentially as $d \to \infty$.
Consequently, this bound does not imply exponential information complexity in $d$ at any fixed positive accuracy.
A separate lower-bound argument is needed which we give in the next result.
We state this result for $H^\alpha([0,1]^d)$, $H^\alpha_\mix([0,1]^d)$ and $H^{\alpha,\rmu}_\mix([0,1]^d)$.
The anchored space $H^{\alpha,\bsa}_\mix([0,1]^d)$ also suffers from the curse of dimensionality but we will not show that here, see, e.g.,~\cite{NW2010,NP2025}.
For the space $H^\alpha([0,1]^d)$ we act as if we can take $d \to \infty$, but we need to remind the reader that bounded point evaluation requires $\alpha > d/2$.
Since we do not want to send $\alpha$ to infinity the result needs to be read as holding for functions in $H^\alpha([0,1]^d) \cap C([0,1]^d)$ with the norm from $H^\alpha([0,1]^d)$.
This makes point evaluation well defined, but does not guarantee a finite worst-case error, the following lower-bound argument however remains valid.

\begin{theorem}[{Curse of dimensionality for $H^\alpha([0,1]^d)$, $H^\alpha_\mix([0,1]^d)$ and $H^{\alpha,\rmu}_\mix([0,1]^d)$}]\label{thm:curse}
Let $d, \alpha, n \in \N$, and consider cubature rules of the form
\begin{align*}
  Q_{d,n}(f)
  &=
  \sum_{i=1}^n w_i \, f(\bsx_i)
  ,
  \qquad
  w_i \in \R
  .
\end{align*}
Define $\eta_\alpha := 2 / (\sum_{\tau=0}^{\alpha} (2\pi)^{2\tau}) \in (0, 1)$, then, for $n \le (1 + \eta_\alpha)^d$,
\begin{align}\label{eq:curse-H-alpha-mix}
  \sup_{\substack{f \in H^\alpha_\mix([0,1]^d) \\ \|f\|_{H^\alpha_\mix([0,1]^d)} \le 1}}
  |I_d(f) - Q_{d,n}(f)|
  &\ge
  \left(1 - \frac{n}{(1 + \eta_\alpha)^d}\right)^{\!1/2}
  .
\end{align}
Consequently, reducing the error to at most some $\varepsilon \in (0, 1)$
requires
\begin{align*}
  n
  &\ge
  (1 - \varepsilon^2) \, (1 + \eta_\alpha)^d
  ,
\end{align*}
which grows exponentially with $d$.
The same conclusion holds for $H^\alpha([0,1]^d)$ and $H^{\alpha,\rmu}_\mix([0,1]^d)$ by the constant-one embeddings $H^\alpha_\mix([0,1]^d) \hookrightarrow H^\alpha([0,1]^d)$ and $H^\alpha_\mix([0,1]^d) \hookrightarrow H^{\alpha,\rmu}_\mix([0,1]^d)$, see~\eqref{eq:H-alpha-gamma-mix-equivalence-unanchored} with $\bsgamma \equiv \bsone$,
\begin{align*}
  \sup_{\substack{f \in H^\alpha([0,1]^d) \\ \|f\|_{H^\alpha([0,1]^d)} \le 1}}
  |I_d(f) - Q_{d,n}(f)|
  \ge
  \sup_{\substack{f \in H^\alpha_\mix([0,1]^d) \\ \|f\|_{H^\alpha_\mix([0,1]^d)} \le 1}}
  |I_d(f) - Q_{d,n}(f)|
  ,
\end{align*}
and
\begin{align*}
  \sup_{\substack{f \in H^{\alpha,\rmu}_\mix([0,1]^d) \\ \|f\|_{H^{\alpha,\rmu}_\mix([0,1]^d)} \le 1}}
  |I_d(f) - Q_{d,n}(f)|
  \ge
  \sup_{\substack{f \in H^\alpha_\mix([0,1]^d) \\ \|f\|_{H^\alpha_\mix([0,1]^d)} \le 1}}
  |I_d(f) - Q_{d,n}(f)|
  .
\end{align*}
\end{theorem}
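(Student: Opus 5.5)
The plan is to restrict to a finite-dimensional subspace on which everything is explicit and the point-evaluation functionals are represented by a kernel with a simple product form. Consider the $3^d$ functions
\begin{align*}
  e_{\setu,\bss}(\bsx) := \prod_{j\in\setu} \sigma_j(x_j),
  \qquad \sigma_j \in \{\cos(2\pi\,\cdot), \sin(2\pi\,\cdot)\},
\end{align*}
where $\bss$ records the choice of $\sigma_j$ for each $j \in \setu$, and let $V$ be their span inside $H^\alpha_\mix([0,1]^d)$. Since the norm~\eqref{eq:norm-Halpha-mix} is a sum of $L_2$-norms of mixed derivatives and $D^\tau\cos(2\pi\cdot)$, $D^\tau\sin(2\pi\cdot)$ are again $\pm(2\pi)^\tau$ times $\cos$ or $\sin$, these functions are mutually orthogonal in $H^\alpha_\mix([0,1]^d)$. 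Their squared norms are
\begin{align*}
  \Big(\tfrac12\sum_{\tau=0}^\alpha(2\pi)^{2\tau}\Big)^{|\setu|} = \eta_\alpha^{-|\setu|},
\end{align*}
which is exactly why $\eta_\alpha$ appears in the statement. With the orthonormal basis $\eta_\alpha^{|\setu|/2}e_{\setu,\bss}$, the reproducing kernel of $V$ becomes
\begin{align*}
  K(\bsx,\bsy)=\prod_{j=1}^d\big(1+\eta_\alpha\cos(2\pi(x_j-y_j))\big),
  \qquad\text{so}\qquad K(\bsx,\bsx)=(1+\eta_\alpha)^d .
\end{align*}
Moreover the integral functional on $V$ is represented by the constant function $1$, which has norm $1$.

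Because $Q_{d,n}$ is linear, restricting the supremum in~\eqref{eq:curse-H-alpha-mix} to the unit ball of $V$ gives
\begin{align*}
  e(Q_{d,n}) \ge \Big\|1-\sum_i w_i K(\cdot,\bsx_i)\Big\|_V \ge \|1-P1\|_V=(1-\|P1\|_V^2)^{1/2},
\end{align*}
where $P$ is the orthogonal projection onto $\operatorname{span}\{K(\cdot,\bsx_i)\}$ and $\|1\|_V=1$. So the whole theorem reduces to the estimate
\begin{align*}
  \|P1\|_V^2=\sup_{\bsc}\frac{(\sum_i c_i)^2}{\bsc^\top G\bsc}\le\frac{n}{(1+\eta_\alpha)^d},
  \qquad G_{ik}=K(\bsx_i,\bsx_k),
\end{align*}
where I used $\langle 1,K(\cdot,\bsx_i)\rangle_V=1$. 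Once this is in hand, the consequence $n\ge(1-\varepsilon^2)(1+\eta_\alpha)^d$ follows by rearranging $e\le\varepsilon$. The statements for $H^\alpha([0,1]^d)$ and $H^{\alpha,\rmu}_\mix([0,1]^d)$ then follow from the norm-one embeddings quoted in the theorem: the unit ball of $H^\alpha_\mix([0,1]^d)$ is contained in each of the other unit balls.

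The main obstacle is the Gram estimate $\bsc^\top G\bsc\ge(1+\eta_\alpha)^d(\sum_ic_i)^2/n$. This is the point where the structure of $K$ has to be used, since $G$ need not be well conditioned. The diagonal of $G$ is constant, and $K\ge0$ because $\eta_\alpha<1$. My first attempt would be to expand $K$ in the complex exponentials $\exp(2\pi\rmi\,\bsk\cdot\bsx)$ with $\bsk\in\{-1,0,1\}^d$ and nonnegative weights $(\eta_\alpha/2)^{|\supp\bsk|}$, which sum to $(1+\eta_\alpha)^d$. Writing $\bsc^\top G\bsc=\sum_{\bsk}(\eta_\alpha/2)^{|\supp\bsk|}|\sum_ic_i\exp(2\pi\rmi\,\bsk\cdot\bsx_i)|^2$, I would try to compare this with $(\sum_ic_i)^2$, the $\bsk=\bszero$ term.

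If that direct comparison fails, the fallback is to decompose $K$ as a sum of tensor-product pieces with disjoint supports in each coordinate, in the style of the decomposable-kernel arguments of Novak and Wo\'zniakowski~\cite{NW2010}. The aim there is to show that the $n$ nodes can only "see" a fraction $n/(1+\eta_\alpha)^d$ of the total mass of the kernel. I expect this decomposition step to be where most of the technical work lies.
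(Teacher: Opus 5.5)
\paragraph{Verdict: there is a gap at the key estimate.} Your reduction is correct and is in substance the paper's own. Restricted to $V$, the paper's fooling function $h_{d,n}$ is exactly the representer $1-\sum_i w_i K(\cdot,\bsx_i)$ of the error functional, and $\|h_{d,n}\|^2 = 1-2s+\bsw^\top G\,\bsw$ with $s=\sum_i w_i$. Your projection step is the same minimisation over the weights. Everything therefore rests on the inequality $\bsc^\top G\,\bsc \ge (1+\eta_\alpha)^d(\sum_i c_i)^2/n$, which is Lemma~\ref{lem:double-sum}, and this is precisely the step you leave open.

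\paragraph{Why neither of your routes closes it.}
\begin{itemize}
\item \emph{The exponential expansion.} Expanding $K$ in $\exp(2\pi\rmi\,\bsk\cdot\bsx)$ with weights $(\eta/2)^{|\supp(\bsk)|}$ gives nonnegative weights for every $\eta>0$. But the target inequality is false for $\eta>1$. Take $d=1$, $n=2$, $x_1=0$, $x_2=1/2$ and $\bsc=(1,1)$. Then $\bsc^\top G\,\bsc = 2(1+\eta)+2(1-\eta)=4$, which is smaller than $2(1+\eta)$. So any comparison that only uses positivity of the Fourier weights cannot succeed; the $\bsk=\bszero$ term alone only gives the useless factor $1$. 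The condition $\eta\le 1$ has to enter essentially.
\item \emph{The structural facts you list.} Constant diagonal together with $K\ge0$ is not enough either. The matrix with rows $(1,0,2^{-1/2})$, $(0,1,2^{-1/2})$, $(2^{-1/2},2^{-1/2},1)$ is positive semidefinite, entrywise nonnegative and has unit diagonal. Yet it annihilates $\bsc=(1,1,-\sqrt2)$, whose coordinate sum is nonzero.
\item \emph{The decomposable-kernel fallback.} This is unavailable for your kernel. The factor $1+\eta\cos(2\pi(x-y))$ has no nonzero decomposable part, because the RKHS of each piece would consist of trigonometric polynomials vanishing on an interval, hence would be zero.
\end{itemize}

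\paragraph{The missing ingredient.} What you need is the Schur-product argument of Lemma~\ref{lem:double-sum}.
\begin{itemize}
\item Since $\eta=\eta_\alpha<1$, write $1+\eta\cos(2\theta)=(1-\eta)+2\eta\cos^2\theta$ in each coordinate and expand. This gives $G=\sum_{\setu\subseteq\{1,\ldots,d\}}(1-\eta)^{d-|\setu|}(2\eta)^{|\setu|}\,C_\setu\circ C_\setu$, where $(C_\setu)_{i,i'}=\prod_{j\in\setu}\cos(\pi(x_{i,j}-x_{i',j}))$ and all coefficients are nonnegative.
\item Each $C_\setu$ is a real positive semidefinite matrix with unit diagonal, by $\cos(a-b)=\cos a\cos b+\sin a\sin b$ and the Schur product theorem.
\item For any such $C=ZZ^\top$ with $Z\in\R^{n\times n}$, drop the off-diagonal terms, apply Cauchy--Schwarz, and use $\sum_r Z_{i,r}^2=1$:
\[
\bsc^\top(C\circ C)\,\bsc=\sum_{r,s=1}^n\Big(\sum_{i=1}^n c_i Z_{i,r}Z_{i,s}\Big)^2\ge\sum_{r=1}^n\Big(\sum_{i=1}^n c_iZ_{i,r}^2\Big)^2\ge\frac1n\Big(\sum_{i=1}^n c_i\Big)^2 .
\]
\item Summing with the binomial weights gives the factor $(1-\eta+2\eta)^d=(1+\eta)^d$. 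Hence $\|P1\|_V^2\le n/(1+\eta_\alpha)^d$, and the rest of your argument then goes through unchanged.
\end{itemize}
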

\begin{proof}
For the proof set $\eta = \eta_\alpha$.
Consider the three univariate functions
\begin{align*}
  e_0(x)
  &:=
  1
  ,
  &
  e_1(x)
  &:=
  \sqrt{\eta} \, \cos(2\pi x)
  ,
  &
  e_2(x)
  &:=
  \sqrt{\eta} \, \sin(2\pi x)
  .
\end{align*}
They are orthonormal with respect to the univariate Sobolev inner product
\begin{align*}
  \langle f, g \rangle_{H^\alpha([0,1])}
  &:=
  \sum_{r=0}^{\alpha}
  \int_0^1 f^{(r)}(x) \, \overline{g^{(r)}(x)} \rd{x}
  .
\end{align*}
(Note that this is for $d=1$ hence we just wrote $H^\alpha$.)
Furthermore, $I_1(e_0) = 1$ and $I_1(e_1) = I_1(e_2) = 0$.
The $d$-variate functions
\begin{align*}
  e_{\bsk}(\bsx)
  &:=
  \prod_{j=1}^d e_{k_j}(x_j)
  ,
  \qquad
  \bsk \in \{0,1,2\}^d
  ,
\end{align*}
form an orthonormal family in $H^\alpha_\mix([0,1]^d)$.
Only $e_\bszero = 1$ has a non-zero integral, being $I_d(e_\bszero) = 1$.

Define the error functional $E_{d,n} := I_d - Q_{d,n}$ and consider the function
\begin{align}\label{eq:def-cubature-rule-fooling-function}
  h_{d,n}(\bsx)
  &:=
  \sum_{\bsk \in \{0,1,2\}^d} E_{d,n}(e_\bsk) \, e_\bsk(\bsx)
  .
\end{align}
By linearity of $E_{d,n}$,
\begin{align*}
  E_{d,n}(h_{d,n})
  &=
  \sum_{\bsk \in \{0,1,2\}^d}
  E_{d,n}(e_\bsk) \, E_{d,n}(e_\bsk)
  =
  \sum_{\bsk \in \{0,1,2\}^d} (E_{d,n}(e_\bsk))^2
  .
\end{align*}
It turns out the squared norm is also equal to this expression
\begin{align*}
  \|h_{d,n}\|_{H^\alpha_\mix([0,1]^d)}^2
  &=
  \langle h_{d,n}, h_{d,n} \rangle_{H^\alpha_\mix([0,1]^d)}
  \\
  &=
  \sum_{\bsk \in \{0,1,2\}^d}
  \sum_{\bsk' \in \{0,1,2\}^d}
  E_{d,n}(e_\bsk) \, E_{d,n}(e_{\bsk'}) \,
  \langle e_\bsk, e_{\bsk'} \rangle_{H^\alpha_\mix([0,1]^d)}
  \\
  &=
  \sum_{\bsk \in \{0,1,2\}^d}
  (E_{d,n}(e_\bsk))^2
  ,
\end{align*}
where the last equality follows from $\langle e_\bsk, e_{\bsk'} \rangle_{H^\alpha_\mix([0,1]^d)} = \bbone\{\bsk = \bsk'\}$.
If $h_{d,n} \ne 0$ then $h_{d,n} / \|h_{d,n}\|_{H^\alpha_\mix([0,1]^d)}$ is in the unit ball and has integration error $\|h_{d,n}\|_{H^\alpha_\mix([0,1]^d)}$.
If $h_{d,n} = 0$ the resulting lower bound below is trivially valid.
Thus, in either case,
\begin{align*}
  \sup_{\substack{f \in H^\alpha_\mix([0,1]^d) \\ \|f\|_{H^\alpha_\mix([0,1]^d)} \le 1}} |I_d(f) - Q_{d,n}(f)|
  &=
  \sup_{\substack{f \in H^\alpha_\mix([0,1]^d) \\ \|f\|_{H^\alpha_\mix([0,1]^d)} \le 1}} |E_{d,n}(f)|
  \\
  &\ge
  \|h_{d,n}\|_{H^\alpha_\mix([0,1]^d)}
  =
  \sqrt{E_{d,n}(h_{d,n})}
  .
\end{align*}

We find, using $I_d(e_\bsk) = \bbone\{\bsk = \bszero\}$ and with $s := \sum_{i=1}^n w_i$,
\begin{align*}
  \journalonly{\MoveEqLeft[1]} E_{d,n}(h_{d,n})
  \journalonly{\\*}&=
  \sum_{\bsk \in \{0,1,2\}^d}
  (E_{d,n}(e_\bsk))^2
  \\
  &=
  \sum_{\bsk \in \{0,1,2\}^d}
  (I_d(e_\bsk) - Q_{d,n}(e_\bsk))^2
  \\
  &=
  \sum_{\bsk \in \{0,1,2\}^d}
  \left((I_d(e_\bsk))^2 - 2 \, I_d(e_\bsk) \, Q_{d,n}(e_\bsk) + (Q_{d,n}(e_\bsk))^2\right)
  \\
  &=
  1 - 2 \sum_{i=1}^n w_i
  +
  \sum_{i,i'=1}^n \articleonly{\!} w_i \, w_{i'} \sum_{\bsk \in \{0,1,2\}^d} e_\bsk(\bsx_i) \, e_\bsk(\bsx_{i'})
  \\
  &=
  1 - 2 s
  +
  \sum_{i,i'=1}^n \articleonly{\!} w_i \, w_{i'} \sum_{\bsk \in \{0,1,2\}^d} \prod_{j=1}^d e_{k_j}(x_{i,j}) \, e_{k_j}(x_{i',j})
  \\
  &=
  1 - 2 s
  +
  \sum_{i,i'=1}^n \articleonly{\!} w_i \, w_{i'} \prod_{j=1}^d \sum_{k_j \in \{0, 1, 2\}} e_{k_j}(x_{i,j}) \, e_{k_j}(x_{i',j})
  \\
  &=
  1 - 2 s
  +
  \sum_{i,i'=1}^n \articleonly{\!} w_i \, w_{i'} \prod_{j=1}^d \Big(1 + \eta \, \cos(2\pi x_{i,j}) \, \cos(2\pi x_{i',j})
  \\*&\hphantom{{}= 1 - 2 s + \sum_{i,i'=1}^n \articleonly{\!} w_i \, w_{i'} \prod_{j=1}^d \Big(1}
  + \eta \, \sin(2\pi x_{i,j}) \, \sin(2\pi x_{i',j}) \Big)
  \\
  &=
  1 - 2 s
  +
  \sum_{i,i'=1}^n \articleonly{\!} w_i \, w_{i'} \prod_{j=1}^d \left(1 + \eta \, \cos(2\pi (x_{i,j} - x_{i',j})) \right)
  \\
  &\ge
  1 - 2 s + \frac{(1 + \eta)^d}{n} \, s^2
  \\
  &\ge
  1 - \frac{n}{(1 + \eta)^d}
  ,
\end{align*}
where the inequalities follow from Lemma~\ref{lem:double-sum} and minimising the quadratic.
This proves the lower bound for $H^\alpha_\mix([0,1]^d)$.
In Section~\ref{sec:rkhs} we will introduce the topic of reproducing kernel Hilbert spaces.
It is interesting to note that the expression above is exactly the expression for the squared worst-case error in a reproducing kernel Hilbert space with reproducing kernel $K(\bsx, \bsy) = \sum_{\bsk \in \{0,1,2\}^d} e_\bsk(\bsx) \, e_\bsk(\bsy) = \prod_{j=1}^d (1 + \eta \, \cos(2\pi(x_j-y_j)))$ for $\operatorname{span}\{ e_\bsk : \bsk \in \{0,1,2\}^d \}$.
The worst-case error on this subspace then forms a lower bound for the worst-case error on the full space.

The right-hand side is positive as long as $n < (1 + \eta)^d$, which implies that then $h_{d,n}$ is not the zero function since its norm is positive.
Furthermore, if we want to reduce the squared error to be at most $0 < \varepsilon^2 \le \varepsilon_0^2 < 1$ then we need $n \ge (1 - \varepsilon^2) \, (1 + \eta)^d \ge (1 - \varepsilon_0^2) \, (1 + \eta)^d$ function values.
Since $\eta = \eta(\alpha) > 0$ this is exponential in~$d$.
We have the curse of dimensionality~\eqref{eq:curse} with $c = 1 - \varepsilon_0^2$.

Finally, $\|f\|_{H^\alpha([0,1]^d)} \le \|f\|_{H^\alpha_\mix([0,1]^d)}$, the unit ball of $H^\alpha_\mix([0,1]^d)$ is therefore contained in the unit ball of $H^\alpha([0,1]^d)$, so the same lower bound holds for the latter.
Similarly, the unit ball of $H^\alpha_\mix([0,1]^d)$ is contained in the unit ball of $H^{\alpha,\rmu}_\mix([0,1]^d)$, see~\eqref{eq:H-alpha-gamma-mix-equivalence-unanchored}.
\end{proof}

The next theorem shows that it does not matter if you have an adaptive algorithm and/or a nonlinear algorithm, you will still have the curse of dimensionality in the unweighted spaces $H^\alpha([0,1]^d)$, $H^\alpha_\mix([0,1]^d)$ and $H^{\alpha,\rmu}_\mix([0,1]^d)$.
In the proof we will use a result on optimal weights, Theorem~\ref{thm:RKHS-optimal-fixed-nodes}, that we will prove later in Section~\ref{sec:rkhs} on reproducing kernel Hilbert spaces.
Alternatively we refer to~\cite[Theorem~4.7]{NW2008} which states this in more generality.

\begin{theorem}[{Extension of the curse for deterministic nonlinear and adaptive cubature for $H^\alpha([0,1]^d)$, $H^\alpha_\mix([0,1]^d)$ and $H^{\alpha,\rmu}_\mix([0,1]^d)$}]\label{thm:curse-nonlinear-adaptive}
Let $\alpha \in \N$ and $F_d$ be a normed function space on $[0,1]^d$ such that $H^\alpha_\mix([0,1]^d) \hookrightarrow F_d$ with an embedding constant $C > 0$ independent of $d$, i.e., $\|f\|_{F_d} \le C \, \|f\|_{H^\alpha_\mix([0,1]^d)}$ for all $f \in H^\alpha_\mix([0,1]^d)$, and let $Q_{d,n}$ be an arbitrary deterministic linear or nonlinear cubature algorithm that uses at most $n$ function values, where the sample points may be chosen adaptively based on previously observed function values.
Then, with $\eta_\alpha$ as in Theorem~\textup{\ref{thm:curse}}, for $n \le (1 + \eta_\alpha)^d$,
\begin{align*}
  \sup_{\substack{f \in F_d \\ \|f\|_{F_d} \le 1}}
  |I_d(f) - Q_{d,n}(f)|
  &\ge
  \frac1C
  \left(1 - \frac{n}{(1 + \eta_\alpha)^d}\right)^{\!1/2}
  .
\end{align*}
For $H^\alpha([0,1]^d)$, $H^\alpha_\mix([0,1]^d)$ and $H^{\alpha,\rmu}_\mix([0,1]^d)$ we can take $C = 1$ and so all suffer from the curse of dimensionality even if we allow deterministic nonlinear and/or adaptive cubature algorithms.
\end{theorem}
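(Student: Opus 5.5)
We reduce the nonlinear/adaptive case to the linear nonadaptive case of Theorem~\ref{thm:curse} by restricting to the finite-dimensional subspace
\[
  V_d := \operatorname{span}\{ e_\bsk : \bsk \in \{0,1,2\}^d \}
\]
from the proof of that theorem. On $V_d$ the $H^\alpha_\mix([0,1]^d)$ norm is Hilbertian with orthonormal basis $\{e_\bsk\}$, and the reproducing kernel is
\[
  K(\bsx,\bsy) = \prod_{j=1}^d \bigl(1+\eta_\alpha\cos(2\pi(x_j-y_j))\bigr).
\]
By the embedding $\|f\|_{F_d} \le C\|f\|_{H^\alpha_\mix([0,1]^d)}$, the set $\{f \in V_d : \|f\|_{H^\alpha_\mix} \le 1/C\}$ lies in the unit ball of $F_d$. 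It therefore suffices to show that every deterministic (possibly adaptive, nonlinear) algorithm using $n$ values has worst-case error at least $\bigl(1-n/(1+\eta_\alpha)^d\bigr)^{1/2}$ on the unit ball $B$ of $V_d$. The factor $1/C$ then follows by scaling, since $\sup_{\|f\|_{F_d}\le 1}\ge \sup_{f\in B/C}$ and the sup over $B/C$ of $|I_d(f)-Q(f)|$ is at least $(1/C)$ times the radius of information computed for $B$.

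The key step is the adaptive-to-nonadaptive reduction via the zero function. Run the algorithm $Q_{d,n}$ on $f=0$. This fixes nodes $\bsx_1,\ldots,\bsx_m$ with $m\le n$, all chosen adaptively but deterministically from observed zeros, and an output $q$. Every $f\in B$ with $f(\bsx_i)=0$ for all $i$ produces the same information as $0$ and hence the same output $q$. The set $Z=\{f\in B : f(\bsx_i)=0 \ \forall i\}$ is symmetric, so $-f\in Z$ whenever $f\in Z$. Applying \eqref{eq:triangle-fp-fpp} to the pair $f,-f$ gives error at least $\sup_{f\in Z}|I_d(f)|$.

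It remains to bound $\sup_{f\in Z} I_d(f)$ from below. In this Hilbert space setting, $\sup_{f\in Z}I_d(f)$ equals the norm of the projection of the representer of $I_d$ (which is $e_\bszero=1$) onto the orthogonal complement of $\operatorname{span}\{K(\cdot,\bsx_i)\}$. That in turn equals the minimal worst-case error over linear rules $\sum_i w_i f(\bsx_i)$ with these fixed nodes; this is the content of Theorem~\ref{thm:RKHS-optimal-fixed-nodes} on optimal weights. The computation in the proof of Theorem~\ref{thm:curse}, via Lemma~\ref{lem:double-sum}, shows that for any weights the squared worst-case error on $V_d$ is at least
\[
  1-2s+\frac{(1+\eta_\alpha)^d}{m}\,s^2 \ \ge\ 1-\frac{m}{(1+\eta_\alpha)^d} \ \ge\ 1-\frac{n}{(1+\eta_\alpha)^d}.
\]
Combining the three steps gives the claim.

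The main obstacle is the middle step: making rigorous that adaptivity gives nothing, because the fooling function must share the information that the algorithm itself chose. Running the algorithm on $0$ and using the symmetric, linear zero-information set resolves this cleanly. The identification of $\sup_{f\in Z}I_d(f)$ with the optimal linear error is quoted from Theorem~\ref{thm:RKHS-optimal-fixed-nodes}. For the final sentence of the theorem, $C=1$ holds for all three spaces by the constant-one embeddings used in Theorem~\ref{thm:curse}.
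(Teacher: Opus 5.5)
Your proof is correct and follows the paper's argument. You run the algorithm on the zero function, use the symmetric pair $\pm f$ vanishing at the sampled nodes, identify $\sup|I_d(f)|$ over that set with the optimal fixed-node linear rule via Theorem~\ref{thm:RKHS-optimal-fixed-nodes}, and finish with Lemma~\ref{lem:double-sum} exactly as in Theorem~\ref{thm:curse}. The only difference is that you restrict to the finite-dimensional subspace $V_d$ (with kernel $\prod_{j=1}^d(1+\eta_\alpha\cos(2\pi(x_j-y_j)))$) before invoking the optimal-weights theorem, whereas the paper applies that theorem to all of $H^\alpha_\mix([0,1]^d)$ and lets Theorem~\ref{thm:curse} carry out the restriction.
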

\begin{proof}
For any such algorithm $Q_{d,n}$, run $Q_{d,n}$ on the zero function and denote the resulting fixed set of nodes by $X^0 = \{\bsx_1^0, \ldots, \bsx_m^0\}$ with $m \le n$.
Set $q := Q_{d,n}(0)$.
Take any $f \in H^\alpha_\mix([0,1]^d)$ with $\|f\|_{H^\alpha_\mix([0,1]^d)} \le 1$ and $f(\bsx_i^0) = 0$ for all $i = 1, \ldots, m$.
The norm inequality $\|f\|_{F_d} \le C \, \|f\|_{H^\alpha_\mix([0,1]^d)}$ implies that both $f/C$ and $-f/C$ belong to the unit ball of~$F_d$.
For both functions we have $Q_{d,n}(f/C) = Q_{d,n}(-f/C) = q$.
Applying~\eqref{eq:triangle-fp-fpp} with $f^\fmarkone = f/C$ and $f^\fmarktwo = -f/C$ gives
\begin{align*}
  e(Q_{d,n}, F_d)
  &\ge
  \max\{|I_d(f/C) - q|, |I_d(-f/C) - q|\}
  \ge
  \frac1C \, |I_d(f)|
  .
\end{align*}
Taking the supremum over all such $f$ gives
\begin{align*}
  e(Q_{d,n}, F_d)
  &\ge
  \frac1C \,
  \sup_{\substack{f \in H^\alpha_\mix([0,1]^d) \\ \|f\|_{H^\alpha_\mix([0,1]^d)} \le 1 \\ f(\bsx_i^0)=0,\ i=1,\ldots,m}}
  |I_d(f)|
  =
  \frac1C \, e(Q^{X^0,\bsw^*}_{d,m},H^\alpha_\mix([0,1]^d))
  ,
\end{align*}
where $Q^{X^0,\bsw^*}_{d,m}$ is a nonadaptive linear algorithm using the nodes in $X^0$ and using optimal weights $\bsw^*$ in the sense of Theorem~\ref{thm:RKHS-optimal-fixed-nodes} in Section~\ref{sec:rkhs} on reproducing kernel Hilbert spaces.
(The space $H^\alpha_\mix([0,1]^d)$ for $\alpha \in \N$ is a reproducing kernel Hilbert space, see also Section~\ref{sec:rkhs}.)
Alternatively, the existence of such an optimal nonadaptive linear rule follows from~\cite[Theorem~4.7]{NW2008}, since the Sobolev unit ball is convex and symmetric.
Thus, even if $Q_{d,n}$ is nonlinear and adaptive, its worst-case error on $F_d$ is at least $1/C$ times that of the fixed nonadaptive linear rule on $H^\alpha_\mix([0,1]^d)$.
We can now invoke Theorem~\ref{thm:curse} applied to $Q^{X^0,\bsw^*}_{d,m}$ with $m \le n$.
\end{proof}

\paragraph{Final thoughts on unweighted spaces and transfer to periodic spaces.}
We finish by stating that $n^{-\alpha}$ is the optimal algebraic rate in one dimension.
Mixed smoothness cannot improve this power of~$n$ in higher dimensions.
A recent paper with lower bounds is~\cite{NP2025}.
This includes lower bounds for different variations of anchored function spaces.

Putting more restrictions on functions or increasing the norm reduces the ``size'' of the unit ball.
One such reduction of the unit ball is by means of the weighted norms that we introduced in Section~\ref{sec:weighted-spaces} and we will prove upper bounds in Section~\ref{sec:upper-bound-lattice-rules} to indeed show that we can get rid of the curse of dimensionality under appropriate summability conditions on these weights.
In Section~\ref{sec:upper-bound-lattice-rules} we do this for periodic functions since the topic of these lecture notes is lattice rules, but also weighted non-periodic function spaces are able to break the curse of dimensionality under appropriate summability conditions on the weights.

One could wonder if just switching to unweighted periodic function spaces, which is an extra condition on the functions, not just a rescaling of the norms, could also get rid of the curse?
Unfortunately, unweighted periodic spaces also suffer the curse of dimensionality.
This is actually immediate by looking at the proofs of Theorem~\ref{thm:Hmixalpha-lower-bound} and Theorem~\ref{thm:curse}.
The functions $g_{d,\alpha,t,\bsc} \in H^\alpha_\mix([0,1]^d)$ we construct for the lower bound in Theorem~\ref{thm:Hmixalpha-lower-bound}, and all their derivatives through the required order, agree at the opposite faces of the unit cube, since they vanish there.
Hence $g_{d,\alpha,t,\bsc} \in \widetilde{H}^\alpha_\mix([0,1]^d) \subset H^\alpha_\mix([0,1]^d)$.
The same is true for the function $h_{d,n}$ in Theorem~\ref{thm:curse}, we have $h_{d,n} \in \widetilde{H}^\alpha_\mix([0,1]^d) \subset H^\alpha_\mix([0,1]^d)$.
We will formally state this later in Theorem~\ref{thm:periodic-lower-bound-curse}.

\paragraph{Two technical lemmas.}
We now show two lemmas that were used in the proofs of Theorems~\ref{thm:Hmixalpha-lower-bound} and~\ref{thm:curse}.
The following lemma was used in the proof of Theorem~\ref{thm:Hmixalpha-lower-bound}.

\begin{lemma}\label{lem:phialpha-derivative-inner-product}
Let $\alpha \in \N$, $v, v' \in \N_0$, $a \in \Z_{2^v}$, and $a' \in \Z_{2^{v'}}$, $\tau \in \{0, \ldots, \alpha\}$ and let $\phi_\alpha$ be defined by~\eqref{eq:phialpha}.
Define the dyadic open intervals $J := (a/2^v, (a+1)/2^v)$ and $J' := (a'/2^{v'}, (a'+1)/2^{v'})$ such that they are the support of $\phi_\alpha^{(\tau)}(2^v x - a)$ and $\phi_\alpha^{(\tau)}(2^{v'} x - a')$.
Then
\begin{align*}
  \numberthis{eq:phialpha-derivative-inner-product}
  \journalonly{\MoveEqLeft[1]} \left|
  \int_0^1
  2^{-v \, (\alpha-\tau)} \, \phi_\alpha^{(\tau)}(2^v x - a) \,
  2^{-v' \, (\alpha-\tau)} \, \phi_\alpha^{(\tau)}(2^{v'} x - a')
  \rd{x}
  \right|
  \journalonly{\\* }&\le
  C_\alpha \,
  2^{-\max\{v, v'\}} \,
  2^{-|v - v'|}
  ,
\end{align*}
with
\begin{align}\label{eq:Calpha}
  C_\alpha
  &:=
  \frac{((\alpha+1)!)^2}{2\alpha+3}
  .
\end{align}
If the open intervals $J$ and $J'$ are disjoint, then the integral vanishes.
\end{lemma}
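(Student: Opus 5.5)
Without loss of generality I take $v \ge v'$; the integrand is symmetric under swapping $(v,a)$ and $(v',a')$. If the open intervals $J$ and $J'$ are disjoint, the two factors have disjoint supports, so the integral vanishes and there is nothing to prove. Otherwise, because dyadic intervals are nested, $J \subseteq J'$ and $|J| = 2^{-v}$. I then estimate the integral by pulling out the supremum of the coarse factor and integrating the fine factor in absolute value:
\begin{align*}
  \left|\int_J \cdots \rd{x}\right|
  &\le
  2^{-v(\alpha-\tau)} \, 2^{-v'(\alpha-\tau)} \,
  \|\phi_\alpha^{(\tau)}\|_{L_\infty} \,
  \int_J |\phi_\alpha^{(\tau)}(2^v x - a)| \rd{x}
  \\
  &=
  2^{-(v+v')(\alpha-\tau)} \, 2^{-v} \,
  \|\phi_\alpha^{(\tau)}\|_{L_\infty} \, \|\phi_\alpha^{(\tau)}\|_{L_1}
  .
\end{align*}
The target bound is $2^{-\max\{v,v'\}} \, 2^{-|v-v'|} = 2^{-v} \, 2^{-(v-v')}$. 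After cancelling $2^{-v}$, I still need to control the factor $2^{-(v+v')(\alpha-\tau)}$ by $2^{-(v-v')}$.

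When $\tau \le \alpha - 1$ this is immediate: $(v+v')(\alpha-\tau) \ge v+v' \ge v - v'$. The case $\tau = \alpha$ is the main obstacle. There the scaling factor equals $1$, so the decay $2^{-(v-v')}$ has to come from cancellation. My plan is to integrate by parts once, moving one derivative off the fine factor. Let $\Phi$ be the antiderivative of $\phi_\alpha^{(\alpha)}(2^v \cdot - a)$, namely $\Phi(x) = 2^{-v}\phi_\alpha^{(\alpha-1)}(2^v x - a)$. This function vanishes at both endpoints of $J$, since $\phi_\alpha$ has zeros of order $\alpha+1$ there. The integral therefore becomes $-2^{v'}\int_J \Phi(x)\,\phi_\alpha^{(\alpha+1)}(2^{v'}x - a')\rd{x}$. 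Note that $\phi_\alpha^{(\alpha+1)}$ is bounded: $\phi_\alpha$ is a polynomial of degree $2\alpha+2$, and the exponent $\alpha+1$ is exactly what makes this derivative (and the boundary terms) behave well. The result is bounded by
\[
  2^{v'} \, 2^{-v} \, 2^{-v} \,
  \|\phi_\alpha^{(\alpha-1)}\|_{L_1} \, \|\phi_\alpha^{(\alpha+1)}\|_{L_\infty}
  = 2^{-v} \, 2^{-(v-v')} \cdot (\text{const}).
\]
I expect this to work because the $\alpha = 1$ case already handles $\phi_1^{(2)}$, which is a bounded polynomial. One subtlety is $v = v'$: then $J = J'$ and no decay is needed, and the plain bound suffices.

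It remains to show that the constants are at most $C_\alpha = ((\alpha+1)!)^2/(2\alpha+3)$. For this I use explicit bounds on the derivatives of $\phi_\alpha(x) = x^{\alpha+1}(1-x)^{\alpha+1}$. By the Leibniz rule, $|\phi_\alpha^{(k)}| \le \sum_i \binom{k}{i}\frac{(\alpha+1)!^2}{(\alpha+1-i)!(\alpha+1-k+i)!}$. For $k \le \alpha+1$ this is at most $(\alpha+1)!^2/(\alpha+1-k)!^2 \cdot 2^k/k!$-type quantities. Combined with $\|\phi_\alpha^{(j)}\|_{L_1} \le \|\phi_\alpha^{(j)}\|_{L_2}$ and Beta-integral evaluations such as $\int_0^1 x^{2\alpha+2}(1-x)^{2\alpha+2}\rd{x}$, this should fit under $C_\alpha$. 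I expect this constant-checking to be the fiddliest step. If the crude sup-times-$L_1$ route overshoots, I would instead apply Cauchy--Schwarz on $J$ with $L_2$ norms, which is where the $1/(2\alpha+3)$ from $\int x^{2\alpha+2}\rd{x}$ naturally appears.
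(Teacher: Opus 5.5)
Your exponent bookkeeping is correct and is a legitimate variant of the paper's argument. With $J\subseteq J'$, you let the factor $2^{-(v+v')(\alpha-\tau)}$ produce $2^{-|v-v'|}$ for every $\tau\le\alpha-1$, and you need cancellation only for $\tau=\alpha$. Your integration by parts supplies that cancellation: $\Phi$ vanishing at both ends of $J$ is the same zero-mean property of $\phi_\alpha^{(\alpha)}$ that the paper exploits by subtracting $\phi_\alpha^{(\tau)}(2^{-k}r)$ and applying a Lipschitz bound. The paper uses that cancellation for all $\tau\ge1$ and the scaling factor only for $\tau=0$. What remains in your route is to show $\|\phi_\alpha^{(\tau)}\|_{L_\infty}\|\phi_\alpha^{(\tau)}\|_{L_1}\le C_\alpha$ for $\tau\le\alpha-1$, and $\|\phi_\alpha^{(\alpha-1)}\|_{L_1}\|\phi_\alpha^{(\alpha+1)}\|_{L_\infty}\le C_\alpha$ for $\tau=\alpha$.

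That is where the gap is, and the explicit $C_\alpha$ is part of the statement. The Leibniz expansion with the triangle inequality gives exactly $\|\phi_\alpha^{(k)}\|_{L_\infty}\le(2\alpha+2)!/(2\alpha+2-k)!$. In particular $\|\phi_\alpha^{(\alpha+1)}\|_{L_\infty}\le(\alpha+1)!\binom{2\alpha+2}{\alpha+1}$, which ignores heavy sign cancellation and is exponentially larger than the true value $(\alpha+1)!$. Already for $\alpha=3$ your $\tau=\alpha$ constant becomes $1680\,\|\phi_3''\|_{L_1}\approx1680\cdot0.0595\approx100>C_3=64$. For $\alpha=4$ it is about $5160>C_4\approx1309$, and it only gets worse. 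Your Cauchy--Schwarz fallback cannot rescue this. On the short interval $J$ the coarse factor must be bounded in $L_\infty$, because bounding it by its $L_2(J')$ norm yields only $2^{-|v-v'|/2}$.

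The missing ingredient is the one in the paper. By Rodrigues' formula, $\phi_\alpha^{(\alpha+1)}(x)=(\alpha+1)!\,P_{\alpha+1}(1-2x)$ on $(0,1)$, so $|P_\ell|\le1$ gives $\|\phi_\alpha^{(\alpha+1)}\|_{L_\infty}\le(\alpha+1)!$. Integrating once gives $\phi_\alpha^{(\alpha)}(x)=\frac{(\alpha+1)!}{2(2\alpha+3)}\big(P_\alpha(1-2x)-P_{\alpha+2}(1-2x)\big)$, hence $\|\phi_\alpha^{(\alpha)}\|_{L_\infty}\le(\alpha+1)!/(2\alpha+3)$. Since $\phi_\alpha^{(\tau)}$ vanishes at $0$ and $1$ for $\tau\le\alpha$, the mean value theorem gives $\|\phi_\alpha^{(\tau)}\|_{L_\infty}\le\frac12\|\phi_\alpha^{(\tau+1)}\|_{L_\infty}$. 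With these bounds and $\|\cdot\|_{L_1}\le\|\cdot\|_{L_\infty}$, your $\tau\le\alpha-1$ constant is at most $\frac14((\alpha+1)!)^2/(2\alpha+3)^2\le C_\alpha$. Your $\tau=\alpha$ constant is at most $\frac12\|\phi_\alpha^{(\alpha)}\|_{L_\infty}\,(\alpha+1)!\le\frac12 C_\alpha$, and your proof closes.
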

\begin{proof}
From the definition~\eqref{eq:phialpha} of $\phi_\alpha$ we note that its derivatives up to order $\alpha$ vanish at the endpoints of its support; for the $\alpha$th derivative this is a consequence of us defining $\phi_\alpha$ with a power $\alpha+1$ instead of $\alpha$.
Thus, for our choice of $\phi_\alpha$,
\begin{align}\label{eq:phialpha-alpha-zero-mean}
  \int_0^1 \phi_\alpha^{(\tau)}(x) \rd{x}
  &=
  \phi_\alpha^{(\tau-1)}(x) \Big|_0^1
  =
  0
  ,
  \qquad
  \text{for } 1 \le \tau \le \alpha
  .
\end{align}
The $(\alpha+1)$st derivative can be written in terms of the Legendre polynomial of degree $\alpha+1$ as follows
\begin{align*}
  \phi_\alpha^{(\alpha+1)}(x)
  &=
  (\alpha+1)! \, P_{\alpha+1}(1-2x)
  ,
  \qquad
  0 < x < 1
  .
\end{align*}
Using properties of the Legendre polynomials we know that
\begin{align*}
  \phi_\alpha^{(\alpha)}(x)
  &=
  \frac{(\alpha+1)!}{2(2\alpha+3)}
  \left(P_\alpha(1-2x) - P_{\alpha+2}(1-2x)\right)
  .
\end{align*}
Further using $|P_\ell(x)| \le 1$ for $-1 \le x \le 1$ and $\ell \in \N_0$ we can conclude that
\begin{align}\label{eq:phialpha-Linfty-bounds}
  \|\phi_\alpha^{(\alpha)}\|_{L_\infty([0,1])}
  &\le
  \frac{(\alpha+1)!}{2\alpha+3}
  ,
  &
  \|\phi_\alpha^{(\alpha+1)}\|_{L_\infty([0,1])}
  &\le
  (\alpha+1)!
  ,
\end{align}
where the latter bound gives us a bound on the Lipschitz constant for~$\phi_\alpha^{(\alpha)}$ on~$(0, 1)$.
Since $\phi_\alpha^{(\tau)}$ vanishes at both endpoints for all $0 \le \tau \le \alpha$, the mean value theorem gives $\|\phi_\alpha^{(\tau)}\|_{L_\infty([0,1])} \le 2^{-1} \, \|\phi_\alpha^{(\tau+1)}\|_{L_\infty([0,1])}$, which together with the bound $\|\phi_\alpha^{(\alpha)}\|_{L_\infty([0,1])} \le (\alpha+1)!/(2\alpha+3)$ gives
\begin{align}\label{eq:phialpha-Linfty-bounds2}
  \|\phi_\alpha\|_{L_\infty([0,1])}
  \le
  2^{-1} \,
  \|\phi^{(1)}_\alpha\|_{L_\infty([0,1])}
  \le
  \cdots
  \le
  2^{-\alpha} \,
  \|\phi^{(\alpha)}_\alpha\|_{L_\infty([0,1])}
  \le
  2^{-\alpha} \, \frac{(\alpha+1)!}{2\alpha+3}
  .
\end{align}

If the interiors of the two dyadic intervals are disjoint, then the integral~\eqref{eq:phialpha-derivative-inner-product} vanishes, otherwise the intervals are nested.
The final panel of Figure~\ref{fig:dyadic-partition} illustrates this nesting that happens separately in each coordinate even though neither box contains the other.
First assume $1 \le \tau \le \alpha$.
By symmetry, suppose that $v' \ge v$, i.e., $J' \subseteq J$, put $k := v' - v$ and let $r \in \{0, \ldots, 2^k - 1\}$ be such that $a' = 2^k a + r$.
The substitution $y = 2^{v'} x - a' \in [0,1]$ gives
\begin{align*}
  \journalonly{\MoveEqLeft[1]} \int_0^1
  \phi_\alpha^{(\tau)}(2^{v'} x - a') \,
  \phi_\alpha^{(\tau)}(2^v x - a)
  \rd{x}
  \journalonly{\\*}&=
  2^{-v'}
  \int_0^1
  \phi_\alpha^{(\tau)}(y) \,
  \phi_\alpha^{(\tau)}(2^{-k} (r + y))
  \rd{y}
  \\
  &=
  2^{-v'}
  \int_0^1
  \phi_\alpha^{(\tau)}(y)
  \left[
    \phi_\alpha^{(\tau)}(2^{-k} (r + y))
    -
    \phi_\alpha^{(\tau)}(2^{-k} r)
  \right]
  \rd{y}
  ,
\end{align*}
where in the last equality we subtracted the constant $\phi_\alpha^{(\tau)}(2^{-k}r)$, which does not change the integral because $\phi_\alpha^{(\tau)}(y)$ has integral zero over $[0,1]$ by~\eqref{eq:phialpha-alpha-zero-mean} since we assumed $1 \le \tau \le \alpha$.
We now have
\begin{align*}
  \left|
    \phi_\alpha^{(\tau)}(2^{-k} (r + y))
    -
    \phi_\alpha^{(\tau)}(2^{-k} r)
  \right|
  &\le
  \|\phi_\alpha^{(\tau+1)}\|_{L_\infty([0,1])} \articleonly{\,} 2^{-k} \articleonly{\,} y
  \le
  2^{-k} \articleonly{\,} \|\phi_\alpha^{(\tau+1)}\|_{L_\infty([0,1])}
  .
\end{align*}
For $1 \le \tau < \alpha$, iterating~\eqref{eq:phialpha-Linfty-bounds2} bounds both $\|\phi_\alpha^{(\tau)}\|_{L_\infty([0,1])}$ and $\|\phi_\alpha^{(\tau+1)}\|_{L_\infty([0,1])}$ in terms of $\|\phi_\alpha^{(\alpha)}\|_{L_\infty([0,1])}$.
For $\tau = \alpha$, we instead use both bounds in~\eqref{eq:phialpha-Linfty-bounds}.
Thus, in either case,
\begin{align*}
  \journalonly{\MoveEqLeft[1]} \left|
  \int_0^1
  \phi_\alpha^{(\tau)}(2^{v'} x - a') \,
  \phi_\alpha^{(\tau)}(2^v x - a)
  \rd{x}
  \right|
  \journalonly{\\*}&\le
  2^{-\max\{v, v'\}} \, 2^{-|v - v'|} \,
  \|\phi^{(\tau)}_\alpha\|_{L_\infty([0,1])} \,
  \|\phi^{(\tau+1)}_\alpha\|_{L_\infty([0,1])}
  \\
  &\le
  \frac{((\alpha+1)!)^2}{2\alpha+3} \,
  2^{-\max\{v, v'\}} \, 2^{-|v - v'|} \,
  .
\end{align*}
The result for $1 \le \tau \le \alpha$ now follows by using $2^{-v (\alpha - \tau)} \le 1$, and similarly for~$v'$.

For $\tau = 0$, again assume $v' \ge v$, set $k := v' - v$, and let $r \in \{0, \ldots, 2^k - 1\}$ be such that $a' = 2^k a + r$.
We then have
\begin{align*}
  \left|
  \int_0^1
  \phi_\alpha(2^{v'} x - a') \,
  \phi_\alpha(2^v x - a)
  \rd{x}
  \right|
  &=
  2^{-v'}
  \left|
  \int_0^1
  \phi_\alpha(y) \,
  \phi_\alpha(2^{-k} (r + y))
  \rd{y}
  \right|
  \\
  &\le
  2^{-v'} \, \|\phi_\alpha\|_{L_\infty([0,1])}^2
  \\
  &\le
  C_\alpha \, 2^{-\max\{v, v'\}}
  .
\end{align*}
Here the last inequality follows from~\eqref{eq:phialpha-Linfty-bounds2} and the fact that clearly $2^{-2\alpha} / (2\alpha+3) \le 1$.
We finish the proof by noticing that $v + v' \ge |v - v'|$ and hence, for $\alpha \ge 1$, we have $2^{-v \, \alpha} \, 2^{-v' \, \alpha} \le 2^{-|v - v'|}$.
\end{proof}

The following lemma was used in the proof of Theorem~\ref{thm:curse}.

\begin{lemma}\label{lem:double-sum}
Let $d, n \in \N$, let $\eta \in (0,1)$, and let $\bsx_1, \ldots, \bsx_n \in [0,1]^d$ and $w_1, \ldots, w_n \in \R$.
Then
\begin{align}\label{eq:double-sum-lower-bound}
  \sum_{i,i'=1}^n w_i \, w_{i'}
  \prod_{j=1}^d \left(1 + \eta \, \cos(2\pi(x_{i,j} - x_{i',j}))\right)
  &\ge
  \frac{(1 + \eta)^d}{n}
  \left(\sum_{i=1}^n w_i\right)^2
  .
\end{align}
\end{lemma}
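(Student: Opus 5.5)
Write the kernel as $K(\bsx,\bsy) = \prod_{j=1}^d (1 + \eta \cos(2\pi(x_j-y_j)))$ and expand it as a finite Fourier series in $\bsx - \bsy$. Using $1 + \eta\cos(2\pi t) = 1 + \tfrac{\eta}{2} e^{2\pi\rmi t} + \tfrac{\eta}{2} e^{-2\pi\rmi t}$ and multiplying out the product gives
\begin{align*}
  K(\bsx,\bsy)
  &=
  \sum_{\bsh \in \{-1,0,1\}^d} \Big(\frac{\eta}{2}\Big)^{|\supp(\bsh)|} \exp(2\pi\rmi\,\bsh\cdot(\bsx-\bsy))
  .
\end{align*}
Every coefficient $\lambda_\bsh := (\eta/2)^{|\supp(\bsh)|}$ is positive. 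Substituting this into the double sum turns the left-hand side of~\eqref{eq:double-sum-lower-bound} into a sum of squares,
\begin{align*}
  S
  :=
  \sum_{\bsh \in \{-1,0,1\}^d} \lambda_\bsh \, \Big| \sum_{i=1}^n w_i \exp(2\pi\rmi\,\bsh\cdot\bsx_i) \Big|^2
  .
\end{align*}
This step is routine.

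The obstacle is that the claimed bound carries the constant $(1+\eta)^d/n$, which is larger than the trivial bound $S \ge \lambda_\bszero s^2 = s^2$ (keeping only $\bsh=\bszero$, with $s = \sum_i w_i$). The factor $(1+\eta)^d = K(\bsx,\bsx)$ is the sum of all the $\lambda_\bsh$, so the inequality is not a termwise statement. It says that the Gram matrix $G = [K(\bsx_i,\bsx_{i'})]_{i,i'}$ dominates $\tfrac{(1+\eta)^d}{n}\bsone\bsone^\top$.

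To handle this I would first attempt a Cauchy--Schwarz argument in the form $(\bsone^\top \bsw)^2 \le (\bsw^\top G \bsw)(\bsone^\top G^{-1}\bsone)$, which reduces the claim to $\bsone^\top G^{-1}\bsone \le n/(1+\eta)^d$. A positive definite matrix with constant diagonal $D = (1+\eta)^d$ need not satisfy $\bsone^\top G^{-1}\bsone \le n/D$, however. In the form stated, the inequality requires $\lambda_{\max}(G)$-type control in the opposite direction, and it can fail when off-diagonal entries are large: take all $\bsx_i$ equal, so $G = D\,\bsone\bsone^\top$ and $S = D s^2$, which is fine; but $G$ may also have entries below $D$ in a way that does not help. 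I would therefore verify the case $n=1$ (equality, $S = D w_1^2$) and the case of equal points, which suggests that the minimum of $S$ over configurations with $s$ fixed is attained when the points cluster.

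To make this rigorous I would bound $K(\bsx_i,\bsx_{i'}) \ge (1-\eta)^d$ and try to prove $\bsw^\top G\bsw \ge (D/n)s^2$ via Jensen's inequality~\eqref{eq:jensen2} applied in the Fourier representation. If that fails, I would fall back on the weaker but sufficient statement: it is enough to show the quadratic $1 - 2s + S$ is at least $1 - n/D$. This follows if one shows $S \ge s^2 D/n$ only for minimising weights. Writing optimal weights as $\bsw = G^{-1}\bsone$, it remains to show $\bsone^\top G^{-1}\bsone \le n/D$, which I would attempt via $\bsone^\top G^{-1}\bsone \le \operatorname{tr}(G^{-1})\ldots$, and I expect this to be the main obstacle.
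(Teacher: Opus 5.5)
\textbf{There is a genuine gap: the inequality itself is never proved.} Your Fourier rewriting of the left-hand side as a positive combination of squares $\bigl|\sum_i w_i\exp(2\pi\rmi\,\bsh\cdot\bsx_i)\bigr|^2$ over $\bsh\in\{-1,0,1\}^d$ is correct. After that, you end by calling the decisive step the main obstacle, and none of the routes you list closes it.
\begin{itemize}
\item The reduction to $\bsone^\top G^{-1}\bsone\le n/D$ is not a weaker statement. It is equivalent to the lemma, because the Cauchy--Schwarz bound you use is attained at $\bsw=G^{-1}\bsone$.
\item Your ``sufficient'' quadratic statement $1-2s+S\ge 1-n/D$ is not weaker either. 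Replace $\bsw$ by $t\bsw$ and require the quadratic in~$t$ to be nonnegative; this gives exactly the lemma.
\item Bounding by $\operatorname{tr}(G^{-1})$ cannot work, since $\operatorname{tr}(G^{-1})\ge n^2/\operatorname{tr}(G)=n/D$.
\item The entrywise bound $K\ge(1-\eta)^d$ tells you nothing for weights of mixed sign.
\item The clustering heuristic is backwards. Coinciding points give $S=Ds^2$, a factor $n$ above the bound. The bound is nearly attained by spread-out points with an almost diagonal Gram matrix, e.g.\ $d=1$, $\bsx_1=0$, $\bsx_2=\tfrac12$, $w_1=w_2=1$, where the two sides are $4$ and $2(1+\eta)$.
\end{itemize}
That same example exposes the real issue. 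For $\eta>1$ your coefficients $(\eta/2)^{|\supp(\bsh)|}$ are still positive, yet $4<2(1+\eta)$, so the lemma is false there. Any proof must use $\eta\le1$, so positivity of the integer-frequency expansion can never suffice on its own.

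\textbf{The missing idea} (and the paper's proof) is to regroup using $1+\eta\cos(2\theta)=(1-\eta)+2\eta\cos^2\theta$. Its coefficients are nonnegative precisely because $\eta\le1$. Expanding the product gives
\[
  G=\sum_{\setu\subseteq\{1,\ldots,d\}}(1-\eta)^{d-|\setu|}\,(2\eta)^{|\setu|}\,C_\setu\circ C_\setu,
  \qquad
  (C_\setu)_{i,i'}=\prod_{j\in\setu}\cos(\pi(x_{i,j}-x_{i',j})),
\]
where $\circ$ is the entrywise product. Each $C_\setu$ is positive semidefinite with unit diagonal, by the cosine addition formula and the Schur product theorem. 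The coefficients sum to $(1+\eta)^d$ by the binomial theorem.

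It then suffices to show $\bsw^\top(C\circ C)\bsw\ge s^2/n$ for every real positive semidefinite $C\in\R^{n\times n}$ with unit diagonal:
\begin{enumerate}
\item Factor $C=ZZ^\top$ with $Z\in\R^{n\times n}$, so that $\bsw^\top(C\circ C)\bsw=\sum_{r,r'=1}^n\bigl(\sum_i w_iZ_{i,r}Z_{i,r'}\bigr)^2$.
\item Drop the terms with $r\ne r'$.
\item Apply Cauchy--Schwarz over the $n$ values of~$r$, using $\sum_r Z_{i,r}^2=C_{i,i}=1$.
\end{enumerate}
The factor $1/n$ comes from $Z$ having only $n$ columns (rank of $C$ at most~$n$). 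Your Gram-matrix analysis never reaches this ingredient.
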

\begin{proof}
Let $C \in \R^{n \times n}$ be a real symmetric positive semidefinite matrix with ones on its diagonal.
Then its eigenvalue decomposition $C = V \, \Lambda \, V^\mathsf{T}$ has non-negative eigenvalues and $V$ is an orthogonal matrix.
Define $Z := V \, \Lambda^{1/2}$, then $C = Z \, Z^\mathsf{T}$.
It follows that $C_{i,i'} = \sum_{r=1}^n Z_{i,r} \, Z_{i',r}$ and $C_{i,i} = \sum_{r=1}^n Z_{i,r}^2 = 1$.
Then
\begin{align*}
  \sum_{i,i'=1}^n \journalonly{\!} w_i \, w_{i'} \, C_{i,i'}^2
  &=
  \sum_{i,i'=1}^n \journalonly{\!} w_i \, w_{i'} \, \left(\sum_{r=1}^n Z_{i,r} \, Z_{i',r}\right)^2
  \\
  &=
  \sum_{i,i'=1}^n \journalonly{\!} w_i \, w_{i'} \, \sum_{r,s=1}^n Z_{i,r} \, Z_{i',r} \, Z_{i,s} \, Z_{i',s}
  =
  \sum_{r,s=1}^n
  \left(\sum_{i=1}^n w_i \, Z_{i,r} \, Z_{i,s}\right)^2
  \\
  &\ge
  \sum_{r=1}^n
  \left(\sum_{i=1}^n w_i \, Z_{i,r}^2\right)^2
  \ge
  \frac1n
  \left(\sum_{r=1}^n \sum_{i=1}^n w_i \, Z_{i,r}^2\right)^2
  =
  \frac1n \left(\sum_{i=1}^n w_i\right)^2
  ,
\end{align*}
where the first inequality follows by dropping all the off-diagonal terms, the second inequality is Cauchy--Schwarz and the last equality uses $\sum_{r=1}^n Z_{i,r}^2 = C_{i,i} = 1$.

For $\setu \subseteq \{1,\ldots,d\}$, define $C_\setu \in \R^{n \times n}$ by
\begin{align*}
  (C_\setu)_{i,i'}
  &:=
  \prod_{j \in \setu}
  \cos(\pi (x_{i,j} - x_{i',j}))
  .
\end{align*}
Each $C_\setu$ is positive semidefinite and has ones on its diagonal: every individual cosine matrix is positive semidefinite by $\cos(a - b) = \cos(a) \cos(b) + \sin(a) \sin(b)$, and their entrywise product is again positive semidefinite by the Schur product theorem.
Using $1 + \eta \cos(2x) = 1 - \eta + 2 \eta \cos^2(x)$ and expanding the product gives
\begin{align*}
  \prod_{j=1}^d
  \left(1 + \eta \, \cos(2\pi (x_{i,j} - x_{i',j}))\right)
  &=
  \sum_{\setu \subseteq \{1,\ldots,d\}}
  (1 - \eta)^{d-|\setu|} \, (2\eta)^{|\setu|} \, (C_\setu)_{i,i'}^2
  .
\end{align*}
Applying the first part of the proof to every $C_\setu$ therefore gives
\begin{align*}
  \journalonly{\MoveEqLeft[1]} \sum_{i,i'=1}^n w_i \, w_{i'}
  \prod_{j=1}^d (1 + \eta \, \cos(2\pi (x_{i,j} - x_{i',j})))
  \journalonly{\\*}&\ge
  \frac1n \left(\sum_{i=1}^n w_i\right)^2
  \sum_{\setu \subseteq \{1,\ldots,d\}}
  (1 - \eta)^{d-|\setu|} \, (2\eta)^{|\setu|}
  \\
  &=
  \frac{(1 + \eta)^d}{n}
  \left(\sum_{i=1}^n w_i\right)^2
  ,
\end{align*}
where the last equality is the binomial theorem.
\end{proof}

\subsection{Multivariate periodic mixed smoothness spaces}\label{sec:periodic-spaces}

Since we again introduce several function spaces, we first give a few pointers to the literature.
The spaces that we define are the multivariate versions of the univariate spaces from Section~\ref{sec:periodic-spaces-d=1}.
We give a sneak peek in Table~\ref{tbl:periodic-equivalent-norms}.
This table gives an overview of the notation and also of the Fourier multipliers that we use to define their norms.
Again, depending on aim, authors and topic, one or the other function space is chosen in the literature to prove things.
Sometimes this makes comparison hard.
Hopefully these notes make it easier to understand how all these spaces connect, and maybe serve as a citable reference should such a comparison be necessary at some point.

Most of the references given above for the univariate periodic norms in Section~\ref{sec:periodic-spaces-d=1} already treat these corresponding multivariate periodic spaces with dominating mixed smoothness, so naturally many of the same references reappear here.
E.g., the spaces $\widetilde{H}^\alpha_\mix([0,1]^d)$, $\widetilde{H}'^\alpha_\mix([0,1]^d)$ and $\widetilde{H}''^\alpha_\mix([0,1]^d)$ are treated in~\cite{KSU2015}, see also~\cite{KUV2021,KV2019}, while the unanchored spaces $\widetilde{H}^{\alpha,\rmu}_\mix([0,1]^d)$ and $\widetilde{H}^{\alpha,\rmu}_{\mix,1}([0,1]^d)$, particularly the scale-$1$ version commonly called a Korobov space, can be found in, e.g.,~\cite{SJ94,NW2008,NW2010,DHP2015,NS2023,DKP2022}.
For their $\bsgamma$-weighted counterparts listed in Table~\ref{tbl:periodic-equivalent-norms}, see, e.g.,~\cite{Kuo2003,DSWW2004,DSWW2006,DP2010,DNP2014,DKS2013,Nuy2014,KPV2015,GHHR2017,DKP2022}.

\begin{table}[htbp]
  \centering
  \journalonly{\setlength{\abovecaptionskip}{8pt}}
  \renewcommand{\arraystretch}{1.4}
  \setlength{\tabcolsep}{4pt}
  \ifjournalstyle\begin{adjustbox}{max width=\linewidth}\fi
  \begin{tabular}{llll}
    \hline
    \multicolumn{4}{c}{unweighted} \\
    norm & & space $F$ & squared Fourier multiplier $r_F^2(\bsh)$ \\
    \hline
    standard & $\alpha \in \N_0$ & $\widetilde{H}^\alpha_\mix([0,1]^d)$ & $\prod_{j=1}^d \left(1+\sum_{\tau=1}^\alpha |2\pi h_j|^{2\tau}\right)$ \\
    first and last & $\alpha > 0$ & $\widetilde{H}'^\alpha_\mix([0,1]^d)$ & $\prod_{j=1}^d \left(1+|2\pi h_j|^{2\alpha}\right)$ \\
    first and last, scale $1$ & $\alpha > 0$ & $\widetilde{H}'^\alpha_{\mix,1}([0,1]^d)$ & $\prod_{j=1}^d \left(1+|h_j|^{2\alpha}\right)$ \\
    Bessel potential & $\alpha \ge 0$ & $\widetilde{H}''^\alpha_\mix([0,1]^d)$ & $\prod_{j=1}^d \left(1+|2\pi h_j|^2\right)^\alpha$ \\
    Bessel potential, scale $1$ & $\alpha \ge 0$ & $\widetilde{H}''^\alpha_{\mix,1}([0,1]^d)$ & $\prod_{j=1}^d \left(1+|h_j|^2\right)^\alpha$ \\
    unanchored & $\alpha \ge 0$ & $\widetilde{H}^{\alpha,\rmu}_\mix([0,1]^d)$ & $\prod_{j=1}^d \max\{1,|2\pi h_j|\}^{2\alpha}$ \\
    unanchored, scale $1$ & $\alpha \ge 0$ & $\widetilde{H}^{\alpha,\rmu}_{\mix,1}([0,1]^d)$ & $\prod_{j=1}^d \max\{1,|h_j|\}^{2\alpha}$ \\
    \hline
    \noalign{\vskip 0.5em}
    \hline
    \multicolumn{4}{c}{general weights $\bsgamma$} \\
    norm & & space $F$ & squared Fourier multiplier $r_F^2(\bsh)$ \\
    \hline
    standard & $\alpha \in \N_0$ & $\widetilde{H}^{\alpha,\bsgamma}_\mix([0,1]^d)$ & $\sum_{\setu \subseteq \{1,\ldots,d\}} \gamma_\setu^{-1} \prod_{j \in \setu} \left(\sum_{\tau=1}^\alpha |2\pi h_j|^{2\tau}\right)$ \\
    first and last & $\alpha > 0$ & $\widetilde{H}'^{\alpha,\bsgamma}_\mix([0,1]^d)$ & $\sum_{\setu \subseteq \{1,\ldots,d\}} \gamma_\setu^{-1} \prod_{j \in \setu} |2\pi h_j|^{2\alpha}$ \\
    first and last, scale $1$ & $\alpha > 0$ & $\widetilde{H}'^{\alpha,\bsgamma}_{\mix,1}([0,1]^d)$ & $\sum_{\setu \subseteq \{1,\ldots,d\}} \gamma_\setu^{-1} \prod_{j \in \setu} |h_j|^{2\alpha}$ \\
    Bessel potential & $\alpha \ge 0$ & $\widetilde{H}''^{\alpha,\bsgamma}_\mix([0,1]^d)$ & $\sum_{\setu \subseteq \{1,\ldots,d\}} \gamma_\setu^{-1} \prod_{j \in \setu} \left(\left(1+|2\pi h_j|^2\right)^\alpha-1\right)$ \\
    Bessel potential, scale $1$ & $\alpha \ge 0$ & $\widetilde{H}''^{\alpha,\bsgamma}_{\mix,1}([0,1]^d)$ & $\sum_{\setu \subseteq \{1,\ldots,d\}} \gamma_\setu^{-1} \prod_{j \in \setu} \left(\left(1+|h_j|^2\right)^\alpha-1\right)$ \\
    unanchored & $\alpha \ge 0$ & $\widetilde{H}^{\alpha,\bsgamma,\rmu}_\mix([0,1]^d)$ & $\gamma_{\supp(\bsh)}^{-1} \prod_{j \in \supp(\bsh)} |2\pi h_j|^{2\alpha}$ \\
    unanchored, scale $1$ & $\alpha \ge 0$ & $\widetilde{H}^{\alpha,\bsgamma,\rmu}_{\mix,1}([0,1]^d)$ & $\gamma_{\supp(\bsh)}^{-1} \prod_{j \in \supp(\bsh)} |h_j|^{2\alpha}$ \\
    \hline
    \noalign{\vskip 0.5em}
    \hline
    \multicolumn{4}{c}{product weights $\gamma_\setu = \prod_{j \in \setu} \gamma_j$} \\
    norm & & space $F$ & squared Fourier multiplier $r_F^2(\bsh)$ \\
    \hline
    standard & $\alpha \in \N_0$ & $\widetilde{H}^{\alpha,\bsgamma}_\mix([0,1]^d)$ & $\prod_{j=1}^d \left(1+\gamma_j^{-1}\sum_{\tau=1}^\alpha |2\pi h_j|^{2\tau}\right)$ \\
    first and last & $\alpha > 0$ & $\widetilde{H}'^{\alpha,\bsgamma}_\mix([0,1]^d)$ & $\prod_{j=1}^d \left(1+\gamma_j^{-1}|2\pi h_j|^{2\alpha}\right)$ \\
    first and last, scale $1$ & $\alpha > 0$ & $\widetilde{H}'^{\alpha,\bsgamma}_{\mix,1}([0,1]^d)$ & $\prod_{j=1}^d \left(1+\gamma_j^{-1}|h_j|^{2\alpha}\right)$ \\
    Bessel potential & $\alpha \ge 0$ & $\widetilde{H}''^{\alpha,\bsgamma}_\mix([0,1]^d)$ & $\prod_{j=1}^d \left(1+\gamma_j^{-1}\left(\left(1+|2\pi h_j|^2\right)^\alpha-1\right)\right)$ \\
    Bessel potential, scale $1$ & $\alpha \ge 0$ & $\widetilde{H}''^{\alpha,\bsgamma}_{\mix,1}([0,1]^d)$ & $\prod_{j=1}^d \left(1+\gamma_j^{-1}\left(\left(1+|h_j|^2\right)^\alpha-1\right)\right)$ \\
    unanchored & $\alpha \ge 0$ & $\widetilde{H}^{\alpha,\bsgamma,\rmu}_\mix([0,1]^d)$ & $\prod_{j \in \supp(\bsh)} \gamma_j^{-1} \, |2\pi h_j|^{2\alpha}$ \\
    unanchored, scale $1$ & $\alpha \ge 0$ & $\widetilde{H}^{\alpha,\bsgamma,\rmu}_{\mix,1}([0,1]^d)$ & $\prod_{j \in \supp(\bsh)} \gamma_j^{-1} \, |h_j|^{2\alpha}$ \\
    \hline
  \end{tabular}%
  \ifjournalstyle\end{adjustbox}\fi
  \caption{Squared Fourier multipliers for the unweighted and weighted equivalent norms on the (weighted) dominating mixed smoothness spaces $\widetilde{\calH}^{\alpha,\bsgamma}_\mix([0,1]^d)$. For each space $F$ in the table, the norm has the Fourier form $\|f\|_F^2 = \sum_{\bsh \in \Z^d} |\hat{f}_\bsh|^2 \, r_F^2(\bsh)$.}
  \label{tbl:periodic-equivalent-norms}
\end{table}

We proceed in the same two steps as for the multivariate non-periodic spaces in Section~\ref{sec:non-periodic-spaces}: we first pass from univariate smoothness to dominating mixed smoothness by taking tensor products, and then introduce weights for subsets of coordinates as in Section~\ref{sec:weighted-spaces} where we did this for the non-periodic spaces already.
The aim is to obtain convergence upper bounds of the form $C_{d,\alpha,\bsgamma}^\mix(\epsilon) \, n^{-\alpha + \epsilon}$, for arbitrarily small $\epsilon > 0$, where the constant $C_{d,\alpha,\bsgamma}^\mix(\epsilon)$ might blow up for $\epsilon \to 0$ and, under certain summability conditions on the weights $\bsgamma$, we have $\sup_{d \in \N} C_{d,\alpha,\bsgamma}^\mix(\epsilon) \le C_{\alpha,\bsgamma}^\mix(\epsilon) < \infty$ independent of~$d$.
Here and below we retain the convention $\gamma_\emptyset = 1$ and assume $\gamma_\setu > 0$ for every~$\setu$.
We need $\gamma_\emptyset = 1$ so that the initial error, the error of the optimal algorithm that uses no information, is~$1$, see Section~\ref{sec:curse-definition}.
By assuming $\gamma_\setu > 0$ we do not have to consider special cases when writing $\gamma_\setu^{-1}$, and this also allows us to show equivalence between function spaces that differ only in the choice of the weights.

\subsubsection{Fourier coefficients and absolutely converging Fourier series}
For $f \in L_1([0,1]^d)$ and $\bsh \in \Z^d$, we define the multivariate Fourier coefficients by
\begin{align}\label{eq:Fourier-coeff-d}
  \hat{f}_\bsh
  &:=
  \int_{[0,1]^d} f(\bsx) \, \exp(-2\pi\rmi \, \bsh \cdot \bsx) \rd\bsx
  .
\end{align}
As in the univariate setting, it is useful to collect the functions with absolutely summable Fourier coefficients in the Wiener algebra
\begin{align}\label{eq:def-Wiener-algebra-d}
\boxed{
  A([0,1]^d)
  :=
  \Big\{
    f \in L_1([0,1]^d)
    :
    \|f\|_{A([0,1]^d)}
    :=
    \sum_{\bsh \in \Z^d} |\hat{f}_\bsh|
    <
    \infty
  \Big\}
  .
}
\end{align}
We identify $f \in A([0,1]^d)$ with its continuous representative given by its absolutely and uniformly convergent Fourier series.
We have $A([0,1]^d) \hookrightarrow C(\bbT([0,1]^d))$ with embedding constant~$1$ since, for all $\bsx \in [0,1]^d$,
\begin{align}\label{eq:A-embedded-in-C}
  |f(\bsx)|
  &=
  \left| \sum_{\bsh \in \Z^d} \hat{f}_\bsh \, \exp(2\pi\rmi \, \bsh\cdot\bsx)\right|
  \le
  \sum_{\bsh \in \Z^d} |\hat{f}_\bsh|
  <
  \infty.
\end{align}

\subsubsection{Weighted multivariate periodic spaces}
All the norms, and corresponding function spaces, in Table~\ref{tbl:periodic-equivalent-norms} are multivariate generalisations of Fourier multiplier norms of the univariate spaces from Section~\ref{sec:periodic-spaces-d=1}.
As before, a space $F$ in the table is defined by finiteness of its norm, and for these Fourier-based spaces they all take the form
\begin{align}\label{eq:periodic-mixed-Fourier-norm}
  \|f\|_F^2
  &=
  \sum_{\bsh \in \Z^d} |\hat{f}_\bsh|^2 \, r_F^2(\bsh)
  ,
\end{align}
where $r_F^2$ is the squared Fourier multiplier of that space.
We will use $r(\bsh)$ as a general symbol for Fourier multipliers and it takes a specific form depending on the function space.
Sometimes we write $r_F$ to make clear it is associated with a function space~$F$.
In case the space is a plain tensor product space, e.g., for the unweighted case (which also corresponds to the equal weight case $\gamma_\setu \equiv 1$), or when we have product weights $\gamma_\setu = \prod_{j\in\setu} \gamma_j$, then the multiplier is just the product of the (weighted) multipliers of the univariate spaces, while for general weights we have a weighted sum of the product of multipliers.

We will explain this in more detail shortly, but let us first look at some examples of Fourier multipliers from the table.
For the ``first and last, scale~$1$'' space, which we defined for $d=1$ in~\eqref{eq:norm-tildeH-prime-alpha-1-d=1}, with product weights the table shows
\begin{align}\label{eq:multiplier-tildeH-prime-alpha-1-ptw}
  r^2(\bsh)
  &=
  \prod_{j=1}^d (1 + \gamma_j^{-1} \, |h_j|^{2\alpha})
  ,
\end{align}
and for the same space with general weights the table shows
\begin{align}\label{eq:multiplier-tildeH-prime-alpha-1-gw}
  r^2(\bsh)
  &=
  \sum_{\setu \subseteq \{1,\ldots,d\}} \gamma_\setu^{-1} \prod_{j\in\setu} |h_j|^{2\alpha}
  .
\end{align}
As another example, for the ``unanchored, scale 1'' space, which we defined for $d=1$ in~\eqref{eq:norm-tildeH-alpha-u-1-d=1}, with product weights the table shows
\begin{align*}
  r^2(\bsh)
  &=
  \prod_{j \in \supp(\bsh)} \gamma_j^{-1} \, |h_j|^{2\alpha}
  ,
\end{align*}
and for the same space with general weights the table shows
\begin{align*}
  r^2(\bsh)
  &=
  \gamma_{\supp(\bsh)}^{-1} \, \prod_{j\in\supp(\bsh)} |h_j|^{2\alpha}
  .
\end{align*}

Clearly when we have product weights $\gamma_\setu = \prod_{j\in\setu} \gamma_j$ then for such choice of $\gamma_\setu$ we must have that~\eqref{eq:multiplier-tildeH-prime-alpha-1-gw} reduces to~\eqref{eq:multiplier-tildeH-prime-alpha-1-ptw}.
Let us quickly verify this.
We start by writing~\eqref{eq:multiplier-tildeH-prime-alpha-1-gw} as follows
\begin{align*}
  \sum_{\setu \subseteq \{1,\ldots,d\}} \gamma_\setu^{-1} \Big(\prod_{j \notin \setu} 1\Big) \Big(\prod_{j\in\setu} |h_j|^{2\alpha}\Big)
  &=
  \sum_{\setu \subseteq \{1,\ldots,d\}} \Big(\prod_{j \notin \setu} 1\Big) \Big(\prod_{j\in\setu} \gamma_j^{-1} \, |h_j|^{2\alpha}\Big)
  \\
  \numberthis{eq:sum-incl-excl}
  &=
  \prod_{j=1}^d (1 + \gamma_j^{-1} \, |h_j|^{2\alpha} )
  .
\end{align*}
Such identities occur quite often in the literature and so it is useful to be familiar with them.

Looking at Table~\ref{tbl:periodic-equivalent-norms} you should be quite confident that the unweighted and product weight versions, except maybe the Bessel potential ones, are indeed the correct generalisations of those univariate ones in Section~\ref{sec:periodic-spaces-d=1}, but the general weights versions might look slightly puzzling, except for the unanchored ones.
Therefore, let us take the ``Bessel potential, scale~$1$'' norm with general weights as an example and then also derive the product weights form from it.
The univariate Bessel potential norm with scaling~$1$ was introduced in~\eqref{eq:norm-tildeH-double-prime-alpha-1-d=1}.
Recall that the idea of the weight $\gamma_\setu$ is to be able to punish movement of the function for the subset of variables in~$\setu$.
For the non-periodic functions we did this by weighting the $L_2$-norms of the derivatives $D^\bstau f$ with $\supp(\bstau) = \setu$, see, \eqref{eq:norm-Halpha-bsgamma-mix}, \eqref{eq:norm-H-alpha-bsgamma-a} and~\eqref{eq:norm-H-alpha-bsgamma-u}.
Here we need to generalise in such a way that norms which can be expressed in terms of derivatives, i.e., when $\alpha \in \N$, will have exactly this effect.
For Fourier-based norms this means that the weights should act on the terms containing $h_j$, since these terms arise from derivatives of $\exp(2\pi\rmi \, h_j x_j)$.
For a moment, assume $\alpha \in \N$, and let us use the original Bessel potential norm with scaling $2\pi$, see~\eqref{eq:norm-tildeH-double-prime-alpha-d=1}, such that we can interpret the multipliers directly in terms of derivatives of $\exp(2\pi\rmi \, h x)$, then $(1 + |2\pi h|^2)^\alpha = \sum_{\tau=0}^\alpha \binom{\alpha}{\tau} \, |2\pi h|^{2\tau} = 1 + \sum_{\tau=1}^\alpha \binom{\alpha}{\tau} \, |2\pi h|^{2\tau}$.
Apart from the binomial coefficients this now matches~\eqref{eq:norm-tildeH-alpha-d=1} and it is clear that the part with the sum is where the norms of the derivatives come in and so this is the part where we want the weights to act.
We could leave it in this form, or we could rewrite $\sum_{\tau=1}^\alpha \binom{\alpha}{\tau} \, |2\pi h|^{2\tau} = -1 + \sum_{\tau=0}^\alpha \binom{\alpha}{\tau} \, |2\pi h|^{2\tau} = (1 + |2\pi h|^2)^\alpha - 1$.
You should now recognise the form in the table.
For $\alpha \in \N$, and back to scale~$1$, we have the following equivalent forms with general weights,
\begin{align*}
  \journalonly{\MoveEqLeft[3]} \sum_{\setu \subseteq \{1,\ldots,d\}} \gamma_\setu^{-1} \, \Big( \prod_{j\notin\setu} 1 \Big) \Big( \prod_{j\in\setu} \sum_{\tau=1}^\alpha \binom{\alpha}{\tau} \, |h_j|^{2\tau} \Big)
  \journalonly{\\*}&=
  \sum_{\setu \subseteq \{1,\ldots,d\}} \gamma_\setu^{-1} \,\prod_{j\in\setu} \Big( (1 + |h_j|^2)^\alpha - 1 \Big)
  .
\end{align*}
The right-hand side can now be interpreted for general $\alpha \ge 0$.
The product weights form can then be obtained by doing the same as in~\eqref{eq:sum-incl-excl}.

Let us explicitly highlight some of the norms from Table~\ref{tbl:periodic-equivalent-norms}.
For $\alpha \in \N_0$ we have the weighted periodic norm
\begin{align}\label{eq:norm-Htilde-alpha-bsgamma-mix}
\boxed{
  \|f\|_{\widetilde{H}^{\alpha,\bsgamma}_\mix([0,1]^d)}^2
  :=
  \sum_{\bsh \in \Z^d}
  |\hat{f}_\bsh|^2 \,
  \sum_{\setu \subseteq \{1,\ldots,d\}}
  \gamma_\setu^{-1} \,
  \prod_{j \in \setu}
  \left(
    \sum_{\tau=1}^\alpha |2\pi h_j|^{2\tau}
  \right)
  .
}
\end{align}
For real $\alpha \ge 0$ we have the weighted periodic unanchored norm
\begin{align}\label{eq:norm-Htilde-alpha-bsgamma-u}
\boxed{
  \|f\|_{\widetilde{H}^{\alpha,\bsgamma,\rmu}_\mix([0,1]^d)}^2
  :=
  \sum_{\bsh \in \Z^d}
  \gamma_{\supp(\bsh)}^{-1} \,
  |\hat{f}_\bsh|^2
  \prod_{j \in \supp(\bsh)} |2\pi h_j|^{2\alpha}
  ,
}
\end{align}
and its scale-$1$ variant
\begin{align}\label{eq:norm-Htilde-alpha-bsgamma-u-1}
\boxed{
  \|f\|_{\widetilde{H}^{\alpha,\bsgamma,\rmu}_{\mix,1}([0,1]^d)}^2
  :=
  \sum_{\bsh \in \Z^d}
  \gamma_{\supp(\bsh)}^{-1} \,
  |\hat{f}_\bsh|^2
  \prod_{j \in \supp(\bsh)} |h_j|^{2\alpha}
  .
}
\end{align}
Here, as in much of the QMC literature, a weighted Korobov space means this weighted unanchored periodic Sobolev space of dominating mixed smoothness, usually with the scale-$1$ norm, although the name is also used for closely related variants.
Korobov's 1959 paper~\cite{Kor59} actually uses an unweighted $\ell_\infty$-condition on the Fourier coefficients rather than the $\ell_2$-based norm that is now commonly called a Korobov space norm and that we use here.

We can now show in the next lemma that all spaces from Table~\ref{tbl:periodic-equivalent-norms} are equal as vector spaces and have equivalent norms.
As for $d=1$ we will denote the collection of all these spaces from Table~\ref{tbl:periodic-equivalent-norms} as vector spaces, without specifying the particular norm, by
\begin{align*}
\boxed{
  \widetilde{\calH}^{\alpha,\bsgamma}_\mix([0,1]^d)
  .
}
\end{align*}

\begin{lemma}\label{lem:periodic-equivalent-norms-d}
For $d \in \N$, $\alpha \ge 0$ in the respective ranges and weights $\bsgamma$ with $\gamma_\setu > 0$ for all~$\setu$, let $F_{d,\alpha,\bsgamma}$ be any of the spaces in Table~\textup{\ref{tbl:periodic-equivalent-norms}}.
For fixed $d$ and $\alpha$, all these spaces are continuously embedded into each other, also when using different weight families $\bsgamma$ and $\bsgamma'$ with $\gamma_\setu, \gamma'_\setu > 0$ for all~$\setu$.
When $\alpha > 1/2$ then we have the continuous embedding
\begin{align*}
  F_{d,\alpha,\bsgamma}
  &\hookrightarrow
  A([0,1]^d)
  \hookrightarrow
  C(\bbT([0,1]^d))
  .
\end{align*}
\end{lemma}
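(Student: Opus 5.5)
The plan is to reduce everything to pointwise comparison of the squared Fourier multipliers $r_F^2(\bsh)$ in Table~\ref{tbl:periodic-equivalent-norms}. This works because every norm has the form~\eqref{eq:periodic-mixed-Fourier-norm}. So if $c\, r_F^2(\bsh) \le r_G^2(\bsh) \le C\, r_F^2(\bsh)$ for all $\bsh \in \Z^d$, with constants $0<c\le C$ depending only on $d$, $\alpha$ and the weights, then the two spaces coincide as vector spaces and have equivalent norms. Since $d$ is fixed, we do not care about $d$-dependence of the constants here.

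\textbf{Step 1: general weights against unit weights.} For every row with general weights I would write $r^2_{F,\bsgamma}(\bsh) = \sum_{\setu} \gamma_\setu^{-1} \prod_{j\in\setu} \rho(h_j)$, where $\rho(h) \ge 0$ is the univariate ``derivative part''. The examples are $\rho(h)=\sum_{\tau=1}^\alpha|2\pi h|^{2\tau}$, $|2\pi h|^{2\alpha}$, and $(1+|2\pi h|^2)^\alpha-1$. The unanchored rows fit the same form with a single term $\setu=\supp(\bsh)$. Bounding $\gamma_{\max,d}^{-1} \le \gamma_\setu^{-1} \le \gamma_{\min,d}^{-1}$ as in Lemma~\ref{lem:H-alpha-C-embedding-d} gives $\gamma_{\max,d}^{-1} r^2_{F,\bsone} \le r^2_{F,\bsgamma} \le \gamma_{\min,d}^{-1} r^2_{F,\bsone}$. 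By~\eqref{eq:sum-incl-excl}, $r^2_{F,\bsone}$ equals the product of univariate multipliers $\prod_j(1+\rho(h_j))$, which is exactly the unweighted row. For the unanchored rows, $r^2_{F,\bsone}=\prod_j \max\{1,|2\pi h_j|\}^{2\alpha}$ is likewise the unweighted row. Comparing two weight families $\bsgamma,\bsgamma'$ term by term gives the constants $c_d(\bsgamma,\bsgamma')$ exactly as in Lemma~\ref{lem:H-alpha-mix-weighted-equivalent-norms}.

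\textbf{Step 2: unweighted spaces against each other.} All unweighted multipliers are products of univariate multipliers. The proof of Lemma~\ref{lem:periodic-equivalent-norms-d=1} already gives pointwise two-sided inequalities between the univariate multipliers for every $h\in\Z$ and for the relevant ranges of $\alpha$. Multiplying these coordinatewise gives the multivariate comparison with constants raised to the power $d$, in the spirit of~\eqref{eq:tensorisation}. Chaining Steps 1 and 2 makes every space in the table equivalent to every other, for any pair of weight families. The ranges of $\alpha$ need some care: the primed spaces need $\alpha>0$ and the standard norm needs $\alpha\in\N_0$. The comparison is restricted to the common range, just as in the univariate lemma.

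\textbf{Step 3: embedding into the Wiener algebra for $\alpha>1/2$.} Given Step 2, it suffices to show $\widetilde{H}^{\alpha,\rmu}_{\mix,1}([0,1]^d)$ with unit weights is contained in $A([0,1]^d)$. Cauchy--Schwarz gives $\sum_\bsh|\hat f_\bsh| \le \big(\sum_\bsh r^{-2}(\bsh)\big)^{1/2}\|f\|$. For this space $\sum_{\bsh} r^{-2}(\bsh)=\prod_{j=1}^d \big(1+2\zeta(2\alpha)\big)<\infty$ because $2\alpha>1$, by Lemma~\ref{lem:periodic-continuous-embeddings-d=1} applied coordinatewise. With general weights the same computation gives $\sum_\setu \gamma_\setu (2\zeta(2\alpha))^{|\setu|}$, which is finite for fixed $d$. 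The step $A([0,1]^d)\hookrightarrow C(\bbT([0,1]^d))$ is~\eqref{eq:A-embedded-in-C}.

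The main obstacle is bookkeeping rather than analysis. One must verify that each general-weight multiplier in the table really has the form $\sum_\setu\gamma_\setu^{-1}\prod_{j\in\setu}\rho(h_j)$ with $1+\rho$ equal to the corresponding univariate multiplier. This is most delicate for the Bessel-potential rows, where one uses $\rho(h)=(1+|2\pi h|^2)^\alpha-1\ge0$. One must also make sure the univariate inequalities used in Step 2 hold for the full stated $\alpha$ ranges, including $\alpha=0$ where only some rows are defined.
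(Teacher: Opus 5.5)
Your proposal is correct and is essentially the paper's argument: you compare the squared Fourier multipliers pointwise, weighted against unweighted via $\gamma_{\min,d}$ and $\gamma_{\max,d}$ (the strategy of Lemma~\ref{lem:H-alpha-mix-weighted-equivalent-norms}) and unweighted rows against each other by tensorising the univariate comparisons of Lemma~\ref{lem:periodic-equivalent-norms-d=1}, and you then get the Wiener-algebra embedding by Cauchy--Schwarz with the scale-$1$ unanchored multiplier, for which $\sum_{\bsh} r^{-2}(\bsh) = \sum_{\setu}\gamma_\setu\,(2\zeta(2\alpha))^{|\setu|}$. One small imprecision: the unanchored rows are not of the form $\sum_\setu\gamma_\setu^{-1}\prod_{j\in\setu}\rho(h_j)$, since that form always contains the term $\gamma_\emptyset^{-1}$; this is harmless, because your Step~1 only uses that each multiplier is a nonnegative combination of the $\gamma_\setu^{-1}$ with $\bsgamma$-independent coefficients whose unit-weight version is the unweighted row, and that holds for every row.
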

\begin{proof}
The pointwise univariate multiplier comparisons from Lemma~\ref{lem:periodic-equivalent-norms-d=1} easily extend coordinatewise to the unweighted and product-weight multipliers.
For general weights we can apply the same pointwise comparisons to every product in the sums over $\setu \subseteq \{1,\ldots,d\}$.
For different weight families we can follow the strategy from Lemma~\ref{lem:H-alpha-mix-weighted-equivalent-norms}.
This shows they are all continuously embedded into each other.

To show the continuous embedding into the Wiener algebra we can hence just pick one of these spaces.
For clarity let us write $r$ as the Fourier multiplier of our particular space, but let us also denote its dependence on $\alpha$ and $\bsgamma$ explicitly by writing $r_{\alpha,\bsgamma}$.
Consider the scale-$1$ unanchored multiplier
\begin{align*}
  r_{\alpha,\bsgamma}^2(\bsh)
  &=
  \gamma_{\supp(\bsh)}^{-1}
  \prod_{j \in \supp(\bsh)} |h_j|^{2\alpha}
  .
\end{align*}
When $\alpha > 1/2$ we have $\zeta(2\alpha) < \infty$ and hence
\begin{align*}
  \sum_{\bsh \in \Z^d} r_{\alpha,\bsgamma}^{-2}(\bsh)
  &=
  \sum_{\bsh \in \Z^d}
  \gamma_{\supp(\bsh)}
  \prod_{j \in \supp(\bsh)} |h_j|^{-2\alpha}
  \\
  &=
  \sum_{\setu \subseteq \{1,\ldots,d\}}
  \gamma_\setu
  \sum_{\substack{\bsh \in \Z^d \\ \supp(\bsh) = \setu}}
  \prod_{j \in \setu} |h_j|^{-2\alpha}
  \\
  &=
  \sum_{\setu \subseteq \{1,\ldots,d\}}
  \gamma_\setu \,
  \prod_{j \in \setu}
  2 \sum_{h_j \ge 1} |h_j|^{-2\alpha}
  \\
  &=
  \sum_{\setu \subseteq \{1,\ldots,d\}}
  \gamma_\setu \, (2 \, \zeta(2\alpha))^{|\setu|}
  <
  \infty
  .
\end{align*}
Hence Cauchy--Schwarz gives, for any $\bsx \in [0,1]^d$,
\begin{align}\label{eq:embedding-r}
  |f(\bsx)|
  \le
  \sum_{\bsh \in \Z^d} |\hat{f}_\bsh|
  &\le
  \left(
    \sum_{\bsh \in \Z^d} |\hat{f}_\bsh|^2 \, r_{\alpha,\bsgamma}^2(\bsh)
  \right)^{\!1/2}
  \left(
    \sum_{\bsh \in \Z^d} r_{\alpha,\bsgamma}^{-2}(\bsh)
  \right)^{\!1/2}
  <
  \infty
  .
\end{align}
This finishes the proof.
\end{proof}

The previous proof used the commonly occurring chain of inequalities~\eqref{eq:embedding-r} for our Fourier-based Hilbert norms; we just multiply and divide by a Fourier multiplier $r_{\alpha,\bsgamma}(\bsh) > 0$ and then apply Cauchy--Schwarz.
We stress the dependence on $\alpha$ and $\bsgamma$ by using those as subscripts for the multiplier, but the particular space is left implicit from the context to not overload the notation.
Repeating the argument with Hölder's inequality we obtain, for any Fourier multiplier $r_{\alpha,\bsgamma}$,
\begin{align}\label{eq:embedding-r-holder}
  |f(\bsx)|
  \le
  \sum_{\bsh \in \Z^d} |\hat{f}_\bsh|
  &\le
  \left(
    \sum_{\bsh \in \Z^d} |\hat{f}_\bsh|^p \, r_{\alpha,\bsgamma}^p(\bsh)
  \right)^{\!1/p}
  \left(
    \sum_{\bsh \in \Z^d} r_{\alpha,\bsgamma}^{-q}(\bsh)
  \right)^{\!1/q}
  ,
\end{align}
with $\nicefrac{1}{p} + \nicefrac{1}{q} = 1$ and $1 \le p, q \le \infty$ (and the proper adjustment in case one of $p$ or $q$ is infinity).
We could then take the first factor as an $\ell_p$-norm defining a function space $F^{\alpha,\bsgamma}_{r,p}([0,1]^d)$ with norm
\begin{align}\label{eq:Fourier-ell-p}
  \|f\|_{F^{\alpha,\bsgamma}_{r,p}([0,1]^d)}
  &=
  \begin{cases}
  \left( \sum_{\bsh \in \Z^d} |\hat{f}_\bsh|^p \, r_{\alpha,\bsgamma}^p(\bsh) \right)^{\!1/p}, & \text{if } 1 \le p < \infty, \\[4mm]
  \sup_{\bsh \in \Z^d} |\hat{f}_\bsh| \, r_{\alpha,\bsgamma}(\bsh), & \text{if } p = \infty .
  \end{cases}
\end{align}
This is, e.g., discussed in~\cite{Nuy2014}.
For $1 \le q \le \infty$, the condition for continuous embedding into $A([0,1]^d)$ and then $C(\bbT([0,1]^d))$, becomes $r_{\alpha,\bsgamma}^{-1} \in \ell_q(\Z^d)$.
For $1 \le q < \infty$, this means
\begin{align*}
  \left(
    \sum_{\bsh \in \Z^d} r_{\alpha,\bsgamma}^{-q}(\bsh)
  \right)^{\!1/q}
  <
  \infty
  ,
\end{align*}
and for all Fourier multipliers in Table~\ref{tbl:periodic-equivalent-norms} this holds when $\alpha > 1/q$ (for the same reasoning as in the previous proof as we will have $\zeta(q \alpha)$ popping up).
For $q = \infty$ the condition is $\sup_{\bsh \in \Z^d} r_{\alpha,\bsgamma}^{-1}(\bsh) < \infty$ which holds for $\alpha \ge 0$ for the norms defined there.
In fact $p = 1$ ($q = \infty$) and $\alpha = 0$ results in the Wiener algebra for any of the Fourier multipliers in Table~\ref{tbl:periodic-equivalent-norms} that allow $\alpha = 0$.
Note however that for the weighted unanchored norms the weights will still show up even when $\alpha = 0$.
This is also the case for the Hilbert space setting with $p = 2$.

Since our Fourier multiplier function includes the weights $\bsgamma$, this could be a good reason to square the weights $\gamma_\setu$ appearing in the squared multipliers in Table~\ref{tbl:periodic-equivalent-norms}, which is done in some of the literature, but not in others, and we also do not do it here.

\subsubsection{``Periodicity'' in the multivariate spaces}
We now clarify what we mean by ``periodic'' functions in all the spaces $\widetilde{\calH}^{\alpha,\bsgamma}_\mix([0,1]^d)$.
We extend Lemma~\ref{lem:periodic-derivatives-d=1}.

\begin{lemma}[Periodicity implied by the Fourier norms]\label{lem:periodic-derivatives-d}
Given $d \in \N$, $\alpha \ge 0$, weights $\bsgamma$ with $\gamma_\setu > 0$ for all~$\setu$ and $f \in \widetilde{\calH}^{\alpha,\bsgamma}_\mix([0,1]^d)$, as a vector space, and any of the equivalent norms from Table~\textup{\ref{tbl:periodic-equivalent-norms}} with $\alpha$ restricted to the associated range.
For $\bstau \in \N_0^d$ with all $0 \le \tau_j \le \alpha$, we have $D^\bstau f \in L_2([0,1]^d)$.
For $\bstau \in \N_0^d$ with all $0 \le \tau_j \le \alpha-1$, for $i \in \{1,\ldots,d\}$ the derivative $D^\bstau f$, regarded as a function of $x_i$, has an $L_2([0,1]^{d-1})$-valued absolutely continuous representative.
For $\bstau \in \N_0^d$ with all $0 \le \tau_j < \alpha-1/2$, the Fourier series of $D^\bstau f$ converges absolutely and uniformly, with
\begin{align}\label{eq:periodic-derivatives-Fourier-d}
  D^\bstau f(\bsx)
  &=
  \sum_{\bsh \in \Z^d}
  \hat{f}_\bsh \,
  \Big[ \prod_{j=1}^d (2\pi\rmi \, h_j)^{\tau_j} \Big] \,
  \exp(2\pi\rmi \, \bsh \cdot \bsx)
  \qquad
  \text{for } \bsx \in [0,1]^d
  .
\end{align}
I.e., for $\bstau \in \N_0^d$ with all $0 \le \tau_j < \alpha-1/2$, we have $D^\bstau f \in A([0,1]^d) \subset C(\bbT([0,1]^d))$, i.e., $D^\bstau f$ is continuous and its values agree when any coordinate is set to~$0$ and~$1$.
\end{lemma}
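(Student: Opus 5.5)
First I reduce to one convenient norm. By Lemma~\ref{lem:periodic-equivalent-norms-d} all spaces in Table~\ref{tbl:periodic-equivalent-norms} coincide as vector spaces with equivalent norms. Changing the weights only changes constants, because all $\gamma_\setu > 0$ and there are finitely many subsets. So $f$ lies in the unweighted ``first and last'' space with multiplier $\prod_{j=1}^d (1+|2\pi h_j|^{2\alpha})$ when $\alpha>0$, and $\alpha=0$ is trivial. Expanding this product shows that for every $\setu \subseteq \{1,\ldots,d\}$,
\begin{align*}
  \sum_{\bsh \in \Z^d} |\hat{f}_\bsh|^2 \prod_{j \in \setu} |2\pi h_j|^{2\alpha} < \infty .
\end{align*}
For $\bstau$ with $0 \le \tau_j \le \alpha$ I use $|2\pi h_j|^{2\tau_j} \le 1 + |2\pi h_j|^{2\alpha}$. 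This shows that the sequence $\hat{f}_\bsh \prod_j (2\pi\rmi h_j)^{\tau_j}$ is in $\ell_2(\Z^d)$, so it defines some $g_\bstau \in L_2([0,1]^d)$. Testing against trigonometric polynomials, which are dense in the periodic $C^\infty$ test functions, and integrating by parts with no boundary terms shows that $g_\bstau = D^\bstau f$ weakly on the torus. Restricting to $(0,1)^d$ then gives the first claim.

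For the absolute-continuity claim I fix $i$ and write $D^\bstau f(\bsx) = \sum_{h_i} c_{h_i}(\bsx_{-i}) \exp(2\pi\rmi h_i x_i)$ with $c_{h_i} \in L_2([0,1]^{d-1})$. Since $\tau_i + 1 \le \alpha$, the previous step applied to $\bstau + \bse_i$ gives $\sum_{h_i} |2\pi h_i|^2 \|c_{h_i}\|_{L_2}^2 < \infty$. This is the univariate situation of Lemma~\ref{lem:periodic-derivatives-d=1}, now with $L_2([0,1]^{d-1})$-valued coefficients. I define $G(x_i) := c_0 + \int_0^{x_i} \partial_i D^\bstau f(\cdot,t)\rd{t}$ as a Bochner integral of an $L_2([0,1]^{d-1})$-valued $L_2$ function. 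Comparing Fourier coefficients shows that $G$ is a representative of $D^\bstau f$. It is absolutely continuous, and periodic because $\partial_i D^\bstau f$ has zero mean in $x_i$.

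For the last claim I take $\bstau$ with $\tau_j < \alpha - 1/2$ and apply Cauchy--Schwarz as in~\eqref{eq:embedding-r}:
\begin{align*}
  \sum_{\bsh} |\hat{f}_\bsh| \prod_{j} |2\pi h_j|^{\tau_j}
  \le
  \|f\|_{\widetilde{H}'^\alpha_\mix}
  \prod_{j=1}^d \Big( \sum_{h \in \Z} \frac{|2\pi h|^{2\tau_j}}{1+|2\pi h|^{2\alpha}} \Big)^{1/2}.
\end{align*}
Each factor is at most $1 + 2(2\pi)^{-2(\alpha-\tau_j)}\zeta(2(\alpha-\tau_j))$, which is finite since $2(\alpha-\tau_j) > 1$. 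Hence the series converges absolutely and uniformly to a continuous $\bbT^d$-periodic function whose Fourier coefficients equal those of $D^\bstau f$. By uniqueness of Fourier coefficients in $L_1$, this function is the continuous representative, which gives~\eqref{eq:periodic-derivatives-Fourier-d} and $D^\bstau f \in A([0,1]^d) \subset C(\bbT([0,1]^d))$.

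I expect the main obstacle to be the middle step. One has to make precise the vector-valued absolute continuity and show that the weak derivative on $(0,1)^d$ agrees with the torus derivative. The first attempt would be the Bochner-integral construction above, falling back on Fubini applied coordinatewise to the univariate Lemma~\ref{lem:periodic-derivatives-d=1} if that proves awkward.
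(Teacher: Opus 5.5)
Your argument follows the same route as the paper. You reduce to the unweighted $\widetilde{H}'^\alpha_\mix([0,1]^d)$ norm via Lemma~\ref{lem:periodic-equivalent-norms-d}, use $|2\pi h_j|^{2\tau_j} \le 1 + |2\pi h_j|^{2\alpha}$ with Parseval for the first claim, get the vector-valued absolute continuity from $D^{\bstau+\bse_i} f \in L_2([0,1]^d)$, and finish with the Cauchy--Schwarz and zeta-function estimate. Your torus-distribution justification that the $\ell_2$ sequence really is $D^\bstau f$ is a welcome extra detail that the paper leaves implicit.

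The one difference is the middle step. The paper simply cites that elements of $H^1((0,1);L_2([0,1]^{d-1}))$ have absolutely continuous representatives, while you build one by hand, and your construction has a concrete slip.

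\textbf{The slip.} With initial value $c_0$, interchanging the $L_2$-convergent series and $\int_0^{x_i}$ gives
\[
G(x_i) = c_0 + \sum_{h_i \ne 0} c_{h_i} \, \bigl(\exp(2\pi\rmi \, h_i x_i) - 1\bigr).
\]
Its $x_i$-mean is $c_0 - \sum_{h_i \ne 0} c_{h_i}$, not $c_0$. So comparing Fourier coefficients does \emph{not} show that $G$ represents $D^\bstau f$. For example, $f(\bsx) = \cos(2\pi x_i)$ with $\bstau = \bszero$ gives $G = \cos(2\pi x_i) - 1$.

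\textbf{The fix.} Start from the would-be value at $x_i = 0$, i.e.\ take
\[
G(x_i) := \sum_{h_i \in \Z} c_{h_i} + \int_0^{x_i} \partial_i D^\bstau f(\cdot,t) \rd{t}.
\]
This series converges absolutely in $L_2([0,1]^{d-1})$ by Cauchy--Schwarz, since $\sum_{h_i} |2\pi h_i|^2 \, \|c_{h_i}\|_{L_2([0,1]^{d-1})}^2 < \infty$ and $\sum_{h_i \ne 0} |2\pi h_i|^{-2} < \infty$. Then $G(x_i) = c_0 + \sum_{h_i \ne 0} c_{h_i} \exp(2\pi\rmi \, h_i x_i)$, as required.

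Equivalently, note that $G - D^\bstau f$ has zero weak $x_i$-derivative, and correct $G$ by the resulting function of $\bsx_{-i}$. Your periodicity remark and the remaining steps then go through unchanged.
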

\begin{proof}
We can immediately assume $\alpha > 0$.
By Lemma~\ref{lem:periodic-equivalent-norms-d}, we may work with the unweighted product norm $\widetilde{H}'^\alpha_\mix([0,1]^d)$, with squared Fourier multiplier
\begin{align*}
  r_\alpha^2(\bsh)
  &=
  \prod_{j=1}^d (1 + |2\pi h_j|^{2\alpha})
  .
\end{align*}
For $\bstau \in \N_0^d$ with all $0 \le \tau_j \le \alpha$, we have, $\bsh \in \Z^d$,
\begin{align*}
  \prod_{j=1}^d |2\pi h_j|^{2\tau_j}
  &\le
  \prod_{j=1}^d (1 + |2\pi h_j|^{2\alpha})
  =
  r_\alpha^2(\bsh)
  ,
\end{align*}
hence, using Parseval and $f \in \widetilde{H}'^\alpha_\mix([0,1]^d)$,
\begin{align*}
  \|D^\bstau f\|_{L_2([0,1]^d)}^2
  &=
  \sum_{\bsh \in \Z^d}
  |\hat{f}_\bsh|^2 \, \prod_{j=1}^d |2\pi h_j|^{2\tau_j}
  \\
  &\le
  \sum_{\bsh \in \Z^d}
  |\hat{f}_\bsh|^2 \, r_\alpha^2(\bsh)
  =
  \|f\|_{\widetilde{H}'^\alpha_\mix([0,1]^d)}^2
  <
  \infty
  ,
\end{align*}
and hence $D^\bstau f \in L_2([0,1]^d)$.
It follows that if all $0 \le \tau_j \le \alpha-1$, then both $D^\bstau f$ and $D^{\bstau+\bse_i}f$ belong to $L_2([0,1]^d)$ for $i \in \{1,\ldots,d\}$.
Regarding $D^\bstau f$ as a function of $x_i$ with values in $L_2([0,1]^{d-1})$, its weak derivative is $D^{\bstau+\bse_i}f$, so it belongs to $H^1((0,1);L_2([0,1]^{d-1}))$ and therefore has an $L_2([0,1]^{d-1})$-valued absolutely continuous representative.

Now let all $0 \le \tau_j < \alpha-1/2$.
Applying the Cauchy--Schwarz argument from~\eqref{eq:embedding-r} to the Fourier coefficients of $D^\bstau f$ gives
\begin{align*}
  \sum_{\bsh \in \Z^d}
  |\hat{f}_\bsh| \, \prod_{j=1}^d |2\pi h_j|^{\tau_j}
  &\le
  \|f\|_{\widetilde{H}'^\alpha_\mix([0,1]^d)}
  \left(
    \sum_{\bsh \in \Z^d}
    \prod_{j=1}^d
    \frac{|2\pi h_j|^{2\tau_j}}{1 + |2\pi h_j|^{2\alpha}}
  \right)^{1/2}
  \\
  &=
  \|f\|_{\widetilde{H}'^\alpha_\mix([0,1]^d)}
  \prod_{j=1}^d
  \left(
    \sum_{h_j \in \Z}
    \frac{|2\pi h_j|^{2\tau_j}}{1 + |2\pi h_j|^{2\alpha}}
  \right)^{1/2}
  <
  \infty
  .
\end{align*}
The last product is finite since $\zeta(2(\alpha-\tau_j)) < \infty$ when $2(\alpha-\tau_j) > 1 \Leftrightarrow \tau_j < \alpha - 1/2$, by the same zeta-function estimate as in Lemma~\ref{lem:periodic-derivatives-d=1}.
Thus the Fourier series in~\eqref{eq:periodic-derivatives-Fourier-d} converges absolutely and uniformly and defines a continuous representative whose values agree when any coordinate is set to~$0$ and~$1$.
\end{proof}

\subsubsection{Relation with the non-periodic weighted mixed spaces for integer smoothness}
For integer $\alpha \in \N$ and $\bstau \in \N_0^d$, the condition $0 \le \tau_j < \alpha-1/2$ is equivalent to $0 \le \tau_j \le \alpha-1$, hence the preceding lemma shows that every mixed derivative $D^\bstau f$ with $\bstau \in \{0,\ldots,\alpha-1\}^d$ has a continuous representative whose values agree pointwise when $x_j$ is set to~$0$ and~$1$.
Thus, for every $j \in \{1,\ldots,d\}$, $\bstau \in \{0,\ldots,\alpha-1\}^d$ and $\bsx \in [0,1]^d$,
\begin{align}\label{eq:periodic-mixed-boundary-conditions}
  D^\bstau f(\bsx)\big|_{x_j=0}
  &=
  D^\bstau f(\bsx)\big|_{x_j=1}
  .
\end{align}
More generally, if $\bstau \in \{0,\ldots,\alpha\}^d$ and for some $j \in \{1,\ldots,d\}$ we have the stricter condition $\tau_j \le \alpha-1$, then the traces for $x_j=0$ and $x_j=1$ exist in $L_2([0,1]^{d-1})$ and satisfy
\begin{align}\label{eq:periodic-mixed-boundary-conditions-L2}
  D^\bstau f(\bsx)\big|_{x_j=0}
  &=
  D^\bstau f(\bsx)\big|_{x_j=1}
  \qquad\text{in}\quad L_2([0,1]^{d-1})
  .
\end{align}
This follows by regarding $D^\bstau f$ as an $L_2([0,1]^{d-1})$-valued $H^1$ function of~$x_j$ as in the proof of Lemma~\ref{lem:periodic-derivatives-d} and approximating it by its periodic Fourier partial sums in that space.
This equality holds almost everywhere, but it need not hold pointwise when one of the $\tau_i = \alpha$ for some $i \in \{1,\ldots,d\} \setminus \{j\}$.
If $\tau_j=\alpha$, then these traces are not guaranteed to exist and no boundary equality is asserted.
For $\alpha=0$, the boundary conditions are obviously void.

We now make exactly the same link as in the univariate section.
For $f \in \widetilde{H}^{\alpha,\bsgamma}_\mix([0,1]^d)$, repeated application of Parseval's identity as in Lemma~\ref{lem:periodic-Fourier-characterisation-d=1}, gives
\begin{align*}
  \|f\|_{\widetilde{H}^{\alpha,\bsgamma}_\mix([0,1]^d)}^2
  &=
  \sum_{\bstau \in \{0,\ldots,\alpha\}^d}
  \gamma_{\supp(\bstau)}^{-1} \,
  \|D^\bstau f\|_{L_2([0,1]^d)}^2
  =
  \|f\|_{H^{\alpha,\bsgamma}_\mix([0,1]^d)}^2
  .
\end{align*}
The coordinatewise version of the same univariate Fourier characterisation as in Lemma~\ref{lem:periodic-Fourier-characterisation-d=1} also proves the converse, and hence, we can equivalently define, for integer $\alpha \in \N_0$,
\begin{align}\label{eq:periodic-mixed-space-restriction}
  \widetilde{H}^{\alpha,\bsgamma}_\mix([0,1]^d)
  &=
  \Big\{
    f \in H^{\alpha,\bsgamma}_\mix([0,1]^d)
    :
    f \text{ satisfies~\eqref{eq:periodic-mixed-boundary-conditions} (or~\eqref{eq:periodic-mixed-boundary-conditions-L2})}
  \Big\}
  .
\end{align}
Compare with the univariate definitions~\eqref{eq:space-tildeH-alpha-d=1} and~\eqref{eq:space-tildeH-alpha-d=1-b}.
Thus $\widetilde{H}^{\alpha,\bsgamma}_\mix([0,1]^d)$ is exactly the restriction of $H^{\alpha,\bsgamma}_\mix([0,1]^d)$ and its Fourier-multiplier based norm agrees exactly with the inherited norm.

For integer $\alpha \in \N$, the periodic unanchored norm~\eqref{eq:norm-Htilde-alpha-bsgamma-u} is likewise exactly the restriction of the non-periodic unanchored norm~\eqref{eq:norm-H-alpha-bsgamma-u} to the periodic subspace conditions as in~\eqref{eq:periodic-mixed-space-restriction}.
Indeed, a term in~\eqref{eq:norm-H-alpha-bsgamma-u} vanishes whenever an integrated coordinate carries a derivative of order between $1$ and $\alpha-1$, because the traces of the preceding derivative agree when that coordinate is set to~$0$ and~$1$.
See also the discussion in~\cite{NS2023}.
Only the terms indexed by $\bstau \in \{0,\alpha\}^d$ remain, and Parseval's identity gives
\begin{align*}
  \|f\|_{\widetilde{H}^{\alpha,\bsgamma,\rmu}_\mix([0,1]^d)}^2
  &=
  \sum_{\bstau \in \{0, \alpha\}^d}
  \gamma_{\supp(\bstau)}^{-1} \,
  \|I_{-\supp(\bstau)} D^\bstau f\|_{L_2([0,1]^d)}^2
  ,
\end{align*}
where $-\supp(\bstau) = \{1,\ldots,d\} \setminus \supp(\bstau)$.
The scale-$1$ variant of this unanchored norm is obtained by rescaling the general weights according to
\begin{align*}
  \gamma_\setu^*
  &:=
  \frac{\gamma_\setu}{(2\pi)^{2\alpha|\setu|}}
  ,
\end{align*}
for which
\begin{align*}
  \|f\|_{\widetilde{H}^{\alpha,\bsgamma,\rmu}_\mix([0,1]^d)}
  &=
  \|f\|_{\widetilde{H}^{\alpha,\bsgamma^*,\rmu}_{\mix,1}([0,1]^d)}
  .
\end{align*}
For product weights this is simply $\gamma_j^* = \gamma_j/(2\pi)^{2\alpha}$.

\subsubsection{Transferring lower bounds and upper bounds}

Now that we have defined our ``periodic'' spaces and, for $\alpha \in \N$, made some connections to their corresponding non-periodic variants, we are ready to show that the lower bounds proved for the non-periodic spaces also transfer to the periodic spaces.

\begin{theorem}[Lower bound and curse of dimensionality for the unweighted periodic mixed spaces]\label{thm:periodic-lower-bound-curse}
Let $d, \alpha \in \N$ and let $F_{d,\alpha}$ be any of the seven unweighted spaces $\widetilde{H}^\alpha_\mix([0,1]^d)$, $\widetilde{H}'^\alpha_\mix([0,1]^d)$, $\widetilde{H}'^\alpha_{\mix,1}([0,1]^d)$, $\widetilde{H}''^\alpha_\mix([0,1]^d)$, $\widetilde{H}''^\alpha_{\mix,1}([0,1]^d)$, $\widetilde{H}^{\alpha,\rmu}_\mix([0,1]^d)$ and $\widetilde{H}^{\alpha,\rmu}_{\mix,1}([0,1]^d)$ listed in the first part of Table~\textup{\ref{tbl:periodic-equivalent-norms}}.
By Lemma~\textup{\ref{lem:periodic-equivalent-norms-d}}, there exist embedding constants $C_{d,\alpha,F} > 0$ such that
\begin{align*}
  \|f\|_{F_{d,\alpha}}
  &\le
  C_{d,\alpha,F} \, \|f\|_{\widetilde{H}^\alpha_\mix([0,1]^d)}
  .
\end{align*}
(In fact $C_{d,\alpha,F} \ge 1$.)
For $n \ge 2$, the lower bound from Theorem~\textup{\ref{thm:Hmixalpha-lower-bound}} then gives
\begin{align}\label{eq:periodic-mixed-lower-bound}
  \inf_{Q_{d,n} \in \frakQ_{d,n}}
  \sup_{\substack{f \in F_{d,\alpha} \\ \|f\|_{F_{d,\alpha}} \le 1}}
  |I_d(f) - Q_{d,n}(f)|
  &\ge
  \frac{c^\mix_{d,\alpha}}{C_{d,\alpha,F}} \, n^{-\alpha} \, (\log_2 n)^{(d-1)/2}
  .
\end{align}
Let $r_{1,\alpha,F}^2$ denote the corresponding univariate squared Fourier multiplier from Table~\textup{\ref{tbl:periodic-equivalent-norms}}, such that $r_{d,\alpha,F}^2(\bsh) = \prod_{j=1}^d r_{1,\alpha,F}^2(h_j)$ is the squared Fourier multiplier for the space $F_{d,\alpha}$, and set
\begin{align*}
  \eta_{\alpha,F}
  &:=
  \min\Big\{\frac12,\frac{2}{r_{1,\alpha,F}^2(1)}\Big\}
  \in
  (0,1)
  .
\end{align*}
For every cubature rule $Q_{d,n}(f) = \sum_{i=1}^n w_i \, f(\bsx_i)$ and $n \le (1 + \eta_{\alpha,F})^d$ we have
\begin{align}\label{eq:periodic-mixed-curse}
  \sup_{\substack{f \in F_{d,\alpha} \\ \|f\|_{F_{d,\alpha}} \le 1}}
  |I_d(f) - Q_{d,n}(f)|
  &\ge
  \left(1 - \frac{n}{(1 + \eta_{\alpha,F})^d}\right)^{1/2}
  .
\end{align}
Consequently, all seven unweighted periodic mixed spaces suffer from the curse of dimensionality.
\end{theorem}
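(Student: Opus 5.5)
The proof has two independent parts: the algebraic lower bound~\eqref{eq:periodic-mixed-lower-bound}, and the curse bound~\eqref{eq:periodic-mixed-curse}.

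\textbf{Algebraic lower bound.} I would reuse the fooling functions $g^\fmarkone, g^\fmarktwo$ from the proof of Theorem~\ref{thm:Hmixalpha-lower-bound}. They are finite sums of products of dilated bumps $\phi_\alpha$, and all their derivatives of coordinatewise order at most $\alpha$ vanish on the faces of the dyadic boxes. They therefore satisfy the boundary conditions~\eqref{eq:periodic-mixed-boundary-conditions}. By~\eqref{eq:periodic-mixed-space-restriction} they lie in $\widetilde{H}^\alpha_\mix([0,1]^d)$, with norm equal to their $H^\alpha_\mix([0,1]^d)$ norm, so that norm is at most~$1$. Hence the diameter-of-information argument gives the bound $c^\mix_{d,\alpha} n^{-\alpha}(\log_2 n)^{(d-1)/2}$ on the unit ball of $\widetilde{H}^\alpha_\mix([0,1]^d)$. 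Dividing these functions by $C_{d,\alpha,F}$ places them in the unit ball of $F_{d,\alpha}$, which gives~\eqref{eq:periodic-mixed-lower-bound}. This is just the lower-bound transplanting argument after~\eqref{eq:transplant-upper-bound}; the required rescaling closure holds for deterministic algorithms. The claim $C_{d,\alpha,F}\ge1$ follows by testing on $f=1$, which has norm $1$ in every space of the table.

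\textbf{Curse bound.} I would redo the proof of Theorem~\ref{thm:curse} directly in $F_{d,\alpha}$ rather than transplanting, because transplanting would introduce a factor $C_{d,\alpha,F}$ that may grow exponentially in $d$. Set $\eta=\eta_{\alpha,F}$ and take $e_0=1$, $e_1=\sqrt{\eta}\cos(2\pi x)$, $e_2=\sqrt{\eta}\sin(2\pi x)$. Each Fourier norm in the table is diagonal in the Fourier basis, with $r^2_{1,\alpha,F}(0)=1$ for every unweighted multiplier. Write $\cos(2\pi x)=(e^{2\pi\rmi x}+e^{-2\pi\rmi x})/2$ and note that $r_{1,\alpha,F}(-1)=r_{1,\alpha,F}(1)$. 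Then $\|e_1\|^2=\|e_2\|^2=\eta\, r^2_{1,\alpha,F}(1)/2$ and $\langle e_1,e_2\rangle=0$, while $e_0$ is orthogonal to both since they have no zero mode.

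Here is the main obstacle: these functions are no longer exactly orthonormal, only satisfying $\|e_k\|\le1$, because $\eta\le 2/r^2_{1,\alpha,F}(1)$. I would handle this by rescaling. Put $\tilde e_1,\tilde e_2$ equal to $e_1,e_2$ normalised to unit norm, and set $\tilde\eta:=2/r^2_{1,\alpha,F}(1)\ge\eta$. Then $\{\tilde e_\bsk\}_{\bsk\in\{0,1,2\}^d}$ is an orthonormal family in the tensor-product Hilbert space $F_{d,\alpha}$, and $I_d(\tilde e_\bsk)=\bbone\{\bsk=\bszero\}$.

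With this family, the same fooling function $h_{d,n}$ as in~\eqref{eq:def-cubature-rule-fooling-function} yields squared error at least
\begin{align*}
1-2s+\sum_{i,i'}w_iw_{i'}\prod_{j=1}^d\big(1+\tilde\eta\cos(2\pi(x_{i,j}-x_{i',j}))\big), \qquad s=\sum_{i=1}^n w_i.
\end{align*}
Lemma~\ref{lem:double-sum} requires its parameter in $(0,1)$. It can be applied with $\tilde\eta$ if $\tilde\eta<1$, and otherwise with $\eta=1/2$. In the latter case I would first check that the product with $\tilde\eta$ dominates the product with $\eta$ in the positive semidefinite sense: expand $\prod_j(1+\tilde\eta c_j)$ as $\sum_\setu \tilde\eta^{|\setu|}\prod_{j\in\setu}c_j$, where each matrix $[\prod_{j\in\setu}c_j]_{i,i'}$ is positive semidefinite by the Schur product theorem, so the coefficientwise inequality $\tilde\eta^{|\setu|}\ge\eta^{|\setu|}$ passes to the quadratic form. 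Either way the quadratic form is at least $(1+\eta)^d s^2/n$. Minimising $1-2s+(1+\eta)^d s^2/n$ over $s$ gives the lower bound $1-n/(1+\eta)^d$, and taking the square root yields~\eqref{eq:periodic-mixed-curse}.

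The curse then follows exactly as at the end of the proof of Theorem~\ref{thm:curse}. It extends to deterministic nonlinear and adaptive algorithms through Theorem~\ref{thm:curse-nonlinear-adaptive}, applied with $F_{d,\alpha}$ in place of $H^\alpha_\mix([0,1]^d)$, which is admissible since $F_{d,\alpha}$ is also a reproducing kernel Hilbert space.
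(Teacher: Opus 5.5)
Your proposal is correct and follows the paper's proof: the fooling functions $g_{d,\alpha,t,\bsc}$ from Theorem~\ref{thm:Hmixalpha-lower-bound} vanish together with their derivatives on the faces, so dividing by $C_{d,\alpha,F}$ gives~\eqref{eq:periodic-mixed-lower-bound}, and the curse bound comes from the same $h_{d,n}$ together with Lemma~\ref{lem:double-sum}. The only difference is how you handle normalisation: you normalise $e_1,e_2$ exactly, which forces the extra Schur-product monotonicity step to descend from $2/r_{1,\alpha,F}^2(1)$ to $\eta_{\alpha,F}$, whereas the paper keeps $\sqrt{\eta_{\alpha,F}}\cos(2\pi x)$ and $\sqrt{\eta_{\alpha,F}}\sin(2\pi x)$ of norm at most one. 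That is no real obstacle, because orthogonality with $\|e_\bsk\|\le 1$ gives $\|h_{d,n}\|^2 \le \sum_\bsk (E_{d,n}(e_\bsk))^2 = E_{d,n}(h_{d,n})$, so $h_{d,n}/\|h_{d,n}\|$ still has error at least $\sqrt{E_{d,n}(h_{d,n})}$ and Lemma~\ref{lem:double-sum} applies directly with $\eta_{\alpha,F}\in(0,1)$.
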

\begin{proof}
The functions $g_{d,\alpha,t,\bsc}$ used in the proof of Theorem~\ref{thm:Hmixalpha-lower-bound}, together with all required derivatives, vanish on opposite faces of the cube, so taking care of the scaling of the unit ball by dividing by $C_{d,\alpha,F}$ proves~\eqref{eq:periodic-mixed-lower-bound}.
To prove the curse, we take the same steps as in the proof of Theorem~\ref{thm:curse} with $e_0(x) = 1$, $e_1(x) = \sqrt{\eta_{\alpha,F}} \, \cos(2\pi x)$ and $e_2(x) = \sqrt{\eta_{\alpha,F}} \, \sin(2\pi x)$, which are periodic and have norm at most one by the definition of $\eta_{\alpha,F}$.
\end{proof}

The previous theorem shows that the unweighted spaces in $\widetilde{\calH}^\alpha_\mix([0,1]^d)$ are tightly coupled and we could easily transfer lower bounds and the statement on the curse of dimensionality from the non-periodic spaces.
We have introduced weights as a possible route to tractability and avoiding the curse of dimensionality.
So it makes sense to also investigate if we can transfer upper bounds between these spaces.
We use the fact that the squared Fourier multiplier for $\widetilde{H}^{\alpha,\bsgamma,\rmu}_{\mix,1}([0,1]^d)$ is always the smallest for all $\bsh \in \Z^d$ (with the exception of the Bessel potential scale-$1$ when $1/2 < \alpha < 1$).
This enables us to transfer upper bounds for $\widetilde{H}^{\alpha,\bsgamma,\rmu}_{\mix,1}([0,1]^d)$ directly to those other spaces, as the next theorem shows.
If we assume a worst-case error bound for a lattice rule in the space $\widetilde{H}^{\alpha,\bsgamma,\rmu}_{\mix,1}([0,1]^d)$ of the form $C^\mix_{d,\alpha,\bsgamma}(\epsilon) \, n^{-\alpha+\epsilon}$, for $\epsilon > 0$, and the weights are such that we actually have a dimension-independent constant $\sup_{d \in \N} C_{d,\alpha,\bsgamma}^\mix(\epsilon) \le C_{\alpha,\bsgamma}^\mix(\epsilon) < \infty$, then this dimension-independent bound transfers immediately to those other spaces.
(The Bessel potential scale-$1$ space can be handled by rescaling the weights and, in the case of rescaled product weights, also permits transfer of the dimension-independent bound.)
We also show how to transfer such upper bounds to the periodic anchored space.
We show a result for general weights with bad dependence on~$d$, cf.\ Lemma~\ref{lem:H-alpha-mix-weighted-equivalent-norms}, and a result specialised for product weights, cf.\ Lemma~\ref{lem:H-1-mix-product-weight-embeddings}, that can be independent of~$d$ if and only if $\sum_{j \ge 1} \sqrt{\gamma_j} < \infty$.

\begin{theorem}[Transfer of upper bounds between weighted periodic mixed spaces]\label{thm:periodic-upper-bound-transfer}
Let $d, n \in \N$, $\alpha > 1/2$, $\bsgamma$ be a family of weights with $\gamma_\setu > 0$ for all~$\setu$ and let $Q_n$ be a cubature rule.
Suppose, we know an upper bound on the worst-case error for integration in the weighted unanchored space $\widetilde{H}^{\alpha,\bsgamma,\rmu}_{\mix,1}([0,1]^d)$ of the form, for $\epsilon > 0$,
\begin{align}\label{eq:assumed-bound}
  e(Q_n, \widetilde{H}^{\alpha,\bsgamma,\rmu}_{\mix,1}([0,1]^d))
  &\le
  C_{d,\alpha,\bsgamma}^\mix(\epsilon) \, n^{-\alpha+\epsilon}
  ,
\end{align}
then, for $F_{d,\alpha,\bsgamma}$ any weighted space from Table~\textup{\ref{tbl:periodic-equivalent-norms}}, and corresponding valid~$\alpha$, we have the same worst-case error upper bound
\begin{align}\label{eq:periodic-upper-bound-transfer}
  e(Q_n, F_{d,\alpha,\bsgamma})
  &\le
  C_{d,\alpha,\bsgamma}^\mix(\epsilon) \, n^{-\alpha+\epsilon}
  ,
\end{align}
where for $\widetilde{H}''^{\alpha,\bsgamma}_{\mix,1}([0,1]^d)$ we restrict this to $\alpha \ge 1$.
This means dimension-independent bounds transfer with the same constant.
For $\widetilde{H}''^{\alpha,\bsgamma}_{\mix,1}([0,1]^d)$ with $1/2 < \alpha < 1$, set $c_\alpha := 2^\alpha-1$ and define
\begin{align*}
  \gamma_\setu^*
  &:=
  \frac{\gamma_\setu}{c_\alpha^{|\setu|}}
  ,
\end{align*}
if~\eqref{eq:assumed-bound} holds with $\bsgamma$ replaced by $\bsgamma^*$, then
\begin{align}\label{eq:periodic-Bessel-upper-bound-transfer}
  e(Q_n, \widetilde{H}''^{\alpha,\bsgamma}_{\mix,1}([0,1]^d))
  &\le
  e(Q_n, \widetilde{H}^{\alpha,\bsgamma^*,\rmu}_{\mix,1}([0,1]^d))
  \le
  C_{d,\alpha,\bsgamma^*}^\mix(\epsilon) \, n^{-\alpha+\epsilon}
  ,
\end{align}
and in case $\bsgamma$ are product weights $\gamma_\setu = \prod_{j\in\setu} \gamma_j$ then also $\bsgamma^*$ are product weights with $\gamma_j^* = \gamma_j / c_\alpha$ and $\sum_{j \ge 1} \gamma_j^p < \infty \Leftrightarrow \sum_{j \ge 1} (\gamma_j^*)^p < \infty$, and dimension-independent bounds transfer whenever the assumed bound for $\bsgamma^*$ is uniform in~$d$.

For $\alpha \in \N$ and $\bsa \in [0,1]^d$, let $\widetilde{H}^{\alpha,\bsgamma,\bsa}_\mix([0,1]^d)$ denote the periodic subspace of the anchored Sobolev space $H^{\alpha,\bsgamma,\bsa}_\mix([0,1]^d)$ determined by the conditions in~\eqref{eq:periodic-mixed-boundary-conditions}, or~\eqref{eq:periodic-mixed-boundary-conditions-L2}, equipped with the inherited anchored norm~\eqref{eq:norm-H-alpha-bsgamma-a}, then the assumed bound~\eqref{eq:assumed-bound}, with $\rho_d(\bsgamma)$ defined in~\eqref{eq:def-rho-d}, gives
\begin{align}\label{eq:periodic-anchored-upper-bound-transfer}
  e(Q_n, \widetilde{H}^{\alpha,\bsgamma,\bsa}_\mix([0,1]^d))
  &\le
  \rho_d(\bsgamma) \, (2\alpha)^{d/2} \, C_{d,\alpha,\bsgamma}^\mix(\epsilon) \, n^{-\alpha+\epsilon}
  ,
\end{align}
which scales badly with~$d$.
However, in case of product weights $\gamma_\setu = \prod_{j\in\setu} \gamma_j$ we have
\begin{align}\label{eq:periodic-anchored-product-upper-bound-transfer}
  e(Q_n, \widetilde{H}^{\alpha,\bsgamma,\bsa}_\mix([0,1]^d))
  &\le
  \kappa_{d,\alpha}(\bsgamma) \, C_{d,\alpha,\bsgamma}^\mix(\epsilon) \, n^{-\alpha+\epsilon}
  ,
\end{align}
with $m_\alpha := 2 \, (2\pi)^{-2\alpha} \, \zeta(2\alpha)$ and
\begin{align*}
  \numberthis{eq:def-kappa-d-alpha}
  \kappa_{d,\alpha}(\bsgamma)
  &:=
  \prod_{j=1}^d
  \frac{\sqrt{4 + \gamma_j \, m_\alpha} + \sqrt{\gamma_j \, m_\alpha}}{2}
  \le
  \prod_{j=1}^d
  \frac{\sqrt{4 + \gamma_j/12} + \sqrt{\gamma_j/12}}{2}
  ,
\end{align*}
where $\kappa_{d,\alpha}(\bsgamma)$ is independent of the anchor and is uniformly bounded in~$d$ if and only if $\sum_{j \ge 1} \sqrt{\gamma_j} < \infty$ (cf.\ $\kappa_d(\bsgamma,\bsa)$ in~\eqref{eq:def-kappa-d} for $\alpha = 1$) and hence dimension-independent bounds transfer.
\end{theorem}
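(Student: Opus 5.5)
The whole proof rests on one transplant principle, \eqref{eq:transplant-upper-bound}. If $\|f\|_{G} \le C\,\|f\|_{F}$ for every $f$ in a space $F$ of periodic functions with absolutely converging Fourier series, then $e(Q_n,F) \le C\, e(Q_n,G)$. We always take $G = \widetilde{H}^{\alpha,\bsgamma,\rmu}_{\mix,1}([0,1]^d)$, possibly with modified weights. For the Fourier-based spaces, the required norm inequality with $C=1$ follows from a pointwise comparison of squared multipliers: it suffices to show $r_G^2(\bsh) \le r_F^2(\bsh)$ for all $\bsh\in\Z^d$. Since $r_G^2(\bsh) = \gamma_{\supp(\bsh)}^{-1}\prod_{j\in\supp(\bsh)}|h_j|^{2\alpha}$, the main tool is the following observation. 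Every general-weight multiplier in Table~\ref{tbl:periodic-equivalent-norms} has the form $\sum_{\setu}\gamma_\setu^{-1}\prod_{j\in\setu}s(h_j)$ with a univariate $s\ge 0$ and $s(0)=0$. Hence only subsets $\setu\subseteq\supp(\bsh)$ contribute, and keeping just the single term $\setu=\supp(\bsh)$ gives the lower bound $\gamma_{\supp(\bsh)}^{-1}\prod_{j\in\supp(\bsh)}s(h_j)$. It therefore suffices to check $s(h)\ge |h|^{2\alpha}$ for integers $h\ne0$.

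The univariate checks go as follows.
\begin{itemize}
\item First and last, scale~$1$: $s(h)=|h|^{2\alpha}$, so this is trivial.
\item The $2\pi$-scaled versions gain a factor $(2\pi)^{2\alpha}\ge1$ on top of their scale-$1$ counterparts.
\item Standard: $s(h)=\sum_{\tau=1}^\alpha|2\pi h|^{2\tau}\ge|2\pi h|^{2\alpha}$.
\item Bessel, $2\pi$-scaled: $s(h)=(1+|2\pi h|^2)^\alpha-1\ge |2\pi h|^{2\alpha}\ge|h|^{2\alpha}$ for $\alpha\ge1$ by superadditivity of $t\mapsto t^\alpha$. For $1/2<\alpha<1$, use instead $(1+x)^\alpha-1\ge (2^\alpha-1)x^\alpha$ for $x\ge1$ together with $(2^\alpha-1)(2\pi)^{2\alpha}\ge1$.
\item Bessel, scale~$1$, $\alpha\ge1$: again $(1+h^2)^\alpha-1\ge h^{2\alpha}$.
\item Unanchored spaces: the multiplier is equal to, or $(2\pi)^{2\alpha}$ times, $r_G^2$.
\end{itemize}
This yields \eqref{eq:periodic-upper-bound-transfer}. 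The product-weight rows are special cases of general weights by \eqref{eq:sum-incl-excl}.

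For the Bessel scale-$1$ space with $1/2<\alpha<1$, concavity of $x\mapsto(1+x)^\alpha-1-c_\alpha x^\alpha$ is unclear. Instead, use that $g(x)=((1+x)^\alpha-1)/x^\alpha$ is nonincreasing on $[1,\infty)$ for $\alpha<1$, which can be checked by differentiation. This gives $(1+h^2)^\alpha-1\ge c_\alpha |h|^{2\alpha}$ for $|h|\ge1$, with equality at $|h|=1$, so $s(h)\ge c_\alpha|h|^{2\alpha}$. The single-term argument then gives $r_F^2\ge (\gamma^*_{\supp(\bsh)})^{-1}\prod|h_j|^{2\alpha}$, which is \eqref{eq:periodic-Bessel-upper-bound-transfer}. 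The remark on product weights is immediate, since $\gamma_j^*=\gamma_j/c_\alpha$ is a constant rescaling.

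For the periodic anchored space with $\alpha\in\N$, the general-weight bound \eqref{eq:periodic-anchored-upper-bound-transfer} comes from two embeddings. Lemma~\ref{lem:H-alpha-mix-weighted-equivalent-norms}, specifically \eqref{eq:H-alpha-gamma-mix-equivalence-unanchored-anchored}, gives $\|f\|_{H^{\alpha,\bsgamma,\rmu}_\mix}\le\rho_d(\bsgamma)(2\alpha)^{d/2}\|f\|_{H^{\alpha,\bsgamma,\bsa}_\mix}$. Restricting to the periodic subspace, the unanchored norm coincides with the Fourier norm \eqref{eq:norm-Htilde-alpha-bsgamma-u}, which dominates the scale-$1$ unanchored norm by the multiplier comparison above.

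The product-weight bound \eqref{eq:periodic-anchored-product-upper-bound-transfer} is the main obstacle. I would redo the proof of Lemma~\ref{lem:H-1-mix-product-weight-embeddings} coordinatewise for general $\alpha$ on periodic functions. In one variable, the periodic anchored norm with weight $\gamma$ is $|f(a)|^2+\sum_{\tau=1}^{\alpha-1}|D^\tau f(a)|^2+\gamma^{-1}\|D^\alpha f\|^2$. By \eqref{eq:periodic-anchor-mean-Fourier-bound} we have $|\hat f_0|\le|f(a)|+\sqrt{m_\alpha}\,\|D^\alpha f\|$, and the intermediate derivative terms are nonnegative and may simply be dropped. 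The scalar inequality \eqref{eq:scalar-quadratic-inequality} with $r=\sqrt{\gamma m_\alpha}$ then gives
\[
|\hat f_0|^2+\gamma^{-1}\|D^\alpha f\|^2\le\kappa_1^2\bigl(|f(a)|^2+\gamma^{-1}\|D^\alpha f\|^2\bigr).
\]
The delicate part is tensorising this rigorously. The anchored multivariate norm mixes anchoring and derivatives across coordinates, so I would argue via the anchored and unanchored product structure: apply the univariate estimate in one coordinate at a time, treating the others as Hilbert-space-valued, and use \eqref{eq:tensorisation}. The bound $m_\alpha\le1/12$ gives the final inequality. Uniform boundedness in $d$ holds if and only if $\sum_j\sqrt{\gamma_j}<\infty$, exactly as in Lemma~\ref{lem:H-1-mix-product-weight-embeddings}, since $\kappa_1=1+\sqrt{\gamma m_\alpha}+O(\gamma)$.
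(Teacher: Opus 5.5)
Your route is the paper's route. You compare squared multipliers pointwise, keeping only the term $\setu=\supp(\bsh)$, and transplant via \eqref{eq:transplant-upper-bound}. For general weights you restrict \eqref{eq:H-alpha-gamma-mix-equivalence-unanchored-anchored} to the periodic subspace. For product weights you feed the univariate estimate $|\hat f_0|\le|f(a)|+\sqrt{m_\alpha}\,\|D^\alpha f\|_{L_2([0,1])}$ into \eqref{eq:scalar-quadratic-inequality} and then tensorise. The tensorisation is less delicate than you fear. With product weights, both the periodic anchored norm and the periodic unanchored norm are Hilbert tensor products of their univariate weighted versions. So \eqref{eq:tensorisation} applies directly to the identity map, and that is all the paper does.

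One step is stated the wrong way round. For $0<\alpha<1$, the function $g(x)=((1+x)^\alpha-1)/x^\alpha$ is \emph{nondecreasing} on $[1,\infty)$, not nonincreasing. With $t=1/x$ one has $g=(1+t)^\alpha-t^\alpha$, whose $t$-derivative $\alpha\,((1+t)^{\alpha-1}-t^{\alpha-1})$ is negative. Hence $g$ increases in $x$, from $g(1)=c_\alpha$ to $1$. If $g$ were nonincreasing, as you claim, you would get $g(x)\le c_\alpha$. That is the reverse of the inequality $(1+x)^\alpha-1\ge c_\alpha x^\alpha$ on which both of your Bessel cases with $1/2<\alpha<1$ rest (scale $1$ and $2\pi$-scaled). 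With the direction corrected your argument works.

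The paper obtains the same inequality differently. It uses $(1+x)^\alpha\ge 2^{\alpha-1}(1+x^\alpha)$, that is \eqref{eq:jensen1} with $\lambda=1/\alpha$, together with $(1-2^{\alpha-1})(x^\alpha-1)\ge0$. It treats the $2\pi$-scaled Bessel space for all $\alpha>1/2$ at once via $1+|2\pi h|^2\ge4|h|^2$ and $4^\alpha-1>1$.

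A few minor slips, none of which affects the conclusions:
\begin{itemize}
\item The $2\pi$-scaled unanchored multiplier is $(2\pi)^{2\alpha|\supp(\bsh)|}$ times $r_G^2(\bsh)$, not $(2\pi)^{2\alpha}$ times.
\item The intermediate terms $|D^\tau f(a)|^2$ in the univariate weighted anchored norm carry a factor $\gamma^{-1}$.
\item The expansion is $\kappa_1=1+\tfrac12\sqrt{\gamma m_\alpha}+O(\gamma)$.
\end{itemize}
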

\begin{proof}
For each of the spaces $F_{d,\alpha,\bsgamma}$, its squared Fourier multiplier in Table~\ref{tbl:periodic-equivalent-norms} is pointwise at least the squared multiplier of $\widetilde{H}^{\alpha,\bsgamma,\rmu}_{\mix,1}([0,1]^d)$, with the exception of $\widetilde{H}''^{\alpha,\bsgamma}_{\mix,1}([0,1]^d)$ when $1/2 < \alpha < 1$.
For $\widetilde{H}''^{\alpha,\bsgamma}_{\mix,1}([0,1]^d)$ with $\alpha \ge 1$ we use~\eqref{eq:fake-jensen-2} to obtain
\begin{align*}
  (1 + |h|^2)^\alpha - 1
  &\ge
  |h|^{2\alpha}
  ,
\end{align*}
and for $\widetilde{H}''^{\alpha,\bsgamma}_\mix([0,1]^d)$ with $\alpha > 1/2$ we use, for $0 \ne h \in \Z$,
\begin{align*}
  (1 + |2\pi h|^2)^\alpha - 1
  &\ge
  |h|^{2\alpha}
  ,
\end{align*}
which follows from $1 + |2\pi h|^2 \ge 4 \, |h|^2$, $|h|^{2\alpha} \ge 1$ and $4^\alpha - 1 > 1$.
This implies for all those cases the embedding $F_{d,\alpha,\bsgamma} \hookrightarrow \widetilde{H}^{\alpha,\bsgamma,\rmu}_{\mix,1}([0,1]^d)$ with constant~$1$, since
\begin{align*}
  \|f\|_{\widetilde{H}^{\alpha,\bsgamma,\rmu}_{\mix,1}([0,1]^d)}
  &\le
  \|f\|_{F_{d,\alpha,\bsgamma}}
  .
\end{align*}
We can now apply~\eqref{eq:transplant-upper-bound} to transplant the worst-case upper bound for $\widetilde{H}^{\alpha,\bsgamma,\rmu}_{\mix,1}([0,1]^d)$ to $F_{d,\alpha,\bsgamma}$.

For the special case $\widetilde{H}''^{\alpha,\bsgamma}_{\mix,1}([0,1]^d)$ with $1/2 < \alpha < 1$ we use, for $0 \ne h \in \Z$,
\begin{align*}
  (1 + |h|^2)^\alpha - 1
  &\ge
  c_\alpha \, |h|^{2\alpha}
  ,
\end{align*}
which follows from~\eqref{eq:jensen1} with $\lambda = 1/\alpha$ since $(1 + x)^\alpha - 1 \ge 2^{\alpha-1} (1 + x^\alpha) - 1 \ge (2^\alpha - 1) \, x^\alpha$ with $x = |h|^2 \ge 1$ and the second inequality follows from $2^{\alpha-1} (1 + x^\alpha) - 1 - (2^\alpha - 1) \, x^\alpha = (1 - 2^{\alpha-1}) \, (x^\alpha - 1) \ge 0$.
We now compare the multiplier for each $\bsh \in \Z^d$ between the norm of $\widetilde{H}''^{\alpha,\bsgamma}_{\mix,1}([0,1]^d)$ and the norm of $\widetilde{H}^{\alpha,\bsgamma^*,\rmu}_{\mix,1}([0,1]^d)$.
For $\bsh \in \Z^d$ we get
\begin{align*}
  \sum_{\setu \subseteq \{1,\ldots,d\}} \gamma_\setu^{-1} \prod_{j \in \setu} ((1 + |h_j|^2)^\alpha - 1)
  &\ge
  \gamma_{\supp(\bsh)}^{-1} \prod_{j \in \supp(\bsh)} ((1 + |h_j|^2)^\alpha - 1)
  \\
  &\ge
  \gamma_{\supp(\bsh)}^{-1} \, c_\alpha^{|\supp(\bsh)|} \prod_{j \in \supp(\bsh)} |h_j|^{2\alpha}
  ,
\end{align*}
hence, with $\gamma_\setu^* = \gamma_\setu / c_\alpha^{|\setu|}$, we also have the embedding with constant~$1$,
\begin{align*}
  \|f\|_{\widetilde{H}^{\alpha,\bsgamma^*,\rmu}_{\mix,1}([0,1]^d)}
  &\le
  \|f\|_{\widetilde{H}''^{\alpha,\bsgamma}_{\mix,1}([0,1]^d)}
  .
\end{align*}
So in this case we can apply~\eqref{eq:transplant-upper-bound} to transplant the worst-case upper bound from $\widetilde{H}^{\alpha,\bsgamma^*,\rmu}_{\mix,1}([0,1]^d)$ to $\widetilde{H}''^{\alpha,\bsgamma}_{\mix,1}([0,1]^d)$.

We now look at the periodic anchored space $\widetilde{H}^{\alpha,\bsgamma,\bsa}_\mix([0,1]^d)$ with $\alpha \in \N$.
We have
\begin{align*}
  \|f\|_{\widetilde{H}^{\alpha,\bsgamma,\rmu}_{\mix,1}([0,1]^d)}
  &\le
  \|f\|_{\widetilde{H}^{\alpha,\bsgamma,\rmu}_\mix([0,1]^d)}
  \le
  \rho_d(\bsgamma) \, (2\alpha)^{d/2} \, \|f\|_{\widetilde{H}^{\alpha,\bsgamma,\bsa}_\mix([0,1]^d)}
  ,
\end{align*}
where the first inequality follows directly from comparing~\eqref{eq:norm-Htilde-alpha-bsgamma-u} with~\eqref{eq:norm-Htilde-alpha-bsgamma-u-1}, and the second follows by restricting~\eqref{eq:H-alpha-gamma-mix-equivalence-unanchored-anchored} to the periodic subspace.
Using~\eqref{eq:transplant-upper-bound} then gives the bound on the worst-case error for $\widetilde{H}^{\alpha,\bsgamma,\bsa}_\mix([0,1]^d)$ with the bad $d$-dependence.

We now switch to product weights $\gamma_\setu = \prod_{j\in\setu} \gamma_j$.
For $\alpha \in \N$ and univariate $f \in \widetilde{H}^\alpha([0,1])$, in particular $f \in C(\bbT([0,1]))$ and $D^\alpha f \in L_2([0,1])$, and for $m_\alpha$ as defined above, the bound in~\eqref{eq:periodic-anchor-mean-Fourier-bound} from Lemma~\ref{lem:H-alpha-a-norm-equivalence-d=1}, applied with $x = a$, gives
\begin{align*}
  |f(a) - \hat{f}_0|
  &\le
  \sqrt{m_\alpha} \, \|D^\alpha f\|_{L_2([0,1])}
  .
\end{align*}
Using this estimate, we apply the inequality~\eqref{eq:scalar-quadratic-inequality} from the proof of Lemma~\ref{lem:H-1-mix-product-weight-embeddings} with
\begin{align*}
  u
  &=
  |f(a)|
  ,
  &
  v
  &=
  \gamma^{-1/2} \, \|D^\alpha f\|_{L_2([0,1])}
  ,
  &
  r
  &=
  \sqrt{\gamma \, m_\alpha}
  .
\end{align*}
Thus,
\begin{align*}
  \journalonly{\MoveEqLeft[1]} \left(|\hat{f}_0|^2 + \gamma^{-1} \, \|D^\alpha f\|_{L_2([0,1])}^2\right)^{1/2}
  \journalonly{\\*}&\le
  \frac{\sqrt{4 + \gamma \, m_\alpha} + \sqrt{\gamma \, m_\alpha}}{2}
  \left(|f(a)|^2 + \gamma^{-1} \, \|D^\alpha f\|_{L_2([0,1])}^2\right)^{1/2}
  .
\end{align*}
Tensorising the resulting inequality over the coordinates, as in the proof of Lemma~\ref{lem:H-1-mix-product-weight-embeddings}, gives, with $\kappa_{d,\alpha}(\bsgamma)$ as defined in~\eqref{eq:def-kappa-d-alpha},
\begin{align*}
  \|f\|_{\widetilde{H}^{\alpha,\bsgamma,\rmu}_{\mix,1}([0,1]^d)}
  &\le
  \|f\|_{\widetilde{H}^{\alpha,\bsgamma,\rmu}_\mix([0,1]^d)}
  \le
  \kappa_{d,\alpha}(\bsgamma) \, \|f\|_{\widetilde{H}^{\alpha,\bsgamma,\bsa}_\mix([0,1]^d)}
  .
\end{align*}
We can now again use~\eqref{eq:transplant-upper-bound} to get the worst-case error bound.
The uniform boundedness assertion for $\kappa_{d,\alpha}(\bsgamma)$ follows from the behaviour of each factor as $\gamma_j \to 0$ as in Lemma~\ref{lem:H-1-mix-product-weight-embeddings}.
\end{proof}

A good conclusion from here is that it is a good idea to construct lattice rules for the unanchored space $\widetilde{H}^{\alpha,\bsgamma,\rmu}_{\mix,1}([0,1]^d)$ since these results transfer easily to the other same-weighted Fourier-multiplier spaces from Table~\ref{tbl:periodic-equivalent-norms}, with the exceptional case of the range $1/2 < \alpha < 1$ for the space $\widetilde{H}''^{\alpha,\bsgamma}_{\mix,1}([0,1]^d)$.
But also for this space and even for the anchored periodic space $\widetilde{H}^{\alpha,\bsgamma,\bsa}_\mix([0,1]^d)$ we still have such claims when considering product weights.

Although it turns out that upper bounds seem to be transferable without ruining the $d$-dependence in these cases, at least for product weights, we must warn that tractability can nevertheless be sensitive to the particular equivalent norm.
E.g., for the special case of fully permutation-invariant subspaces, where tractability can be achieved without having weights, \cite[Section~3.2]{NSW2016} compares the squared Fourier multipliers for the unanchored space $\max\{1, |2\pi h|\}^{2\alpha}$ and the Bessel potential space with $(1 + |h|^2)^\alpha$, and their upper-bound argument yields strong polynomial tractability for $\alpha \ge \alpha_1^* \approx 0.61769976$ and $\alpha \ge \alpha_2^* \approx 1.521196$ respectively.

\subsection{An upper bound for lattice rules in periodic spaces}\label{sec:upper-bound-lattice-rules}

The whole equivalent function space business from the previous sections might have been a bit too much if we just want to understand error upper bounds for lattice rules.
So, if you are not too worried where our fancy norms come from, then this section could be read more or less independently.

\subsubsection{Error and worst-case error for lattice rules}\label{sec:lattice-worst-case-error}

The next lemma shows an explicit expression for the error of using a lattice rule to integrate a function that can be represented by an absolutely converging Fourier series.
We introduced this as the Wiener algebra~\eqref{eq:def-Wiener-algebra-d} in the previous section and we write $f \in A([0,1]^d)$.
We recall that for $f \in A([0,1]^d)$ we have, for all $\bsx \in [0,1]^d$,
\begin{align}\label{eq:Fourier-series-abs-conv}
  f(\bsx)
  &=
  \sum_{\bsh \in \Z^d} \hat{f}_\bsh \, \exp(2\pi\rmi \, \bsh \cdot \bsx)
  ,
  &
  \sum_{\bsh \in \Z^d} |\hat{f}_\bsh|
  &<
  \infty
  .
\end{align}

With the \emph{character property}, for $a \in \Z$ and $n \in \N$, we mean
\begin{align*}
  \numberthis{eq:character-prop}
  \frac1n \sum_{k=0}^{n-1} \exp(2\pi\rmi\, k a / n)
  &=
  \bbone\{a \equiv 0 \tpmod{n}\}
  \\
  &=
  \begin{cases}
    1, & \text{if $a$ is a multiple of $n$}, \\
    0, & \text{otherwise}.
  \end{cases}
\end{align*}
Using this on the Fourier basis functions with our lattice points~\eqref{eq:R1L-points} then gives us the \emph{lattice character property}, for $\bsh \in \Z^d$, $\bsz \in \Z^d$ and $n \in \N$,
\begin{align}\label{eq:lattice-character-prop}
  \frac1n \sum_{k=0}^{n-1} \exp(2\pi\rmi\, k \, (\bsz \cdot \bsh) / n)
  &=
  \begin{cases}
    1 & \text{if } \bsh \cdot \bsz \equiv 0 \pmod{n} ,\\
    0 & \text{otherwise} .
  \end{cases}
\end{align}
Using our notation for a lattice rule $Q_{d,n,\bsz}$, see~\eqref{eq:R1L-rule}, this can be written as
\begin{align*}
  Q_{d,n,\bsz}(\bsx \mapsto \exp(2\pi\rmi\, \bsh \cdot \bsx))
  &=
  \bbone\{\bsh \cdot \bsz \equiv 0 \tpmod{n}\}
  ,
\intertext{while}
  I_d(\bsx \mapsto \exp(2\pi\rmi\, \bsh \cdot \bsx))
  &=
  \bbone\{ \bsh = \bszero \}
  ,
\end{align*}
i.e., for all $\bsh \ne \bszero$ the integral is zero.
This hence gives us a way of looking at the error of integrating the Fourier basis functions.

This allows us to get an explicit expression for the error of integrating $f \in A([0,1]^d)$ which we show in the next lemma.
This result can be found in any standard reference, e.g., \cite{SJ94,Nie92,Nuy2014,DKS2013,DKP2022}.
We obtain the nice form in the lemma since the basis functions of our absolutely converging series representation match up perfectly with the nodes we use in the cubature rule: the lattice rule either integrates exactly a basis function or it has error~$1$.
Similar expressions exist for Walsh series expansions in combinations with digital nets, see, e.g., \cite{Nuy2014,DP2010}.
One often defines the \emph{dual lattice} by
\begin{align*}
  \big\{ \bsh \in \Z^d : \bsh \cdot \bsz \equiv 0 \pmod{n} \big\}
  .
\end{align*}
This is the set of Fourier indices of the basis functions where the lattice rule returns~$1$.
(For the equivalent concept with digital nets one sometimes leaves out the Walsh basis function corresponding to the $\bszero$ index and then this is the set of indices where the rule does not integrate the respective basis function correctly.)

\begin{lemma}[Exact error expression]\label{lem:fourier-sum-error-expression}
  Suppose $f$ has an absolutely converging Fourier series~\eqref{eq:Fourier-series-abs-conv}, i.e., $f \in A([0,1]^d)$, then the error of approximating its $d$-dimensional integral $I_d$, defined in~\eqref{eq:Id}, with a lattice rule $Q_{d,n,\bsz}$, as defined in~\eqref{eq:R1L-rule}, can be written as
  \begin{align*}
    Q_{d,n,\bsz}(f) - I_d(f)
    &=
    \sum_{\substack{\bszero \ne \bsh \in \Z^d \\ \bsh\cdot\bsz \equiv 0 \tpmod{n}}} \hat{f}_\bsh
    .
  \end{align*}
\end{lemma}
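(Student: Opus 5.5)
The plan is to substitute the absolutely convergent Fourier series~\eqref{eq:Fourier-series-abs-conv} into the lattice rule and interchange the finite sum over $k$ with the series over $\bsh$. Since $f \in A([0,1]^d)$, we identify $f$ with its continuous representative. By~\eqref{eq:A-embedded-in-C} this representative equals its Fourier series at every point, in particular at every lattice point $\bsx_k = (\bsz k \bmod n)/n$. So
\begin{align*}
  Q_{d,n,\bsz}(f)
  &=
  \frac1n \sum_{k=0}^{n-1} \sum_{\bsh \in \Z^d} \hat{f}_\bsh \, \exp(2\pi\rmi \, \bsh \cdot \bsx_k)
  .
\end{align*}
Because the inner series converges absolutely, with $\sum_{\bsh} |\hat{f}_\bsh| < \infty$, and the outer sum is finite, the two summations may be swapped. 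This gives $\sum_{\bsh} \hat{f}_\bsh \, Q_{d,n,\bsz}(\exp(2\pi\rmi\,\bsh\cdot\cdot))$.

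Next I would evaluate the lattice rule on each Fourier basis function. One has to check that the reduction modulo~$n$ in the lattice points does not matter. Write $\bsz k \bmod n = \bsz k - n \bsm$ with $\bsm \in \Z^d$. Then $\bsh \cdot \bsx_k = k(\bsh\cdot\bsz)/n - \bsh\cdot\bsm$, and the integer part disappears in the exponential. By the lattice character property~\eqref{eq:lattice-character-prop} the rule returns $\bbone\{\bsh\cdot\bsz \equiv 0 \tpmod{n}\}$. Hence $Q_{d,n,\bsz}(f) = \sum_{\bsh \cdot \bsz \equiv 0 \tpmod n} \hat{f}_\bsh$.

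For the integral, $I_d(f) = \hat{f}_\bszero$ follows directly from the definition~\eqref{eq:Fourier-coeff-d} with $\bsh = \bszero$. Since $\bszero \cdot \bsz = 0 \equiv 0 \tpmod{n}$, the $\bsh = \bszero$ term belongs to the dual lattice sum, and subtracting it leaves exactly the claimed expression over $\bszero \ne \bsh$. The rearranged sum is again absolutely convergent, being a subseries of an absolutely convergent series.

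The only real obstacle is justifying the pointwise use of the Fourier series and the interchange of sums. Both rest on absolute summability of $(\hat{f}_\bsh)_\bsh$ and on the convention of using the continuous representative. The rest is the bookkeeping of the mod-$n$ reduction inside the exponential.
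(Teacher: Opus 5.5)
Your proposal is correct and follows essentially the paper's proof. Like the paper, you substitute the absolutely convergent Fourier series, interchange it with the finite lattice sum, and apply the lattice character property. The only differences are cosmetic. You first evaluate $Q_{d,n,\bsz}(f)$ as the sum over the whole dual lattice and then subtract $\hat{f}_{\bszero} = I_d(f)$, whereas the paper cancels the $\bsh = \bszero$ term against the integral before interchanging the sums. You also spell out why the mod-$n$ reduction does not matter and why the interchange is justified, which the paper leaves implicit.
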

\begin{proof}
We expand $f$ in its absolutely converging Fourier series and make use of the lattice character property~\eqref{eq:lattice-character-prop} to obtain
\begin{align*}
  Q_{d,n,\bsz}(f) - I_d(f)
  &=
  \frac1n \sum_{k=0}^{n-1} f((\bsz k \bmod n) / n) - \int_{[0,1]^d} f(\bsx) \rd\bsx
  \\
  &=
  \frac1n \sum_{k=0}^{n-1} \sum_{\bsh \in \Z^d} \hat{f}_\bsh \, \exp(2\pi\rmi \, \bsh \cdot (\bsz k \bmod n) / n) - \int_{[0,1]^d} f(\bsx) \rd\bsx
  \\
  &=
  \frac1n \sum_{k=0}^{n-1} \sum_{\bszero \ne \bsh \in \Z^d} \hat{f}_\bsh \, \exp(2\pi\rmi \, \bsh \cdot \bsz k / n)
  \\
  &=
  \sum_{\bszero \ne \bsh \in \Z^d} \hat{f}_\bsh \, \frac1n \sum_{k=0}^{n-1} \exp(2\pi\rmi \, \bsh \cdot \bsz k / n)
  \\
  &=
  \sum_{\bszero \ne \bsh \in \Z^d} \hat{f}_\bsh \, \bbone\{\bsh \cdot \bsz \equiv 0 \tpmod{n}\}
  \\
  &=
  \sum_{\substack{\bszero \ne \bsh \in \Z^d \\ \bsh\cdot\bsz \equiv 0 \tpmod{n}}} \hat{f}_\bsh
  .
  \qedhere
\end{align*}
\end{proof}

Using the previous lemma we are now in a position to derive a very easy error bound using the Cauchy--Schwarz inequality.
We could also go ahead and use Hölder's inequality as we did in~\eqref{eq:embedding-r-holder} and from where we defined a general $\ell_p$-based Fourier norm~\eqref{eq:Fourier-ell-p} for a space we denoted by $F^{\alpha,\bsgamma}_{r,p}([0,1]^d)$.
This is done in~\cite{Nuy2014}, but we will not pursue that here.
However, we do want to remark that the original articles~\cite{Kor59,Kor59-b,Kor60} introducing lattice rules by Korobov in 1959 are similar to using an $\ell_\infty$-norm and not an $\ell_2$-norm as we pursue here.
There Korobov introduces a class $E_\alpha(C)$ and membership in the class is based on conditions like $|\hat{f}_\bsh| \le C \prod_{j \in \supp(\bsh)} |h_j|^{-\alpha} = C \prod_{j=1}^d \max\{1, |h_j|\}^{-\alpha}$ or $|\hat{f}_\bsh| \le C \prod_{j=1}^d (1 + |h_j|)^{-\alpha}$ (which could be seen as an equivalent norm as in Sections~\ref{sec:periodic-spaces-d=1} and~\ref{sec:periodic-spaces}).
We could interpret Korobov's condition as an $\ell_\infty$-based Fourier norm.

To state the next theorem we define the norm in terms of Fourier multipliers.
We denote the multiplier for a space $F^{\alpha,\bsgamma}_r([0,1]^d) = F^{\alpha,\bsgamma}_{r,2}([0,1]^d)$ by $r_{\alpha,\bsgamma} : \Z^d \to \R_{>0}$, where $\alpha$ is the smoothness and $\bsgamma$ will be a family of weights.
This makes the statement much more general than doing this for one specific space/norm and we know from Section~\ref{sec:periodic-spaces} and Table~\ref{tbl:periodic-equivalent-norms} that all of the norms we are interested in can be expressed in this way.
From the discussion on general Fourier $\ell_p$-norms~\eqref{eq:Fourier-ell-p} we already know that for $p = 2$ and the Fourier multipliers given in Table~\ref{tbl:periodic-equivalent-norms} we need $\alpha > 1/2$ and $r_{\alpha,\bsgamma}^{-1} \in \ell_2(\Z^d)$, i.e., $\sum_{\bsh \in \Z^d} r_{\alpha,\bsgamma}^{-2}(\bsh) < \infty$ for the embedding into the Wiener algebra $A([0,1]^d)$ which is then also embedded into the space of the continuous functions on the torus $C(\bbT([0,1]^d))$.
The parameters $\alpha$ and $\bsgamma$ are just placeholders in the theorem, where $\alpha$ denotes the smoothness and $\bsgamma$ is a weight family.
We write it in terms of $\alpha$ and $\bsgamma$ just to make the connection to our spaces in Table~\ref{tbl:periodic-equivalent-norms}.

\begin{theorem}[Worst-case error for Fourier series based norms]\label{thm:wce-F-d-r}
  Consider the function space $F^{\alpha,\bsgamma}_r([0,1]^d)$ with norm defined in terms of a given Fourier multiplier $r_{\alpha,\bsgamma} : \Z^d \to \R_{>0}$ as
  \begin{align}\label{eq:def-F-d-r-norm}
    \| f \|_{F^{\alpha,\bsgamma}_r([0,1]^d)}^2
    &:=
    \sum_{\bsh \in \Z^d} |\hat{f}_\bsh|^2 \, r_{\alpha,\bsgamma}^2(\bsh)
    ,
  \end{align}
  and $f \in F^{\alpha,\bsgamma}_r([0,1]^d)$ whenever $\|f\|_{F^{\alpha,\bsgamma}_r([0,1]^d)} < \infty$.
  If $r_{\alpha,\bsgamma}^{-1} \in \ell_2(\Z^d)$ then $F^{\alpha,\bsgamma}_r([0,1]^d) \hookrightarrow A([0,1]^d)$, i.e., $f$ has an absolutely converging Fourier series, and for every $f \in F^{\alpha,\bsgamma}_r([0,1]^d)$,
  \begin{align*}
    | Q_{d,n,\bsz}(f) - I_d(f) |
    &\le
    \| f \|_{F^{\alpha,\bsgamma}_r([0,1]^d)}
    \,
    \Bigg( \sum_{\substack{\bszero \ne \bsh \in \Z^d \\ \bsh\cdot\bsz \equiv 0 \tpmod{n}}} r_{\alpha,\bsgamma}^{-2}(\bsh) \Bigg)^{\!1/2}
    .
  \end{align*}
  Moreover, this error bound is attainable for some $f \in F^{\alpha,\bsgamma}_r([0,1]^d)$, and hence
  \begin{align*}
    e(Q_{d,n,\bsz}, F^{\alpha,\bsgamma}_r([0,1]^d))
    &=
    \sup_{\substack{f \in F^{\alpha,\bsgamma}_r([0,1]^d) \\ \|f\|_{F^{\alpha,\bsgamma}_r([0,1]^d)} \le 1}} |I_d(f) - Q_{d,n,\bsz}(f)|
    \\
    \numberthis{eq:F-r-wce}
    &=
    \Bigg( \sum_{\substack{\bszero \ne \bsh \in \Z^d \\ \bsh\cdot\bsz \equiv 0 \tpmod{n}}} r_{\alpha,\bsgamma}^{-2}(\bsh) \Bigg)^{\!1/2}
    .
  \end{align*}
  This worst-case error is ``computable'' for a given triplet $(d, n, \bsz)$ by calculating the square root of the error for the function $\xi_{d,r,\alpha,\bsgamma} \in F^{\alpha,\bsgamma}_r([0,1]^d)$, which is defined independently of $n$ and $\bsz$ by
  \begin{align}\label{eq:def-xi-d-r}
    \xi_{d,r,\alpha,\bsgamma}(\bsx)
    &:=
    \sum_{\bszero \ne \bsh \in \Z^d} r_{\alpha,\bsgamma}^{-2}(\bsh) \, \exp(\twopii \bsh\cdot\bsx)
    ,
  \end{align}
  such that
  \begin{align}\label{eq:R1L-wce}
    e(Q_{d,n,\bsz}, F^{\alpha,\bsgamma}_r([0,1]^d))
    =
    \Bigg( \sum_{\substack{\bszero \ne \bsh \in \Z^d \\ \bsh\cdot\bsz \equiv 0 \tpmod{n}}} r_{\alpha,\bsgamma}^{-2}(\bsh) \Bigg)^{\!1/2}
    &=
    \Big( Q_{d,n,\bsz}(\xi_{d,r,\alpha,\bsgamma}) \Big)^{1/2}
    .
  \end{align}
\end{theorem}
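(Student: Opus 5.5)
The plan has four steps: embed into the Wiener algebra, apply Cauchy--Schwarz to the exact error formula, build an extremal function to show equality, and identify that extremal function with $\xi_{d,r,\alpha,\bsgamma}$ evaluated by the lattice rule.

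\textbf{Embedding.} This is the chain~\eqref{eq:embedding-r}. Write $|\hat f_\bsh| = |\hat f_\bsh|\, r_{\alpha,\bsgamma}(\bsh)\cdot r_{\alpha,\bsgamma}^{-1}(\bsh)$ and apply Cauchy--Schwarz. This gives
\begin{align*}
  \|f\|_{A([0,1]^d)} \le \|r_{\alpha,\bsgamma}^{-1}\|_{\ell_2(\Z^d)}\,\|f\|_{F^{\alpha,\bsgamma}_r([0,1]^d)},
\end{align*}
so $F^{\alpha,\bsgamma}_r([0,1]^d)\hookrightarrow A([0,1]^d)$.

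\textbf{Upper bound.} Since $f\in A([0,1]^d)$, Lemma~\ref{lem:fourier-sum-error-expression} applies and expresses the error as $\sum \hat f_\bsh$ over the dual lattice with $\bsh\neq\bszero$. Applying the same Cauchy--Schwarz splitting, but only over that index set $L:=\{\bszero\ne\bsh : \bsh\cdot\bsz\equiv0 \tpmod{n}\}$, gives
\begin{align*}
  |Q_{d,n,\bsz}(f)-I_d(f)| \le \Big(\sum_{\bsh\in L}|\hat f_\bsh|^2 r_{\alpha,\bsgamma}^2(\bsh)\Big)^{1/2}\Big(\sum_{\bsh\in L} r_{\alpha,\bsgamma}^{-2}(\bsh)\Big)^{1/2}.
\end{align*}
The first factor is at most the full norm, which yields the stated bound. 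The second factor is finite because $r_{\alpha,\bsgamma}^{-1}\in\ell_2$.

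\textbf{Attainability.} Equality in Cauchy--Schwarz requires $\hat f_\bsh\, r_{\alpha,\bsgamma}(\bsh)$ to be proportional to $r_{\alpha,\bsgamma}^{-1}(\bsh)$ on $L$, with the coefficients vanishing off $L$ so that no norm is wasted. I would therefore take $\hat g_\bsh := r_{\alpha,\bsgamma}^{-2}(\bsh)\,\bbone\{\bsh\in L\}$. Then $\|g\|_F^2 = \sum_{L} r_{\alpha,\bsgamma}^{-2} =: S$, so $g$ lies in the space, and its error is $\sum_L \hat g_\bsh = S$. The normalised function $g/\sqrt S$ has unit norm and error $\sqrt S$, which proves~\eqref{eq:F-r-wce}. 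If $S=0$ the statement is trivial. I must also note that $g$ is a genuine element of the space: its coefficients lie in $\ell_1$, so $g$ is the continuous function given by its absolutely convergent Fourier series. In the weighted setting $r_{\alpha,\bsgamma}$ is real and positive, so everything is real.

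\textbf{Computable form.} For~\eqref{eq:R1L-wce}, note that $\xi_{d,r,\alpha,\bsgamma}$ has coefficients $r_{\alpha,\bsgamma}^{-2}(\bsh)$ for $\bsh\ne\bszero$ and $0$ at $\bszero$. Its norm squared is $\sum_{\bsh\ne\bszero} r_{\alpha,\bsgamma}^{-2}(\bsh)<\infty$, so it belongs to the space, and $I_d(\xi_{d,r,\alpha,\bsgamma})=0$. Lemma~\ref{lem:fourier-sum-error-expression} then gives $Q_{d,n,\bsz}(\xi_{d,r,\alpha,\bsgamma}) = \sum_{L} r_{\alpha,\bsgamma}^{-2}(\bsh) = e^2$, and taking square roots finishes.

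The only slightly delicate point I expect is confirming that the extremal $g$ and $\xi_{d,r,\alpha,\bsgamma}$ really belong to the space as continuous functions so that point evaluation is legitimate; the $\ell_1$ summability of their coefficients handles this. Everything else is direct Cauchy--Schwarz bookkeeping.
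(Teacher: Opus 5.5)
Your proposal is correct and follows the paper's proof step for step. It uses the embedding via~\eqref{eq:embedding-r}, Cauchy--Schwarz on the dual-lattice sum from Lemma~\ref{lem:fourier-sum-error-expression}, the extremal function with coefficients $r_{\alpha,\bsgamma}^{-2}(\bsh)$ on the dual lattice, and the evaluation of $Q_{d,n,\bsz}(\xi_{d,r,\alpha,\bsgamma})$ by the same lemma; the only difference is that the paper checks the unnormalised function attains the bound rather than normalising it, which is immaterial. As a small aside, your case $S=0$ never occurs, since $n\bse_1$ always lies in $L$ and $r_{\alpha,\bsgamma}^{-2}>0$.
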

\begin{proof}
We first use~\eqref{eq:embedding-r} for our specific $r_{\alpha,\bsgamma}$ to show the embedding based on $r_{\alpha,\bsgamma}^{-1} \in \ell_2(\Z^d)$.
Next we work in exactly the same spirit as in the proof of Lemma~\ref{lem:fourier-sum-error-expression}.
We multiply and divide by the Fourier multiplier $r_{\alpha,\bsgamma}$, and apply Cauchy--Schwarz
\begin{align*}
  | Q_{d,n,\bsz}(f) - I_d(f) |
  &=
  \Bigg| \sum_{\substack{\bszero \ne \bsh \in \Z^d \\ \bsh\cdot\bsz \equiv 0 \tpmod{n}}} \hat{f}_\bsh \Bigg|
  \\
  &\le
  \sum_{\substack{\bszero \ne \bsh \in \Z^d \\ \bsh\cdot\bsz \equiv 0 \tpmod{n}}} |\hat{f}_\bsh|
  \\
  &=
  \sum_{\substack{\bszero \ne \bsh \in \Z^d \\ \bsh\cdot\bsz \equiv 0 \tpmod{n}}} |\hat{f}_\bsh| \, \frac{r_{\alpha,\bsgamma}(\bsh)}{r_{\alpha,\bsgamma}(\bsh)}
  \\
  &\le
  \Bigg( \sum_{\substack{\bszero \ne \bsh \in \Z^d \\ \bsh\cdot\bsz \equiv 0 \tpmod{n}}} |\hat{f}_\bsh|^2 \, r_{\alpha,\bsgamma}^2(\bsh) \Bigg)^{1/2}
  \journalonly{\!} \articleonly{\,}
  \Bigg( \sum_{\substack{\bszero \ne \bsh \in \Z^d \\ \bsh\cdot\bsz \equiv 0 \tpmod{n}}} r_{\alpha,\bsgamma}^{-2}(\bsh) \Bigg)^{1/2}
  \\
  \numberthis{eq:CS-bound}
  &\le
  \Bigg( \sum_{\bsh \in \Z^d} |\hat{f}_\bsh|^2 \, r_{\alpha,\bsgamma}^2(\bsh) \Bigg)^{\!1/2}
  \journalonly{\!} \articleonly{\,}
  \Bigg( \sum_{\substack{\bszero \ne \bsh \in \Z^d \\ \bsh\cdot\bsz \equiv 0 \tpmod{n}}} r_{\alpha,\bsgamma}^{-2}(\bsh) \Bigg)^{\!1/2}
  .
\end{align*}
If we take the function $\xi = \xi_{d,r,\alpha,\bsgamma,n,\bsz}$ (which is not yet the function $\xi_{d,r,\alpha,\bsgamma}$ from the statement) with Fourier coefficients $\hat{\xi}_\bsh = r_{\alpha,\bsgamma}^{-2}(\bsh)$ for $\bszero \ne \bsh \in \Z^d$ whenever $\bsh\cdot\bsz \equiv 0 \pmod{n}$ and $\hat{\xi}_\bsh = 0$ otherwise, then this upper bound is attained, since we have
\begin{align*}
  &Q_{d,n,\bsz}(\xi) - I_d(\xi)
  =
  \sum_{\substack{\bszero \ne \bsh \in \Z^d \\ \bsh\cdot\bsz \equiv 0 \tpmod{n}}} r_{\alpha,\bsgamma}^{-2}(\bsh)
  \\
  &\journalonly{\!}\articleonly{\quad}=
  \Bigg( \sum_{\bsh \in \Z^d} \frac{\bbone\{\bsh\cdot\bsz \equiv 0 \tpmod{n}\} \, \bbone\{\bsh \ne \bszero\}}{r_{\alpha,\bsgamma}^4(\bsh)} \, r_{\alpha,\bsgamma}^2(\bsh) \Bigg)^{\!1/2}
  \,
  \Bigg( \journalonly{\mkern-12mu}\sum_{\substack{\bszero \ne \bsh \in \Z^d \\ \bsh\cdot\bsz \equiv 0 \tpmod{n}}}\journalonly{\mkern-12mu} r_{\alpha,\bsgamma}^{-2}(\bsh) \Bigg)^{\!1/2}
  ,
\end{align*}
which matches~\eqref{eq:CS-bound} above.
We need to check that this function is also part of the function space, and this is immediate in this case since $\|\xi\|_{F^{\alpha,\bsgamma}_r([0,1]^d)} < \infty$ because $r_{\alpha,\bsgamma}^{-1} \in \ell_2(\Z^d)$ and there are no further conditions for membership in~$F^{\alpha,\bsgamma}_r([0,1]^d)$.
Moreover, the similar looking function $\xi_{d,r,\alpha,\bsgamma}$, defined in~\eqref{eq:def-xi-d-r}, and which does not depend on $n$ and $\bsz$, has
\begin{align*}
  Q_{d,n,\bsz}(\xi_{d,r,\alpha,\bsgamma}) - I_d(\xi_{d,r,\alpha,\bsgamma})
  &=
  Q_{d,n,\bsz}(\xi_{d,r,\alpha,\bsgamma})
  \\
  &=
  \sum_{\substack{\bszero \ne \bsh \in \Z^d \\ \bsh\cdot\bsz \equiv 0 \tpmod{n}}} r_{\alpha,\bsgamma}^{-2}(\bsh)
  \\
  &=
  \Big( e(Q_{d,n,\bsz}, F^{\alpha,\bsgamma}_r([0,1]^d)) \Big)^2
  .
\end{align*}
This finishes the proof.
\end{proof}

We wrote ``computable'', in quotes, in the statement of the theorem to suggest that you can actually compute that expression in a practical way in many cases.
This means we need to be able to somehow handle the $d$-dimensional infinite Fourier series~\eqref{eq:def-xi-d-r}.
This could either be done numerically or, if it exists, by a closed-form expression.

Quite often the function $r_{\alpha,\bsgamma}$ is a product over the dimensions $r(\bsh) = r_{\alpha,\bsgamma}(\bsh) = \prod_{j=1}^d r_j(h_j)$, for some functions~$r_j$.
E.g., this is the case for product weights $\gamma_\setu = \prod_{j\in\setu} \gamma_j$ in Table~\ref{tbl:periodic-equivalent-norms}, including the unweighted case since that corresponds to $\gamma_j = 1$.
This results in a tensor-product space and the $d$-dimensional Fourier series then factorises into one-dimensional Fourier series as follows
\begin{align*}
  \journalonly{\MoveEqLeft[1]} \xi_{d,r,\alpha,\bsgamma}(\bsx)
  =
  \sum_{\bszero \ne \bsh \in \Z^d} r_{\alpha,\bsgamma}^{-2}(\bsh) \, \exp(\twopii \bsh\cdot\bsx)
  \journalonly{\\*}&=
  -r_{\alpha,\bsgamma}^{-2}(\bszero) + \sum_{\bsh \in \Z^d} r_{\alpha,\bsgamma}^{-2}(\bsh) \, \exp(\twopii \bsh\cdot\bsx)
  \\
  &=
  -\prod_{j=1}^d r_j^{-2}(0) + \prod_{j=1}^d \sum_{h_j \in \Z} r_j^{-2}(h_j) \, \exp(\twopii h_j x_j)
  \\
  \numberthis{eq:xi-d-r-form-1}
  &=
  -\prod_{j=1}^d r_j^{-2}(0) + \prod_{j=1}^d \psi_j(x_j)
  ,
\end{align*}
for some one-dimensional functions~$\psi_j = \sum_{h_j \in \Z} r_j^{-2}(h_j) \, \exp(\twopii h_j x_j)$.
These one-dimensional functions $\psi_j$ might be explicitly computable in closed form depending on what $r_j^{-2}(h)$ is.
Even if there is no closed form, we could numerically approximate $\psi_j$ for all $x = k / n$, $k = 0, \ldots, n-1$, to sufficient precision (possibly using an FFT), since every projection of our lattice points consists only of $k / n$ coordinates.

There is one common form that appears a lot in the literature in case $r_j(0) = 1$ (which can also be obtained by normalising).
Again with $r_{\alpha,\bsgamma}(\bsh) = \prod_{j=1}^d r_j(h_j)$ and with $r_j(0) = 1$, we can write
\begin{align*}
  \journalonly{\MoveEqLeft[1]} \xi_{d,r,\alpha,\bsgamma}(\bsx)
  =
  \sum_{\bszero \ne \bsh \in \Z^d} r_{\alpha,\bsgamma}^{-2}(\bsh) \, \exp(\twopii \bsh\cdot\bsx)
  \journalonly{\\*}&=
  -1 + \prod_{j=1}^d \Big( 1 + \sum_{0 \ne h_j \in \Z} r_j^{-2}(h_j) \, \exp(\twopii h_j x_j) \Big)
  \\
  \numberthis{eq:xi-d-r-form-2}
  &=
  -1 + \prod_{j=1}^d \Big( 1 + \omega_j(x_j) \Big)
  .
\end{align*}
Obviously this is the same form as~\eqref{eq:xi-d-r-form-1}, using $r_j(0) = 1$, and writing $\psi_j(x_j) = 1 + \omega_j(x_j)$.
The same remarks about computability hold.
We might know a closed form expression for the Fourier series $\omega_j$ or we can approximate its values for all $x = k / n$, $k = 0,\ldots,n-1$, up to sufficient precision.

In particular, when we have product weights $\gamma_\setu = \prod_{j\in\setu} \gamma_j$ and the squared Fourier multiplier is that of the unanchored periodic Sobolev space of dominating mixed smoothness $\widetilde{H}^{\alpha,\bsgamma,\rmu}_{\mix,1}([0,1]^d)$, see Table~\ref{tbl:periodic-equivalent-norms}, then
\begin{align*}
  r_j^2(h_j)
  &=
  \begin{cases}
    \gamma_j^{-1} \, |h_j|^{2\alpha}, & \text{if } h_j \ne 0, \\
    1, &\text{if } h_j = 0,
  \end{cases}
  .
\end{align*}
For $\alpha \in \N$ we then have
\begin{align}\label{eq:unanchored-omega-j}
  \omega_j(x_j)
  &=
  \gamma_j \, \sum_{0 \ne h_j \in \Z} \frac{\exp(\twopii h_j x_j)}{|h_j|^{2\alpha}}
  =
  \gamma_j \, \frac{(-1)^{\alpha+1} \, (2\pi)^{2\alpha}}{(2\alpha)!} \, B_{2\alpha}(x_j)
  ,
\end{align}
where $B_{2\alpha}$ is the Bernoulli polynomial of even degree $2\alpha$, see, \cite[(24.8.1)]{DLMF} or~\cite[(23.1.18)]{AS64}.
This form is hence particularly interesting for computer implementation since $\omega_j$ reduces to just a simple polynomial.

For non-integer $\alpha > 1/2$ we can still find kind of a closed form in terms of the polylogarithm, see~\cite[(25.12.10)]{DLMF},
\begin{align*}
  \Li_s(z)
  &=
  \sum_{h \ge 1} \frac{z^h}{h^s}
  ,
\end{align*}
which converges for $z \in \bbC$, $|z| \le 1$ and $\Re s > 1$.
We can take $z = \exp(\twopii x)$ and $s = 2\alpha$ and compute
\begin{align*}
  \sum_{0 \ne h \in \Z} \frac{\exp(\twopii h x)}{|h|^{2\alpha}}
  &=
  \sum_{h \ge 1} \frac{\exp(\twopii h x)}{h^{2\alpha}} + \sum_{h \ge 1} \frac{\exp(-\twopii h x)}{h^{2\alpha}}
  \\
  &=
  \sum_{h \ge 1} \frac{z^h}{h^{2\alpha}} + \sum_{h \ge 1} \frac{\overline{z}^h}{h^{2\alpha}}
  \\
  &=
  \Li_{2\alpha}(\exp(\twopii x)) + \Li_{2\alpha}(\exp(-\twopii x))
  \\
  &=
  2 \, \Re\Big\{ \Li_{2\alpha}(\exp(\twopii x)) \Big\}
  \numberthis{eq:unanchored-polylog}
  ,
\end{align*}
which holds for any $\alpha > 1/2$.
Implementations of the polylogarithm function are typically available in different programming languages.
Again, we only need the values at $x = k / n$, $k = 0, \ldots, n-1$.

\subsubsection{Component-by-component construction}\label{sec:cbc-construction}

At this point we are ready to show a constructive upper bound for lattice rules in function spaces whose squared Fourier multiplier has the form
\begin{align}\label{eq:r-general-weights-prod}
  r_{\alpha,\bsgamma}^2(\bsh)
  &=
  \gamma_{\supp(\bsh)}^{-1} \, \prod_{j\in\supp(\bsh)} r_\alpha^2(h_j)
  ,
\end{align}
and additionally satisfies that if any of the components of $\bsh$ is a non-zero multiple of a number $n \in \N$ then it behaves like multiplication with~$n^{2\alpha}$, see condition~\eqref{eq:r-multiplier-scaling} below.
This is a very specific restriction which only the unanchored weighted periodic Sobolev spaces $\widetilde{H}^{\alpha,\bsgamma,\rmu}_\mix([0,1]^d)$ and $\widetilde{H}^{\alpha,\bsgamma,\rmu}_{\mix,1}([0,1]^d)$ in Table~\ref{tbl:periodic-equivalent-norms} satisfy.
The condition~\eqref{eq:r-multiplier-scaling} below is written in an exact form but could be relaxed up to a constant as in~\cite{NSW2016}.
However, the result would be similar to using embedding constants on the final result, see~\eqref{eq:transplant-upper-bound}, which is the strategy we are following, and so we do not add this complication.
The assumptions~\eqref{eq:r-general-weights-prod} and~\eqref{eq:r-multiplier-scaling} enable us to do (rather crude) estimates of the form
\begin{align*}
  \sum_{\substack{\bszero \ne \bsh \in \Z^d \\ \bsh \equiv \bszero \tpmod{n}}} r_{\alpha,\bsgamma}^{-2}(\bsh)
  &\le
  n^{-2\alpha} \, \sum_{\bszero \ne \bsh \in \Z^d} r_{\alpha,\bsgamma}^{-2}(\bsh)
  .
\end{align*}

The theorem below shows the component-by-component result due to Sloan and collaborators~\cite{SR2002,SKJ2002}, in which we inductively extend a generating vector component-by-component minimising the worst-case error in each step.
A similar idea was in fact already used by Korobov in~\cite[Theorem~1]{Kor59}.
We first show the result for $n$ prime as this is traditionally easier to prove.
Afterwards we extend to composite~$n$ using a technique from~\cite{KMN2025} which works more generally than the direct techniques that are most often used.
We repeat again that among the spaces in Table~\ref{tbl:periodic-equivalent-norms}, the conditions of the theorem hold only for the unanchored variants $\widetilde{H}^{\alpha,\bsgamma,\rmu}_\mix([0,1]^d)$ and $\widetilde{H}^{\alpha,\bsgamma,\rmu}_{\mix,1}([0,1]^d)$.
Afterwards we will use norm embeddings to extend the result to the other spaces in Theorem~\ref{thm:lattice-rule-upper-bounds}.

\begin{theorem}[Component-by-component construction for lattice rules with prime~$n$]\label{thm:CBC-prime}
Let $\alpha > 1/2$ and let $r_\alpha : \Z \to \R_{>0}$ be a one-dimensional Fourier multiplier whose inverse satisfies
\begin{align}\label{eq:r-multiplier-scaling}
  r_\alpha^{-1}(n \, h)
  &\le
  n^{-\alpha} \, r_\alpha^{-1}(h)
  &&
  \text{for all } n \in \N \text{ and } 0 \ne h \in \Z
  .
\end{align}
This implies $r_\alpha^{-1} \in \ell_{1/\lambda}(\Z)$ for every $1/2 \le \lambda < \alpha$.
Let $\bsgamma = \{\gamma_\setu\}_{\setu \in \frakU}$ with $\gamma_\emptyset = 1$ and $\gamma_\setu > 0$ for all~$\setu$.
For any $d \in \N_0$ and $\bsh \in \Z^d$, define the squared Fourier multiplier
\begin{align}\label{eq:r-form-CBC}
  r_{\alpha,\bsgamma}^2(\bsh)
  &:=
  \gamma_{\supp(\bsh)}^{-1}
  \prod_{j\in\supp(\bsh)} r_\alpha^2(h_j)
  .
\end{align}
Note that, e.g., $r_{\alpha,\bsgamma}^2((\bsh', 0)) = r_{\alpha,\bsgamma}^2(\bsh')$ and this definition works for any $d \in \N_0$.

For $d = 1, 2, \ldots$, let $F^{\alpha,\bsgamma}_r([0,1]^d)$ be the corresponding $d$-dimensional space as in~\eqref{eq:def-F-d-r-norm}.
Let $n$ be prime, pick any $z_1 \in \Z_n^\times$, and, successively for $d = 2, 3, \ldots$, choose
\begin{align}\label{eq:CBC-argmin-Zntimes}
  z_d
  &\in
  \underset{z\in\Z_n^\times}{\arg\min}\;
  e(Q_{d,n,(z_1,\ldots,z_{d-1},z)}, F^{\alpha,\bsgamma}_r([0,1]^d))
  .
\end{align}
Then, for every $1/2 \le \lambda < \alpha$ and every $d = 1, 2, \ldots$, the resulting generating vector satisfies
\begin{align}\label{eq:CBC-prime-upper-bound}
  e(Q_{d,n,\bsz_{\{1,\ldots,d\}}}, F^{\alpha,\bsgamma}_r([0,1]^d))
  &\le
  \left(
    \frac1{\varphi(n)}
    \sum_{\emptyset\ne\setu\subseteq\{1,\ldots,d\}}
    \gamma_\setu^{1/(2\lambda)} \,
    \mu_\alpha^{|\setu|}(\lambda)
  \right)^{\!\lambda}
  ,
\end{align}
with
\begin{align}\label{eq:def-mu-alpha-lambda}
  \mu_\alpha(\lambda)
  &:=
  \sum_{0\ne h\in\Z} r_\alpha^{-1/\lambda}(h)
  <
  \infty
  .
\end{align}
\end{theorem}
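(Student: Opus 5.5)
We argue by induction on $d$, using the squared worst-case error $e_d^2(\bsz) := \sum_{\bszero\ne\bsh\in\Z^d,\ \bsh\cdot\bsz\equiv0 \tpmod{n}} r_{\alpha,\bsgamma}^{-2}(\bsh)$ from Theorem~\ref{thm:wce-F-d-r}. The target is the $\lambda$-power form $e_d^{2}(\bsz)^{1/(2\lambda)} \le \frac1{\varphi(n)} \sum_{\emptyset\ne\setu\subseteq\{1,\ldots,d\}} \gamma_\setu^{1/(2\lambda)} \mu_\alpha^{|\setu|}(\lambda)$, which is~\eqref{eq:CBC-prime-upper-bound} after raising to the power $\lambda$. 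Throughout we apply~\eqref{eq:fake-jensen} with exponent $1/(2\lambda)\le 1$ to the sums of squared multipliers, so that $\big(\sum a_i\big)^{1/(2\lambda)} \le \sum a_i^{1/(2\lambda)}$. The summability claim $r_\alpha^{-1}\in\ell_{1/\lambda}$ follows by iterating~\eqref{eq:r-multiplier-scaling}. For $|h|\ge1$ we write $h = |h|\cdot\mathrm{sgn}(h)$, which gives $r_\alpha^{-1}(h)\le |h|^{-\alpha} \max\{r_\alpha^{-1}(1),r_\alpha^{-1}(-1)\}$, and $\sum_{h\ne0} |h|^{-\alpha/\lambda}<\infty$ since $\alpha/\lambda>1$.

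For the base case $d=1$ with $z_1\in\Z_n^\times$ and $n$ prime, $hz_1\equiv0$ forces $h\equiv0 \tpmod{n}$, so $h=nh'$ with $h'\ne0$. Applying Jensen and then~\eqref{eq:r-multiplier-scaling} gives $e_1^{2\cdot 1/(2\lambda)} \le \gamma_{\{1\}}^{1/(2\lambda)} n^{-\alpha/\lambda}\mu_\alpha(\lambda)$. Since $n^{-\alpha/\lambda}\le n^{-1}\le \varphi(n)^{-1}$, the base case holds.

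For the induction step, write $\bsh=(\bsh',h_d)$. The error $e_d^2(\bsz',z)$ splits into the terms with $h_d=0$, which equal $e_{d-1}^2(\bsz')$ because $r_{\alpha,\bsgamma}((\bsh',0))=r_{\alpha,\bsgamma}(\bsh')$, plus a remainder $\theta(z)$ over $h_d\ne0$. Because $z_d$ is a minimiser, $\theta(z_d)^{1/(2\lambda)}$ is at most the average of $\theta(z)^{1/(2\lambda)}$ over $z\in\Z_n^\times$. We apply Jensen termwise and average. For $h_d\equiv 0 \tpmod{n}$ the condition reduces to $\bsh'\cdot\bsz'\equiv0$, and it holds for every $z$; we bound these terms crudely by dropping the condition and using~\eqref{eq:r-multiplier-scaling} on $h_d$, giving $n^{-\alpha/\lambda}\le 1/\varphi(n)$ times $\sum_{\bsh'}\sum_{h_d\ne0}$ of the $1/(2\lambda)$-powers. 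For $h_d\not\equiv0$, each fixed $\bsh'$ admits exactly one $z\in\Z_n^\times$ with $h_d z\equiv -\bsh'\cdot\bsz'$ when the right-hand side is nonzero mod $n$, and none otherwise, since $n$ is prime and $h_d$ is invertible. The average therefore contributes at most $\frac1{\varphi(n)}$ times the same full sum. Combining the two cases is the delicate point: they must not double-count into a factor $2$. We handle this by noting that for each $(\bsh',h_d)$ exactly one of the two cases applies, so the sum over all $\bsh'\in\Z^{d-1}$ and $h_d\ne0$ is counted once with weight at most $1/\varphi(n)$. The full sum factorises by~\eqref{eq:r-form-CBC} as $\sum_{\setu\ni d}\gamma_\setu^{1/(2\lambda)}\mu_\alpha^{|\setu|}(\lambda)$, summing over $\supp(\bsh')$.

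Adding the $h_d=0$ part, which is bounded by the induction hypothesis via~\eqref{eq:fake-jensen}, $e_d^{1/\lambda}\le e_{d-1}^{1/\lambda}+\theta(z_d)^{1/(2\lambda)}$, yields the sum over all nonempty $\setu\subseteq\{1,\ldots,d\}$ with prefactor $1/\varphi(n)$, which completes the induction. The main obstacle is the averaging step: we must keep the per-$\bsh$ weight at most $1/\varphi(n)$ uniformly, using the primality of $n$, and correctly treat the $h_d\equiv0$ terms with the scaling assumption rather than the counting argument.
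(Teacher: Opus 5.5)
Your overall structure matches the paper: the induction, splitting off the $h_d=0$ terms, ``minimiser $\le$ average'', termwise use of \eqref{eq:fake-jensen}, the base case, and the count of solutions $z\in\Z_n^\times$. The step you flag as delicate, however, is resolved incorrectly, and as written your chain does not give \eqref{eq:CBC-prime-upper-bound}. Write $c:=\bsh'\cdot\bsz' \bmod n$. After averaging over $z$, the term indexed by $(\bsh',h_d)$ carries the weight
\[
  \frac{1}{\varphi(n)}\,\bigl|\{z\in\Z_n^\times : c+h_d z\equiv 0 \tpmod{n}\}\bigr|
  =
  \begin{cases}
    \bbone\{c=0\}, & h_d\equiv 0 \tpmod{n},\\
    \bbone\{c\ne0\}/\varphi(n), & h_d\not\equiv 0 \tpmod{n}.
  \end{cases}
\]
So the terms with $h_d\equiv0$ and $c=0$ have weight $1$, not at most $1/\varphi(n)$. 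The factor $n^{-\alpha/\lambda}$ appears only after you substitute $h_d=nh'$ and apply \eqref{eq:r-multiplier-scaling}. That re-indexes these terms onto a sum over all $h'\ne0$, including residues $h'\not\equiv0$. Because you also dropped the condition $c=0$, you end up with $n^{-\alpha/\lambda}$ times the full sum over all $\bsh'$ and all nonzero last coordinates. This overlaps with the $h_d\not\equiv0$ contribution, which is $1/\varphi(n)$ times essentially the same full sum. The fact that each original pair $(\bsh',h_d)$ lies in exactly one case therefore does not prevent the double counting. What your chain actually proves is
\[
  \theta(z_d)^{1/(2\lambda)} \le \bigl(n^{-\alpha/\lambda}+1/\varphi(n)\bigr)\sum_{d\in\setu\subseteq\{1,\ldots,d\}}\gamma_\setu^{1/(2\lambda)}\mu_\alpha^{|\setu|}(\lambda),
\]
i.e.\ the theorem with $1/\varphi(n)$ replaced by a constant up to $2/\varphi(n)$. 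For $r_\alpha(h)=|h|^\alpha$ the scaling step is an equality, so the two contributions genuinely add up and the stated constant does not follow.

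The missing idea is to use exclusivity per $\bsh'$, i.e.\ per residue $c$, rather than per $(\bsh',h_d)$. For that you must keep the condition $\bsh'\cdot\bsz'\equiv0$ on the $h_d\equiv0$ terms. This is what the paper does in \eqref{eq:split-c}, by partitioning the $\bsh'$-sum according to $c\in\Z_n$.
\begin{itemize}
\item For $c=0$ only $h_d\equiv0$ contributes, and the inner factor is $\sum_{0\ne h\equiv0} r_\alpha^{-1/\lambda}(h)\le n^{-\alpha/\lambda}\mu_\alpha(\lambda)$.
\item For $c\ne0$ only $h_d\not\equiv0$ contributes, and the inner factor is at most $\mu_\alpha(\lambda)/(n-1)$.
\end{itemize}
Each $\bsh'$ is therefore multiplied by a single factor $\max\{n^{-\alpha/\lambda},1/(n-1)\}\,\mu_\alpha(\lambda)\le\mu_\alpha(\lambda)/\varphi(n)$. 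Recombining the sum over $c$ then counts the full sum exactly once, giving $\frac1{\varphi(n)}\sum_{d\in\setu}\gamma_\setu^{1/(2\lambda)}\mu_\alpha^{|\setu|}(\lambda)$. With this replacement the rest of your induction goes through.
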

\begin{proof}
We have that $\mu_\alpha(\lambda) < \infty$ for $1/2 \le \lambda < \alpha$ since $r_\alpha^{-1} \in \ell_{1/\lambda}(\Z)$ which follows from the scaling condition~\eqref{eq:r-multiplier-scaling} which says $r_\alpha^{-1}(h) \le |h|^{-\alpha} \, r_\alpha^{-1}(\pm 1) \in O(|h|^{-\alpha})$ and so $r_\alpha^{-1/\lambda}(h) \in O(|h|^{-\alpha/\lambda})$ which is summable if $\alpha / \lambda > 1$.
From the assumed form of $r_{\alpha,\bsgamma} : \Z^d \to \R_{>0}$ in~\eqref{eq:r-form-CBC} it now follows that $r_{\alpha,\bsgamma}^{-1} \in \ell_2(\Z^d)$ and we can thus apply the previous theorem, Theorem~\ref{thm:wce-F-d-r}, to write the squared worst-case error of a lattice rule $Q_{d,n,\bsz}$ in our space $F^{\alpha,\bsgamma}_r([0,1]^d)$ as, using the shorthand notation for the sum as in~\eqref{eq:fake-0-dimensional-sum},
\begin{align*}
  \journalonly{\MoveEqLeft[1]} e^2(Q_{d,n,\bsz}, F^{\alpha,\bsgamma}_r([0,1]^d))
  \journalonly{\\*}&=
  \sum_{\emptyset \ne \setu \subseteq \{1,\ldots,d\}}
  \gamma_\setu
  \sum_{\substack{\bsh_\setu \in (\Z \setminus \{0\})^{|\setu|} \\ \bsh_\setu \cdot \bsz_\setu \equiv 0 \tpmod{n}}}
  \prod_{j \in \setu} r_\alpha^{-2}(h_j)
  \\
  &=
  \sum_{\emptyset \ne \setu \subseteq \{1,\ldots,d-1\}}
  \gamma_\setu
  \sum_{\substack{\bsh_\setu \in (\Z \setminus \{0\})^{|\setu|} \\ \bsh_\setu \cdot \bsz_\setu \equiv 0 \tpmod{n}}}
  \prod_{j \in \setu} r_\alpha^{-2}(h_j)
  \\
  &\qquad+
  \sum_{d \in \setu \subseteq \{1,\ldots,d\}}
  \gamma_\setu
  \sum_{\substack{\bsh_\setu \in (\Z \setminus \{0\})^{|\setu|} \\ \bsh_\setu \cdot \bsz_\setu \equiv 0 \tpmod{n}}}
  \prod_{j \in \setu} r_\alpha^{-2}(h_j)
  \\
  &=
  e^2(Q_{d-1,n,\bsz_{\{1,\ldots,d-1\}}}, F^{\alpha,\bsgamma}_r([0,1]^{d-1}))
  \journalbreak\journalonly{\qquad} +
  \theta_d(z_d; z_1,\ldots,z_{d-1})
  ,
\end{align*}
with
\begin{align*}
  \journalonly{\MoveEqLeft[1]} \theta_d(z_d; z_1,\ldots,z_{d-1})
  =
  \theta_d(z_d; z_1,\ldots,z_{d-1}, r, \alpha, \bsgamma)
  \journalonly{\\*}&:=
  \sum_{d \in \setu \subseteq \{1,\ldots,d\}}
  \gamma_\setu
  \sum_{\substack{\bsh_\setu \in (\Z \setminus \{0\})^{|\setu|} \\ \bsh_\setu \cdot \bsz_\setu \equiv 0 \tpmod{n}}}
  \prod_{j \in \setu} r_\alpha^{-2}(h_j)
  .
\end{align*}
This forms the basis for an inductive proof since the squared worst-case error can be written as the squared worst-case error for $d-1$ and an additional term $\theta_d$, which we consider as a function of~$z_d$ while keeping all previous components $z_1$, \ldots, $z_{d-1}$ fixed.
We raise the squared worst-case error to a power $1/(2\lambda)$ with $2\lambda \ge 1$, for which, from Section~\ref{sec:Jensen} equation~\eqref{eq:fake-jensen}, we know that
\begin{align*}
  [ e^2(Q_{d,n,\bsz}, F^{\alpha,\bsgamma}_r([0,1]^d)) ]^{1/(2\lambda)}
  &=
  e^{1/\lambda}(Q_{d,n,\bsz}, F^{\alpha,\bsgamma}_r([0,1]^d))
  \\
  &\le
  e^{1/\lambda}(Q_{d-1,n,\bsz_{\{1,\ldots,d-1\}}}, F^{\alpha,\bsgamma}_r([0,1]^{d-1}))
  \\
  &\qquad+
  \theta_d^{1/(2\lambda)}(z_d; z_1,\ldots,z_{d-1})
  .
\end{align*}

For $d = 1$ we can pick any $z_1 \in \Z_n^\times$ to obtain
\begin{align*}
  e^2(Q_{1,n,z_1}, F^{\alpha,\bsgamma}_r([0,1]))
  &=
  \gamma_{\{1\}} \, \sum_{\substack{0 \ne h_1 \in \Z \\ h_1 z_1 \equiv 0 \tpmod{n}}} r_\alpha^{-2}(h_1)
  \\
  &=
  \gamma_{\{1\}} \, \sum_{\substack{0 \ne h_1 \in \Z \\ h_1 \equiv 0 \tpmod{n}}} r_\alpha^{-2}(h_1)
  \\
  &\le
  \frac{\gamma_{\{1\}}}{n^{2\alpha}} \, \sum_{0 \ne h_1 \in \Z} r_\alpha^{-2}(h_1)
  \\
  &=
  \frac{\gamma_{\{1\}} \, \mu_\alpha(1/2)}{n^{2\alpha}}
  .
\end{align*}
To bring this into the form for our inductive proof, we raise it to the power $1/(2\lambda)$.
We have, using~\eqref{eq:fake-jensen} again, with $2\lambda \ge 1$,
\begin{align*}
  \mu_\alpha^{1/(2\lambda)}(1/2)
  &=
  \Bigg( \sum_{0 \ne h \in \Z} r_\alpha^{-2}(h) \Bigg)^{1/(2\lambda)}
  \le
  \sum_{0 \ne h \in \Z} r_\alpha^{-1/\lambda}(h)
  =
  \mu_\alpha(\lambda)
  ,
\end{align*}
where $\mu_\alpha(\lambda) < \infty$ for $\lambda < \alpha$, hence
\begin{align}\label{eq:induction-d=1}
  e^{1/\lambda}(Q_{1,n,z_1}, F^{\alpha,\bsgamma}_r([0,1]))
  &\le
  \frac{\gamma_{\{1\}}^{1/(2\lambda)} \, \mu_\alpha(\lambda)}{n^{\alpha/\lambda}}
  <
  \frac{\gamma_{\{1\}}^{1/(2\lambda)} \, \mu_\alpha(\lambda)}{n-1}
  .
\end{align}
We take the following bound as our induction hypothesis
\begin{align*}
  e^{1/\lambda}(Q_{d-1,n,\bsz_{\{1,\ldots,d-1\}}}, F^{\alpha,\bsgamma}_r([0,1]^{d-1}))
  &\le
  \frac1{n-1} \sum_{\emptyset \ne \setu \subseteq \{1,\ldots,d-1\}} \gamma_\setu^{1/(2\lambda)} \, \mu_\alpha^{|\setu|}(\lambda)
  .
\end{align*}
This is clearly satisfied for~\eqref{eq:induction-d=1}.

Next we show that if the induction hypothesis is satisfied for the previous choice $z_1$, \ldots, $z_{d-1}$, then we can find a good $z_d \in \Z_n^\times$ such that the hypothesis is satisfied for the $d$-dimensional error.
We pick $z_d \in \Z_n^\times$ which achieves the smallest value possible for $\theta_d(z; z_1, \ldots, z_{d-1})$ over all choices $z \in \Z_n^\times$.
There might be multiple valid choices.
In fact if $r_\alpha^2(-h) = r_\alpha^2(h) = r_\alpha^2(|h|)$, which is always the case for our Fourier multipliers, and $n > 2$ such that $|\Z_n^\times| \ge 2$, then there are always at least two equivalent choices due to the symmetry $\theta_d(-z; z_1, \ldots, z_{d-1}) = \theta_d(z; z_1, \ldots, z_{d-1})$ where $-z$ is the additive inverse $-z \equiv n - z \pmod{n}$.
These minimisers do not change when considering $\theta_d^{1/(2\lambda)}$ instead of~$\theta_d$.
Explicitly, we have $z_d \in Z_d(r, \alpha, \bsgamma, \bsz_{\{1,\ldots,d-1\}})$.
We could denote this with $z_d = z_d(r,\alpha,\bsgamma,\bsz_{\{1,\ldots,d-1\}})$.
This remark is important, since the selection does not depend on the choice of~$\lambda$ and in an implementation we hence just look for a minimiser of $\theta_d(z; z_1, \ldots, z_{d-1})$.
In the proof we bound $\theta_d^{1/(2\lambda)}(z_d) = \theta_d^{1/(2\lambda)}(z_d(r,\alpha,\bsgamma,\bsz_{\{1,\ldots,d-1\}}))$, and the resulting bound holds for any $\lambda \in [1/2, \alpha)$.
To obtain our bound we use the trick that ``the best choice is at least as good as the average'', which results in, again using~\eqref{eq:fake-jensen}, since $2\lambda \ge 1$, and the shorthand notation for the sum as in~\eqref{eq:fake-0-dimensional-sum},
\begin{align*}
  &
  \theta_d^{1/(2\lambda)}(z_d; z_1,\ldots,z_{d-1})
  \\
  &\quad\le
  \frac1{|\Z_n^\times|} \sum_{z \in \Z_n^\times} \theta_d^{1/(2\lambda)}(z; z_1,\ldots,z_{d-1})
  \\
  &\quad\le
  \frac1{|\Z_n^\times|} \sum_{z \in \Z_n^\times}
  \sum_{\setv \subseteq \{1,\ldots,d-1\}} \gamma_{\setv\cup\{d\}}^{1/(2\lambda)}
  \sum_{\substack{\bsh_\setv \in (\Z \setminus \{0\})^{|\setv|} \\ 0 \ne h_d \in \Z \\ \bsh_\setv \cdot \bsz_\setv + h_d z \equiv 0 \tpmod{n}}}
  r_\alpha^{-1/\lambda}(h_d) \,
  \prod_{j \in \setv} r_\alpha^{-1/\lambda}(h_j)
  \\
  \numberthis{eq:split-c}
  &\quad=
  \sum_{c \in \Z_n}
  \Bigg[
  \sum_{\setv \subseteq \{1,\ldots,d-1\}} \gamma_{\setv\cup\{d\}}^{1/(2\lambda)}
  \sum_{\substack{\bsh_\setv \in (\Z \setminus \{0\})^{|\setv|} \\ \bsh_\setv \cdot \bsz_\setv \equiv c \tpmod{n}}}
  \prod_{j \in \setv} r_\alpha^{-1/\lambda}(h_j)
  \Bigg]
  \\*
  &\qquad\qquad\quad
  \Bigg[
  \frac1{|\Z_n^\times|} \sum_{z \in \Z_n^\times}
  \sum_{\substack{0 \ne h_d \in \Z \\ c + h_d z \equiv 0 \tpmod{n}}}
  r_\alpha^{-1/\lambda}(h_d)
  \Bigg]
  .
\end{align*}
Given $c \in \Z_n$, $0 \ne h \in \Z$ and prime $n$ we split the congruence $c + h z \equiv 0 \pmod{n}$ into two cases while considering solutions for $z \in \Z_n^\times$.
Remember that $z \in \Z_n^\times$ means that $z$ has a multiplicative inverse $z^{-1} \in \Z_n^\times$.
Also $\Z_n^\times = \{1, \ldots, n-1\}$ since $n$ is prime.
If $c = 0$ then $h z \equiv 0 \pmod{n}$ reduces to the condition $h \equiv 0 \pmod{n}$ and all choices of $z \in \Z_n^\times$ are solutions of the congruence.
If $c \ne 0$ then, since $n$ is prime, $c \in \Z_n^\times$, and for $z \in \Z_n^\times$ this necessarily means we have the condition $h \bmod{n} \in \Z_n^\times$.
It follows that there is a unique solution $z \in \Z_n^\times$ provided $h \not\equiv 0 \pmod{n}$.
This results in the following bound
\begin{align*}
  \frac1{|\Z_n^\times|} \sum_{z \in \Z_n^\times}
  \sum_{\substack{0 \ne h \in \Z \\ c + h z \equiv 0 \tpmod{n}}}
  r_\alpha^{-1/\lambda}(h)
  &=
  \begin{cases}
    \displaystyle
    \sum_{\substack{0 \ne h \in \Z \\ h \equiv 0 \tpmod{n}}} r_\alpha^{-1/\lambda}(h),
    & \text{if } c = 0,
    \\[8mm]
    \displaystyle
    \frac1{|\Z_n^\times|} \sum_{\substack{0 \ne h \in \Z \\ h \not\equiv 0 \tpmod{n}}} r_\alpha^{-1/\lambda}(h),
    & \text{if } c \ne 0,
  \end{cases}
  \\
  &\le
  \begin{cases}
    \displaystyle
    n^{-\alpha/\lambda} \, \mu_\alpha(\lambda),
    & \text{if } c = 0,
    \\[2mm]
    \displaystyle
    \frac{\mu_\alpha(\lambda)}{|\Z_n^\times|},
    & \text{if } c \ne 0,
  \end{cases}
  \\
  &\le
  \frac{\mu_\alpha(\lambda)}{n-1}
  ,
\end{align*}
where we used $\lambda < \alpha$ and hence $n^{-\alpha/\lambda} < n^{-1} < (n-1)^{-1}$ and $|\Z_n^\times| = n - 1$.

Hence, for our choice $z_d = z_d(r,\alpha,\bsgamma,\bsz_{\{1,\ldots,d-1\}})$,
\begin{align*}
  \journalonly{\MoveEqLeft[1]} \theta_d^{1/(2\lambda)}(z_d; z_1,\ldots,z_{d-1})
  \journalonly{\\*}&\le
  \frac{\mu_\alpha(\lambda)}{n-1}
  \sum_{c \in \Z_n}
  \sum_{\setv \subseteq \{1,\ldots,d-1\}} \gamma_{\setv\cup\{d\}}^{1/(2\lambda)}
  \sum_{\substack{\bsh_\setv \in (\Z \setminus \{0\})^{|\setv|} \\ \bsh_\setv \cdot \bsz_\setv \equiv c \tpmod{n}}}
  \prod_{j \in \setv} r_\alpha^{-1/\lambda}(h_j)
  \\
  &=
  \frac{\mu_\alpha(\lambda)}{n-1}
  \sum_{\setv \subseteq \{1,\ldots,d-1\}} \gamma_{\setv\cup\{d\}}^{1/(2\lambda)}
  \sum_{\bsh_\setv \in (\Z \setminus \{0\})^{|\setv|}}
  \prod_{j \in \setv} r_\alpha^{-1/\lambda}(h_j)
  \\
  &=
  \frac1{n-1}
  \sum_{d \in \setu \subseteq \{1,\ldots,d\}} \gamma_{\setu}^{1/(2\lambda)}
  \sum_{\bsh_\setu \in (\Z \setminus \{0\})^{|\setu|}}
  \prod_{j \in \setu} r_\alpha^{-1/\lambda}(h_j)
  \\
  &\le
  \frac1{n-1}
  \sum_{d \in \setu \subseteq \{1,\ldots,d\}} \gamma_{\setu}^{1/(2\lambda)}
  \mu_\alpha^{|\setu|}(\lambda)
  .
\end{align*}

Combining this with our induction hypothesis we obtain, for all $\lambda \in [1/2, \alpha)$,
\begin{align*}
  e^{1/\lambda}(Q_{d,n,\bsz}, F^{\alpha,\bsgamma}_r([0,1]^d))
  &\le
  \frac1{n-1} \sum_{\emptyset \ne \setu \subseteq \{1,\ldots,d-1\}} \gamma_\setu^{1/(2\lambda)} \, \mu_\alpha^{|\setu|}(\lambda)
  \\
  &\qquad\qquad+
  \frac1{n-1}
  \sum_{d \in \setu \subseteq \{1,\ldots,d\}} \gamma_{\setu}^{1/(2\lambda)}
  \mu_\alpha^{|\setu|}(\lambda)
  \\
  &=
  \frac1{n-1} \sum_{\emptyset \ne \setu \subseteq \{1,\ldots,d\}} \gamma_\setu^{1/(2\lambda)} \, \mu_\alpha^{|\setu|}(\lambda)
  .
\end{align*}
Raising both sides to the power $\lambda$ finishes the proof.
\end{proof}

The minimisation in~\eqref{eq:CBC-argmin-Zntimes} is the \emph{component-by-component construction}.
We can stop the constructions in Theorem~\ref{thm:CBC-prime}, and the following Theorem~\ref{thm:CBC-prime-or-composite}, at any dimension, and we can resume later if more dimensions are needed.
Every leading subvector satisfies the corresponding bound in its number of dimensions.
Depending on the structure of the weights, one can obtain fast implementations based on the FFT, see, e.g.,~\cite{NC2006-prime,NC2006,CKN2006,Nuy2014}, and they are called \emph{fast component-by-component construction}.
An alternative proof that covers both lattice rules with prime~$n$ and polynomial lattice rules with an irreducible modulus is given in~\cite[Theorem~5.1]{Nuy2014}, which also describes how to formulate fast CBC algorithms using FFTs and convolutions.
We will not discuss fast constructions here.

The worst-case error bound~\eqref{eq:CBC-prime-upper-bound} is dimension-independent when we can bound it independently of~$d$.
Since $\mu_\alpha(\lambda)$ is a number, this all depends on the weight family~$\bsgamma$.
More precisely, the bound is dimension-independent, also said to be strongly tractable, see, e.g., \cite{NW2010}, if
\begin{align}\label{eq:tract-gen}
  \sup_{d \in \N} \;
  \sum_{\emptyset \ne \setu \subseteq \{1,\ldots,d\}}
  \gamma_\setu^{1/(2\lambda)} \,
  \mu_\alpha^{|\setu|}(\lambda)
  &<
  \infty
  .
\end{align}
For product weights $\gamma_\setu = \prod_{j\in\setu} \gamma_j$, Lemma~\ref{lem:product-weight-sum}, applied with $\kappa = \mu_\alpha(\lambda)$ and weights $\gamma_j^{1/(2\lambda)}$, shows that this condition holds if $\sum_{j\ge1} \gamma_j^{1/(2\lambda)} < \infty$.
For POD weights $\gamma_\setu = \Gamma_{|\setu|} \prod_{j\in\setu} \gamma_j$, the summability condition is
\begin{align}\label{eq:tract-pod}
  \sup_{d \in \N} \;
  \sum_{\emptyset \ne \setu \subseteq \{1,\ldots,d\}}
  \Gamma_{|\setu|}^{1/(2\lambda)}
  \,
  \mu_\alpha^{|\setu|}(\lambda)
  \,
  \prod_{j\in\setu} \gamma_j^{1/(2\lambda)}
  &<
  \infty
  .
\end{align}
We can use Lemma~\ref{lem:POD-product-weight-bound}(i): if $\Gamma_\ell^{1/(2\lambda)}$ grows at most exponentially, i.e., $\sup_{\ell \ge 1} \Gamma_\ell^{1/\ell} < \infty$, and if $\sum_{j \ge 1} \gamma_j^{1/(2\lambda)} < \infty$ then we have a dimension-independent bound.
We can also use Lemma~\ref{lem:POD-product-weight-bound}(iii): if $\Gamma_\ell^{1/(2\lambda)} \le B^\ell \, \ell!$ for some $B > 0$ and $\sum_{j \ge 1} \gamma_j^{1/(2\lambda)} < \infty$, then we again obtain a dimension-independent bound.
In particular, this works for the weights motivated by the PDE bound~\eqref{eq:PDE-bound} where $\Gamma_\ell = \ell!$ since $1/(2\lambda) \le 1$.

We now move to composite~$n$.
Existence results of good generating vectors for arbitrary~$n$ follow, for example, from the averaging results in~\cite[Theorem~5.10]{Nie92} and~\cite[Theorem~5.8]{DKS2013}.
Component-by-component constructions for composite~$n$ and their fast implementations are treated in~\cite{KJ2002a,NC2006}.
The corresponding arguments are more involved than for prime~$n$.
In the next theorem we use~\cite[Lemmas~4.1 and~4.2]{KMN2025} to count the number of solutions $z \in \Z_n^\times$ for $c + h z \equiv 0 \pmod{n}$ given $c, h$ and~$n$, together with partitioning sums on the gcd with~$n$ in some way.
For $n \in \N$ let
\begin{align*}
  \divisors(n)
  &:=
  \big\{ m \in \N : m \mid n \big\}
  ,
\end{align*}
denote its set of positive divisors.
We will use a couple of simple tricks.
Given $n \in \N$ and $\bsz \in \Z^d$, we can partition the following sums as follows
\begin{align}
  \label{eq:ip-partition}
  \sum_{\bsh \in \Z^d} F(\bsh)
  &=
  \sum_{c \in \Z_n}
  \sum_{\substack{\bsh \in \Z^d \\ \bsh \cdot \bsz \equiv c \tpmod{n}}}
  F(\bsh)
  ,
  \\
  \label{eq:gcd-partition}
  \sum_{c \in \Z_n} G(c)
  &=
  \sum_{g \in \divisors(n)}
  \sum_{\substack{c \in \Z_n \\ \gcd(c,n) = g}}
  G(c)
  .
\end{align}
Note that for $n \in \N = \{1, 2, \ldots\}$ we have $\gcd(0,n) = n$.
These can of course also be used in reverse to remove the extra sums if nothing else depends anymore on the extra variables we introduced.
Under these sums there could be additional conditions, like excluding zeros.
This does not matter.

\begin{theorem}[Component-by-component construction for lattice rules with prime or composite~$n$]\label{thm:CBC-prime-or-composite}
Let $\alpha$, $r_\alpha$, $\bsgamma$ and the spaces $F^{\alpha,\bsgamma}_r([0,1]^d)$ for $d = 1, 2, \ldots$ satisfy the corresponding assumptions of Theorem~\textup{\ref{thm:CBC-prime}}.
Let $n \in \N$ with $n \ge 2$.
Pick any $z_1 \in \Z_n^\times$ and, successively for $d = 2, 3, \ldots$, choose
\begin{align*}
  z_d
  &\in
  \underset{z \in \Z_n^\times}{\arg\min}\;
  e(Q_{d,n,(z_1,\ldots,z_{d-1},z)}, F^{\alpha,\bsgamma}_r([0,1]^d))
  .
\end{align*}
Then, for every $1/2 \le \lambda < \alpha$ and every $d = 1, 2, \ldots$, the resulting CBC generating vector satisfies
\begin{align}\label{eq:CBC-composite-upper-bound}
  e(Q_{d,n,\bsz_{\{1,\ldots,d\}}},F^{\alpha,\bsgamma}_r([0,1]^d))
  &\le
  \left(
    \frac1{\varphi(n)}
    \sum_{\emptyset \ne \setu \subseteq \{1,\ldots,d\}}
    \gamma_\setu^{1/(2\lambda)} \,
    \mu_\alpha^{|\setu|}(\lambda)
  \right)^{\!\lambda}
  ,
\end{align}
with $\mu_\alpha$ defined by~\eqref{eq:def-mu-alpha-lambda}.
\end{theorem}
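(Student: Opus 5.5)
The argument follows the proof of Theorem~\ref{thm:CBC-prime} step by step. Only the averaging step changes, and the prime-specific counting there is replaced by a gcd-based count. As before, Theorem~\ref{thm:wce-F-d-r} gives the splitting $e^2(Q_{d,n,\bsz}) = e^2(Q_{d-1,n,\bsz_{\{1,\ldots,d-1\}}}) + \theta_d(z_d; z_1,\ldots,z_{d-1})$. Raising this to the power $1/(2\lambda)$ and applying~\eqref{eq:fake-jensen} reduces everything to an induction, with hypothesis
\begin{align*}
  e^{1/\lambda}(Q_{d-1,n,\bsz_{\{1,\ldots,d-1\}}}, F^{\alpha,\bsgamma}_r([0,1]^{d-1}))
  \le
  \frac{1}{\varphi(n)} \sum_{\emptyset \ne \setu \subseteq \{1,\ldots,d-1\}} \gamma_\setu^{1/(2\lambda)} \, \mu_\alpha^{|\setu|}(\lambda)
  .
\end{align*}

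For the base case $d=1$ with $z_1 \in \Z_n^\times$, the condition $h z_1 \equiv 0 \pmod n$ is equivalent to $n \mid h$. The scaling condition~\eqref{eq:r-multiplier-scaling} then gives the bound $\gamma_{\{1\}}^{1/(2\lambda)}\mu_\alpha(\lambda)\, n^{-\alpha/\lambda}$. This is at most $\gamma_{\{1\}}^{1/(2\lambda)}\mu_\alpha(\lambda)/\varphi(n)$, because $n^{-\alpha/\lambda} < n^{-1} \le \varphi(n)^{-1}$.

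For the inductive step I again use ``the best is at least as good as the average'' over $z \in \Z_n^\times$. I then split the sum by $c := \bsh_\setv \cdot \bsz_\setv \bmod n$ as in~\eqref{eq:ip-partition}, which reproduces~\eqref{eq:split-c}. It then suffices to show that, for every $c \in \Z_n$,
\begin{align*}
  A(c) := \frac{1}{\varphi(n)} \sum_{z\in\Z_n^\times} \sum_{\substack{0\ne h\in\Z\\ c+hz\equiv 0 \tpmod n}} r_\alpha^{-1/\lambda}(h)
  \le \frac{\mu_\alpha(\lambda)}{\varphi(n)} .
\end{align*}
Once this holds, summing over $c$ removes the congruence in the first bracket, and the rest of the computation is identical to the prime case.

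The bound on $A(c)$ is the crux. Interchanging the sums gives $A(c) = \varphi(n)^{-1} \sum_{h \ne 0} r_\alpha^{-1/\lambda}(h)\, N(c,h)$, where $N(c,h) := |\{z\in\Z_n^\times : hz \equiv -c \pmod n\}|$. This is exactly what~\cite[Lemmas~4.1 and~4.2]{KMN2025} count. Let $g = \gcd(h,n)$. A solution requires $\gcd(c,n) = g$, since $z$ is a unit. In that case $hz\equiv -c$ is equivalent to $(h/g) z \equiv -c/g \pmod{n/g}$, which fixes $z$ modulo $n/g$. The number of units of $\Z_n$ in a fixed unit residue class modulo $n/g$ is $\varphi(n)/\varphi(n/g)$.

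So for fixed $c$ with $\gcd(c,n)=g$, only $h$ with $\gcd(h,n)=g$ contribute, each with weight $\varphi(n)/\varphi(n/g)$. Two cases arise.

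\textbf{Case $g=n$, i.e.\ $c=0$.} Here $h \in n\Z$, every $z$ works, and~\eqref{eq:r-multiplier-scaling} gives $A(0) \le n^{-\alpha/\lambda}\mu_\alpha(\lambda) \le \mu_\alpha(\lambda)/\varphi(n)$.

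\textbf{Case $g<n$.} Here $A(c) \le \varphi(n/g)^{-1}\sum_{h\ne0,\ g\mid h} r_\alpha^{-1/\lambda}(h)$. Writing $h = g h'$ and applying~\eqref{eq:r-multiplier-scaling} gives at most $\varphi(n/g)^{-1} g^{-\alpha/\lambda}\mu_\alpha(\lambda) \le \mu_\alpha(\lambda)/(g\,\varphi(n/g))$. Finally, $g\,\varphi(n/g) \ge \varphi(g)\varphi(n/g) \ge \varphi(n)$, using submultiplicativity $\varphi(ab)\le \varphi(a)\varphi(b)$, which follows from the product formula.

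The main obstacle I expect is getting this count right for all $c$, including the $g=n$ versus $g<n$ split via~\eqref{eq:gcd-partition}. Once $A(c)\le \mu_\alpha(\lambda)/\varphi(n)$ is established, adding $\theta_d^{1/(2\lambda)}$ to the induction hypothesis and raising the result to the power $\lambda$ gives~\eqref{eq:CBC-composite-upper-bound}.
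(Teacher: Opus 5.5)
Your argument follows the same route as the paper: reduce to \eqref{eq:split-c}, note that only $h$ with $\gcd(h,n)=\gcd(c,n)=g$ can contribute, write $h=gh'$, and use \eqref{eq:r-multiplier-scaling}. The one difference is the count of solutions. You compute the number of \emph{unit} solutions exactly, $N(c,h)=\varphi(n)/\varphi(n/g)$. The paper instead bounds it by the number of solutions $z\in\Z_n$, which is $\gcd(h,n)=g$ (see \eqref{eq:CBC-composite-unit-count}), and then uses $g\cdot g^{-\alpha/\lambda}\le 1$.

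Your final step is justified by a false statement. Euler's totient is supermultiplicative, not submultiplicative: $\varphi(ab)=\varphi(a)\,\varphi(b)\,\delta/\varphi(\delta)\ge\varphi(a)\,\varphi(b)$ with $\delta=\gcd(a,b)$. This is what the product formula actually gives, since primes dividing both $a$ and $b$ are counted twice in $\varphi(a)\varphi(b)$.

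Hence $\varphi(g)\,\varphi(n/g)\ge\varphi(n)$ fails whenever $\gcd(g,n/g)>1$. For example, $n=4$, $c=2$, $g=2$ turns your chain into $1\ge 2$.

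The inequality you actually need, $g\,\varphi(n/g)\ge\varphi(n)$, is still true. Since every prime dividing $n/g$ divides $n$, we have $g\,\varphi(n/g)=n\prod_{p\mid n/g}(1-1/p)\ge n\prod_{p\mid n}(1-1/p)=\varphi(n)$.

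More directly, the $\varphi(n)/\varphi(n/g)$ units you count all lie in one residue class modulo $n/g$ inside $\Z_n$. That class has exactly $g$ elements, so $\varphi(n)/\varphi(n/g)\le g$. This is precisely the paper's crude bound, so the exact fibre count is thrown away in the last step and gains nothing.

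With this one-line repair, or by using $N(c,h)\le g$ from the start, your proof is complete.
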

\begin{proof}
We follow the previous proof until~\eqref{eq:split-c} where, using the shorthand notation for the sum as in~\eqref{eq:fake-0-dimensional-sum}, we need to bound
\begin{align*}
  \theta_d^{1/(2\lambda)}(z_d; z_1,\ldots,z_{d-1})
  &\le
  \sum_{\setv \subseteq \{1,\ldots,d-1\}} \gamma_{\setv\cup\{d\}}^{1/(2\lambda)}
  \sum_{c \in \Z_n}
  \sum_{\substack{\bsh_\setv \in (\Z \setminus \{0\})^{|\setv|} \\ \bsh_\setv \cdot \bsz_\setv \equiv c \tpmod{n}}}
  \prod_{j \in \setv} r_\alpha^{-1/\lambda}(h_j)
  \\*
  &\qquad\qquad\qquad\qquad
  \frac1{|\Z_n^\times|} \sum_{z \in \Z_n^\times}
  \sum_{\substack{0 \ne h_d \in \Z \\ c + h_d z \equiv 0 \tpmod{n}}}
  r_\alpha^{-1/\lambda}(h_d)
  .
\end{align*}

We will use the elementary fact, see also~\cite[Lemmas~4.1 and~4.2]{KMN2025}, that for fixed $c \in \Z_n$ and $h \in \Z$, the congruence $c + h z \equiv 0 \pmod{n}$ has exactly $\gcd(h, n)$ solutions $z \in \Z_n$ if $\gcd(h, n) \mid c$, and no solutions otherwise.
We pick the components of the generating vector from $\Z_n^\times$, so this serves as an upper bound.
Moreover, if $z \in \Z_n^\times$ solves $c + h z \equiv 0 \pmod{n}$ then we have $\gcd(h, n) = \gcd(z h, n) = \gcd(c, n)$.
We thus have, given $c \in \Z_n$ and $h \in \Z$,
\begin{align}\label{eq:CBC-composite-unit-count}
  \left|\big\{z \in \Z_n^\times : c + h z \equiv 0 \pmod{n}\big\}\right|
  &\le
  \begin{cases}
    \gcd(h, n), & \text{if } \gcd(h, n) = \gcd(c, n), \\
    0, & \text{otherwise}.
  \end{cases}
\end{align}
We will also use $\gcd(h, n) = g \Rightarrow g \mid h$ and $\gcd(h/g, n/g) = 1$.

We now combine the partitioning of~\eqref{eq:ip-partition} and~\eqref{eq:gcd-partition}, the congruence properties~\eqref{eq:CBC-composite-unit-count} and the decay property~\eqref{eq:r-multiplier-scaling}, to obtain, for $\setv \subseteq \{1, \ldots, d-1\}$,
\begin{align*}
  &
  \sum_{c \in \Z_n}
  \sum_{\substack{\bsh_\setv \in (\Z \setminus \{0\})^{|\setv|} \\ \bsh_\setv \cdot \bsz_\setv \equiv c \tpmod{n}}}
  \prod_{j \in \setv} r_\alpha^{-1/\lambda}(h_j)
  \;
  \frac1{|\Z_n^\times|} \sum_{z \in \Z_n^\times}
  \smashoperator[r]{\sum_{\substack{0 \ne h_d \in \Z \\ c + h_d z \equiv 0 \tpmod{n}}}}
  r_\alpha^{-1/\lambda}(h_d)
  \\
  &\;=
  \sum_{g \in \divisors(n)}
  \sum_{\substack{c \in \Z_n \\ \gcd(c, n) = g}}
  \sum_{\substack{\bsh_\setv \in (\Z \setminus \{0\})^{|\setv|} \\ \bsh_\setv \cdot \bsz_\setv \equiv c \tpmod{n}}}
  \prod_{j \in \setv} r_\alpha^{-1/\lambda}(h_j)
  \;
  \frac1{|\Z_n^\times|} \sum_{z \in \Z_n^\times}
  \smashoperator[r]{\sum_{\substack{0 \ne h_d \in \Z \\ \gcd(h_d, n) = g \\ c + h_d z \equiv 0 \tpmod{n}}}}
  r_\alpha^{-1/\lambda}(h_d)
  \\
  &\;\le
  \sum_{g \in \divisors(n)}
  \sum_{\substack{c \in \Z_n \\ \gcd(c, n) = g}}
  \sum_{\substack{\bsh_\setv \in (\Z \setminus \{0\})^{|\setv|} \\ \bsh_\setv \cdot \bsz_\setv \equiv c \tpmod{n}}}
  \prod_{j \in \setv} r_\alpha^{-1/\lambda}(h_j)
  \;
  \frac{g}{|\Z_n^\times|}
  \smashoperator[r]{\sum_{\substack{0 \ne h \in \Z \\ \gcd(h, n/g) = 1}}}
  r_\alpha^{-1/\lambda}(g \, h)
  \\
  &\;\le
  \sum_{g \in \divisors(n)}
  \sum_{\substack{c \in \Z_n \\ \gcd(c, n) = g}}
  \sum_{\substack{\bsh_\setv \in (\Z \setminus \{0\})^{|\setv|} \\ \bsh_\setv \cdot \bsz_\setv \equiv c \tpmod{n}}}
  \prod_{j \in \setv} r_\alpha^{-1/\lambda}(h_j)
  \;
  \frac{g}{|\Z_n^\times|}
  \sum_{0 \ne h \in \Z}
  g^{-\alpha/\lambda} \,
  r_\alpha^{-1/\lambda}(h)
  \\
  &\;\le
  \frac{\mu_\alpha(\lambda)}{\varphi(n)}
  \sum_{g \in \divisors(n)}
  \sum_{\substack{c \in \Z_n \\ \gcd(c, n) = g}}
  \sum_{\substack{\bsh_\setv \in (\Z \setminus \{0\})^{|\setv|} \\ \bsh_\setv \cdot \bsz_\setv \equiv c \tpmod{n}}}
  \prod_{j \in \setv} r_\alpha^{-1/\lambda}(h_j)
  \\
  &\;=
  \frac{\mu_\alpha(\lambda)}{\varphi(n)}
  \sum_{\bsh_\setv \in (\Z \setminus \{0\})^{|\setv|}}
  \prod_{j \in \setv} r_\alpha^{-1/\lambda}(h_j)
  \\
  &\;=
  \frac{\mu_\alpha^{|\setv|+1}(\lambda)}{\varphi(n)}
  ,
\end{align*}
where in the last inequality we used $g^{1 - \alpha/\lambda} \le 1$ since $g \ge 1$ and $\lambda < \alpha$.
Summation over~$\setv$ gives
\begin{align*}
  \theta_d^{1/(2\lambda)}(z_d; z_1,\ldots,z_{d-1})
  &\le
  \frac1{\varphi(n)}
  \sum_{d \in \setu \subseteq \{1,\ldots,d\}}
  \gamma_\setu^{1/(2\lambda)} \, \mu_\alpha^{|\setu|}(\lambda)
  .
\end{align*}
For $d = 1$ we already had the required bound from the previous proof in~\eqref{eq:induction-d=1} since $\varphi(n) \le n - 1$.
The induction then yields~\eqref{eq:CBC-composite-upper-bound}.
\end{proof}

Obtaining dimension-independent bounds follows exactly the same discussion as after Theorem~\ref{thm:CBC-prime}, see~\eqref{eq:tract-gen} and~\eqref{eq:tract-pod}.

The bounds in Theorems~\ref{thm:CBC-prime} and~\ref{thm:CBC-prime-or-composite} also hold if the components are chosen to minimise the error over all of~$\Z_n$, starting with any $z_1 \in \Z_n^\times$.
This is because the minimum over $\Z_n$ is at most the minimum over $\Z_n^\times$.
For prime $n$, we have $\varphi(n) = n - 1$, so the two theorems give the same bound.
For prime powers $n = p^m$, there is also the easy bound $1/\varphi(n) \le 2 / n$, see, e.g.,~\cite[Theorem~5.1]{KN2016}.

\subsubsection{Convergence rates and dimension-independent bounds}\label{sec:lattice-convergence-rates}

The setup of the previous theorem applies directly to the weighted unanchored periodic spaces $\widetilde{H}^{\alpha,\bsgamma,\rmu}_\mix([0,1]^d)$ and $\widetilde{H}^{\alpha,\bsgamma,\rmu}_{\mix,1}([0,1]^d)$.
Their Fourier multipliers satisfy~\eqref{eq:r-multiplier-scaling} with equality.
Theorem~\ref{thm:periodic-upper-bound-transfer} transfers results from $\widetilde{H}^{\alpha,\bsgamma,\rmu}_{\mix,1}([0,1]^d)$ to the other spaces in Table~\ref{tbl:periodic-equivalent-norms}, which is what we use in the following theorem.
We first state results for general weights and then specialise to product and POD weights, giving conditions for dimension-independent convergence.

\begin{theorem}[Component-by-component bounds for lattice rules in the weighted periodic mixed spaces from Table~\ref{tbl:periodic-equivalent-norms}]\label{thm:lattice-rule-upper-bounds}
Let $\alpha > 1/2$ and $\bsgamma = \{\gamma_\setu\}_{\setu \in \frakU}$ be general weights with $\gamma_\emptyset = 1$ and $\gamma_\setu > 0$ for all~$\setu$.
Let $n \in \N$ with $n \ge 2$.
Let $\bsz$ be constructed component-by-component for the unanchored space $\widetilde{H}^{\alpha,\bsgamma,\rmu}_{\mix,1}([0,1]^d)$ using Theorem~\textup{\ref{thm:CBC-prime-or-composite}}.
Then, for every $1/2 \le \lambda < \alpha$, we have
\begin{align}\label{eq:CBC-periodic-general-totient-bound}
  e(Q_{d,n,\bsz}, \widetilde{H}^{\alpha,\bsgamma,\rmu}_{\mix,1}([0,1]^d))
  &\le
  \Bigg(
  \frac1{\varphi(n)}
  \,
  \sum_{\emptyset \ne \setu \subseteq \{1,\ldots,d\}}
  \gamma_\setu^{1/(2\lambda)} \,
  (
    2\zeta(\alpha / \lambda)
  )^{|\setu|}
  \Bigg)^{\!\lambda}
  .
\end{align}
Substituting $\delta = \alpha - \lambda \in (0, \alpha - 1/2]$, for every $\epsilon > 0$, there is a constant $C_{\alpha,\delta,\epsilon} = C_{\lambda,\epsilon} > 0$, independent of $n$, $d$ and $\bsgamma$, such that
\begin{align*}
  \numberthis{eq:CBC-periodic-general-upper-bound}
  \journalonly{\MoveEqLeft[1]} e(Q_{d,n,\bsz}, \widetilde{H}^{\alpha,\bsgamma,\rmu}_{\mix,1}([0,1]^d))
  \journalonly{\\* }&\le
  C_{\alpha,\delta,\epsilon} \, n^{-\alpha+\delta+\epsilon} \,
  \Bigg(
    \sum_{\emptyset \ne \setu \subseteq \{1,\ldots,d\}}
    \gamma_\setu^{1/(2(\alpha - \delta))} \,
    \Big(
      2\zeta\big(\frac{\alpha}{\alpha - \delta}\big)
    \Big)^{|\setu|}
  \Bigg)^{\!\alpha - \delta}
  .
\end{align*}
Both bounds hold for $e(Q_{d,n,\bsz}, F^{\alpha,\bsgamma}_r([0,1]^d))$ with
\begin{align*}
  F^{\alpha,\bsgamma}_r([0,1]^d)
  &\in
  \Big\{
    \widetilde{H}^{\alpha,\bsgamma}_\mix([0,1]^d),
    \widetilde{H}'^{\alpha,\bsgamma}_\mix([0,1]^d),
    \widetilde{H}'^{\alpha,\bsgamma}_{\mix,1}([0,1]^d),
    \widetilde{H}''^{\alpha,\bsgamma}_\mix([0,1]^d),
  \\*
  &\qquad
    \widetilde{H}''^{\alpha,\bsgamma}_{\mix,1}([0,1]^d),
    \widetilde{H}^{\alpha,\bsgamma,\rmu}_\mix([0,1]^d),
    \widetilde{H}^{\alpha,\bsgamma,\rmu}_{\mix,1}([0,1]^d)
  \Big\}
  ,
\end{align*}
where $r = r_{\alpha,\bsgamma}$ is the corresponding Fourier multiplier from Table~\textup{\ref{tbl:periodic-equivalent-norms}} and $\alpha$ is restricted to the corresponding range, and with $\alpha \ge 1$ for $\widetilde{H}''^{\alpha,\bsgamma}_{\mix,1}([0,1]^d)$.

For fixed $\alpha$, $\delta$ and $\epsilon$, and every $c > 0$, there is an $n_0 = n_0(\alpha,\delta,\epsilon,c)$, independent of $d$ and $\bsgamma$, such that~\eqref{eq:CBC-periodic-general-upper-bound} holds with $c$ in place of $C_{\alpha,\delta,\epsilon}$ for all $n \ge n_0$.
Furthermore, if the sum in the bound is uniformly bounded in~$d$ then the convergence is dimension-independent and is $o(n^{-\alpha+\delta+\epsilon})$ for every $\epsilon > 0$.
In particular the sum is uniformly bounded in the following cases.
\begin{itemize}
  \item
  If $\bsgamma$ are product weights $\gamma_\setu = \prod_{j \in \setu} \gamma_j$ and $\sum_{j \ge 1} \gamma_j^{1/(2(\alpha - \delta))} < \infty$, the sum in~\eqref{eq:CBC-periodic-general-totient-bound} and~\eqref{eq:CBC-periodic-general-upper-bound} is uniformly bounded in~$d$.

  For the anchored periodic space $\widetilde{H}^{\alpha,\bsgamma,\bsa}_\mix([0,1]^d)$, which is not in the above list, with $\alpha \in \N$ and product weights, the bound also holds after multiplication by $\kappa_{d,\alpha}(\bsgamma)$ from~\eqref{eq:def-kappa-d-alpha}.
  The extra constant $\kappa_{d,\alpha}(\bsgamma)$ additionally needs $\sum_{j \ge 1} \sqrt{\gamma_j} < \infty$ to be uniformly bounded in~$d$.

  \item
  If $\bsgamma$ are POD weights $\gamma_\setu = \Gamma_{|\setu|} \, \prod_{j \in \setu} \gamma_j$, then the sum in~\eqref{eq:CBC-periodic-general-totient-bound} and~\eqref{eq:CBC-periodic-general-upper-bound} is uniformly bounded in~$d$ if $\sum_{j \ge 1} \gamma_j^{1/(2(\alpha - \delta))} < \infty$ and either $\sup_{\ell \ge 1} \Gamma_\ell^{1/\ell} < \infty$ or $\Gamma_\ell^{1/(2(\alpha - \delta))} \le B^\ell \, \ell!$ for some $B > 0$.
\end{itemize}
\end{theorem}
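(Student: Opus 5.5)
The plan is to obtain everything from Theorem~\ref{thm:CBC-prime-or-composite} applied to the scale-$1$ unanchored space, followed by the transfer results of Theorem~\ref{thm:periodic-upper-bound-transfer}.

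First, we check the hypotheses. The multiplier of $\widetilde{H}^{\alpha,\bsgamma,\rmu}_{\mix,1}([0,1]^d)$ has the form~\eqref{eq:r-form-CBC} with $r_\alpha(h)=|h|^{\alpha}$ for $h\ne0$. This satisfies~\eqref{eq:r-multiplier-scaling} with equality. We then compute
\[
  \mu_\alpha(\lambda)=\sum_{0\ne h\in\Z}|h|^{-\alpha/\lambda}=2\zeta(\alpha/\lambda),
\]
which is finite because $\lambda<\alpha$. Substituting this into~\eqref{eq:CBC-composite-upper-bound} gives~\eqref{eq:CBC-periodic-general-totient-bound} directly.

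For~\eqref{eq:CBC-periodic-general-upper-bound}, we set $\lambda=\alpha-\delta$. It remains to bound $\varphi(n)^{-\lambda}$ by $C\,n^{-\lambda+\epsilon}$. We use the classical fact $\varphi(n)\ge c_\eta\,n^{1-\eta}$ for every $\eta>0$; for instance, it follows from $n/\varphi(n)=\prod_{p\mid n}(1-1/p)^{-1}\in O(\log\log n)$. Choosing $\eta=\epsilon/\lambda$ yields the constant $C_{\alpha,\delta,\epsilon}$, which depends only on $\lambda$ and $\epsilon$. Because $n/\varphi(n)=o(n^{\eta})$, the ratio $\varphi(n)^{-\lambda}/n^{-\lambda+\epsilon}$ actually tends to zero. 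This gives the statement about $n_0$ with an arbitrary constant $c$: the only $n$-dependence sits in this factor, which does not involve $d$ or $\bsgamma$. If the weight sum is bounded uniformly in $d$, the $o(n^{-\alpha+\delta+\epsilon})$ claim follows immediately. The totient estimate is the one step needing outside input. I would cite it, or prove the weaker $\varphi(n)\ge c_\eta n^{1-\eta}$ directly by splitting primes $p\mid n$ into $p<2^{1/\eta}$ and $p\ge 2^{1/\eta}$.

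Next, we transfer to the other spaces. Theorem~\ref{thm:periodic-upper-bound-transfer} shows that every listed space embeds into $\widetilde{H}^{\alpha,\bsgamma,\rmu}_{\mix,1}([0,1]^d)$ with constant $1$; for $\widetilde{H}''^{\alpha,\bsgamma}_{\mix,1}$ this requires $\alpha\ge1$. By~\eqref{eq:transplant-upper-bound}, both bounds carry over unchanged. For the anchored periodic space with product weights and $\alpha\in\N$, the same theorem gives the extra factor $\kappa_{d,\alpha}(\bsgamma)$, which is bounded uniformly in $d$ if and only if $\sum_j\sqrt{\gamma_j}<\infty$.

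Finally, we treat the weight cases. For product weights, Lemma~\ref{lem:product-weight-sum} with weights $\gamma_j^{1/(2\lambda)}$ and $\kappa=2\zeta(\alpha/\lambda)$ bounds the sum by $\exp\bigl(\kappa\sum_j\gamma_j^{1/(2\lambda)}\bigr)$. For POD weights, $\gamma_\setu^{1/(2\lambda)}$ is again a POD weight with $\Gamma_\ell'=\Gamma_\ell^{1/(2\lambda)}$ and $\gamma_j'=\gamma_j^{1/(2\lambda)}$. Absorbing $\kappa^{|\setu|}$ into $\gamma_j'$, we apply Lemma~\ref{lem:POD-product-weight-bound}(i) in the case $\sup_\ell\Gamma_\ell^{1/\ell}<\infty$, which gives $\sup_\ell(\Gamma_\ell')^{1/\ell}<\infty$. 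In the case $\Gamma_\ell'\le B^\ell\ell!$ we apply part (iii); since the absorbed factor $\kappa$ only rescales the tail sum, part (iii) still applies.
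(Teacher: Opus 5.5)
Your proposal is correct and follows essentially the same route as the paper: apply Theorem~\ref{thm:CBC-prime-or-composite} with $r_\alpha(h)=|h|^\alpha$ so that $\mu_\alpha(\lambda)=2\zeta(\alpha/\lambda)$, convert $\varphi(n)^{-\lambda}$ using $1/\varphi(n)\in o(n^{-1+\eta})$ with $\eta=\epsilon/\lambda$, transfer to the other spaces via Theorem~\ref{thm:periodic-upper-bound-transfer}, and treat the weight cases with Lemmas~\ref{lem:product-weight-sum} and~\ref{lem:POD-product-weight-bound}. Two small differences are in your favour. Your lower bound $\varphi(n)\ge c_\eta n^{1-\eta}$ holds for all $n$, so it gives the constant $C_{\alpha,\delta,\epsilon}$ for every $n\ge 2$ directly, whereas the paper's argument only gives it for $n\ge n_0$. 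Your explicit POD reduction, absorbing $(2\zeta(\alpha/\lambda))^{|\setu|}$ into $\gamma_j^{1/(2\lambda)}$ and passing to $\Gamma_\ell^{1/(2\lambda)}$, fills in a step that the paper leaves implicit.
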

\begin{proof}
For the unanchored space $\widetilde{H}^{\alpha,\bsgamma,\rmu}_{\mix,1}([0,1]^d)$ we have, see Table~\ref{tbl:periodic-equivalent-norms}, $r_\alpha(h) = |h|^\alpha$ for $h \ne 0$.
Using this in~\eqref{eq:def-mu-alpha-lambda} we get
\begin{align*}
  \mu_\alpha(\lambda)
  &=
  2\zeta(\alpha/\lambda)
  ,
\end{align*}
which is finite for $\alpha/\lambda > 1$, i.e., $\lambda < \alpha$.
We satisfy all the requirements of Theorem~\ref{thm:CBC-prime-or-composite} and hence obtain~\eqref{eq:CBC-periodic-general-totient-bound}.

Next we use
\begin{align*}
  1/\varphi(n)
  &\in
  o(n^{-1+\eta})
\end{align*}
for every $\eta > 0$, i.e., for every $c_\eta > 0$ there is $n_0 = n_0(\eta,c_\eta)$ such that for all $n \ge n_0$ we have
\begin{align*}
  \frac1{\varphi(n)}
  &\le
  c_\eta \, n^{-1+\eta}
  .
\end{align*}
Taking $\lambda = \alpha - \delta$ and $\eta = \epsilon/\lambda = \epsilon/(\alpha - \delta)$, we then arrive at~\eqref{eq:CBC-periodic-general-upper-bound} with $C_{\alpha,\delta,\epsilon} = C_{\lambda,\epsilon} = c_\eta^\lambda = c_{\epsilon/\lambda}^\lambda = c_{\epsilon/(\alpha - \delta)}^{\alpha - \delta}$.

Using Theorem~\ref{thm:periodic-upper-bound-transfer} we can use norm embeddings to transfer the result to any of the listed function spaces, including the anchored periodic space $\widetilde{H}^{\alpha,\bsgamma,\bsa}_\mix([0,1]^d)$ for product weights.

The conditions for dimension-independent bounds for product weights follow from Lemma~\ref{lem:product-weight-sum} and Theorem~\ref{thm:periodic-upper-bound-transfer}, and for POD weights from Lemma~\ref{lem:POD-product-weight-bound}.
\end{proof}

\subsection{Reproducing kernel Hilbert spaces}\label{sec:rkhs}

In the previous sections we derived lower and upper bounds using relatively direct techniques.
Since we (mostly) stuck to Hilbert versions of our function spaces with sufficient smoothness to have continuous embedding into $C([0,1]^d)$, these spaces are all reproducing kernel Hilbert spaces (RKHS).
A lot of the statements we have shown can also be shown using RKHS techniques.
In particular, for deterministic worst-case integration over the unit ball of an RKHS with bounded integration functional, we still need to prove one last result already used in the proof of Theorem~\ref{thm:curse-nonlinear-adaptive}: together with the symmetry of the unit ball and the zero-function argument used there, it shows that nonlinear and adaptive algorithms cannot improve upon an optimal non-adaptive linear rule.
We will first introduce some general notions of reproducing kernel Hilbert spaces.
We state these without proof.
These can be found, e.g., in~\cite{Aro50,DKS2013,NW2008}.
A particularly nice short overview is given in \cite[Chapter~9]{Moo85} which applies this to one-dimensional quadrature in an anchored Sobolev space of smoothness~$1$ and also gives the reproducing kernels for $\alpha \in \N$.

\subsubsection{Reproducing kernels and the worst-case error}\label{sec:rkhs-representers-worst-case-error}

Let $H(K)$ be a complex- or real-valued reproducing kernel Hilbert space on~$[0,1]^d$.
We assume its reproducing kernel $K : [0,1]^d \times [0,1]^d \to \R$ is real-valued and symmetric.
(This holds for all the RKHSs considered here, also if they are complex-valued.)
The kernel needs to be symmetric positive definite, that is
\begin{align*}
  \sum_{i, j = 1}^n \overline{w_i} \, w_j \, K(\bsx_i, \bsx_j)
  &\ge
  0
  ,
\end{align*}
for any $n \in \N$, $\bsx_1, \ldots, \bsx_n \in [0,1]^d$ and $w_1, \ldots, w_n \in \bbC$.
A positive definite kernel uniquely defines an RKHS and vice versa.
We use $\langle f, g \rangle_{H(K)}$ to denote its inner product and we use the convention that the inner product is linear in the first argument and conjugate-linear in the second argument.
The inner product induces the norm
\begin{align*}
  \|f\|_{H(K)}
  &=
  \sqrt{\langle f, f \rangle_{H(K)}}
  .
\end{align*}

The defining property of a reproducing kernel Hilbert space is that for $f \in H(K)$ and $\bsx \in [0,1]^d$ we have
\begin{align*}
  f(\bsx)
  &=
  \langle f , K_\bsx \rangle_{H(K)}
  ,
\end{align*}
where $K_\bsx \in H(K)$ is defined by $K_\bsx := K(\cdot, \bsx)$.
Then $K_\bsx$ is called the \emph{Riesz representer} of point evaluation at~$\bsx$.
Continuity of point evaluation at a fixed $\bsx$ means that there exists a constant $C_\bsx < \infty$ such that point evaluation is bounded as
\begin{align*}
  |f(\bsx)|
  &\le
  C_\bsx \, \|f\|_{H(K)}
  \qquad\text{for all } f \in H(K)
  .
\end{align*}
A continuous embedding $H(K) \hookrightarrow C([0,1]^d)$ is a sufficient condition for continuity of point evaluation, but it is not a necessary condition.
E.g., Walsh spaces of smoothness $\alpha > 1/2$ contain discontinuous Walsh functions even though point evaluation is continuous, see, e.g., \cite[Section~4.4]{Nuy2014} and \cite[Section~6.1]{DKS2013}.
Using Cauchy--Schwarz we obtain
\begin{align*}
  |f(\bsx)|
  &=
  |\langle f, K_\bsx \rangle_{H(K)}|
  \le
  \|f\|_{H(K)} \, \|K_\bsx\|_{H(K)}
  =
  \sqrt{K(\bsx, \bsx)} \, \|f\|_{H(K)}
  \numberthis{eq:RKHS-point-evaluation-bound}
  .
\end{align*}
Hence $C_\bsx$ above can be taken as $\sqrt{K(\bsx,\bsx)}$.

More generally the Riesz representation theorem states that every continuous linear functional $L : H(K) \to \bbC$ has a unique representer $h \in H(K)$ such that for all $f \in H(K)$ we have $L(f) = \langle f, h \rangle_{H(K)}$.
Again, continuity means there exists a constant $C_L < \infty$ such that $|L(f)| \le C_L \, \|f\|_{H(K)}$ for all $f \in H(K)$.

For the spaces considered in these lecture notes, the integration functional is continuous / bounded.
Let $f \in H(K)$.
We have
\begin{align*}
  I_d(f)
  &=
  \int_{[0,1]^d} f(\bsx) \rd\bsx
  =
  \int_{[0,1]^d} \langle f, K_\bsx \rangle_{H(K)} \rd\bsx
  =
  \langle f, \int_{[0,1]^d} K_\bsx \rd\bsx \rangle_{H(K)}
  .
\end{align*}
Hence the Riesz representer of integration $I_d$ on $H(K)$ is
\begin{align}\label{eq:RKHS-integration-representer}
  h_d(\bsy)
  &:=
  \int_{[0,1]^d} K_\bsx(\bsy) \rd\bsx
  ,
\end{align}
and we have $I_d(f) = \langle f, h_d \rangle_{H(K)}$ for all $f \in H(K)$.
For a cubature rule $Q^{X,\bsw}_{d,n}$ with given nodes $X = \{\bsx_1, \ldots, \bsx_n\} \subset [0,1]^d$ and given weights $\bsw = (w_1, \ldots, w_n) \in \R^n$, we have
\begin{align*}
  Q^{X,\bsw}_{d,n}(f)
  &=
  \sum_{i=1}^n w_i \, f(\bsx_i)
  =
  \sum_{i=1}^n w_i \, \langle f, K_{\bsx_i} \rangle_{H(K)}
  =
  \langle f, \sum_{i=1}^n w_i \, K_{\bsx_i} \rangle_{H(K)}
  ,
\end{align*}
which reveals the Riesz representer of the cubature rule $Q^{X,\bsw}_{d,n}$ on $H(K)$ as
\begin{align}\label{eq:RKHS-cubature-representer}
  q^{X,\bsw}_{d,n}(\bsy)
  &:=
  \sum_{i=1}^n w_i \, K_{\bsx_i}(\bsy)
  ,
\end{align}
and we have $Q^{X,\bsw}_{d,n}(f) = \langle f, q^{X,\bsw}_{d,n} \rangle_{H(K)}$ for all $f \in H(K)$.
In passing we remark that complex weights $w_i$ do not help in our case since the kernel is real-valued.

Using these representers we obtain an explicit formula for the (squared) worst-case error in the next lemma.
We have already met this formula in the proof of Theorem~\ref{thm:curse}.

\begin{lemma}[Worst-case error in an RKHS]\label{lem:RKHS-wce}
Let $Q^{X,\bsw}_{d,n}$ be a cubature rule with given nodes $X = \{\bsx_1, \ldots, \bsx_n\} \subset [0,1]^d$ and given weights $\bsw = (w_1, \ldots, w_n) \in \R^n$ of the form
\begin{align}\label{eq:Q-X-w}
  Q^{X,\bsw}_{d,n}(f)
  =
  Q_{d,n}(f; (w_i, \bsx_i)_{i=1}^n)
  &=
  \sum_{i=1}^n w_i \, f(\bsx_i)
  ,
\end{align}
and let $H(K)$ be a reproducing kernel Hilbert space on $[0,1]^d$ with a real-valued symmetric positive definite kernel $K$ and assume integration $I_d$ as in~\eqref{eq:Id} is bounded on~$H(K)$, then the squared worst-case error has the exact form
\begin{align*}
  &
  e(Q^{X,\bsw}_{d,n}, H(K))^2
  =
  \|h_d - q^{X,\bsw}_{d,n}\|_{H(K)}^2
  \journalonly{\numberthis{eq:RKHS-wce-formula}}
  \\*
  &\articleonly{\qquad}\journalonly{\;}=
  \journalonly{\iint_{[0,1]^{2d}}}\articleonly{\int_{[0,1]^d} \int_{[0,1]^d}} K(\bsx,\bsy) \rd\bsx \rd\bsy
  -
  2 \sum_{i=1}^n w_i \int_{[0,1]^d} K(\bsx_i,\bsy) \rd\bsy
  +
  \sum_{i,j=1}^n w_i \, w_j \, K(\bsx_i,\bsx_j)
  \articleonly{\numberthis{eq:RKHS-wce-formula}}
  .
\end{align*}
This formula is easy to remember by its form $IIK - 2QIK + QQK$ which is the expansion of the quadratic form $\|h_d - q^{X,\bsw}_{d,n}\|_{H(K)}^2$.
\end{lemma}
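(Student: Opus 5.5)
The plan is to write the error functional $I_d - Q^{X,\bsw}_{d,n}$ as a single inner product against the difference of the two Riesz representers, and then let Cauchy--Schwarz together with an explicit extremal function give the supremum exactly. By~\eqref{eq:RKHS-integration-representer} and~\eqref{eq:RKHS-cubature-representer}, linearity of the inner product in its first argument gives, for every $f \in H(K)$,
\begin{align*}
  I_d(f) - Q^{X,\bsw}_{d,n}(f)
  &=
  \langle f, h_d - q^{X,\bsw}_{d,n} \rangle_{H(K)}
  .
\end{align*}
Cauchy--Schwarz then yields $|I_d(f) - Q^{X,\bsw}_{d,n}(f)| \le \|f\|_{H(K)} \, \|h_d - q^{X,\bsw}_{d,n}\|_{H(K)}$, so the worst-case error~\eqref{eq:def-wce-integration} is at most $\|h_d - q^{X,\bsw}_{d,n}\|_{H(K)}$. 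For the reverse inequality I would take $f = (h_d - q^{X,\bsw}_{d,n})/\|h_d - q^{X,\bsw}_{d,n}\|_{H(K)}$ when the representer difference is nonzero. This $f$ lies in the unit ball and attains the bound. If the difference is zero, the error vanishes identically and the equality holds trivially.

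Next I expand the squared norm as $\langle h_d,h_d\rangle - 2\Re\langle q,h_d\rangle + \langle q,q\rangle$ and evaluate each term with the reproducing property.

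\textbf{The $QQK$ term.} By the reproducing property, $\langle K_{\bsx_i}, K_{\bsx_j}\rangle_{H(K)} = K(\bsx_j,\bsx_i)$. Since the kernel is real and symmetric and the weights are real, this gives $\sum_{i,j} w_i w_j K(\bsx_i,\bsx_j)$.

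\textbf{The $QIK$ term.} We have $\langle h_d, K_{\bsx_i}\rangle = h_d(\bsx_i) = \int K(\bsy,\bsx_i)\rd\bsy$. Because this is real, the real part can be dropped, and symmetry turns it into $\int K(\bsx_i,\bsy)\rd\bsy$.

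\textbf{The $IIK$ term.} We have $\langle h_d,h_d\rangle = I_d(h_d) = \int h_d(\bsy)\rd\bsy = \iint K(\bsx,\bsy)\rd\bsx\rd\bsy$, since $I_d(f) = \langle f,h_d\rangle$ for all $f \in H(K)$.

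The only delicate step is the justification, already used informally in the paper, that the representer $h_d$ exists and has the stated form. Here I would rely on boundedness of $I_d$ and the Riesz representation theorem to get some representer $h \in H(K)$. Evaluating at $\bsy$ then gives $h(\bsy) = \langle h, K_\bsy\rangle = \overline{\langle K_\bsy, h\rangle} = \overline{I_d(K_\bsy)} = \int K(\bsx,\bsy)\rd\bsx$, which uses that the kernel is real. This identifies $h = h_d$ without having to interchange an integral with the inner product. Measurability and integrability of $\bsx \mapsto K(\bsx,\bsy)$ follow because $K_\bsy \in H(K)$ and $I_d$ is defined on $H(K)$.
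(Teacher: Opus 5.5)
Your proof is correct and follows the same route as the paper. You identify $h_d - q^{X,\bsw}_{d,n}$ as the Riesz representer of the error functional, bound the error by Cauchy--Schwarz, attain the bound with the normalised representer, and expand $\|h_d - q^{X,\bsw}_{d,n}\|_{H(K)}^2$ into the $IIK - 2QIK + QQK$ form. The only minor difference is that you obtain the form of $h_d$ from the Riesz representation theorem and $h(\bsy) = \overline{I_d(K_\bsy)}$, whereas the paper moves the integral inside the inner product just before the lemma; your justification is slightly cleaner, and carrying the real part along is harmless since every term is real.
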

\begin{proof}
Combining~\eqref{eq:RKHS-integration-representer} and~\eqref{eq:RKHS-cubature-representer} we obtain, for $f \in H(K)$,
\begin{align*}
  I_d(f) - Q^{X,\bsw}_{d,n}(f)
  &=
  \langle f, h_d - q^{X,\bsw}_{d,n} \rangle_{H(K)}
  .
\end{align*}
Thus $h_d - q^{X,\bsw}_{d,n}$ is the Riesz representer of the error functional~$I_d - Q^{X,\bsw}_{d,n}$.
Applying Cauchy--Schwarz gives
\begin{align*}
  |I_d(f) - Q^{X,\bsw}_{d,n}(f)|
  =
  |\langle f, h_d - q^{X,\bsw}_{d,n} \rangle_{H(K)}|
  &\le
  \|f\|_{H(K)} \, \|h_d - q^{X,\bsw}_{d,n}\|_{H(K)}
  ,
\end{align*}
and hence
\begin{align*}
  e(Q^{X,\bsw}_{d,n}, H(K))
  &\le
  \|h_d - q^{X,\bsw}_{d,n}\|_{H(K)}
  .
\end{align*}
We will now show this is equality.
The case $h_d - q^{X,\bsw}_{d,n} = 0$ is trivial so let us consider $h_d - q^{X,\bsw}_{d,n} \ne 0$.
The function
\begin{align*}
  f^*(\bsy)
  &:=
  \frac{h_d(\bsy) - q^{X,\bsw}_{d,n}(\bsy)}{\|h_d - q^{X,\bsw}_{d,n}\|_{H(K)}}
  ,
\end{align*}
has norm one and satisfies
\begin{align*}
  |I_d(f^*) - Q^{X,\bsw}_{d,n}(f^*)|
  &=
  \frac{|\langle h_d - q^{X,\bsw}_{d,n}, h_d - q^{X,\bsw}_{d,n} \rangle_{H(K)}|}{\|h_d - q^{X,\bsw}_{d,n}\|_{H(K)}}
  =
  \|h_d - q^{X,\bsw}_{d,n}\|_{H(K)}
  .
\end{align*}
Hence we have $e(Q^{X,\bsw}_{d,n}, H(K)) = \|h_d - q^{X,\bsw}_{d,n}\|_{H(K)}$.

We now expand $\|h_d - q^{X,\bsw}_{d,n}\|_{H(K)}^2 = \langle h_d - q^{X,\bsw}_{d,n}, h_d - q^{X,\bsw}_{d,n} \rangle_{H(K)}$.
Since $K : [0,1]^d \times [0,1]^d \to \R$ and $\bsw \in \R^n$ we have
\begin{align*}
  e(Q^{X,\bsw}_{d,n}, H(K))^2
  &=
  \langle h_d, h_d \rangle_{H(K)}
  -
  2 \, \langle q^{X,\bsw}_{d,n}, h_d \rangle_{H(K)}
  +
  \langle q^{X,\bsw}_{d,n}, q^{X,\bsw}_{d,n} \rangle_{H(K)}
  .
\end{align*}
Filling in the expressions for $h_d$ and $q^{X,\bsw}_{d,n}$, see~\eqref{eq:RKHS-integration-representer} and~\eqref{eq:RKHS-cubature-representer}, gives~\eqref{eq:RKHS-wce-formula}.
\end{proof}

\Needspace{10\baselineskip}
\subsubsection{Some kernels for the dominating mixed smoothness Sobolev spaces}\label{sec:rkhs-sobolev-kernels}

\paragraph{Kernels for the non-periodic spaces.}
In the non-periodic spaces of dominating mixed smoothness from Section~\ref{sec:non-periodic-spaces} with integer smoothness $\alpha \ge 1$, point evaluation is continuous and hence they are reproducing kernel Hilbert spaces.
For the weighted unanchored space $H^{\alpha,\bsgamma,\rmu}_\mix([0,1]^d)$ with $\alpha \in \N$ the kernel is
\begin{align}\label{eq:unanchored-kernel}
  K(\bsx,\bsy)
  &=
  \sum_{\setu \subseteq \{1,\ldots,d\}} \gamma_\setu \, \prod_{j \in \setu}
  \Bigg(
    \sum_{\tau=1}^{\alpha} \frac{B_\tau(x_j) \, B_\tau(y_j)}{(\tau!)^2}
    +
    \frac{(-1)^{\alpha+1}}{(2\alpha)!} \, B_{2\alpha}(|x_j-y_j|)
  \Bigg)
  ,
\end{align}
see, e.g.,~\cite{KSWW2010b,DKS2013,DKP2022,NS2023}, and where $B_\tau$ denotes the Bernoulli polynomial of degree~$\tau$.
For the weighted anchored space $H^{\alpha,\bsgamma,\bsa}_\mix([0,1]^d)$ with anchor $\bsa \in [0,1]^d$ and $\alpha \in \N$ the kernel is
\begin{align*}
  &K(\bsx,\bsy)
  \numberthis{eq:anchored-kernel}
  =
  \sum_{\setu \subseteq \{1,\ldots,d\}} \gamma_\setu \, \prod_{j \in \setu}
  \Bigg(
    \sum_{\tau=1}^{\alpha-1} \frac{(x_j-a_j)^\tau (y_j-a_j)^\tau}{(\tau!)^2}
  \\*
  &\qquad\articleonly{\hphantom{K(\bsx,\bsy) =}\qquad}
    + \bbone\{(x_j-a_j)(y_j-a_j) > 0\} \,
  \int_0^1
    \frac{(|x_j-a_j|-t)_+^{\alpha-1} (|y_j-a_j|-t)_+^{\alpha-1}}{((\alpha-1)!)^2}
    \rd{t}
  \Bigg)
  ,
\end{align*}
see, e.g.,~\cite{DKS2013,NN2021uniform}, where $u_+ := \max\{u,0\}$ with $u_+^0 := \bbone\{u > 0\}$ when $\alpha = 1$.
See also~\cite[Appendix~A.2]{NN2021lognormal} for a derivation of the anchored univariate kernel.
As noted there, the formulas in~\cite[Example~4.2]{KSWW2010b} and~\cite[Section~5.2]{DG2014} contain a typo for $\alpha \ge 2$: their ``0 otherwise'' case needs to be fixed as above.
A more direct way of writing the integral is as follows
\begin{align*}
  \MoveEqLeft[1]
  \int_0^1
  \frac{(|x_j-a_j|-t)_+^{\alpha-1} (|y_j-a_j|-t)_+^{\alpha-1}}{((\alpha-1)!)^2}
  \rd{t}
  \\*
  &=
  \int_0^{\min\{|x_j-a_j|,|y_j-a_j|\}}
  \frac{(|x_j-a_j|-t)^{\alpha-1} (|y_j-a_j|-t)^{\alpha-1}}{((\alpha-1)!)^2}
  \rd{t}
  .
\end{align*}
(But page widths in books are limited\ldots)

\paragraph{Kernels for the periodic spaces.}
For the periodic spaces from Section~\ref{sec:periodic-spaces} with smoothness $\alpha > 1/2$, point evaluation is continuous and hence they are reproducing kernel Hilbert spaces.
For every space $F^{\alpha,\bsgamma}_r([0,1]^d)$ from Table~\ref{tbl:periodic-equivalent-norms}, with $\alpha$ in its corresponding range and $\alpha > 1/2$, the kernel takes the form
\begin{align}\label{eq:Fourier-kernel}
  K(\bsx,\bsy)
  &=
  \sum_{\bsh \in \Z^d}
  r_{\alpha,\bsgamma}^{-2}(\bsh) \, \exp(2\pi\rmi \, \bsh \cdot (\bsx-\bsy))
  .
\end{align}
Since these multipliers all take the absolute value on the components of $\bsh$ we have $r_{\alpha,\bsgamma}(\bsh) = r_{\alpha,\bsgamma}(-\bsh)$ and hence the kernel is real valued and we could write $\cos(2\pi \, \bsh \cdot (\bsx-\bsy))$ instead of $\exp(2\pi\rmi \, \bsh \cdot (\bsx-\bsy))$.
For the unanchored periodic spaces $\widetilde{H}^{\alpha,\bsgamma,\rmu}_{\mix,1}([0,1]^d)$ and $\widetilde{H}^{\alpha,\bsgamma,\rmu}_\mix([0,1]^d)$ we can write the kernel in closed form using~\eqref{eq:unanchored-omega-j} for $\alpha \in \N$ or~\eqref{eq:unanchored-polylog} for $\alpha > 1/2$.
E.g., for the unanchored space $\widetilde{H}^{\alpha,\bsgamma,\rmu}_{\mix,1}([0,1]^d)$ with $\alpha \in \N$ we have
\begin{align}\label{eq:periodic-unanchored-kernel}
  K(\bsx,\bsy)
  &=
  \sum_{\setu \subseteq \{1,\ldots,d\}}
  \gamma_\setu \,
  \left[ \frac{(-1)^{\alpha+1} \, (2\pi)^{2\alpha}}{(2\alpha)!} \right]^{|\setu|} \,
  \prod_{j \in \setu} B_{2\alpha}(|x_j - y_j|)
  ,
\end{align}
and for general $\alpha > 1/2$ we have
\begin{align}\label{eq:periodic-unanchored-kernel-polylog}
  K(\bsx,\bsy)
  &=
  \sum_{\setu \subseteq \{1,\ldots,d\}}
  \gamma_\setu \,
  2^{|\setu|} \,
  \prod_{j \in \setu} \Re\Big\{\Li_{2\alpha}\big(\exp(2\pi\rmi \, (x_j - y_j))\big)\Big\}
  .
\end{align}
Dividing each weight $\gamma_\setu$ by $(2\pi)^{2|\setu|\alpha}$ gives the kernel for the unanchored space $\widetilde{H}^{\alpha,\bsgamma,\rmu}_\mix([0,1]^d)$ in both cases.

\subsubsection{Optimal weights for fixed nodes}\label{sec:rkhs-optimal-fixed-nodes}

The following result was used in the proof of Theorem~\ref{thm:curse-nonlinear-adaptive} to pass from arbitrary deterministic nonlinear and adaptive algorithms to linear nonadaptive cubature rules.
We assume that we are given cubature nodes $X = \{\bsx_1, \ldots, \bsx_n\} \subset [0,1]^d$ and we are looking for weights $\bsw^* \in \R^n$ that minimise the worst-case error for the cubature rule $Q^{X,\bsw^*}_{d,n}$ in $H(K)$.
How to find these optimal weights is quite well known, e.g., see \cite[Chapter~9]{Moo85} which demonstrates this for the one-dimensional anchored Sobolev space~$H^{1,a}([0,1])$ with~$a = 0$.

\begin{theorem}[Optimal linear cubature on fixed nodes]\label{thm:RKHS-optimal-fixed-nodes}
Let $H(K)$ be a reproducing kernel Hilbert space on $[0,1]^d$ with a real-valued symmetric positive definite kernel $K$ and assume integration $I_d$ as in~\eqref{eq:Id} is bounded on~$H(K)$.
Let the $n$ nodes $X = \{\bsx_1, \ldots, \bsx_n\} \subset [0,1]^d$ be given and consider (non-adaptive) linear rules $Q^{X,\bsw}_{d,n}$ of the form~\eqref{eq:Q-X-w}, then there exist weights $\bsw^* = (w_1^*, \ldots, w_n^*) \in \R^n$ such that the worst-case error equals
\begin{align}\label{eq:optimal-wce}
  e(Q^{X,\bsw^*}_{d,n}, H(K))
  &=
  \sup_{\substack{f \in H(K) \\ \|f\|_{H(K)} \le 1 \\ f(\bsx_i) = 0,\ i = 1, \ldots, n}}
  |I_d(f)|
  .
\end{align}
Moreover, every deterministic cubature algorithm, that may be adaptive and/or nonlinear, that uses at most $n$ function values has worst-case error at least that of an optimal linear nonadaptive rule on the nodes it samples on the zero function.

Define the matrix
\begin{align*}
  \bsK
  =
  \bsK_{K,X,n}
  &:=
  \begin{bmatrix}
    K(\bsx_i, \bsx_j)
  \end{bmatrix}_{i,j=1,\ldots,n}
  \quad\in\quad
  \R^{n \times n}
  ,
\end{align*}
and the moments vector
\begin{align*}
  \bsb
  =
  \bsb_{K,X,n}
  &:=
  \begin{bmatrix}
    \displaystyle
    \int_{[0,1]^d} K_{\bsx_i}(\bsy) \rd\bsy
  \end{bmatrix}_{i=1,\ldots,n}
  \quad\in\quad
  \R^n
  ,
\end{align*}
then the optimal weights $\bsw^* \in \R^n$ are any solution of
\begin{align*}
  \bsK \, \bsw^* = \bsb
  ,
\end{align*}
and this system always has at least one solution.

We can write
\begin{align}\label{eq:optimal-wce-explicit}
  e^2(Q^{X,\bsw^*}_{d,n}, H(K))
  &=
  \int_{[0,1]^d}\int_{[0,1]^d} K(\bsx, \bsy) \rd\bsx \rd\bsy
  -
  (\bsw^*)^\mathsf{T} \, \bsK \, \bsw^*
  .
\end{align}
\end{theorem}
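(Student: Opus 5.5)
The plan is to work entirely with Riesz representers, using Lemma~\ref{lem:RKHS-wce}. For fixed nodes $X$ the squared worst-case error is $\|h_d - q^{X,\bsw}_{d,n}\|_{H(K)}^2$, and $q^{X,\bsw}_{d,n}$ ranges over the finite-dimensional subspace $V := \operatorname{span}\{K_{\bsx_1},\ldots,K_{\bsx_n}\}$. Minimising over $\bsw$ is therefore the problem of finding the best approximation of $h_d$ from the closed subspace $V$. Its solution is the orthogonal projection $P_V h_d$, which can be written as $\sum_i w_i^* K_{\bsx_i}$ for some $\bsw^*$. The coefficients are not necessarily unique if $\bsK$ is singular.

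The first step is the normal equations. The orthogonality conditions $\langle h_d - \sum_j w_j^* K_{\bsx_j}, K_{\bsx_i}\rangle_{H(K)} = 0$ for all $i$ become $\sum_j w_j^* K(\bsx_i,\bsx_j) = h_d(\bsx_i) = \int K_{\bsx_i}(\bsy)\rd\bsy = b_i$, using the reproducing property and the symmetry and real-valuedness of $K$. This gives $\bsK\bsw^* = \bsb$. Existence of a solution follows because the projection exists. Alternatively, $\bsb \in \operatorname{range}(\bsK)$: if $\bsK\bsv = 0$ then $\|\sum_i v_i K_{\bsx_i}\|^2 = \bsv^\mathsf{T}\bsK\bsv = 0$, hence $\bsv^\mathsf{T}\bsb = \langle h_d, \sum_i v_i K_{\bsx_i}\rangle = 0$, so $\bsb \perp \ker\bsK = \operatorname{range}(\bsK)^\perp$. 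I will choose real $\bsw^*$ since $\bsK$ and $\bsb$ are real. Any solution gives the same element $P_V h_d$ of $H(K)$, so every solution is optimal. Substituting $\bsb = \bsK\bsw^*$ into the $IIK - 2QIK + QQK$ formula gives $IIK - 2(\bsw^*)^\mathsf{T}\bsK\bsw^* + (\bsw^*)^\mathsf{T}\bsK\bsw^*$, which is \eqref{eq:optimal-wce-explicit}.

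The second step is to identify the error $\|h_d - P_V h_d\|$ with the supremum in \eqref{eq:optimal-wce}. The constraint set $\{f : f(\bsx_i) = 0\}$ equals $V^\perp$. For $f \in V^\perp$ with $\|f\|\le 1$ we have $I_d(f) = \langle f, h_d\rangle = \langle f, h_d - P_V h_d\rangle$, so $|I_d(f)| \le \|h_d - P_V h_d\|$ by Cauchy--Schwarz. Equality is attained by $f = (h_d - P_V h_d)/\|h_d - P_V h_d\|$, which lies in $V^\perp$; the case where this vector is zero is trivial.

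The final step handles adaptive and nonlinear algorithms, following the argument in the proof of Theorem~\ref{thm:curse-nonlinear-adaptive}. Run the algorithm on $f = 0$ and record the nodes $X^0$ (at most $n$ of them) and the output $q$. Every $f\in V^\perp$ (with $V$ built from $X^0$) satisfies $f(\bsx)=0$ at each recorded node, and so does $-f$. A deterministic adaptive algorithm therefore sees identical information on $\pm f$ and on $0$, so it returns $q$ for both. By \eqref{eq:triangle-fp-fpp} the error is at least $|I_d(f)|$. Taking the supremum and applying step two gives a lower bound equal to the optimal linear worst-case error on $X^0$. I expect the main obstacle to be care rather than depth. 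The key points are that the algorithm may sample fewer than $n$ points, that a singular $\bsK$ needs the range argument above, and that $\pm f$ remain admissible thanks to the symmetry of the unit ball.
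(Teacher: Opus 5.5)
Your proposal is correct and follows essentially the same route as the paper: the supremum in \eqref{eq:optimal-wce} is identified with the norm of the residual representer $h_d - q^{X,\bsw^*}_{d,n}$, which vanishes at the nodes (equivalently, lies in $\operatorname{span}\{K_{\bsx_i}\}^\perp$). Solvability of $\bsK\bsw^*=\bsb$ follows from the same null-space argument, and adaptive or nonlinear algorithms are handled by running them on the zero function and comparing $\pm f$. The only difference is cosmetic: you get optimality and existence of $\bsw^*$ at once from the projection theorem on the span of the $K_{\bsx_i}$, whereas the paper completes the square, $e^2(Q^{X,\bsw}_{d,n},H(K)) - e^2(Q^{X,\bsw^*}_{d,n},H(K)) = (\bsw-\bsw^*)^\mathsf{T}\bsK(\bsw-\bsw^*) \ge 0$, and proves solvability separately.
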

\begin{proof}
For arbitrary real weights $\bsw \in \R^n$ consider the cubature rule $Q^{X,\bsw}_{d,n}$ in~\eqref{eq:Q-X-w}.
By~\eqref{eq:RKHS-wce-formula} from the previous lemma, its squared worst-case error can be written in terms of $\bsK$ and $\bsb$ as
\begin{align*}
  e(Q^{X,\bsw}_{d,n}, H(K))^2
  &=
  \int_{[0,1]^d} \int_{[0,1]^d} K(\bsx,\bsy) \rd\bsx \rd\bsy
  -
  2 \, \bsw^\mathsf{T} \, \bsb
  +
  \bsw^\mathsf{T} \, \bsK \, \bsw
  .
\end{align*}
Assume for the moment that $\bsK \, \bsw^* = \bsb$ has a solution.
For arbitrary $\bsw \in \R^n$, expanding the difference of the squared worst-case errors for $Q^{X,\bsw}_{d,n}$ and $Q^{X,\bsw^*}_{d,n}$, and immediately substituting $\bsb = \bsK \, \bsw^*$, gives
\begin{align*}
  e(Q^{X,\bsw}_{d,n}, H(K))^2 - e(Q^{X,\bsw^*}_{d,n},H(K))^2
  &=
  \bsw^\mathsf{T} \, \bsK \, \bsw
  -
  2 \, \bsw^\mathsf{T} \, \bsK \, \bsw^*
  +
  (\bsw^*)^\mathsf{T} \, \bsK \, \bsw^*
  \\
  &=
  (\bsw - \bsw^*)^\mathsf{T} \, \bsK \, (\bsw - \bsw^*)
  \\
  &\ge
  0
  ,
\end{align*}
where we used that $\bsK$ is a symmetric positive-semidefinite matrix.
Thus $Q^{X,\bsw^*}_{d,n}$ has minimal worst-case error among all linear cubature rules on the fixed nodes.

By Lemma~\ref{lem:RKHS-wce}, the error functional $I_d - Q^{X,\bsw^*}_{d,n}$ has Riesz representer in $H(K)$ as follows
\begin{align*}
  g^{X,\bsw^*}_{d,n}(\bsy)
  &:=
  h_d(\bsy) - q^{X,\bsw^*}_{d,n}(\bsy)
  =
  h_d(\bsy) - \sum_{i=1}^n w_i^* \, K_{\bsx_i}(\bsy)
  .
\end{align*}
If $\bsw^*$ satisfies $\bsK \, \bsw^* = \bsb$ then, for every $j = 1, \ldots, n$,
\begin{align*}
  g^{X,\bsw^*}_{d,n}(\bsx_j)
  &=
  b_j - \sum_{i=1}^n \bsK_{j,i} \, w_i^*
  =
  0
  .
\end{align*}
For every $f \in H(K)$ with $\|f\|_{H(K)} \le 1$ and that vanishes at the nodes in~$X$ we have $Q^{X,\bsw^*}_{d,n}(f) = 0$ and hence
\begin{align*}
  |I_d(f)|
  &=
  |I_d(f) - Q^{X,\bsw^*}_{d,n}(f)|
  =
  |\langle f, g^{X,\bsw^*}_{d,n} \rangle_{H(K)}|
  \journalbreak\le
  \|g^{X,\bsw^*}_{d,n}\|_{H(K)}
  =
  e(Q^{X,\bsw^*}_{d,n}, H(K))
  .
\end{align*}
We now need to prove this actually is equality and will follow similar steps as in the proof of Lemma~\ref{lem:RKHS-wce}.
The case $g^{X,\bsw^*}_{d,n} = 0$ is trivial.
If $g^{X,\bsw^*}_{d,n} \ne 0$ then
\begin{align*}
  f^*(\bsy)
  &:=
  \frac{g^{X,\bsw^*}_{d,n}(\bsy)}{\|g^{X,\bsw^*}_{d,n}\|_{H(K)}}
\end{align*}
belongs to the unit ball, vanishes at all the nodes in~$X$ and satisfies
\begin{align*}
  |I_d(f^*)|
  &=
  |I_d(f^*) - Q^{X,\bsw^*}_{d,n}(f^*)|
  =
  |\langle f^*, g^{X,\bsw^*}_{d,n} \rangle_{H(K)}|
  \journalbreak=
  \|g^{X,\bsw^*}_{d,n}\|_{H(K)}
  =
  e(Q^{X,\bsw^*}_{d,n}, H(K))
  .
\end{align*}
This proves the exact worst-case error formula~\eqref{eq:optimal-wce}.

We still need to show that even if the matrix $\bsK$ is singular the system $\bsK \, \bsw^* = \bsb$ is consistent.
We will show that $\bsb \in \operatorname{col}(\bsK)$.
Since $\bsK$ is symmetric, we have $\operatorname{null}(\bsK)^\perp = \operatorname{col}(\bsK^\mathsf{T}) = \operatorname{col}(\bsK)$.
Suppose that $\bsc \in \operatorname{null}(\bsK) \subseteq \R^n$ then the reproducing property and symmetry of the kernel give
\begin{align*}
  \Big\| \sum_{i=1}^n c_i \, K_{\bsx_i} \Big\|_{H(K)}^2
  &=
  \sum_{i,j=1}^n c_i \, c_j \, \langle K_{\bsx_i},K_{\bsx_j}\rangle_{H(K)}
  \\
  &=
  \sum_{i,j=1}^n c_i \, c_j \, K(\bsx_i,\bsx_j)
  \\
  &=
  \bsc^\mathsf{T} \, \bsK \, \bsc
  =
  0
  .
\end{align*}
Hence the function $\sum_{i=1}^n c_i \, K_{\bsx_i}(\bsy) = 0$ in~$H(K)$.
Now, for any $\bsc \in \operatorname{null}(\bsK)$, it follows that
\begin{align*}
  \bsc^\mathsf{T} \, \bsb
  &=
  \sum_{i=1}^n c_i \, \int_{[0,1]^d} K_{\bsx_i}(\bsy) \rd\bsy
  =
  \int_{[0,1]^d} \sum_{i=1}^n c_i \, K_{\bsx_i}(\bsy) \rd\bsy
  =
  0
  .
\end{align*}
Thus $\bsb$ is orthogonal to any vector in the null space $\operatorname{null}(\bsK)$ which means $\bsb \in \operatorname{null}(\bsK)^\perp = \operatorname{col}(\bsK^\mathsf{T}) = \operatorname{col}(\bsK)$, since $\bsK$ is symmetric, and thus a solution always exists.

We still need to compare an arbitrary deterministic algorithm, that may be nonlinear and/or adaptive, to the optimal linear algorithm.
Let us denote the arbitrary algorithm by $Q_{d,n}$.
We run the algorithm on the zero function and denote the resulting nodes where the zero function was evaluated by $X^0 = \{\bsx_1^0, \ldots, \bsx_m^0\}$, for some $m \le n$, and let's denote its output by $q := Q_{d,n}(0)$.
Since the algorithm is deterministic, every function $f \in H(K)$ that is zero at these nodes will also result in $Q_{d,n}(f) = Q_{d,n}(0) = q$.
Similarly for $-f$ we have $Q_{d,n}(-f) = Q_{d,n}(0) = q$.
If we have $\|f\|_{H(K)} \le 1$ then obviously also $\|-f\|_{H(K)} \le 1$, and the diameter-of-information argument from Section~\ref{sec:lower-bounds}, cf.~\eqref{eq:triangle-fp-fpp}, gives
\begin{align*}
  \max\{|I_d(f) - q|, |I_d(-f) - q|\}
  &\ge
  |I_d(f)|
  .
\end{align*}
We have
\begin{align*}
  e(Q_{d,n}, H(K))
  &\ge
  \sup_{\substack{f \in H(K) \\ \|f\|_{H(K)} \le 1 \\ f(\bsx_i^0) = 0,\ i = 1, \ldots, m}}
  |I_d(f)|
  .
\end{align*}
The right-hand side coincides with the error of the optimal rule $Q_{d,m}^{X^0,\bsw^*}$.
This algorithm is admissible for the same maximum budget of $n$ since $m \le n$.
Hence, for each $n$ we can construct such a nonadaptive linear rule by running $Q_{d,n}$ on the zero function and computing the optimal weights.
This finishes the proof.
\end{proof}

The previous theorem shows that optimal weights for any prescribed nodes can be computed by solving $\bsK \, \bsw^* = \bsb$, possibly numerically.
E.g., this could be done for purely randomly sampled points, and for our periodic Sobolev spaces this achieves the optimal algebraic convergence rate with high probability, see~\cite[Corollary~8.6]{KUV2021}.
However, compared to the simple Monte Carlo estimator we now will have generally unequal weights, which could be positive or negative.
On the other hand, it is interesting to know that for structured QMC point sets, in some compatible function space, see the next lemma, it is possible to compute these optimal weights explicitly, and even more, for lattice points they are all equal.
(The same is true for digital nets but we will not show that here.)
We now show this in the next lemma which holds for all our periodic spaces $F^{\alpha,\bsgamma}_r([0,1]^d)$ from Table~\ref{tbl:periodic-equivalent-norms} with $\alpha > 1/2$.
(The result could also be shown for general rank lattice points.)

\begin{lemma}[Optimal weights for lattice points]\label{lem:optimal-weights-lattice-points}
Let $\alpha > 1/2$ and let $\bsgamma = \{\gamma_\setu\}_{\setu \in \frakU}$ with $\gamma_\emptyset = 1$ and $\gamma_\setu > 0$ for all~$\setu$.
Let $r_{\alpha,\bsgamma} : \Z^d \to \R_{>0}$ be a Fourier multiplier such that $r_{\alpha,\bsgamma}^{-1} \in \ell_2(\Z^d)$ and $r_{\alpha,\bsgamma}^{-2}(\bszero) = \gamma_\emptyset = 1$, and let $F^{\alpha,\bsgamma}_r([0,1]^d)$ be the space with the norm defined in~\eqref{eq:def-F-d-r-norm},
\begin{align*}
    \| f \|_{F^{\alpha,\bsgamma}_r([0,1]^d)}^2
    &:=
    \sum_{\bsh \in \Z^d} |\hat{f}_\bsh|^2 \, r_{\alpha,\bsgamma}^2(\bsh)
    .
  \end{align*}
Assume $F^{\alpha,\bsgamma}_r([0,1]^d) = H(K)$ where $K$ is a real-valued symmetric positive-definite kernel, given, as in~\eqref{eq:Fourier-kernel}, by
\begin{align*}
  K(\bsx,\bsy)
  &:=
  \sum_{\bsh \in \Z^d}
  r_{\alpha,\bsgamma}^{-2}(\bsh) \,
  \exp(2\pi\rmi \, \bsh \cdot (\bsx-\bsy))
  ,
\end{align*}
and assume that integration $I_d$, see~\eqref{eq:Id}, is bounded on~$H(K)$.
Let $X = \{\bsx_0, \ldots, \bsx_{n-1}\}$ be a lattice point set as in~\eqref{eq:R1L-points}, where we index from zero to make the connection to the group structure clear.
Then one choice of optimal weights on~$X$ is
\begin{align}\label{eq:optimal-lattice-point-weights}
  w^*
  :=
  w_k^*
  &:=
  \frac1{1 + e^2(Q_{d,n,\bsz}, F^{\alpha,\bsgamma}_r([0,1]^d))} \, \frac1n
  ,
  \qquad
  k = 0, \ldots, n-1
  ,
\end{align}
and the resulting squared worst-case error is
\begin{align}\label{eq:optimal-lattice-point-error}
  e^2(Q^{X,\bsw^*}_{d,n}, F^{\alpha,\bsgamma}_r([0,1]^d))
  &=
  \frac{e^2(Q_{d,n,\bsz}, F^{\alpha,\bsgamma}_r([0,1]^d))}{1 + e^2(Q_{d,n,\bsz}, F^{\alpha,\bsgamma}_r([0,1]^d))}
  .
\end{align}
The weights $w^*_k = w^* < 1/n$ and, for good lattice rules, are asymptotically equivalent to~$1/n$.
\end{lemma}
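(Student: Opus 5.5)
We plan to apply Theorem~\ref{thm:RKHS-optimal-fixed-nodes} directly: it suffices to exhibit a constant vector $\bsw^* = w^*\bsone$ that solves $\bsK\,\bsw^* = \bsb$, and then evaluate~\eqref{eq:optimal-wce-explicit}. We first compute the moments. Since $I_d$ kills every Fourier mode except $\bsh=\bszero$, we get $b_k = \int_{[0,1]^d} K(\bsy,\bsx_k)\rd\bsy = r_{\alpha,\bsgamma}^{-2}(\bszero) = 1$ for every $k$. Likewise $\iint K(\bsx,\bsy)\rd\bsx\rd\bsy = 1$. Absolute convergence of the kernel series, from $r_{\alpha,\bsgamma}^{-1}\in\ell_2(\Z^d)$, justifies integrating termwise.

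Next we use the group structure of the lattice. We have $\bsx_k - \bsx_{k'} \equiv \bsx_{k-k' \bmod n} \pmod 1$, and the kernel is $1$-periodic in $\bsx-\bsy$. Hence $K(\bsx_k,\bsx_{k'}) = K(\bsx_{k-k'},\bszero)$, so $\bsK$ is a circulant matrix. Each row sum is therefore
\[
\sum_{k'=0}^{n-1} K(\bsx_k,\bsx_{k'}) = \sum_{\ell=0}^{n-1} K(\bsx_\ell,\bszero) = n\,Q_{d,n,\bsz}\big(K(\cdot,\bszero)\big).
\]
We expand $K(\cdot,\bszero) = \sum_{\bsh} r_{\alpha,\bsgamma}^{-2}(\bsh)\exp(2\pi\rmi\,\bsh\cdot\bsx) = 1 + \xi_{d,r,\alpha,\bsgamma}$ and apply~\eqref{eq:R1L-wce} (equivalently the lattice character property~\eqref{eq:lattice-character-prop}). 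This gives the row sum $n\,(1 + e^2(Q_{d,n,\bsz},F^{\alpha,\bsgamma}_r([0,1]^d)))$, with the same value for every row. So $\bsK\bsone = n(1+e^2)\bsone$, and the choice $w^* = 1/(n(1+e^2))$ solves $\bsK\bsw^* = \bsone = \bsb$. This is exactly~\eqref{eq:optimal-lattice-point-weights}.

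Finally we substitute into~\eqref{eq:optimal-wce-explicit}:
\[
e^2 = 1 - (w^*)^2\,\bsone^\mathsf{T}\bsK\bsone = 1 - (w^*)^2 n^2(1+e^2) = 1 - \frac{1}{1+e^2} = \frac{e^2}{1+e^2},
\]
which is~\eqref{eq:optimal-lattice-point-error}. The claim $w^* < 1/n$ needs $e^2 > 0$, i.e.\ a nonzero dual lattice point $\bsh$ with $\bsh\cdot\bsz\equiv 0 \pmod n$. One always exists, namely $\bsh = (n,0,\ldots,0)$, and every multiplier is positive. Asymptotic equivalence to $1/n$ follows because $e^2 \to 0$ for good lattice rules, for instance by Theorem~\ref{thm:lattice-rule-upper-bounds}.

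The main obstacle is bookkeeping rather than substance. We need a careful justification that the circulant row sum equals $n$ times the lattice rule applied to $K(\cdot,\bszero)$, including the reduction of $\bsx_k-\bsx_{k'}$ modulo $1$. We also need to confirm that the kernel is real and symmetric, so that Theorem~\ref{thm:RKHS-optimal-fixed-nodes} applies. No inversion of $\bsK$ is required, since the theorem accepts any solution of $\bsK\bsw^*=\bsb$.
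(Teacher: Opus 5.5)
Your proposal is correct and follows essentially the same route as the paper. Both show $\bsb = \bsone$, use the lattice group structure and the character property (you via~\eqref{eq:R1L-wce}) to get $\bsK\bsone = n\,(1+e^2(Q_{d,n,\bsz},F^{\alpha,\bsgamma}_r([0,1]^d)))\,\bsone$, and substitute the constant solution into~\eqref{eq:optimal-wce-explicit}. Your explicit check that $e^2>0$, via the dual lattice point $(n,0,\ldots,0)$, gives the strict inequality $w^*<1/n$, which the paper leaves implicit.
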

\begin{proof}
For a lattice point set $X = \{\bsx_0, \ldots, \bsx_{n-1}\}$, where we number from $0$ to $n-1$ to stress the group structure and to be consistent with~\eqref{eq:R1L-points}, also for indexing $\bsb$ and $\bsK$, we have, for $i = 0,\ldots, n-1$,
\begin{align*}
  b_i
  =
  \int_{[0,1]^d} K_{\bsx_i}(\bsy) \rd\bsy
  &=
  \sum_{\bsh \in \Z^d} r_{\alpha,\bsgamma}^{-2}(\bsh) \, \int_{[0,1]^d} \exp(2\pi\rmi \, \bsh \cdot (\bsx_i - \bsy)) \rd\bsy
  \\
  &=
  \sum_{\bsh \in \Z^d} r_{\alpha,\bsgamma}^{-2}(\bsh) \, \exp(2\pi\rmi \, \bsh \cdot \bsx_i) \, \prod_{j=1}^d \bbone\{h_j = 0\}
  \\
  &=
  r_{\alpha,\bsgamma}^{-2}(\bszero)
  =
  \gamma_\emptyset
  =
  1
  ,
\end{align*}
and, for $i = 0,\ldots, n-1$, the $i$th row sum of the matrix $\bsK$ can be written as, using $\bsone_n = (1, \ldots, 1) \in \R^n$,
\begin{align*}
  (\bsK \, \bsone_n)_i
  =
  \sum_{j=0}^{n-1} K(\bsx_i, \bsx_j)
  &=
  \sum_{\bsh \in \Z^d} r_{\alpha,\bsgamma}^{-2}(\bsh) \, \sum_{j=0}^{n-1} \exp(2\pi\rmi \, \bsh \cdot (\bsx_i - \bsx_j))
  \\
  &=
  \sum_{\bsh \in \Z^d} r_{\alpha,\bsgamma}^{-2}(\bsh) \, \sum_{k=0}^{n-1} \exp(2\pi\rmi \, \bsh \cdot \bsx_k)
  \\
  &=
  n \, \sum_{\substack{\bsh \in \Z^d \\ \bsh \cdot \bsz \equiv 0 \tpmod{n}}} r_{\alpha,\bsgamma}^{-2}(\bsh)
  \\
  &=
  n \, \Big( 1 + \sum_{\substack{\bszero \ne \bsh \in \Z^d \\ \bsh \cdot \bsz \equiv 0 \tpmod{n}}} r_{\alpha,\bsgamma}^{-2}(\bsh) \Big)
  \\
  &=
  n \, \Big(1 + e^2(Q_{d,n,\bsz}, F^{\alpha,\bsgamma}_r([0,1]^d))\Big)
  ,
\end{align*}
where we used the lattice group structure $(\bsx_i - \bsx_j) \bmod{1} = \bsx_k$ with $k = (i - j) \bmod{n}$, see Section~\ref{sec:lattice-points}, and the fact that $j \mapsto (i-j) \bmod{n}$ is a bijection of~$\Z_n$, the lattice character property~\eqref{eq:lattice-character-prop} and the expression for the worst-case error of the standard lattice rule $Q_{d,n,\bsz}$ in the space $F^{\alpha,\bsgamma}_r([0,1]^d)$ as given in~\eqref{eq:F-r-wce}.
Note that the rule $Q_{d,n,\bsz} = Q^{X,\bsone_n/n}_{d,n}$ uses the same point set $X$ but is not the rule $Q^{X,\bsw^*}_{d,n}$ that we are trying to construct.
Since the vector $\bsw^*$ satisfies $\bsK \, \bsw^* = \bsb$ and we have $\bsb = \bsone_n$ and $\bsK \, \bsone_n = n \, (1 + e^2(Q_{d,n,\bsz}, F^{\alpha,\bsgamma}_r([0,1]^d))) \, \bsone_n$ we conclude that we can take all $w^*_j$ the same, for $j = 0, \ldots, n-1$,
\begin{align*}
  w^*_j
  =
  w^*
  &=
  \frac1{1 + e^2(Q_{d,n,\bsz}, F^{\alpha,\bsgamma}_r([0,1]^d))} \, \frac1n
  .
\end{align*}
Hence the optimal weights are slightly smaller than $1/n$ and converge to $1/n$ for $n \to \infty$, in the sense that $n \, w_j^* \to 1$, whenever the equal-weight lattice rules converge.

From~\eqref{eq:optimal-wce-explicit} we obtain
\begin{align*}
  e^2(Q^{X,\bsw^*}_{d,n}, F^{\alpha,\bsgamma}_r([0,1]^d))
  &=
  1
  -
  (\bsw^*)^\mathsf{T} \, \bsK \, \bsw^*
  =
  1 - (\bsw^*)^\mathsf{T} \, \bsone_n
  \\
  &=
  1 - (1 + e^2(Q_{d,n,\bsz}, F^{\alpha,\bsgamma}_r([0,1]^d)))^{-1} \, n^{-1} \, \bsone_n^\mathsf{T} \, \bsone_n
  \\
  &=
  \frac{e^2(Q_{d,n,\bsz}, F^{\alpha,\bsgamma}_r([0,1]^d))}{1 + e^2(Q_{d,n,\bsz}, F^{\alpha,\bsgamma}_r([0,1]^d))}
  .
  \qedhere
\end{align*}
\end{proof}

From the previous result we see that
\begin{align*}
  Q^{X,\bsw^*}_{d,n}
  &=
  \frac1{(1 + e^2(Q_{d,n,\bsz}, F^{\alpha,\bsgamma}_r([0,1]^d)))} \, Q_{d,n,\bsz}
  ,
\end{align*}
i.e., the optimal rule is just a rescaling.
This does mean that constant functions are not integrated exactly anymore.
To get this nice form we made use of the group structure of the point set and its compatibility with the reproducing kernel.
A completely equivalent statement could be made with the point set being a digital net and a Walsh series based function space.
(Optimal weights can also be computed for any other point set, even a point set fixed from random samples, but there need not be any structure in the system $\bsK \, \bsw^* = \bsb$ nor in~$\bsw^*$.)

\section*{Acknowledgements}

This set of lecture notes accompanies three lectures given at the CIME workshop on ``High-Dimensional Approximation: From Theoretical Foundations to Machine Learning and PDEs'' organised by Albert Cohen, Domenico Marinucci and Hendrik Speleers, September 23--27, 2024 at Cetraro, Italy.
My sincere thanks go to CIME, the organisers and also to the participants whose enthusiasm and knowledge have impressed me very much.

I would like to thank Jakob Dilen, Stefano Cartabia, Javier Bevia Ripoll, Laurence Wilkes and Weiwen Mo for discussions which form the basis of the contents of these notes and in particular Lisa Vandeweyer for carefully reading part of these notes.

I want to acknowledge support from the Research Foundation Flanders (FWO) for two grants.
The material grew under project grant FWO G091920N when discussing concepts with PhD students on this project.
In that respect I also want to acknowledge the KU~Leuven research council C1 grant 3E230463.
I further acknowledge support from the sabbatical grant FWO K801424N under which I had discussions with my colleagues at UNSW Sydney (Australia) and TU Chemnitz (Germany) concerning function spaces and proof techniques for function approximation.
The CIME summer school fell right after my visit to TU Chemnitz and it became clear that a proper understanding of where these function spaces come from was of huge interest.
The actual lectures contained some basic function spaces and motivation, but more was needed.
Therefore, I promised to expand on this topic in the lecture notes.
I hope that this chapter can serve as a citable reference for other researchers by bringing together results scattered throughout the literature in a consistent overview.
As AI tools become more widely used in research, I hope that such a source will also be helpful for checking the assumptions and context of results found through these tools.

Part of the Julia code originates from my lectures at the Workshop and Summer School on Applied Analysis organised by Franziska Nestler, Martin Stoll, Daniel Potts and Tino Ullrich at TU Chemnitz, Germany, September 18--22, 2023.

I used AI tools to assist with proofreading, correcting typos, helping with graphs and checking consistency throughout the text.
All responsibility remains with the author.
I proudly acknowledge my mistakes.

\bibliographystyle{abbrv}
\bibliography{mrabbrev,bib}

\end{document}